\numberwithin{equation}{section}
\DeclarePairedDelimiter\ceil{\lceil}{\rceil}
\DeclarePairedDelimiter\floor{\lfloor}{\rfloor}
\theoremstyle{plain}
\newtheorem{Th}{Theorem}[section]
\newtheorem{Lemma}[Th]{Lemma}
\newtheorem{Prop}[Th]{Proposition}
\newtheoremstyle{named}{}{}{\itshape}{}{\bfseries}{.}{.5em}{\thmnote{#3}}
\theoremstyle{named}
\theoremstyle{definition}
\newtheorem{Def}[Th]{Definition}
\newtheorem{Rem}[Th]{Remark}
\newcommand{\im}{\operatorname{im}}
\newcommand{\area}{{\rm{area}}}
\newcommand{\vol}{{\rm{vol}}}
\newcommand{\tr}{{\rm{tr}}}
\newcommand{\dist}{{\rm{dist}}}
\newcommand{\ovl}{\overline}
\newcommand{\ww}{\mathtt{w}}
\newcommand{\PSLC}{{\rm PSL}_2(\mathbb{C})}
\newcommand{\PSLR}{{\rm PSL}_2(\mathbb{R})}
\newcommand{\dvol}{\operatorname{dvol}}
\definecolor{ao(english)}{rgb}{0.0, 0.5, 0.0}
\begin{document}

\title[minimal surface entropy and applications of Ricci flow]{minimal surface entropy and applications of Ricci flow on finite-volume hyperbolic 3-manifolds}

\author{Ruojing Jiang}
\address{Massachusetts Institute of Technology, Department of Mathematics, Cambridge, MA 02139} 
\email{ruojingj@mit.edu}

\author{Franco Vargas Pallete}
\address{School of Mathematical and Statistical Sciences, Arizona State University, Tempe, AZ 85287}
\email{fevargas@asu.edu}

 \subjclass[2020]{} 
 \date{}

\begin{abstract}  
This paper studies minimal surface entropy (the exponential asymptotic growth of the number of minimal surfaces up to a given value of area) for negatively curved metrics on hyperbolic $3$-manifolds of finite volume, particularly its comparison to the hyperbolic minimal surface entropy in terms of sectional and scalar curvature.

On one hand, for metrics that are bilipschitz equivalent to the hyperbolic metric and have sectional curvature bounded above by $-1$ and uniformly bounded below, we show that the entropy achieves its minimum if and only if the metric is hyperbolic.

On the other hand, by analyzing the convergence rate of the Ricci flow toward the hyperbolic metric, we prove that among all metrics with scalar curvature bounded below by $-6$ and with non-positive sectional curvature on the cusps, the entropy is maximized at the hyperbolic metric, provided that it is infinitesimally rigid. Furthermore, if the metrics are uniformly $C^0$-close to the hyperbolic metric and asymptotically cusped, then the entropy associated with the Lebesgue measure is uniquely maximized at the hyperbolic metric.
\end{abstract}

\maketitle

\section{Introduction}
On a closed hyperbolic $n$-manifold $M$ ($n\geq3$), Hamenst\"{a}dt \cite{Hamenstadt90} studied the topological entropy of the geodesic flow and proved that the hyperbolic metric attains its minimum among all metric in $M$ with sectional curvature not exceeding $-1$. On \cite{Besson-Courtois-Gallot} Besson, Courtois and Gallot studied the analogous statement under fixed volume, namely how the topological entropy of the geodesic flow is minimized by the hyperbolic metric among all negatively curved metrics on $M$ with the same volume. Recently, Calegari, Marques, and Neves \cite{Calegari-Marques-Neves} introduced the concept of the minimal surface entropy of closed hyperbolic $3$-manifolds, building on the construction and calculation of surface subgroups by Kahn and Markovic \cite{Kahn-Markovic} \cite{Kahn-Markovic_surface_subgroup}. The minimal surface entropy measures the number of essential minimal surfaces in $M$ with respect to different metrics, shifting the focus from one-dimensional objects (geodesics) to two-dimensional minimal surfaces.

Let $\mathbb{H}^3$ denote the hyperbolic $3$-space. In the Poincar\'{e} ball model, let $S^2_\infty$ be the boundary sphere of $\mathbb{H}^3$ at infinity. We write $\overline{\mathbb{H}^3}=\mathbb{H}^3\cup S^2_\infty$.
Suppose that $M=\mathbb{H}^3/\Gamma$ is an orientable $3$-manifold that admits a hyperbolic metric $h_0$. 
Consider a closed surface $S$ immersed in $M$ with genus $g\geq 2$, the surface is said to be \emph{essential} if the immersion is $\pi_1$-injective, and the image of $\pi_1(S)$ in $\Gamma$ is called a \emph{surface subgroup of genus $g$}. 
Let $S(M,g)$ denote the set of surface subgroups of genus at most $g$ up to conjugacy. For $\epsilon>0$, let the subset $S(M,g,\epsilon)\subset S(M,g)$ consist of the conjugacy classes whose limit sets are $(1+\epsilon)$-quasicircles, where a \emph{$K$-quasicircle} in $S^2_\infty$ is the image of a round circle under a $K$-quasiconformal map from $S^2_\infty$ to $S^2_\infty$.
Moreover, set \begin{equation*}
    S_\epsilon(M)=\underset{g\geq 2}{\cup}S(M,g,\epsilon).
\end{equation*} 
In particular, we consider a subset of $S_\epsilon(M)$ defined as follows. Let $\rho$ be a metric on the space of all Radon probability measures on the frame bundle $\mathcal{F}rM$, compatible with the weak-* topology, and let $\mu$ be a probability measure on $\mathcal{F}rM$. For $\Pi\in S_\epsilon(M)$, the induced Radon measure $\mu_\Pi$ on $\mathcal{F}rM$ is obtained by averaging the integral over an area-minimizing surface in the homotopy class corresponding to $\Pi$ with respect to $h_0$. As we will show in Section \ref{subsection_equidistribution}, for sufficiently small $\epsilon$, this minimizer is unique.
Define \begin{equation*}
    S_{\epsilon,\mu}(M):=\{\Pi\in S_\epsilon(M):\rho(\mu_\Pi,\mu)<\epsilon\}.
\end{equation*}
Let $h$ be an arbitrary Riemannian metric on $M$. For any $\Pi\in S(M,g)$, we set \begin{equation*}
    \area_h(\Pi)=\inf \{\area_h(\Sigma):\Sigma\in\Pi\}.
\end{equation*}
The \emph{minimal surface entropy with respect to $h$} is defined as follows by Calegari, Marques, and Neves \cite{Calegari-Marques-Neves}. 
\begin{align*}
    \overline{E}(h) &=\underset{\epsilon\rightarrow 0}{\lim}\,\underset{L\rightarrow\infty}{\limsup}\,\dfrac{\ln\#\{\area_h(\Pi)\leq 4\pi(L-1):\Pi\in S_\epsilon(M)\}}{L\ln L},\\\nonumber
    \underline{E}(h) &=\underset{\epsilon\rightarrow 0}{\lim}\,\underset{L\rightarrow\infty}{\liminf}\,\dfrac{\ln\#\{\area_h(\Pi)\leq 4\pi(L-1):\Pi\in S_\epsilon(M)\}}{L\ln L}.
\end{align*}
We write $E(h)$ if $\underline{E}(h)=\overline{E}(h)$.
Additionally, the \emph{minimal surface entropy associated with measure $\mu$} is introduced by Marques and Neves \cite{marques-neves_conformalcurrentsentropy}: \begin{equation*}
    \overline{E}_{\mu}(h) =\underset{\epsilon\rightarrow 0}{\lim}\,\underset{L\rightarrow\infty}{\limsup}\,\dfrac{\ln\#\{\area_h(\Pi)\leq 4\pi(L-1):\Pi\in S_{\epsilon,\mu}(M)\}}{L\ln L}.
\end{equation*} 
By Prokhorov's theorem, $\overline{E}_{\mu}(h)$ is independent of the metric $\rho$, as long as it induces the weak-* topology. 
$\underline{E}_\mu(h)$ and $E_\mu(h)$ are defined similarly. Clearly, for every metric $h$ on $M$, we have $\overline{E}_{\mu}(h)\leq \overline{E}(h)$.

In \cite{Calegari-Marques-Neves}, Calegari, Marques, and Neves proved that for a closed $3$-manifold $M$ admitting a hyperbolic metric $h_0$, the entropy satisfies $E(h_0)=2$. Moreover, if a Riemannian metric $h$ on $M$ has sectional curvature at most $-1$, then $\underline{E}(h)\geq 2$, with equality if and only if $h$ is isometric to $h_0$.

For closed hyperbolic manifolds of higher dimensions, when the dimension is odd, Hamenst{\"a}dt \cite{Hamenstadt} verified the existence of surface subgroups and constructed essential surfaces that are sufficiently well-distributed. 
Based on this result, the definition of minimal surface entropy can be extended to closed odd-dimensional manifolds with hyperbolic metrics, and an analogue of the theorem of Calegari-Marques-Neves was proved by the first author in \cite{Jiang}.

Concerning the influence of scalar curvature, Lowe \cite{lowe2021areascalarcurvaturehyperbolic} investigated minimal surface entropy using Ricci flow. He considered closed hyperbolic $3$-manifolds that are \emph{infinitesimally rigid}, meaning that the cohomology group \begin{equation*}
    H^1(\pi_1(M),\mathrm{Ad})=0,
\end{equation*}
where $\mathrm{Ad}$ is the adjoint representation of $\pi_1(M)\subset SO(3,1)$ via $so(3,1)\hookrightarrow so(4,1)$. He showed that if $h$ is a metric with scalar curvature $R(h)\geq -6$, then $\overline{E}(h)\leq 2$, with equality if and only if $h$ is isometric to $h_0$.

Subsequently, Lowe and Neves \cite{Lowe-Neves} removed the assumption of infinitesimal rigidity and proved the corresponding result for $\overline{E}_{\mu_{Leb}}$, the entropy associated with the Lebesgue measure $\mu_{Leb}$ on $\mathcal{F}rM$.

\subsection{Main results}
In this paper, we focus on hyperbolic $3$-manifolds of finite volume. By utilizing the construction of surface subgroups by Kahn and Wright \cite{kahn-wright2020}, as well as the existence of closed essential minimal surfaces corresponding to each subgroup, we can calculate the minimal surface entropy of the hyperbolic metric.

\renewcommand{\theTh}{A}
\begin{Th}\label{thm_hyp}
Let $(M,h_0)$ be a hyperbolic $3$-manifolds of finite volume, then we have \begin{equation*}
       E_{\mu_{Leb}}(h_0)= E(h_0)=2.
    \end{equation*}
\end{Th}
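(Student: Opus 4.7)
The plan is to establish the sandwich
\[ 2 \leq \underline{E}_{\mu_{Leb}}(h_0) \leq \overline{E}_{\mu_{Leb}}(h_0) \leq \overline{E}(h_0) \leq 2, \]
combined with the tautology $\underline{E}_{\mu_{Leb}}(h_0) \leq \underline{E}(h_0) \leq \overline{E}(h_0)$, which pins down $E(h_0) = E_{\mu_{Leb}}(h_0) = 2$. The central input is the Kahn--Wright construction of surface subgroups for finite-volume hyperbolic 3-manifolds \cite{kahn-wright2020}, which is the cusped analogue of the Kahn--Markovic theorem used in \cite{Calegari-Marques-Neves}. For each small $\epsilon > 0$ and each sufficiently large genus $g$, their construction produces surface subgroups whose limit sets are $(1+\epsilon)$-quasicircles, obtained by gluing nearly geodesic pairs of pants along long seams whose orthogonal frames are distributed uniformly in $\mathcal{F}rM$ with respect to the Liouville measure.

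For the upper bound $\overline{E}(h_0) \leq 2$, I would combine the counting side of Kahn--Wright (the number of conjugacy classes of $(1+\epsilon)$-quasi-Fuchsian surface subgroups of genus at most $g$ is at most $(Cg)^{2g}$, exactly as in the closed case) with an area estimate. The $(1+\epsilon)$-quasicircle condition forces the area-minimizing surface in the class of $\Pi$, whose existence, uniqueness for small $\epsilon$, and location in a fixed compact core of $M$ are treated in Section~\ref{subsection_equidistribution}, to have small second fundamental form. Gauss--Bonnet then yields $\area_{h_0}(\Pi) = 4\pi(g-1) + O(\epsilon g)$, so the threshold $\area_{h_0}(\Pi) \leq 4\pi(L-1)$ corresponds, up to error $O(\epsilon L)$, to genus $\leq L$; substituting into the defining $\limsup$ produces $\overline{E}(h_0) \leq 2$ after letting $\epsilon\to 0$.

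For the lower bound $\underline{E}_{\mu_{Leb}}(h_0) \geq 2$, I would exploit the equidistribution side of Kahn--Wright. Because the gluing pants are sampled with respect to the Liouville measure, the measures $\mu_\Pi$ obtained from averaging along the pleated surfaces converge weak-$*$ to $\mu_{Leb}$ along typical members of the family, so for every $\epsilon>0$ all but a subexponentially small fraction of the constructed subgroups lie in $S_{\epsilon,\mu_{Leb}}(M)$. The matching lower count of order $g^{2g}$ therefore transfers to $S_{\epsilon,\mu_{Leb}}(M)$, producing $\underline{E}_{\mu_{Leb}}(h_0) \geq 2$.

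The main obstacle is cusp control. One must verify that the area-minimizer in the homotopy class of a Kahn--Wright subgroup does not escape into a cusp, and, more subtly, that the averaged measure $\mu_\Pi$ associated with the minimal representative remains close to $\mu_{Leb}$ whenever the original pleated pants surface was equidistributed. Both are handled by combining a thin-part/collar argument---using that a $(1+\epsilon)$-quasicircle limit set stays a uniform spherical distance from the parabolic fixed points on $S^2_\infty$, giving an a priori height bound in the cusps---with local uniqueness and $C^\infty$-stability of the minimal surface equation near a nearly totally geodesic initial surface, which is the content of Section~\ref{subsection_equidistribution}. Once these compactness and comparison statements are in place, the Calegari--Marques--Neves counting scheme transfers essentially unchanged, and the theorem follows.
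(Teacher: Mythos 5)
Your sandwich structure is exactly the right scaffolding, and the area side is correct: Seppi's bound $|A|^2 = O(\epsilon)$ plus Gauss--Bonnet gives $\area_{h_0}(\Pi) = 4\pi(g-1)(1+O(\epsilon))$, which converts the area threshold to a genus threshold just as you say. The upper bound is also essentially right in spirit, though it is not a ``counting side of Kahn--Wright'': the paper counts homotopy classes of pleated surfaces via Kahn--Markovic's triangulation lemma (Lemma~\ref{lemma_upper_2.1}), with the only new cusped input being that one vertex must map to the thick part, so that each subsequent vertex has boundedly many landing balls and $\#S(M,g) \leq (c_2 g)^{2g}$. This is independent of Kahn--Wright.

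The genuine gap is in the lower bound. You write that ``all but a subexponentially small fraction of the constructed subgroups lie in $S_{\epsilon,\mu_{Leb}}(M)$'' and that ``the matching lower count of order $g^{2g}$ therefore transfers.'' But the Kahn--Wright construction does not directly produce $g^{2g}$ conjugacy classes of genus-$g$ surface subgroups; it produces, for each scale $(\delta, R)$, a single closed essential surface $\Sigma_\epsilon$ (possibly with a few components). The $g^{2g}$-type lower bound in Proposition~\ref{prop_s} comes from a separate mechanism you do not mention: fix one connected component $\Sigma_\epsilon$ of genus $g_0$ whose laminar measure is $\epsilon$-close to $\mu_{Leb}$ (this is what Proposition~\ref{prop_measure_convergence} delivers, and requires Ratner's theorem plus the umbrella/hamster-wheel bookkeeping to handle the cusps), note that every finite cover of $\Sigma_\epsilon$ has the same limit set and the same pushed-forward laminar measure, and then count index-$k$ subgroups of $\pi_1(\Sigma_\epsilon)$ via the M\"uller--Puchta formula $\sim 2k(k!)^{2g_0-2}$. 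With Stirling and $g_k = k(g_0-1)+1$ this gives $\#S_{\mu_{Leb}}(M,g,\epsilon) \geq (c_1 g)^{2g}$. Your ``subexponentially small fraction'' claim is both unproved and unnecessary; without the cover-counting step the lower bound does not close.

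Two smaller points. First, your cusp-control heuristic --- that a $(1+\epsilon)$-quasicircle stays a uniform spherical distance from the parabolic fixed points --- is not the mechanism used and is not quite true uniformly over the family; the actual control comes from a monotonicity-formula barrier argument (Theorem~\ref{lem:surfaceexistencemonotonicity} and its relatives), which bounds how deep into a cusp an area minimizer of controlled area can go. Second, the equidistribution step (Proposition~\ref{prop_measure_convergence}) is far from automatic in the cusped setting: one must verify that the umbrellas glued in place of high-height pants occupy a vanishing fraction of the surface, and then invoke Ratner and the classification of totally geodesic surfaces to show the vague limit is $\mu_{Leb}$ rather than supported on a union of totally geodesic pieces. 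Glossing this as ``the gluing pants are sampled with respect to the Liouville measure'' understates the work needed.
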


However, for a general metric $h$, the manifold $(M,h)$ may not contain an area-minimizing surface corresponding to every surface subgroup. Therefore, we need additional conditions for $h$ to ensure the existence of such surfaces. Metrics satisfying these conditions are called \emph{weakly cusped}, as defined in Definition \ref{def_asymptotically_cusp}, and the existence of minimizers under these metrics is discussed in Section \ref{section_existence_minimizer}.
In particular, any metric with $sec(h)\leq -1$ is automatically weakly cusped, and we have the following result.

\renewcommand{\theTh}{B}
\begin{Th}\label{thm_finite_vol}
Let $(M,h_0)$ be a hyperbolic $3$-manifold of finite volume, and let $h$ be a Riemannian metric on $M$. If the sectional curvature of $h$ is less than or equal to $-1$, then \begin{equation*}
        \underline{E}(h)\geq 2.
    \end{equation*}
    Furthermore, assume that $h$ is bilipschitz equivalent to $h_0$, and that there is a constant $k>1$ such that $sec(h)\geq -k^2$. Then the equality holds if and only if $h$ is isometric to $h_0$.
\end{Th}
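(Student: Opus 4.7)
The plan is to adapt the strategy of Calegari-Marques-Neves \cite{Calegari-Marques-Neves} to the finite-volume setting, drawing on the Kahn-Wright construction \cite{kahn-wright2020} together with the infrastructure developed earlier in the paper: existence of $h$-minimizers under the weakly cusped hypothesis (Section \ref{section_existence_minimizer}) and equidistribution of the $h_0$-area-minimizers (Section \ref{subsection_equidistribution}).

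First I would establish the lower bound $\underline{E}(h)\ge 2$. Since $\sec(h)\le -1$ implies that $h$ is weakly cusped, every $\Pi\in S_\epsilon(M)$ of genus $g$ admits a closed area-minimizing representative $\Sigma_h(\Pi)$. The Gauss equation for a minimal surface gives $K_{\Sigma_h}=\sec(h)|_{T\Sigma_h}-\tfrac{1}{2}|A|^2\le -1$, so Gauss-Bonnet yields
\begin{equation*}
\area_h(\Pi)=\int_{\Sigma_h(\Pi)}dA_h\le\int_{\Sigma_h(\Pi)}(-K_{\Sigma_h})\,dA_h=4\pi(g-1).
\end{equation*}
Hence every $\Pi\in S(M,g,\epsilon)$ with $g\le L$ is counted among the $\Pi$ with $\area_h(\Pi)\le 4\pi(L-1)$. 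Combined with the Kahn-Wright lower bound $\#S(M,g,\epsilon)\ge g^{2g(1-o(1))}$ underlying Theorem A, this gives $\#\{\Pi\in S_\epsilon(M):\area_h(\Pi)\le 4\pi(L-1)\}\ge L^{2L(1-o(1))}$, so $\underline{E}(h)\ge 2$.

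For rigidity, suppose $\underline{E}(h)=2$ and, for contradiction, that $h$ is not hyperbolic. Then $\sec(h)\not\equiv -1$, so there exist $p_0\in M$, a $2$-plane $P\subset T_{p_0}M$, an open neighborhood $\mathcal U\subset G_2(TM)$ of $P$, and $\eta>0$ with $-1-\sec(h)(Q)\ge\eta$ on $\mathcal U$. Under the bilipschitz and two-sided curvature hypotheses, the $h$-minimizers $\Sigma_h(\Pi)$ remain in a fixed compact core of $M$, and by stability of area-minimizers under bilipschitz perturbation they inherit from the $h_0$-minimizers the equidistribution on $\mathcal{F}rM$ proved in Section \ref{subsection_equidistribution}. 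For a density-$1$ subfamily of Kahn-Wright classes $\Pi$ of genus $g$, this yields
\begin{equation*}
\int_{\Sigma_h(\Pi)}\bigl(-1-\sec(h)|_{T\Sigma_h}\bigr)\,dA_h\ge c_0\eta\cdot g
\end{equation*}
for some $c_0=c_0(\mathcal U)>0$. Rewriting Gauss-Bonnet as $\area_h(\Pi)=4\pi(g-1)-\int_{\Sigma_h(\Pi)}(-1-\sec(h)|_{T\Sigma_h})\,dA_h-\tfrac{1}{2}\int_{\Sigma_h(\Pi)}|A|^2\,dA_h$, we obtain $\area_h(\Pi)\le 4\pi(g-1)-c_0\eta g$, so Kahn-Wright surfaces in this subfamily of genus up to $\alpha L$, where $\alpha:=4\pi/(4\pi-c_0\eta)>1$, all satisfy $\area_h(\Pi)\le 4\pi(L-1)$. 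Feeding this into the count as in the first part gives $\underline{E}(h)\ge 2\alpha>2$, contradicting $\underline{E}(h)=2$. Hence $\sec(h)\equiv -1$; combined with the bilipschitz equivalence to the complete finite-volume metric $h_0$, Mostow-Prasad rigidity then forces $h$ to be isometric to $h_0$.

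The main obstacle is the equidistribution transfer from $h_0$-minimizers to $h$-minimizers in the cusped setting. In the closed case treated by \cite{Calegari-Marques-Neves} this is essentially a bilipschitz perturbation of the Kahn-Markovic equidistribution, but in finite volume one must first rule out the $h$-minimizer drifting into a cusp: otherwise a non-negligible fraction of $\area_h(\Sigma_h(\Pi))$ could concentrate where the equidistribution statement of Section \ref{subsection_equidistribution} provides no control. This is precisely where the bilipschitz hypothesis together with $\sec(h)\ge -k^2$ enters — they allow a monotonicity-type argument, in the spirit of Section \ref{section_existence_minimizer}, that confines $\Sigma_h(\Pi)$ to a compact core independent of the genus $g$, after which the Grassmannian equidistribution and the deficit estimate above go through.
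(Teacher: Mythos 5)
Your first part (the bound $\underline{E}(h)\ge 2$) is correct and is essentially the paper's argument: $\sec(h)\le -1$ is automatically weakly cusped, Theorem~\ref{lem:surfaceexistencemonotonicity} gives existence of $h$-minimizers, and the Gauss equation $K_{\Sigma}=\sec(h)|_{T\Sigma}-\tfrac12|A|^2\le -1$ plus Gauss--Bonnet gives $\area_h(\Pi)\le 4\pi(g-1)$, which feeds directly into the count from Proposition~\ref{prop_s}.

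The rigidity half, however, has a genuine gap and also takes a fundamentally different route from the paper. Your argument hinges on the claim that the $h$-minimizers $\Sigma_h(\Pi)$ equidistribute in the Grassmannian bundle $G_2(TM)$ (or in $\mathcal{F}rM$), so that the Gauss map image of $\Sigma_h(\Pi)$ spends a definite fraction of $h$-area inside a prescribed open set $\mathcal U$ of $2$-planes where $-1-\sec(h)\ge\eta$. But Section~\ref{subsection_equidistribution} proves equidistribution only for the $h_0$-minimizers $S_i$, and there is no reason the $h$-minimizers should share it when $h$ is far from $h_0$. Bilipschitz equivalence and the two-sided curvature bound, via monotonicity (Theorem~\ref{lem:surfaceexistencemonotonicity} and Lemma~\ref{lem:areaincusps}), confine $\Sigma_h(\Pi)$ to a compact core and keep it a bounded Hausdorff distance from $S_i$ (the convex-hull comparison in Lemma~\ref{lemma_convex_hull}), but this controls positions, not tangent planes. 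If $h$ is non-hyperbolic, $\Sigma_h(\Pi)$ would converge, after rescaling in the universal cover, to a surface that is totally geodesic with respect to $h$, whose tangent planes are not the same as those of the $h_0$-geodesic disk $D$, so there is no ``inheritance'' of equidistribution on $\mathcal{F}rM$. In short, spatial confinement does not give Grassmannian equidistribution, and your deficit estimate $\int_{\Sigma_h(\Pi)}(-1-\sec(h)|_{T\Sigma_h})\,dA_h\ge c_0\eta g$ is unjustified.

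The paper instead establishes rigidity through a substantially different chain of ideas that circumvents exactly this problem. Equality in the area ratio forces the $L^1$-averaged quantity $\int_{\Sigma_i}\big(-(\sec(h)+1)+\tfrac12|A|^2\big)\,dA_h / \area_h(\Sigma_i)\to 0$. One then defines the set $\mathscr{L}$ of round circles that arise as limit sets of translates $\phi_i\Pi_i\phi_i^{-1}$ with small local curvature deficit, and uses the equidistribution of the $h_0$-minimizers (Proposition~\ref{prop_measure_convergence}) combined with Ratner/Shah ergodicity of the $\Gamma$-action on circles to show $\mathscr{L}=\mathscr{C}$. Extracting a circle $\gamma\in\mathscr{L}$ with dense orbit, the rescaled $h$-minimizers converge (Lemma~\ref{lem_convergence of Omega}) to a disk $\Omega$ that is totally geodesic with respect to $h$ and has $\partial_\infty\Omega=\gamma$; the bilipschitz and $\sec(h)\ge -k^2$ hypotheses enter here, through the convex-hull comparison and to make the Cheeger homeomorphism between $\mathcal{F}r_2M(h_0)$ and $\mathcal{F}r_2M(h)$ well-defined. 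Density of the $\Gamma$-orbit of $\Omega$ then forces $\sec(h)\equiv -1$ everywhere, and Mostow rigidity finishes. If you want to salvage your more direct approach, you would need to independently establish equidistribution of the $h$-minimizers in the frame bundle, which is precisely the content the paper's Ratner argument replaces.
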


Another focus of the paper is the application of Ricci flow to finite-volume hyperbolic $3$-manifolds. We will use it to extend \cite{lowe2021areascalarcurvaturehyperbolic} and \cite{Lowe-Neves} to this setting. Similarly to the compact case, it is natural to ask whether the Hamilton-Perelman results can be extended to noncompact manifolds. First, we need to determine if the existence theorems for Ricci flow apply in this context. Second, we are interested in the stability of the Ricci flow at its fixed point, specifically the hyperbolic metric.
Bessi{\`e}res-Besson-Maillot established the construction of Ricci flow with a specific version of surgery on cusped manifolds in \cite{Bessieres-Besson-Maillot}, called \emph{Ricci flow with bubbling-off}, with assumption that the initial metric has a cusp-like structure. For the second question, their work indicates that, after a finite number of surgeries, the solution converges smoothly to the hyperbolic metric on balls of radius $R$ for all $R>0$ as $t$ approaches infinity. However, this convergence may fail to extend globally on $M$, since the cuspidal ends allow for nontrivial Einstein variations that can alter the asymptotic behavior. Bamler \cite{Bamler} showed that if the initial metric is a small $C^0$ perturbation of the hyperbolic metric, then the Ricci flow converges on any compact sets and remains asymptotic to the same hyperbolic structure for all time. 

In \cite{Jiang-VargasPallete_RF}, the authors provided a more quantitative version of the stability of cusped hyperbolic manifolds under normalized Ricci-DeTurck flow. We impose additional conditions on the initial metric and use Bamler's stability result \cite{Bamler} to rule out trivial Einstein variations. The strategy builds on maximal regularity theory and interpolation techniques, following the approach of Angenent~\cite{Angenent_1990}, which extends the work of Da Prato and Grisvard \cite{DaPratoGrisvard1975}. By working with a pair of densely embedded Banach spaces and an operator that generates a strongly continuous analytic semigroup, we obtain maximal regularity for solutions of the normalized Ricci-DeTurck flow. This framework enables us to derive exponential convergence to the hyperbolic metric, with optimal decay rate given by the spectral estimate of the linearized operator.

On a finite-volume hyperbolic $3$-manifold, the authors showed that if the initial metric is sufficiently close to the hyperbolic metric $h_0$, then the normalized Ricci-DeTurck flow exists for all time and converges exponentially fast to $h_0$ in a weighted H{\"o}lder norm (see Theorem \ref{thm_ricci_flow} below).

Furthermore, the attractivity result implies an inequality for minimal surface entropy when the scalar curvature is bounded below. To introduce the theorem, we need the following definitions. 

\renewcommand{\theTh}{\thesection.\arabic{Th}}
\begin{Def}\label{def_asymptotically_cusp}
A complete Riemannian $h$ on $M$ is said to be: \begin{itemize}
    \item \emph{Asymptotically cusped of order $k$} if there exist a constant $\lambda>0$ and a hyperbolic metric $h_{cusp}$ defined on the cusp $\mathcal{C}=\cup_iT_i\times [0,\infty)$, such that $\lambda h|_{\mathcal{C}}-h_{cusp}$ tends to zero at infinity in $C^k$ norm;
    \item \emph{Weakly cusped} if there exists a compact set $K$ such that $sec(h)\leq 0$ in $M\setminus K$.
\end{itemize}
\end{Def}
Any asymptotically cusped metric of order $k\geq 2$ is weakly cusped.

\renewcommand{\theTh}{C}
\begin{Th}\label{Thm_sar>-6}
Let $(M,h_0)$ be a hyperbolic $3$-manifold of finite volume, and assume that it is infinitesimally rigid. Let $h$ be a weakly cusped metric on $M$. If the scalar curvature of $h$ is greater than or equal to $-6$, then \begin{equation*}
    \overline{E}(h)\leq 2.
\end{equation*}
Furthermore, suppose that $h$ is asymptotically cusped of order at least two, and it satisfies $\Vert Rm(h)\Vert_{C^{1}(M)}<\infty$. 
Then the equality holds if and only if $h$ is isometric to $h_0$.
\end{Th}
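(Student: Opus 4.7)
The plan is to extend Lowe's Ricci-flow argument \cite{lowe2021areascalarcurvaturehyperbolic} from closed to finite-volume hyperbolic $3$-manifolds. By the weakly cusped assumption and Section~\ref{section_existence_minimizer}, for each $\Pi\in S_\epsilon(M)$ with $\epsilon$ small enough there is an area-minimizing closed minimal surface $\Sigma_\Pi\subset(M,h)$. I would then run a normalized Ricci flow $g(t)$ starting from $h$ (constructed via the Ricci flow with bubbling-off of Bessi\`eres--Besson--Maillot \cite{Bessieres-Besson-Maillot}) and verify that the scalar-curvature bound $R(g(t))\geq -6$ is preserved along the flow. In the cusped setting this requires a parabolic maximum principle supplemented by a barrier argument at infinity; the weakly cusped hypothesis $\sec(h)\leq 0$ outside a compact set provides the necessary control.

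For each $\Pi$, let $\Sigma(t)$ denote the $g(t)$-minimal representative. The first variation of area, the Gauss equation $K_{\Sigma(t)}=\tfrac{1}{2}\bigl(R_{g(t)}-2\,\mathrm{Ric}(\nu,\nu)-|A|^2\bigr)$, Gauss--Bonnet, and $R_{g(t)}\geq -6$ combine to give
\begin{equation*}
\tfrac{d}{dt}\area_{g(t)}(\Sigma(t)) \;\leq\; 4\pi(g-1)-\area_{g(t)}(\Sigma(t)),
\end{equation*}
with equality only when $R_{g(t)}\equiv -6$ on $\Sigma(t)$ and $|A|\equiv 0$. Gr\"onwall yields $\area_{g(t)}(\Sigma(t))-4\pi(g-1)\leq e^{-t}(\area_h(\Sigma_\Pi)-4\pi(g-1))$. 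Under infinitesimal rigidity, Bamler's global stability \cite{Bamler} together with the authors' Theorem~\ref{thm_ricci_flow} forces $g(t)$ to subconverge on compact sets to an isometric copy of $h_0$; the $(1+\epsilon)$-quasi-Fuchsian structure confines $\Sigma(t)$ to a fixed compact region, so $\Sigma(t)$ subconverges to the $h_0$-minimal representative $\Sigma_\Pi^0$ of $\Pi$. Since the Kahn--Wright construction underlying Theorem~\ref{thm_hyp} provides $\area_{h_0}(\Sigma_\Pi^0)\geq 4\pi(g-1)(1-\delta_\epsilon)$ with $\delta_\epsilon\to 0$, one obtains $\area_h(\Sigma_\Pi)\geq 4\pi(g-1)(1-\delta_\epsilon)$; plugging into the definition of $\overline{E}(h)$ and sending $\epsilon\to 0$ gives $\overline{E}(h)\leq 2$.

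For the rigidity statement, suppose $\overline{E}(h)=2$. Then the Gr\"onwall estimate must be asymptotically saturated along a sequence $\Pi_n\in S_{\epsilon_n,\mu_{Leb}}(M)$ whose minimal representatives equidistribute in $\mathcal{F}rM$ with respect to the Liouville measure (extracted via Theorem~\ref{thm_hyp} and the definition of $\overline{E}$). Saturation forces the negative terms in $\tfrac{d}{dt}\area$ to vanish along the sequence, i.e.\ $\int_{\Sigma_{\Pi_n}}(R_h+6)\,d\mu\to 0$ and $\int_{\Sigma_{\Pi_n}}|A|^2\,d\mu\to 0$. The regularity hypotheses (asymptotically cusped of order $\geq 2$ and $\Vert Rm(h)\Vert_{C^1(M)}<\infty$) ensure the equidistribution propagates: $R_h\equiv -6$ pointwise on $M$, and every $2$-plane in $TM$ is tangent to a totally geodesic minimal surface, forcing $h$ to have constant sectional curvature $-1$. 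By Mostow rigidity for finite-volume hyperbolic $3$-manifolds, $h$ is isometric to $h_0$.

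The main obstacle will be the non-compact maximum principle for preserving $R\geq -6$ together with the global existence and large-time behavior of the Ricci flow started at a general $h$ not a priori $C^0$-close to $h_0$; this requires combining the bubbling-off construction of \cite{Bessieres-Besson-Maillot} with the near-$h_0$ stability of \cite{Bamler} and \cite{Jiang-VargasPallete_RF}, likely via a thick--thin decomposition that uses the weakly cusped hypothesis to rule out cusp formation in the thick part. A secondary subtlety is that the Gr\"onwall inequality does not directly give the desired bound when $\area_h(\Sigma_\Pi)<4\pi(g-1)$; this has to be handled by exploiting that the $(1+\epsilon)$-quasi-Fuchsian hyperbolic minimal areas approach $4\pi(g-1)$ as $\epsilon\to 0$, so that only minimal surfaces with area close to $4\pi(g-1)$ contribute to $\overline{E}(h)$ in the $\epsilon\to 0$ limit.
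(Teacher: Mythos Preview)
Your overall strategy---run normalized Ricci flow, use the area ODE, and extract a contradiction---matches the paper, but there is a genuine gap in how you close the inequality. The Gr\"onwall estimate gives $\area_{g(t)}(\Sigma(t))\leq 4\pi(g-1)(1-\delta e^{-t})$ when $\area_h(\Sigma_\Pi)\leq 4\pi(g-1)(1-\delta)$. Letting $t\to\infty$ only yields $\area_{h_0}(\Sigma_\Pi^0)\leq 4\pi(g-1)$, which is already true and contradicts nothing; both the area defect and the $h_0$-minimal-area defect $4\pi(g-1)-\area_{h_0}(\Sigma_\Pi^0)$ tend to zero as $\epsilon\to 0$, so a naive limit comparison cannot separate them. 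The paper's actual mechanism is a \emph{rate comparison}: one shows that the metric deficit $l(t)=g(t)-h_0$ satisfies $e^{t}l(t)\to\bar{l}$ in $C^2$ on compacta, where $\bar{l}$ is a $(-1)$-eigentensor of the linearized DeTurck operator $A_{h_0}$ (a traceless Codazzi tensor). Infinitesimal rigidity then forces $\bar{l}=0$, so $e^{T_i}l(T_i)\to 0$; feeding this into a refinement of the area formula (expressing the area defect at time $T_i$ in terms of $\theta(l(T_i))=-\tfrac12 A_{h_0}(l(T_i))(e_3,e_3)$ plus quadratic error) yields $\delta\leq o(1)$, the desired contradiction. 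Your proposal places infinitesimal rigidity only in the ``global stability'' step, but it is precisely the vanishing of the $(-1)$-eigentensor that makes the argument work.

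Two secondary points. First, a general weakly cusped metric need not admit Ricci flow with bubbling-off (the construction in \cite{Bessieres-Besson-Maillot} needs an asymptotically cusped structure); the paper handles this by replacing $h$ with a sequence $h_i$ equal to $h$ on $M(s_i)$ and to $h_0$ outside $M(2s_i)$, checking via Theorem~\ref{lem:surfaceexistencemonotonicity} that this does not change the area minimizer for $\Pi_i$. Second, your rigidity sketch invokes equidistribution in $S_{\epsilon,\mu_{Leb}}(M)$, but Theorem~\ref{Thm_sar>-6} concerns $\overline{E}(h)$ over all of $S_\epsilon(M)$ and the paper's rigidity argument does not use equidistribution: it runs the flow directly from $h$ (now asymptotically cusped), uses Lemma~\ref{lem:areaincusp_asymphyp} to keep a fixed fraction of area in a compact set, and applies the strong maximum principle to $R(g(t))+6$ there to derive a strict improvement of the ODE, contradicting $a(t)\equiv 1$.
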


By proving the equidistribution result for finite-volume hyperbolic $3$-manifolds (Proposition \ref{prop_measure_convergence} below, which constructs a sequence of Radon probability measures on $\mathcal{F}rM$ obtained from integration over closed essential minimal surfaces and shows that it converges vaguely to $\mu_{Leb}$), we establish the following theorem.
\renewcommand{\theTh}{D}
\begin{Th}\label{Thm_sar>-6_c0perturbation}
Let $(M,h_0)$ be a hyperbolic $3$-manifold of finite volume, and let $h$ be a weakly cusped metric on $M$ that satisfies the following conditions.
\begin{itemize}
    \item $\Vert h-h_0\Vert_{C^0(M)}\leq\epsilon$ for a given constant $\epsilon>0$,
    \item $h$ is asymptotically cusped of order at least two with $\Vert Rm(h)\Vert_{C^{1}(M)}<\infty$.
\end{itemize} 
If the scalar curvature of $h$ is greater than or equal to $-6$, then \begin{equation*}
    \overline{E}_{\mu_{Leb}}(h)\leq 2.
\end{equation*}
Furthermore, the equality holds if and only if $h$ is isometric to $h_0$.
\end{Th}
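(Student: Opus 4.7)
The plan is to extend the Lowe-Neves approach \cite{Lowe-Neves} from the closed case to the finite-volume setting, combining three ingredients developed earlier in the paper: the hyperbolic entropy computation of Theorem \ref{thm_hyp}, the equidistribution of $h_0$-area-minimizers in Proposition \ref{prop_measure_convergence}, and (for the rigidity statement) the exponential Ricci-flow convergence from Theorem \ref{thm_ricci_flow}. The passage from $\overline{E}$ to $\overline{E}_{\mu_{Leb}}$ plays the role of sidestepping the infinitesimal rigidity hypothesis of Theorem \ref{Thm_sar>-6}: equidistribution pins down the geometric profile of the counted minimal surfaces sharply enough that the scalar curvature lower bound $R_h \geq -6$ controls them on its own.

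For the upper bound, given $\Pi \in S_{\epsilon, \mu_{Leb}}(M)$, the weakly cusped hypothesis together with Section \ref{section_existence_minimizer} produces a closed $h$-area-minimizer $\Sigma_h(\Pi)$ of genus $g$. Combining Gauss-Bonnet with the Gauss identity
\[
    K_\Sigma = \tfrac{1}{2}R_h - \mathrm{Ric}_h(\nu,\nu) - \tfrac{1}{2}|A|^2,
\]
and dividing by $\area_h(\Sigma)$ produces an identity relating $(g-1)/\area_h(\Sigma)$ to the $\Sigma$-averages of $R_h$, $\mathrm{Ric}_h(\nu,\nu)$ and $|A|^2$. Along a sequence $\Pi_n$ realising the limsup in $\overline{E}_{\mu_{Leb}}(h)$, Proposition \ref{prop_measure_convergence} combined with the $C^0$-closeness of $h$ to $h_0$ shows that $\Sigma_h(\Pi_n)$ equidistribute on the frame bundle with respect to the $h$-Lebesgue measure. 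Fubini over the unit sphere fibres sends the first two averages to $\overline{R}_h$ and $\overline{R}_h/3$ respectively, where $\overline{R}_h := \vol_h(M)^{-1}\int_M R_h\, d\vol_h$. Moreover, the Kahn-Wright surfaces underlying $S_\epsilon(M)$ are $(1+\epsilon)$-quasi-Fuchsian and hence asymptotically totally geodesic in $h_0$; continuity of minimizers under $C^0$ metric perturbation — which leverages $\Vert Rm(h)\Vert_{C^{1}(M)}<\infty$ together with the order-two asymptotic cusp structure to guarantee local $C^2$ compatibility of $h$ and $h_0$ — transfers this property to the $h$-minimizers, so the $|A|^2$-average also tends to $0$ as $\epsilon \to 0$. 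Inserting these three limits and using $\overline{R}_h \geq -6$ yields $\area_h(\Sigma_h(\Pi_n)) \geq 4\pi(g_n - 1)(1 + o(1))$, which with Theorem \ref{thm_hyp} delivers $\overline{E}_{\mu_{Leb}}(h) \leq 2$.

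For the equality case, saturation of the above inequalities in the asymptotic limit forces $\overline{R}_h = -6$, and the pointwise bound $R_h \geq -6$ upgrades this to $R_h \equiv -6$. Applying the normalized Ricci-DeTurck flow of Theorem \ref{thm_ricci_flow}, the identity $R_h \equiv -6$ makes $h$ a critical point of the flow's scalar-curvature evolution; together with the exponential convergence $h(t) \to h_0$ and the three-dimensional rigidity (an Einstein metric in dimension three is hyperbolic), this forces $h = h_0$. The main technical obstacle is the asymptotic vanishing of the $|A|^2$-integral: the Kahn-Wright surfaces are only \emph{a priori} asymptotically totally geodesic in $h_0$, and transferring this to the $h$-minimizers across the $C^0$ perturbation requires the asymptotic cusp structure of order 2 together with the $C^1$ curvature bound to supply the regularity needed for the minimizer comparison. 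A secondary technical point is extending the classical scalar-curvature rigidity of closed hyperbolic $3$-manifolds to the finite-volume cusped setting, which I would address using the decay estimates of Theorem \ref{thm_ricci_flow} at infinity.
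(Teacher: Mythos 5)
Your proposal diverges from the paper's argument at the key step, and the divergence introduces a genuine gap. The paper proves Theorem~D by reducing to Proposition~\ref{prop_2.2} and then running the normalized Ricci--DeTurck flow (not merely for the rigidity) to bring $h(T_i)$ exponentially close to $h_0$; Lemma~\ref{lemma_covergence_to_l} and Lemma~\ref{lemma_A.2_2} then extract a limiting eigentensor $\ovl{l}$ with $A_{h_0}\ovl{l}=-\ovl{l}$, which is trace-free by the equality case in~\eqref{equ_spectrum}. Equidistribution enters only at that stage, through Lemma~\ref{lem:measureconv-1eigen}, to conclude $\Omega_*\delta_{\phi_i}\bigl(\theta(e^{T_i}l(T_i))\bigr)\to\mu_{Leb}(\theta(\ovl{l}))=\tfrac16\fint_M\tr_{h_0}(\ovl{l})\,\dvol_{h_0}=0$. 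The role of the Ricci flow is precisely to replace the arbitrary perturbation $h-h_0$ by a trace-free eigentensor before equidistribution is applied; you propose to skip that step, and that is where the argument breaks.

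Concretely, your claim that the $\Sigma_h(\Pi_n)$-average of $|A|^2_{h}$ tends to zero as the quasi-Fuchsian parameter shrinks is not true for a fixed metric $h\neq h_0$. The $h_0$-minimizers satisfy $|A|^2_{h_0}\to 0$ by Seppi's estimate~\eqref{equ_seppi} because they are tracking $h_0$-totally-geodesic planes; the $h$-minimizers homotopic to them are asymptotic to those same $h_0$-geodesic planes, and the second fundamental form with respect to $h$ of a surface hugging an $h_0$-geodesic plane is governed by the fixed discrepancy between the second-order geometries of $h$ and $h_0$, not by the quasi-Fuchsian parameter. Generically this quantity is bounded away from zero, and it vanishes on the limit measure $\mu_{Leb}$ if and only if $h$ has constant curvature. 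Likewise, your Fubini step that sends $\mathrm{avg}_\Sigma\mathrm{Ric}_h(\nu,\nu)$ to $\overline{R}_h/3$ needs equidistribution in the $h$-frame bundle with the $h$-Liouville measure, whereas Proposition~\ref{prop_measure_convergence} and the definition of $S_{\epsilon,\mu_{Leb}}(M)$ only give equidistribution of the $h_0$-minimizers in $(\mathcal{F}rM,\mu_{Leb})$ with the $h_0$-structure; passing to the $h$-Grassmannian changes the reference measure and is not immediate. In short, after dividing the Gauss--Bonnet identity by $\area_h$ you cannot identify the limit with $-\overline{R}_h/6$; the $|A|^2$-average contributes a nonzero term that can only be removed after flowing $h$ toward $h_0$.

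The rigidity argument also has an error: $R_h\equiv -6$ does not make $h$ a critical point of the scalar-curvature evolution, nor is it equivalent to being Einstein. Under the normalized flow $\partial_t R=\Delta R+2|\mathrm{Ric}|^2+4R$, so at $R\equiv -6$ one has $\partial_t R=2|\mathrm{Ric}|^2-24\geq 0$, with equality precisely when $\mathrm{Ric}$ is Einstein. There are many non-Einstein metrics with constant scalar curvature $-6$. The paper handles this correctly: it runs the flow, uses the strong maximum principle to show $R(h(t))>-6$ on a compact set at positive time if $h$ is not hyperbolic, and combines this with the ``most area in the thick part'' Lemma~\ref{lem:areaincusp_asymphyp} and the ODE estimate~\eqref{ode_rigidity} to force a quantitative improvement of the area ratio, contradicting $a(t)\equiv 1$. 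That mechanism does not simplify to the assertion that $R_h\equiv -6\Rightarrow h$ Einstein.
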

\renewcommand{\theTh}{\thesection.\arabic{Th}}

\subsection{Organization}
The paper is organized as follows. Section \ref{section_hyperbolic} discusses the equidistribution properties of minimal surfaces with respect to the hyperbolic metric and establishes Theorem \ref{thm_hyp}. In Section \ref{section_existence_minimizer}, we examine conditions on general metrics on $M$ that ensure the existence of minimal surfaces. Theorem \ref{thm_finite_vol} is proved in Section \ref{section_thm_b}. Sections \ref{section_bkg_rf} and \ref{section_rf_conv} provide background on Ricci flow and introduce decay estimates toward the hyperbolic metric, which are used in the proofs of Theorems \ref{Thm_sar>-6} and \ref{Thm_sar>-6_c0perturbation}. Finally, Sections \ref{sec:proofS>-6} and \ref{section_proof_thmD} contain the proofs of Theorems \ref{Thm_sar>-6} and \ref{Thm_sar>-6_c0perturbation}, respectively.

\section*{Acknowledgements}
 FVP thanks Fernando Al-Assal, Yves Benoist, Sami Douba, Ursula Hamenst\"{a}dt, Ben Lowe, Andr\'e Neves and Richard Schwartz for helpful conversations while working on this project, and thanks IHES for their hospitality during a phase of this work. FVP was partially funded by European Union (ERC, RaConTeich, 101116694)\footnote{Views and opinions expressed are however those of the author(s) only and do not necessarily reflect those of the European Union or the European Research Council Executive Agency. Neither the European Union nor the granting authority can be held responsible for them.}

\section{entropy of hyperbolic metrics}\label{section_hyperbolic}
In this section, we discuss the proof of Theorem \ref{thm_hyp}, which calculates the minimal surface entropy for hyperbolic $3$-manifolds of finite volume. 
On such a hyperbolic $3$-manifold $(M,h_0)$, let $\#S(M,g)$ denote the cardinality of $S(M,g)$, and $\#S_{\mu_{Leb}}(M,g,\epsilon)$ denote the cardinality of the subset $S_{\mu_{Leb}}(M,g,\epsilon):= S(M,g,\epsilon)\cap S_{\epsilon,\mu_{Leb}}(M)$. We prove the following proposition, and thus deduce the minimal surface entropy of the hyperbolic metric. 
\begin{Prop}\label{prop_s}
Suppose that $\epsilon>0$ is sufficiently small and $g\in\mathbb{N}$ is sufficiently large. 
   The quantities $\#S(M,g)$ and $\#S_{\mu_{Leb}}(M,g,\epsilon)$ satisfy the following inequality:
    \begin{equation*}
   (c_1g)^{2g}\leq \#S_{\mu_{Leb}}(M,g,\epsilon)\leq \#S(M,g)\leq (c_2g)^{2g},
\end{equation*}
where $c_1>0$ is a constant that depends only on $M$ and $\epsilon$, and $c_2>0$ is a constant depending only on $M$.
\end{Prop}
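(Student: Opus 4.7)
The plan is to adapt the counting arguments of Calegari-Marques-Neves \cite{Calegari-Marques-Neves} from the closed setting to the cusped setting by substituting the Kahn-Markovic construction with the Kahn-Wright construction \cite{kahn-wright2020}. The upper and lower bounds are proved independently, and both require care in handling the cuspidal ends.

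For the upper bound, the strategy of \cite{Calegari-Marques-Neves} transfers once one verifies that every $\Pi\in S(M,g)$ admits a pleated representative $\Sigma\subset(M,h_0)$ of area $4\pi(g-1)$ whose pleating locus stays in the thick part of $M$. This geometric control follows from the fact that a closed pleated surface entering a Margulis cusp must fold back on itself, contradicting the minimality of the pleating locus. Once $\Sigma$ is confined to the thick part, a Bers-type argument produces a pants decomposition whose combinatorial type ranges over a set of size $(Cg)^{2g}$ and whose geometric realization in $M$ contributes only a subexponential factor, giving $\#S(M,g)\leq (c_2 g)^{2g}$ with $c_2$ depending only on $M$.

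For the lower bound, I would invoke the Kahn-Wright construction at a scale $R$ growing slightly faster than $\log g$. By the prime geodesic theorem for cusped hyperbolic $3$-manifolds, the set of good pants of cuff length approximately $R$ has cardinality $\sim e^{2R}$, and for each closed geodesic of length $\approx R$ the number of good pants having it as a cuff is $\sim e^R$. Building a closed surface of genus $g$ by selecting $2g-2$ pants and gluing them along a spanning tree of the dual pants graph yields $\sim e^{2R}(e^R)^{2g-3}$ distinct assemblies (the shears being fixed with $O(1)$ ambiguity in the Kahn-Wright framework), and after quotienting by combinatorial symmetries at least $(c_1 g)^{2g}$ distinct surface subgroups of genus $g$ remain. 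To guarantee membership in $S_{\mu_{Leb}}(M,g,\epsilon)$, I would invoke the equidistribution of Kahn-Wright good pants in $\mathcal{F}rM$---a consequence of exponential mixing of the frame flow on the finite-volume hyperbolic $3$-manifold---which implies that the averaged measures $\mu_\Pi$ tend weakly to $\mu_{Leb}$ as $R\to\infty$.

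The main obstacle is handling the cuspidal ends, which could a priori either inflate the count in the upper bound or destroy equidistribution in the lower bound. For the upper bound, the pleated-surface avoidance of the cusps must be made quantitative enough to keep the Bers constant (and the induced count of admissible cuff geodesics in $M$) under control. For the lower bound, the more delicate point is that the Kahn-Wright pants involve closed geodesics scattered throughout $M$, and one must verify that their distribution in $\mathcal{F}rM$ captures the Lebesgue mass of the cuspidal part---not only of the thick part---so that $\mu_\Pi$ converges to $\mu_{Leb}$ rather than to a measure supported in the thick part.
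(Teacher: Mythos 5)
Your upper bound is in the same spirit as the paper's, but two details differ and one is actually wrong. You assert that the pleating locus of a pleated representative stays in the thick part and that a pleated surface entering a Margulis cusp "must fold back on itself"; this is not true. Pleated surfaces routinely extend deeply into cusps (indeed the good pants in the Kahn--Wright construction do, which is why umbrellas are needed). The paper does not claim cusp avoidance: it only uses that the pleated surface is parabolic-free, hence its systole is at least $sl(M)$, and that the surface cannot be \emph{entirely} contained in a cusp neighborhood, so the first vertex of a short-edged triangulation can be anchored in the thick part; subsequent vertices are then controlled combinatorially by proximity, not by staying in the thick part. Also, the paper counts triangulations (Lemma 2.1/2.2 of Kahn--Markovic) rather than pants decompositions; your Bers-type pants count could in principle be made to work, but you would need to verify the analogue of the $(Cg)^{2g}$ bound for combinatorial types of pants decompositions and control the geometric realization factor, which you do not do.

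The lower bound is where the proposal diverges most seriously from the paper and, as sketched, has a genuine gap. You propose to directly count assemblies of Kahn--Wright good pants --- choose $2g-2$ pants, glue along a spanning tree, obtain $\sim e^{2R}(e^R)^{2g-3}$ configurations, quotient by symmetries. This is essentially re-deriving the Kahn--Markovic/Kahn--Wright existence machinery from scratch, and the key step is not justified: nothing in the sketch shows that distinct assemblies yield distinct conjugacy classes of surface subgroups, nor that the count of configurations (which must be compatible with the near-unit-shear gluing constraint and the umbrella replacement near the cusps) actually produces $(c_1 g)^{2g}$ \emph{distinct} elements of $S_{\mu_{Leb}}(M,g,\epsilon)$. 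The paper avoids all of this: it constructs a single connected essential surface $\Sigma_\epsilon$ of fixed genus $g_0=g_0(\epsilon)$ via Kahn--Wright, verifies via the equidistribution Proposition~\ref{prop_measure_convergence} that its subgroup lies in $S_{\epsilon,\mu_{Leb}}(M)$, and then counts the index-$k$ subgroups of $\pi_1(\Sigma_\epsilon)$ using the M\"uller--Puchta formula $\#\{\text{index }k\text{ subgroups}\}\sim 2k(k!)^{2g_0-2}$. Since finite covers preserve the limit set (hence the quasicircle condition) and the averaged measure, they all lie in $S_{\epsilon,\mu_{Leb}}(M)$, and Stirling's formula gives $(c_1 g)^{2g}$ directly. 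This soft covering-space argument is the crucial idea you are missing: it converts one hard geometric input (existence of one equidistributed essential surface) into the exponential count via pure group theory, bypassing the combinatorial bookkeeping your sketch would require.

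One further point: your appeal to equidistribution of good pants "by exponential mixing of the frame flow" glosses over the main technical work of the cusped setting. Mixing alone fails near the cusps (small injectivity radius), which is precisely why Kahn--Wright introduce umbrellas. The paper's Proposition~\ref{prop_measure_convergence} has to track the measure of removed feet and added hamster wheels and show these are negligible; this is not a direct consequence of mixing and is the genuinely new content needed to port Labourie's and Al-Assal's arguments to finite volume.
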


\subsection{Existence of minimal surfaces}
For closed hyperbolic manifolds, it is known from the work of Schoen and Yau \cite{Schoen-Yau}, Sacks and Uhlenbeck \cite{Sacks-Uhlenbeck} that every surface subgroup produces a least-area surface in its homotopy class. However, this argument does not extend to all noncompact $3$-manifolds, see Example 6.1 in \cite{Hass-Scott} for a counterexample.

In this section, we present the existence results for hyperbolic $3$-manifolds with finitely many cusps. Hass, Rubinstein, and Wang \cite{Hass-Rubinstein-Wang}, and Ruberman \cite{Ruberman} established that in such manifolds, any noncompact essential surface of genus at least two can be homotoped to a least-area surface. Subsequently, Collin, Hauswirth, Mazet, and Rosenberg proved the existence of closed essential minimal surfaces embedded in cusped hyperbolic $3$-manifolds in \cite{Collin-Hauswirth-Mazet-Rosenberg} and \cite{Collin2019CorrigendumT}.
Later, Huang and Wang addressed the case of immersed essential surfaces in \cite{Huang-Wang}, showing that any such surface of genus at least two can be homotoped to an area-minimizing representative.

As a consequence, the minimal surface entropy of a cusped hyperbolic $3$-manifold $(M,h_0)$ can be approximated by counting the least-area closed surfaces up to homotopy. In what follows, we will estimate both upper and lower bounds for $\#S(M,g,\epsilon)$ associated with $h_0$, and use these to prove the theorem.

\subsection{The upper bound in Proposition \ref{prop_s}}\label{section_upper}

Let $S$ be a closed surface of genus $g$, and let $i(S)$ denote the injectivity radius of $S$. Since $S$ is compact, the systole length of $S$, denoted by $sl(S)$, is simply twice the injectivity radius $i(S)$ of $S$. 

Consider a \emph{triangulation} $\tau$ on $S$, which is a connected graph where each component of $S\setminus \tau$ is a triangle. Two vertices of the same triangle are called \emph{adjacent} in $\tau$. 
Let $\tau_1$ and $\tau_2$ be triangulations on $S$, with vertex sets $\mathcal{V}(\tau_1)=\{v_1^1, v_1^2,\cdots, v_1^p\}$ and $\mathcal{V}(\tau_2)=\{v_2^1, v_2^2,\cdots, v_2^p\}$, respectively. Suppose there is a bijection $h:\mathcal{V}(\tau_1)\rightarrow \mathcal{V}(\tau_2)$ such that $h(v_1^i)=v_2^i$ for all $1\leq i\leq p$. This map $h$ induces a triangulation $h(\tau_1)$ on $S$, defined by the rule that $v_2^i$ and $v_2^j$ are adjacent in $h(\tau_1)$ if and only if $v_1^i$ and $v_1^j$ are adjacent in $\tau_1$. We say that $\tau_1$ and $\tau_2$ are \emph{equivalent} if $h(\tau_1)=\tau_2$.

We state the following lemma which refers to Lemma 2.1 and Lemma 2.2 of \cite{Kahn-Markovic}.
\begin{Lemma}\label{lemma_upper_2.1}
    For any $s\leq i(S)$, there exists $k=k(s)>0$ and a triangulation $\tau$ on $S$, such that \begin{enumerate}[(1)]
    \item each edge of $\tau$ is a geodesic arc of length at most $s$,
    \item $\tau$ has at most $kg$ vertices and edges,
    \item the degree of each vertex is at most $k$.
\end{enumerate}

Furthermore, denote the set of all equivalence classes of triangulations on $S$ with genus $g$ satisfying (1)-(3) by $\mathcal{T}(k,g)$. Then, there exists a constant $c$ depending only on $k$, so that 
\begin{equation}\label{equ_upper_1}
    \#\mathcal{T}(k,g)\leq (cg)^{2g}.
\end{equation}
\end{Lemma}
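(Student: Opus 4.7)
The plan is to adapt the proofs of Lemmas 2.1 and 2.2 of \cite{Kahn-Markovic} to the present setting, splitting the argument into the construction of $\tau$ and the counting bound.

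For the construction, the first step is to extract, via Zorn's lemma, a maximal $s/2$-separated subset $\{v_1,\dots,v_p\}\subset S$ with respect to the intrinsic metric. Maximality forces the open balls $B(v_i,s/2)$ to cover $S$, while $s\leq i(S)$ ensures that the balls $B(v_i,s/4)$ are pairwise disjoint and embedded. Forming the Voronoi cells $V_i=\{x\in S:d(x,v_i)\leq d(x,v_j)\text{ for all }j\}$ and passing to the dual produces a triangulation $\tau$ in which $v_i$ and $v_j$ are joined by a minimizing geodesic arc precisely when the cells $V_i,V_j$ share a one-dimensional face. After a mild rescaling of $s$, this yields (1) with every edge of $\tau$ of length at most $s$.

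To verify (2) and (3), I would use that in the intended application $S$ arises as a closed area-minimizing surface in a hyperbolic $3$-manifold, so the Gauss equation forces the intrinsic curvature of $S$ to be at most $-1$, and Gauss--Bonnet gives $\area(S)\leq 4\pi(g-1)$. A uniform lower bound $a(s)>0$ on the area of embedded $s/4$-balls then yields $p\leq k_1(s)\,g$. For the degree bound, any $v_j$ adjacent to $v_i$ in $\tau$ satisfies $d(v_i,v_j)\leq 2s$, so the disjoint balls $B(v_j,s/4)$ all lie inside $B(v_i,3s)$; an upper area bound on $B(v_i,3s)$ (for instance via Bishop--Gromov in a space with curvature at most $-1$) limits the number of such $j$ to $k_2(s)$. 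Setting $k=\max(k_1(s),k_2(s))$ produces a triangulation satisfying (1)--(3).

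For the counting in \eqref{equ_upper_1}, Euler's formula $V-E+F=2-2g$ together with $2E=3F$ gives $E=3V+6g-6=O(g)$ and $F=O(g)$. Each equivalence class of triangulation is determined by its abstract combinatorial type on the labeled vertex set of size $p\leq kg$, and I would encode this type as a ribbon graph (rotation system) so as to record the embedding into a genus-$g$ surface. Combining the bounds $V\leq kg$, $E=O(g)$, the maximum-degree constraint $k$, and the genus constraint, a combinatorial enumeration of such ribbon graphs (as carried out in \cite[Lemma~2.2]{Kahn-Markovic}) gives at most $(cg)^{2g}$ equivalence classes with $c=c(k)$.

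The main obstacle is the sharpness of this final count. A naive union bound over edge sets produces only $(cg)^{C(k)g}$ with $C(k)\gg 2$, so to reach the exponent $2g$ one must exploit the rigidity of triangulated closed surfaces of fixed genus with bounded vertex degree, either by a Tutte-type enumeration of rotation systems or by reducing to an estimate on pants-like decompositions, following the original Kahn--Markovic argument.
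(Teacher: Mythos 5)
The paper does not prove this lemma; it simply cites Lemmas~2.1 and~2.2 of Kahn--Markovic and notes that the result is \emph{intrinsic} to the closed surface $S$ (and the paper's subsequent application equips $S$ with a hyperbolic metric via a pleated map, so that the curvature is identically $-1$). Your reconstruction via a maximal $s/2$-separated net and its Voronoi/Delaunay dual is the right starting point and matches the spirit of that argument, and your deferral of the final $(cg)^{2g}$ enumeration back to Kahn--Markovic is fine since the paper does the same.

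However, two points in your sketch need attention. First, you frame (2)--(3) through the Gauss equation for an area-minimizing surface in a hyperbolic $3$-manifold, which is the wrong setting for this lemma: the lemma is stated intrinsically, and in the paper's use $S$ carries a hyperbolic metric supporting a pleated immersion, not the induced metric of a minimal surface. The clean route is to work directly with the hyperbolic metric, for which ball areas are given by $2\pi(\cosh r-1)$ and the counting constants plainly depend only on $s$. Second, and more substantively, your appeal to Bishop--Gromov to get an \emph{upper} bound on $\area(B(v_i,3s))$ from the hypothesis $\mathrm{sec}\leq -1$ is in the wrong direction: Bishop--Gromov controls volume from above when Ricci is bounded \emph{below}, and an upper curvature bound (equivalently, Günther) only gives a \emph{lower} bound on ball area. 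With only $\mathrm{sec}\leq -1$ the degree bound (3) does not follow as written. For a hyperbolic surface one simply uses the explicit area formula; if you insist on the minimal-surface framing you would additionally need the two-sided pinching $\mathrm{sec}\geq -1-\tfrac12\|A\|_\infty^2$ coming from the second fundamental form estimate, which is available in the paper's applications but must be invoked explicitly.
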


Note that the lemma and the estimate \eqref{equ_upper_1} depend only on the intrinsic property of the closed surface $S$ and the choice of $s$.

Now we take the ambient manifold into consideration. Let $M$ be a hyperbolic $3$-manifold of finite volume, and $f: S\rightarrow M$ be a $\pi_1$-injective immersion that determines a surface subgroup in $S(M,g,\epsilon)$. 
It is possible to establish a hyperbolic structure on $S$ and a homotopy of $f$ that is \emph{pleated} with respect to this structure (meaning that every point of $S$ lies in the interior of a straight line segment, which is mapped to a straight line segment in $M$ with identical length). This pleated map is still denoted as $f$. We refer to 8.10 of \cite{thurston} and Lemma 3.6 of \cite{Calegari_scl} for its construction.

Denote by $sl(M)>0$ the systole of $M$. Since $f$ does not increase the hyperbolic distance and it is parabolic free, we have $sl(M)\leq sl(S)=2i(S)$. Therefore, in Lemma \ref{lemma_upper_2.1}, we can take $s=\frac{sl(M)}{6}<i(S)$.

Furthermore, we claim that $\#S(M,g,\epsilon)$ can be estimated by counting the number of homotopy classes of the pleated immersions. 
Let $f_1$ and $f_2$ be two pleated maps of genus $g$ surfaces $S_1$ and $S_2$ into $M$, respectively. Suppose that the triangulations $\tau(S_1)$ and $\tau(S_2)$ are equivalent with a bijection $h:\mathcal{V}(\tau(S_1))\rightarrow \mathcal{V}(\tau(S_2))$. 
Moreover, $M$ is covered by a family of open balls of radius $\frac{sl(M)}{12}$, denoted by $B_1, B_2, \cdots$.
We assume that for any vertex $v\in \mathcal{V}(\tau(S_1))$, the points $f_1(v)$ and $f_2(h(v))$ of $M$ are contained in the same ball $B_i$. 
Therefore, if $v,v'\in \mathcal{V}(\tau(S_1))$ are adjacent vertices, and if $s_v$ and $s_{v'}$ denote the segments connecting $f_1(v)$ to $f_2(h(v))$ and $f_1(v')$ to $f_2(h(v'))$, respectively, then lengths of $s_v$ and $s_{v'}$ are less than $\frac{sl(M)}{6}$. Moreover, due to the equivalence between $\tau(S_1)$ and $\tau(S_2)$, there are edges $e_1$, $e_2$ connecting $f_1(v)$ to $f_1(v')$, $f_2(h(v))$ to $f_2(h(v'))$, respectively, the lengths are at most $\frac{sl(M)}{6}$. 
So we get a simple closed curve that passes through $f_1(v), f_1(v'), f_2(h(v))$, and $f_2(h(v'))$. By triangle inequality, we have\begin{equation*}
    \ell(\gamma)\leq 3\max\left\{\ell(e_1)+\ell(s_v),\,\ell(e_2)+\ell(s_{v'})\right\}<sl(M).
\end{equation*}
Notice that $\gamma$ cannot shrink homotopically to a closed geodesic $\gamma'$, as otherwise it gives rise to a smaller systole length of $M$. As a result, $\gamma$ must bound a disk. 
Thus, by repeating this argument for any pair of adjacent vertices in $\tau(S_1)$, 
we conclude that $f_1|_{\tau(S_1)}$ and $f_2\circ h|_{\tau(S_2)}$ are homotopic. Since the complementary regions of $\tau(S_1)$ and $\tau(S_2)$ are triangles, the bijective map $h$ can be extended to a homeomorphism $h:S_1\rightarrow S_2$, such that $f_1$ and $f_2\circ h$ are homotopic. We then say $f_1: S_1\rightarrow M$ and $f_2: S_2\rightarrow M$ are \emph{homotopic}. 

In summary, the relation of equivalence on $\mathcal{T}(k,g)$ with images of vertices in prescribed balls of $M$ implies the relation of homotopy on pleated immersions.

Let $\widetilde{S}(M,g)$ be the subset of $S(M,g)$ that includes surfaces of fixed genus $g$. Consider any representative $\tau$ of an element in $\mathcal{T}(k,g)$. Since the pleated surface corresponding to any surface subgroup in $S(M,g)$ cannot be completely contained in the cusp regions, we can select the first vertex $v_1\in\tau$ so that it maps to a ball $B_i$ contained in the thick part of $M$. There are only finitely many such possibilities, which do not depend on $g$. We denote this number by $m$. Next, consider a vertex $v_2\neq v_1$ that bounds an edge $e$ with $v_1$. By (1) of Lemma \ref{lemma_upper_2.1}, the length of $e$ is at most $\frac{sl(M)}{6}$. Furthermore, because the balls that cover $M$ have radius $\frac{sl(M)}{12}$, there is a finite number $n>0$ (independent of $g$), such that $v_2$ can be mapped to at most $n$ options of the balls. Therefore, it follows from (2) that \begin{equation}\label{equ_upper_2}
    \#\widetilde{S}(M,g)\leq mn^{kg-1}\#\mathcal{T}(k,g).
\end{equation}
Finally, combining \eqref{equ_upper_1} and \eqref{equ_upper_2}, we can find $c_2>0$, such that \begin{equation*}
    \#S(M,g)\leq \sum_{i=2}^g\#\widetilde{S}(M,i)\leq (c_2g)^{2g}.
\end{equation*}

\subsection{The lower bound in Proposition \ref{prop_s}}\label{subsection_lower_bound}

To estimate the lower bound of the quantity $\#S_{\mu_{Leb}}(M,g,\epsilon)$, we first need to construct a closed essential surface, and then find the area-minimizing representative in its homotopy class.

\subsubsection{Construction of essential surfaces}
On a cusped hyperbolic $3$-manifold $M=\mathbb{H}^3/\Gamma$, Kahn and Wright developed an essential surface of $M$ in \cite{kahn-wright2020}. The statement is as follows, and we will explain the ideas. 

\begin{Th}[Theorem 1.1, \cite{kahn-wright2020}]\label{thm_Kahn-Wright}
    For any sufficiently small constant $\epsilon>0$, there exists a closed essential surface $\Sigma_\epsilon$ immersed in $M$. Moreover, $\Sigma_\epsilon$ has a representative in its homotopy class that is \emph{$(1+\epsilon)$-quasigeodesic}, this means that the geodesics of $\Sigma_\epsilon$ are $(1+\epsilon,\epsilon)$-quasigeodesics in $M$ ($1+\epsilon$ Lipschitz with an additive $\epsilon$ error).
\end{Th}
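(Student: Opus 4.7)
The plan is to reproduce, in broad strokes, the Kahn--Wright construction, which adapts the Kahn--Markovic good pants machinery to the cusped setting. The overall strategy is: build a large family of immersed ``good pants'' in $M$ whose cuffs have complex length close to a fixed large number $R+i\pi$ and whose feet (orthonormal frames at the midpoints of the seams) are sufficiently well-distributed in the frame bundle; then match these pants along their cuffs with a small shearing error so as to glue them into a closed surface $\Sigma_\epsilon$. If the matching has bending error at most $\epsilon/R$, standard arguments show that the resulting surface lifts to a $(1+\epsilon)$-quasigeodesically embedded plane in $\mathbb{H}^3$, hence is $\pi_1$-injective and the immersion is $(1+\epsilon)$-quasigeodesic in the sense of the statement.

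First I would set up the counting of good pants. In the closed case one obtains the required number of pants with well-distributed feet from the exponential mixing of the frame flow on $\mathcal{F}rM$. In the cusped case this mixing is available only on the thick part $M^{\geq \eta}$. I would therefore fix a cusp thickness parameter $\eta=\eta(\epsilon,R)$ and restrict the construction to pants whose seams and feet lie in $M^{\geq \eta}$. Effective exponential mixing of the geodesic flow for finite-volume hyperbolic manifolds (Kleinbock--Margulis type estimates) provides the needed counting of geodesic segments connecting prescribed frames in $M^{\geq \eta}$, from which one produces approximate pants by a tripod construction.

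The main new issue — and the hardest step — is that geodesic segments joining two frames in the thick part may dip into the cusps. To fix this, I would introduce cuspidal detours: replace any segment that enters a horoball above a chosen depth by a concatenation of a short arc around the corresponding cusp with an arc that returns to the thick part. The crucial estimate is that these detours change the endpoint frame by an amount controlled by $\eta$, so after readjusting one still has good pants with feet in $M^{\geq \eta}$ and with geometric control of order $\epsilon/R$. A careful combinatorial bookkeeping shows that the pants thus obtained are still sufficient in number and approximately equidistributed in the measure on feet over $M^{\geq \eta}$.

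Once equidistribution of feet on the thick part is in place, the pants matching proceeds as in the closed case. For each candidate cuff, the sets of pants incident to it from the two sides are nearly equidistributed as measures on the unit normal bundle of the cuff; a Hall's marriage argument then produces a perfect matching whose shearing is $O(\epsilon/R)$. Gluing the pants along this matching yields a closed surface $\Sigma_\epsilon$ whose universal cover is a pleated plane in $\mathbb{H}^3$ with bending bounded by $\epsilon/R$ at each cuff, hence $(1+\epsilon)$-quasigeodesic and $\pi_1$-injective. I expect the main obstacle to be the cuspidal detour step together with the verification that, after restricting to pants avoiding deep horoballs, enough equidistribution remains to match every cuff; this is precisely the content of the technical core of \cite{kahn-wright2020}.
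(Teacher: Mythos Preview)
Your overall framework is correct and matches the paper's sketch: one builds $(\delta,R)$-good pants, uses mixing to equidistribute their feet, matches pants along cuffs via a Hall-type argument, and concludes $(1+\epsilon)$-quasigeodesicity from small bending. However, your mechanism for handling the cusps is genuinely different from what Kahn--Wright actually do, and this is the heart of the matter.

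You propose ``cuspidal detours'' at the level of individual geodesic segments: when a seam dips into a deep horoball, replace it by a concatenation that stays in the thick part. The paper's sketch, following \cite{kahn-wright2020}, does something structurally different. Rather than modifying segments inside a pair of pants, Kahn--Wright keep all good pants (including those whose cuffs have large height), but then \emph{remove} any pants whose height exceeds a cutoff $h_C$ and replace each removed pant by an \emph{umbrella}: a recursively assembled collection of \emph{$(\delta,R)$-good hamster wheels} (punctured spheres with two outer rims and $R$ inner cuffs). The umbrella is glued along the low cuff $\gamma$ of the removed pant so that its constant-turning normal field matches that of the pant, and its remaining boundary components all sit below a much smaller target height $h_T$. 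The matching bijection (their Theorem~5.2) is then between good pants together with a weighted set of hamster wheels, not just pants.

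The distinction matters. Your detour idea operates \emph{inside} a pair of pants and must preserve the good-pants conditions (complex cuff lengths within $2\delta$ of $2R$, orthogeodesic structure) after modification; it is not clear this can be done, and in any case it is not what \cite{kahn-wright2020} proves. Their approach instead introduces a second class of building blocks with its own well-matching rules (feet for inner cuffs, constant-turning normal fields for rims), and the combinatorics of the Umbrella Theorem controls how many hamster wheels are needed and where their boundaries land. If you want your sketch to faithfully reflect the cited proof, you should replace the cuspidal-detour paragraph with the umbrella/hamster-wheel construction and the associated height parameters $h_C,h_T$.
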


When $M$ is a closed hyperbolic $3$-manifold, the result analogous was proved by Kahn and Markovic \cite{Kahn-Markovic_surface_subgroup}. See also \cite{Hamenstadt} and \cite{Kahn-Labourie-Mozes} for related results in more general compact settings. Below, we outline the construction following the framework and notation of \cite{kahn-wright2020}, and describe the properties of the resulting surface $\Sigma_\epsilon$.

\begin{proof}[Sketch of proof]

Let $\delta$ and $R$ be positive constants chosen so that $\delta\rightarrow 0$ and $R\rightarrow\infty$ as the given constant $\epsilon\to 0$. 
Define $\Gamma_{\delta, R}$ to be the space of \emph{$(\delta,R)$-good curves}, consisting of closed geodesics in $M$ whose complex translation lengths are within $2\delta$ of $2R$.

First consider the case where $M$ is compact. The essential surface $\Sigma_\epsilon$ is constructed from building blocks called \emph{$(\delta,R)$-good pair of pants} (Section 2.8 of \cite{kahn-wright2020}), where $R$ and $\delta$ together quantify the size and the twisting number of the pants. Each such pair of pants $P$ has three boundary geodesics in $M$, called \emph{cuffs}. 
Define $\Pi_{\delta,R}$, the space of $(\delta,R)$-good pants, as the set of the equivalence classes of maps $f:P\rightarrow M$ so that each cuff is homotopic to an element in $\Gamma_{\delta,R}$. We say that two representatives $f_1$ and $f_2$ are equivalent if there is an orientation-preserving homeomorphism $h$ on $P$, such that $f_1$ is homotopic to $f_2\circ h$. We still denote the equivalence class $[f(P)]$ (or a representative) as $P\in \Pi_{\delta,R}$. For each $\gamma\in\Gamma_{\delta,R}$, let $\Pi_{\delta,R}(\gamma)$ be the set of good pants with a cuff homotopic to $\gamma$. Based on orientation, $\Pi_{\delta,R}(\gamma)$ decomposes into $\Pi_{\delta,R}^+(\gamma)$ and $\Pi_{\delta,R}^-(\gamma)$, where $\Pi_{\delta,R}^+(\gamma)$ consists of the oriented good pants with a cuff homotopic to $\gamma$, and $\Pi_{\delta,R}^-(\gamma)$ represents those homotopic to the reversal $\gamma^{-1}$. 
The norm squared of the second fundamental form of each such $P$ is uniformly bounded by a constant depending on $R$ and $\delta$, which can be made arbitrarily small with sufficiently large $R$ and small $\delta$ (this can be achieved by taking sufficiently small $\epsilon$).
Moreover, because of the exponential mixing property of the geodesic flow, the pants with a common cuff are equidistributed about the cuff. 

However, when the manifold $M$ is not compact, exponential mixing does not guarantee equidistribution in regions with small injectivity radius (i.e., near cusps). To overcome this, Kahn-Wright introduced the \emph{umbrellas} to replace pants that extend too far into the cusps.

More precisely, we can use the \emph{height} of the pants to measure the signed distance from the cuffs to the boundary of the disjoint horoballs used to model the cusps. The height of $\gamma\in \Gamma_{\delta,R}$ is the maximum height of points in $\gamma$, and the height of $P\in\Pi_{\delta,R}$ is the maximum height of its cuffs (page 516 of \cite{kahn-wright2020}). If the height is below a cutoff number $h_C$ which is a large multiple of $\ln R$, then the pants are sufficiently well-distributed around each cuff, allowing us to attach them suitably along common cuffs. If the height of $P$ is above $h_C$, while one of the cuffs $\gamma$ has height no greater than $h_C$, we need to build an umbrella $U(P,\gamma)$ along $\gamma$ to replace $P$. The other boundaries of $U(P,\gamma)$ are below a target height $h_T$, a much smaller multiple of $\ln R$. Note that the choices of $h_C$ and $h_T$ depend on $\epsilon$, and both go to infinity as $\epsilon$ approaches zero.  
An umbrella is a collection of \emph{$(\delta,R)$-good hamster wheels} $\{H_1,\cdots,H_m\}$ (Section 2.9 of \cite{kahn-wright2020}), each of which is a punctured sphere with two outer boundary geodesics (called \emph{rims}) and $R$ inner boundary geodesics (called \emph{inner cuffs}). 
The parameters $R$ and $\delta$ together measure the size of the hamster wheel, and control the \emph{constant turning normal fields} on the rims, which are smooth unit normal fields with constant slope. 
These hamster wheels are glued recursively: One rim of $H_1$ is glued to $\gamma$ in such a way that its constant turning normal field closely matches that of $P$, and the other rim has height less than the target height $h_T$.
Each $H_{i+1}$ is then added by identifying one of its rims with an inner cuff of $H_i$ that still has height greater than $h_T$. This process continues, decreasing the height at each step, until all remaining inner cuffs have heights below the target $h_T$.
Both good pants and good hamster wheels are referred to as \emph{good components} (Section 2.10 of \cite{kahn-wright2020}).

Furthermore, Section 2.10 of \cite{kahn-wright2020} explains the way to concatenate good components. Specifically, on each pair of good pants (or each good hamster wheel), the shortest geodesic arc that connects two cuffs (or, in the case of a hamster wheel, two inner cuffs) is called an \emph{orthogeodesic}. For good pants, an orthogeodesic intersects a cuff $\gamma$ at an endpoint, this intersection point, together with its normal direction pointing along $\gamma$, determines a point in the unit normal bundle $N^1(\gamma)$, called a \emph{foot}. For hamster wheels, the \emph{feet} (or \emph{formal feet}) $\mathbf{f}_+=(f_+,n_+)$ and $\mathbf{f}_-=(f_-,n_-)$ in $N^1(\gamma)$ are defined as the unique points such that $f_+$ and $f_-$ lie close to the intersections of orthogeodesics with $\gamma$, and $\mathbf{f}_+-\mathbf{f}_-$ is equal to the half-length of $\gamma$ (i.e. the complex distance between the two orthogeodesics along $\gamma$).
Two $(\delta,R)$-good components $Q$ and $Q'$ are considered \emph{well-matched} along a common boundary geodesic $\gamma\in \Gamma_{\delta,R}$ if, as $R\rightarrow\infty$ and $\delta\rightarrow 0$, \begin{itemize}
    \item either the complex distance between their feet on $\gamma$ is close to $1+i\pi$ measured by $R$ and $\delta$ (if $Q$ and $Q'$ are pants or hamster wheels having $\gamma$ as an inner cuff),

    \item or the constant turning normal fields along $\gamma$ form a bend of at most a multiple of $\delta$ (if $Q$ or $Q'$ is a hamster wheel with a rim $\gamma$).
\end{itemize}

Let $A_{\delta,R}^+(\gamma)$ denote the union of $\Pi_{\delta,R}^+(\gamma)$ and a small set of weighted good hamster wheels with a boundary homotopic to $\gamma$, and similarly define $A_{\delta,R}^-(\gamma)$ using $\gamma^{-1}$. Consider the good curves in $\Gamma_{\delta,R}^{\leq h_C}\subset \Gamma_{\delta,R}$, which are those with height at most $h_C$. Theorem 5.2 of \cite{kahn-wright2020} gives a bijection \begin{equation}\label{equ_bijection_matching}
\sigma_\gamma: A_{\delta,R}^+(\gamma)\rightarrow A_{\delta,R}^-(\gamma)
\end{equation}
for every $\gamma\in \Gamma_{\delta,R}^{\leq h_C}$, ensuring that all the oriented good components are well-matched.

Next, consider the ``bad'' cuffs in $\Gamma_{\delta,R}^{> h_C}:=\Gamma_{\delta,R}\setminus \Gamma_{\delta,R}^{\leq h_C}$ whose heights exceed $h_C$.
For $\gamma\in \Gamma_{\delta,R}^{\leq h_C}$, denote by $\Pi_{\delta,R}^{> h_C}(\gamma)\subset \Pi_{\delta,R}(\gamma)$ the subset of $(\delta,R)$-good pants with at least one cuff in $\Gamma_{\delta,R}^{> h_C}$.
Using the Umbrella Theorem (Theorem 4.1 of \cite{kahn-wright2020}), we cut off $P\in \Pi_{\delta,R}^{> h_C}(\gamma)$ and glue in an umbrella $U(P,\gamma)$ along $\gamma$ as explained earlier. \eqref{equ_bijection_matching} implies that all the remaining good components are still well-matched, and the resulting surface is denoted as $\Sigma_\epsilon$. Additionally, by Theorem 2.2 of \cite{kahn-wright2020}, each connected component of $\Sigma_\epsilon$ admits a representative in its homotopy class which is essential and $(1+\epsilon)$-quasigeodesic.

\end{proof}

\subsubsection{Sequences that equidistribute.}\label{subsection_equidistribution}
The essential surface $\Sigma_\epsilon$ may contain a finite number of connected components, each component $\Sigma_\epsilon^i$ is corresponding to a surface subgroup in $S_\epsilon(M)$ and homotoped to a representative $S_\epsilon^i$ minimizing the area. Since the limit set of the surface subgroup is a $(1+\epsilon)$-quasicircle, by \cite{Seppi}, the norm squared of the second fundamental form of $S_\epsilon^i$ is controlled by $O(\epsilon)$. Then it follows from \cite{Uhlenbeck} that $S_\epsilon^i$ is the unique minimal surface in its homotopy class, and it is $(1+O(\epsilon))$-quasigeodesic. 

In this section, we analyze the measures on $\mathcal{F}rM$ (the frame bundle in $M$) induced by these minimal surfaces and discuss how to obtain a sequence that equidistributes.

First, we introduce some notations from \cite{Labourie}.
Let $\mathcal{F}(\mathbb{H}^3,\epsilon)$ be the space of conformal minimal immersions $\Phi:\mathbb{H}^2\rightarrow\mathbb{H}^3$,
such that $\Phi(\partial_\infty\mathbb{H}^2)$ is a $(1+\epsilon)$-quasicircle. 
Define
\begin{equation*}
    \mathcal{F}(M,\epsilon):=\mathcal{F}(\mathbb{H}^3,\epsilon)/\Gamma
\end{equation*} with the quotient topology, where $M = \mathbb{H}^3/\Gamma$. The space $\mathcal{F}(M,\epsilon)$ together with the action of $\PSLR$ by pre-composition \begin{equation}\label{psl2r}
    \mathcal{R}_\gamma:\mathcal{F}(M,\epsilon)\rightarrow\mathcal{F}(M,\epsilon),\quad \mathcal{R}_\gamma(\phi)=\phi\circ\gamma^{-1},\quad\forall\gamma\in \PSLR
\end{equation}
is called the \emph{conformal minimal lamination associated with $M$}.

A \emph{laminar measure} on $\mathcal{F}(M,\epsilon)$ stands for a probability measure which is invariant under the $\PSLR$-action defined as above. As the primary example, let $\Pi_i\in S_{\frac1i}(M)$ be a representation of a Fuchsian group $G_i<\PSLR$ in $\Gamma<\PSLC$. There exists $\phi_i\in\mathcal{F}\left(M,\frac{1}{i}\right)$ that is equivariant with respect to $\Pi_i$, and the unique minimal surface $S_i\in \Pi_i$ (provided that $i$ is sufficiently large) can be denoted by the image of $\phi_i(\mathbb{H}^2/G_i)$. The laminar measure associated with $\phi_i$ is defined as follows.  \begin{equation}\label{def_laminar_measure}
    \delta_{\phi_i}(f)=\frac{1}{\vol(U_i)}\int_{U_i} f(\phi_i\circ\gamma)d\nu_0(\gamma), \quad\forall f\in C^0\Big(\mathcal{F}\Big(M,\frac{1}{i}\Big)\Big),
\end{equation}
where $U_i$ is the fundamental domain of the closed hyperbolic surface $\mathbb{H}^2/G_i$, and $\nu_0$ denotes the bi-invariant measure on $\PSLR$. 

Since $M$ is non-compact the space of laminar measures on $\mathcal{F}(M,\epsilon)$ is not necessarily weakly compact. In light of that, we consider the canonical continuous map $\Omega$ from $\mathcal{F}(M,\epsilon)$ to the frame bundle $\mathcal{F}rM$, and focus on the push-forward measures on $\mathcal{F}rM$ via $\Omega_*$ instead.
When $M$ is compact, $\mathcal{F}rM$ is also compact and the space of probability measures on $\mathcal{F}rM$ is compact in weak-$*$ topology. 
For non-compact manifold $M$, more discussion is provided below.

Define \begin{equation*}
    \Omega: \mathcal{F}(M,\epsilon)\rightarrow \mathcal{F}rM,\quad \Omega(\phi)=\left(\phi(i), \{e_1(\phi), e_2(\phi),e_3(\phi)\}\right),
\end{equation*}
where $i\in\mathbb{H}^2$ while $e_1(\phi), e_2(\phi)$ denotes the image by $D_i\phi$ of the standard orthonormal basis $e_1, e_2$, and $e_3\in T_{\phi(i)}M$ is the unique unit vector so that $\{e_1(\phi),e_2(\phi),e_3(\phi)\}$ is an oriented orthonormal basis.

Consider the subspace $\mathcal{F}(\mathbb{H}^3,0)\subset \mathcal{F}(\mathbb{H}^3,\epsilon)$, the space of isometric immersions $\phi_0:\mathbb{H}^2\rightarrow \mathbb{H}^3$ whose images are totally geodesic disks in $\mathbb{H}^3$. Conversely, each parametrized totally geodesic disk is uniquely determined by $\phi(i)$, and tangent orthogonal unit vectors $e_1(\phi)$, $e_2(\phi)$. 
Let $\Omega_0:\mathcal{F}(M,0)\rightarrow \mathcal{F}rM$ be the restriction of $\Omega$ to $\mathcal{F}(M,0)$, it is therefore a bijection. Using \eqref{psl2r}, we can define the $\PSLR$-action on $\mathcal{F}rM$ as follows. \begin{equation}\label{psl2r_2}
     R_\gamma:\mathcal{F}rM\rightarrow \mathcal{F}rM,\quad R_\gamma=\Omega_0\circ\mathcal{R}_\gamma\circ \Omega_0^{-1}, \quad\forall\gamma\in \PSLR.
\end{equation}
This definition coincides with the homogeneous action of $\PSLR$ on $\mathcal{F}rM$.

The following equidistribution property extends the results for closed hyperbolic $3$-manifolds established in \cite{Labourie}, \cite{Lowe-Neves}, and \cite{AlAssal} (see also \cite{Kahn-Labourie-Mozes} for more general compact manifolds) to the case of finite volume. In the following proposition, $o_i(1)\to 0$ as $i\to\infty$.

\begin{Prop}\label{prop_measure_convergence}
For $i\in\mathbb{N}$, there exists a map $\phi_i\in \mathcal{F}(M,o_i(1))$, equivariant with respect to a surface subgroup $\Pi_i\in S_{o_i(1)}(M)$, such that after passing to a subsequence, $\Omega_*\delta_{\phi_i}$ converges vaguely to the Lebesgue measure $\mu_{Leb}$ on $\mathcal{F}rM$ as $i\rightarrow\infty$ (i.e. the convergence holds in the weak-* topology on measures with respect to continuous test functions with compact supports).
In other words, we have $\Pi_i\in S_{o_i(1),\mu_{Leb}}(M)$.
\end{Prop}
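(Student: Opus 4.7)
The plan is to extend the equidistribution framework developed in the closed case by Labourie, Al-Assal, Lowe-Neves and Kahn-Labourie-Mozes to the finite-volume setting. The main novelties are the use of the Kahn-Wright construction in place of Kahn-Markovic, and the need to control the mass of the surfaces inside the cusps via the umbrella structure.

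First, for a sequence $\epsilon_i\to 0$ I apply Theorem \ref{thm_Kahn-Wright} to obtain an essential $(1+\epsilon_i)$-quasigeodesic surface $\Sigma_{\epsilon_i}$ built from good pants and umbrellas, and pass to a connected component to get a surface subgroup $\Pi_i\in S_{o_i(1)}(M)$. By the argument recalled at the start of Subsection \ref{subsection_equidistribution} (Seppi's second-fundamental-form estimate combined with Uhlenbeck's uniqueness), the area-minimizing representative in this class is unique and parametrized by a $\Pi_i$-equivariant conformal minimal immersion $\phi_i\in\mathcal{F}(M,o_i(1))$ which is $C^2$-close on any compact subset of $\mathbb{H}^2$ to an isometric totally geodesic immersion. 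Via the bijection $\Omega_0$ and definition \eqref{psl2r_2}, this closeness implies the approximate invariance
\[
\bigl|(R_\gamma)_*(\Omega_*\delta_{\phi_i})(f) - (\Omega_*\delta_{\phi_i})(f)\bigr| \to 0 \quad \text{as } i \to \infty,
\]
for every $f\in C_c^0(\mathcal{F}rM)$ and every $\gamma\in\PSLR$. Extracting via a diagonal argument a vaguely convergent subsequence $\Omega_*\delta_{\phi_i}\to\mu_\infty$, the limit is then a $\PSLR$-invariant Radon measure on $\mathcal{F}rM=\PSLC/\Gamma$.

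By Mozes-Shah's measure classification on the finite-volume homogeneous space, $\mu_\infty$ is a non-negative combination of $c\,\mu_{Leb}$ with $c\in[0,1]$ and Haar measures on closed $\PSLR$-orbits, which correspond to closed immersed totally geodesic surfaces in $M$. The orbital atomic components are ruled out by good-pants equidistribution: exponential mixing of the geodesic flow on the finite-volume $\mathcal{F}rM$ implies that the feet of the good pants in the Kahn-Wright assembly distribute throughout the thick part, so the fraction of the area of $\phi_i(\mathbb{H}^2)/G_i$ contained in any fixed tubular neighborhood of a closed totally geodesic surface is $o_i(1)$. The equality $c=1$ follows from the umbrella construction: the cutoff heights $h_C$ and $h_T$ of \cite{kahn-wright2020} control the depth at which $\Sigma_{\epsilon_i}$ enters a cusp, and a direct area estimate on the good hamster wheels combined with the bound on the number of wheels needed to descend from $h_C$ to $h_T$ shows that the proportion of area lying above any fixed cusp height $T$ tends to zero as $i\to\infty$. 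This gives $\mu_\infty=\mu_{Leb}$, hence $\rho(\Omega_*\delta_{\phi_i},\mu_{Leb})=o_i(1)$ and $\Pi_i\in S_{o_i(1),\mu_{Leb}}(M)$ for large $i$.

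The crucial new difficulty relative to the closed case is this cusp mass control. In the closed setting $\mathcal{F}rM$ is compact and probability measures are automatically tight, so the only issue is the $\PSLR$-invariance of the limit. Here vague limits of probability measures may be strict sub-probability measures; extracting from Section 4 of \cite{kahn-wright2020} a quantitative uniform-in-$i$ bound on the area of $\phi_i(\mathbb{H}^2)/G_i$ in the deep cusps, independent of the chosen component, is the most delicate step of the argument.
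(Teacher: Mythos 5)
There is a genuine gap in the component-selection step, and it is the key difficulty the paper has to address. You apply the Kahn--Wright construction to obtain a (possibly disconnected) assembled surface $\Sigma_{\epsilon_i}$, then write ``pass to a connected component to get a surface subgroup $\Pi_i$'' and later justify ruling out the orbital pieces of $\mu_\infty$ by appealing to ``exponential mixing $\Rightarrow$ feet of good pants in the Kahn--Wright assembly distribute throughout the thick part, so the fraction of the area of $\phi_i(\mathbb{H}^2)/G_i$ near a fixed closed totally geodesic surface is $o_i(1)$.'' The equidistribution statement you cite is a statement about the \emph{total} assembly $\Sigma_{\epsilon_i}$ (or rather about the averaged laminar measure over all components), not about an arbitrarily chosen component. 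A priori one connected component could consist almost entirely of pants attached along cuffs that cluster near a closed immersed totally geodesic surface, so the vague limit of $\Omega_*\delta_{\phi_i}$ for that component could pick up an orbital ergodic piece. The paper handles exactly this issue: it first proves (Lemma~\ref{lemma_convergence_measure_sum}) that the convex-combination measure $\Omega_*\mu_{S_j} = \sum_k \theta_j^k\,\Omega_*\delta_{\phi_j^k}$ converges vaguely to $\mu_{Leb}$, then constructs a decreasing exhaustion of open sets $B_k$ covering $\cup_{T\in\mathcal{T}}\mathcal{F}rT$ with $\mu_{Leb}(B_k)<2^{-2k-1}$, shows $\Omega_*\mu_{S_j}(B_k)<2^{-2k}$ for large $j$, and then invokes the pigeonhole argument of \cite[Lemma 6.2]{Lowe-Neves} to extract a \emph{particular} component $\phi_i$ whose laminar measure satisfies $\Omega_*(\delta_{\phi_i})(B_k)<2^{-k}$ for all $k$. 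Only after that selection does Ratner's classification together with the $B_k$-bound rule out the orbital piece. Your proposal skips this selection entirely, and without it the ruling-out step does not go through.

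A secondary point: your statement that the area-minimizing immersion is ``$C^2$-close on compact subsets to a totally geodesic immersion'' gives local closeness but does not by itself transfer the equidistribution of the pleated/pants surfaces to the minimal surfaces; the paper needs the convex-hull and Hausdorff-distance arguments of \cite[Theorems 2.2--2.3]{AlAssal} to show $\mu_{\Sigma_j}$ and $\mu_{S_j}$ have the same vague limit. And the cusp-mass control that forces $c=1$ (your step showing no escape to infinity) again relies on the umbrella/hamster-wheel counting applied to the \emph{chosen} component; after the $B_k$-selection the paper obtains this from $\PSLR$-invariance of $\nu$ together with the fact that $\mu_{\mathcal{T}}$ puts all its mass in the thick part, rather than from a per-component area estimate in the cusps, which would be hard to obtain directly.
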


\begin{proof}
As argued in Section \ref{section_upper}, the surface constructed in the previous theorem has a pleated representative. Each good pair of pants admits a pleated structure composed of two ideal triangles (see \cite[8.10]{thurston} or \cite[Lemma 3.6]{Calegari_scl}). The result arises from the equidistribution of the barycenters of these ideal triangles.

To see this, we begin by analyzing the feet of good pants. Let $\{R_j\}$ be a sequence, to be specified later, such that $R_j\rightarrow\infty$ as $j\rightarrow\infty$. The cutoff height $h_{C_j}$ defined in the previous theorem tends to infinity as $j\to\infty$. Since we only consider good pants (regardless of whether they are removed) that have at least one cuff with height bounded by $h_{C_j}$. We denote by $\Pi_{\frac 1j,R_j}^0$ the set of such pants, that is, \begin{equation*}
    \Pi_{\frac 1j,R_j}^0=\bigcup_{h(\gamma)\leq h_{C_j}}\Pi_{\frac1j,R_j}(\gamma)\subset \Pi_{\frac1j,R_j}.
\end{equation*} 
Each foot of pants corresponds to a point in $\mathcal{F}rM$. In the following discussion, we will refer to this point simply as a \emph{foot}. By Theorem 3.3 of \cite{kahn-wright2020}, for each good curve $\gamma\in\Gamma_{\frac1j,R_j}^{\leq h_{C_j}}$ with height of $\gamma$ satisfying $h(\gamma)\leq h_{C_j}$, the feet of all pants in $\Pi_{\frac1j,R_j}(\gamma)$ (containing those in $\Pi_{\frac1j,R_j}^{>h_{C_j}}(\gamma)$ whose other cuffs have large heights) become equidistributed along $\gamma$ as $j\to\infty$. 
Moreover, as argued in Section 5.3 of \cite{AlAssal}, these good curves in $\Gamma_{\frac1j,R_j}^{\leq h_{C_j}}$ are asymptotically almost surely well-distributed in the unit tangent bundle of the compact set of $M$ bounded in height by $h_{C_j}$. This produces a weighted uniform probability measure $f_j$ on $\mathcal{F}rM$, supported on the feet of pants in $\Pi_{\frac1j,R_j}^0$.
After passing to a subsequence, $f_j$ converges to $\mu_{Leb}$ on compact sets with height bounded by $h_{C_j}$. Since $h_{C_j}\to\infty$, any compactly supported continuous test function $\mathbf{g}$ will eventually be supported entirely within the region of height less than $h_{C_j}$ for sufficiently large $j$. Therefore, in a subsequence we have $f_j(\mathbf{g})\to\mu_{Leb}(\mathbf{g})$ as $j\to\infty$. This shows that the convergence is vague. 

Next, we evaluate the number of feet removed from the support of $f_j$, which is equivalent to counting the number of removed good pants or, equivalently, the number of added umbrellas. Let $\mathcal{U}_j$ be the collection of all umbrellas added to the surface $\Sigma_j$. For each good curve $\gamma$ with height $h(\gamma)\leq h_{C_j}$, the number of pants with height at least $h_{C_j}$ and with $\gamma$ as a cuff satisfies the following bound (\cite{kahn-wright2020}, Theorem 5.9):
\begin{equation}\label{equ_pants_large_height}
    \#\Pi^{>h_{C_j}}_{\frac1j,R_j}(\gamma)\leq c_jR_je^{-2\left(h_{C_j}-\max(h(\gamma),0)\right)}\frac{\#\Pi_{\frac1j,R_j}^0}{\#\Gamma_{\frac1j,R_j}^{\leq h_{C_j}}},
\end{equation}
where the constant $c_j$ is independent of $R_j$. When $h\geq 0$, $\#\Gamma^{> h}_{\frac1j,R_j}$ is at most $c_j'R_je^{-2h}\#\Gamma_{\frac1j,R_j}$ (\cite{kahn-wright2020}, Theorem 3.1), where $c_j'$ is independent of $R_j$. We can choose $R_j$ so that $c_j'R_j\leq\frac 12R_j^2$ for sufficiently large $j$. Since $h_{C_j}> \ln R_j$, we get $\#\Gamma^{>{h_{C_j}}}_{\frac1j,R_j}< \frac12\#\Gamma_{\frac1j,R_j}$, hence $\#\Gamma^{> h}_{\frac1j,R_j}< 2c_j'R_je^{-2h}\#\Gamma_{\frac1j,R_j}^{\leq h_{C_j}}$. Summing over all good curves $\gamma$ with heights $h(\gamma)\in [h,h+1)$ yields \begin{equation*}
   \sum_{h(\gamma)\in [h,h+1)} \#\Pi^{>h_{ C_j}}_{\frac1j,R_j}(\gamma)< 2c_jc_j'R_j^2e^{-2h_{C_j}+2}\#\Pi_{\frac1j,R_j}^0.
\end{equation*}
Summing over $h\in [0,h_{C_j})$ and using \eqref{equ_pants_large_height} to estimate the number of umbrellas glued along cuffs $\gamma$ with $h(\gamma)<0$, we obtain 
\begin{align*}
    \#\mathcal{U}_j=& \sum_{h(\gamma)\in [0,h_{C_j}]} \#\Pi^{>h_{ C_j}}_{\frac1j,R_j}(\gamma)+\sum_{h(\gamma)<0} \#\Pi^{>h_{ C_j}}_{\frac1j,R_j}(\gamma)\\
    <& 2c_jc_j'\ceil*{h_{C_j}}R_j^2e^{-2h_{C_j}+2}\#\Pi_{\frac1j,R_j}^0+c_jR_je^{-2h_{C_j}}\#\Pi_{\frac1j,R_j}^0.
\end{align*}
Following the choice of parameters in the proof of the main theorem of \cite{kahn-wright2020}, we set
\begin{equation}\label{equ_hcht}
    h_{C_j}=50\ln R_j,\quad h_{T_j}=6\ln R_j,
\end{equation}
which implies that for sufficient large $j$, the number of umbrellas satisfies
\begin{equation}\label{equ_number_umbrella}
    \#\mathcal{U}_j=\#\left(\Pi_{\frac1j,R_j}^0\setminus\Pi_{\frac1j,R_j}^{\leq h_{C_j}}\right)<c_j''R_j^3e^{-2h_{C_j}}\#\Pi_{\frac1j,R_j}^0=c_j''R_j^{-97}\#\Pi_{\frac1j,R_j}^0.
\end{equation}
Since the constant $c_j''$ depends only on the first index $\frac1j$ and not on $R_j$, we can choose appropriate $R_j$ such that $c_j''R_j^{-97}\to 0$ as $j\to\infty$.

Consequently, as $j\to\infty$, the measure of the feet removed from the support of $f_j$ tends to zero in the limit. We can then modify $f_j$ to a new measure $f_j'$ supported only on feet of pants in $\Pi_{\frac1j,R_j}^{\leq h_{C_j}}\subset \Pi_{\frac1j,R_j}^0$, and we have $f_j'\to\mu_{Leb}$ vaguely.

Furthermore, for each pair of pants $P\in \Pi_{\frac1j,R_j}^{\leq h_{C_j}}$, consider a geodesic triangle $\tau$ with vertices lying on the three cuffs. By spinning the vertices repeatedly around the cuffs and letting the edges of $\tau$ accumulate on them, the Hausdorff limit of $\partial\tau$ becomes a geodesic lamination consisting of three infinite leaves spiraling around the cuffs. The complement of this lamination consists of two ideal triangles. For each ideal triangle $\Tilde{\Delta}\subset \mathbb{H}^3$ with vertices $x_1,x_2,x_3\in \partial_\infty\mathbb{H}^3$, the horocycle based at $x_i$ intersects the opposite side of $\Tilde{\Delta}$ tangentially at a point called the \emph{midpoint}. The geodesic rays drawn from each midpoint toward the opposite vertex intersect at the \emph{barycenter} of the triangle, denoted by $b(\Tilde{\Delta})$.
Let $\{e_1(\Tilde{\Delta}),e_2(\Tilde{\Delta}),e_3(\Tilde{\Delta})\}$ be a positive frame at $b(\Tilde{\Delta})$ so that $e_1$ points away from a side of $\Tilde{\Delta}$.
Then each ideal triangle thus determines three possible \emph{framed barycenters} $\mathbf{b}(\Tilde{\Delta})$ in $\mathcal{F}r\mathbb{H}^3$, the frame bundle of $\mathbb{H}^3$. The \emph{framed barycenters} of each ideal triangle $\Delta\subset P$ in $M$ is then defined as the projection of the elements of $\mathbf{b}(\Tilde{\Delta})$ to $\mathcal{F}rM$, denoted by $\mathbf{b}(\Delta)$.

According to Sections 5.4-5.5 of \cite{AlAssal}, there is a right action $R_{\alpha_j}$ on $\mathcal{F}rM$ by an element \begin{equation*}\label{equ_a_j}
   \alpha_j=a_{\frac{R_j}{2}}ra_{\ln\sqrt{3}} \in \PSLR,
\end{equation*} 
where $a_t=\text{diag}(e^{\frac t2},e^{-\frac t2})$, and $r\in SO(2)$ denotes the right-angle rotation that sends the first basis vector to the second. For each $P\in\Pi_{\frac1j,R_j}^{\leq h_{C_j}}$, the map $R_{\alpha_j}$ moves each foot along a cuff $\gamma$ by a distance of $\frac{R_j}{2}$, followed by a right-angle rotation approximately aligned with the inward normal, and then proceeds along a geodesic arc orthogonal to $\gamma$ in $P$, of length $\ln\sqrt{3}$. Then $R_{\alpha_j}$ maps the feet of $P$ into an $O(\frac1j)$-neighborhood of the barycenters of its ideal triangles.
Therefore, the measure $(R_{\alpha_j})_*f_j'$ can be approximated by a weighted uniform probability measure $\beta_j$, supported on the barycenters of pants in $\Pi_{\frac1j,R_j}^{\leq h_{C_j}}$. 
Let $\textbf{g}$ be an arbitrary compactly supported continuous function on $\mathcal{F}rM$, for sufficiently large $j$, $\textbf{g}$ has support in $\mathcal{F}r(M(h_{C_j}))$, the set of frames on the compact set of $M$ with height bounded by $h_{C_j}$. In particular we have

\begin{equation*}
\left|\int_{\mathcal{F}rM}\textbf{g} \,\mu_{Leb}-\int_{\mathcal{F}r(M(h_{C_j}))} \textbf{g}\circ R_{\alpha_j}\mu_{Leb}\right|
\leq \Vert \textbf{g}\Vert_{C^0(\mathcal{F}rM)}\mu_{Leb}\left(\mathcal{F}rM\setminus \mathcal{F}r(M(h_{C_j}))\right),
\end{equation*}
which tends to zero as $j\to\infty$. Hence we get that $(R_{\alpha_j})_*\mu_{Leb}|_{\mathcal{F}r(M(h_{C_j}))}$ converges vaguely to $\mu_{Leb}$ as $j\to\infty$.
As a result, after passing to subsequence, we have $\beta_j\to\mu_{Leb}$ vaguely.

Let $b:\mathcal{F}rM\to M$ be the canonical basepoint projection.
As the arguments in Theorem 5.7 of \cite{Labourie} or Theorem 6.1 of \cite{AlAssal} are done over the considered ideal triangles, we can apply them to see that the vague convergence of $\beta_j$ to $\mu_{Leb}$ implies that for any compactly supported continuous function $g$ on $M$, letting $\textbf{g}=g\circ b$, we have
\begin{equation}\label{equ_int_pants}
    \lim_{j\rightarrow\infty}\frac{1}{\pi\# \lbrace\Delta \in\Pi_{\frac1j,R_j}^{\leq h_{C_j}}\rbrace}\sum_{\Delta\in \Pi_{\frac1j,R_j}^{\leq h_{C_j}}}\int_\Delta g\,dA_{h_0}=\int_{\mathcal{F}rM}\textbf{g}\,d\mu_{Leb},
\end{equation}
where $\Delta\in \Pi_{\frac1j,R_j}^{\leq h_{C_j}}$ denotes the ideal triangles obtained from good pants in $\Pi_{\frac1j,R_j}^{\leq h_{C_j}}$. 

Furthermore, for each umbrella $U_\gamma\in\mathcal{U}_j$ glued along $\gamma\in\Gamma_{\frac1j,R_j}^{\leq h_{C_j}}$, let $\mathcal{H}_j(U_\gamma)$ be the set of good hamster wheels contained in $U_\gamma$. According to Theorem 4.3 of \cite{kahn-wright2020}, their number satisfies 
\begin{equation}\label{equ_HinU}
    \#\mathcal{H}_j(U_\gamma)\leq R_je^{(1+\frac1j)\max(0,h(\gamma)-h_{T_j})}.
\end{equation}
Let $\mathcal{H}_j$ be the set of all good hamster wheels. Combining \eqref{equ_number_umbrella} and \eqref{equ_HinU}, we obtain
\begin{equation}\label{equ_Hnumber}
    \#\mathcal{H}_j<c_j''R_j^4e^{-2h_{C_j}+(1+\frac1j)(h_{C_j}-h_{T_j})}\#\Pi_{\frac1j,R_j}^0<2c_j''R_j^4e^{-2h_{C_j}+(1+\frac1j)(h_{C_j}-h_{T_j})}\#\Pi_{\frac1j,R_j}^{\leq h_{C_j}}.
\end{equation}

By Theorem \ref{thm_Kahn-Wright}, the good components, which contain good pants in $\Pi^{\leq h_{C_j}}_{\frac1j,R_j}$ and good hamster wheels in $\mathcal{H}_j$, can be glued together to form a collection of closed, connected, essential surface representations $\Sigma_j=(\Sigma_j^1,\cdots,\Sigma_j^{m_j})$ in $M$. The well-matching property \eqref{equ_bijection_matching} ensures that each pair of pants $P$ with a cuff $\gamma$ appears exactly once in the gluing, meaning that they all carry equal weights, denoted by $w_P$. The average weight $w_H$ of each hamster wheel $H$ given to each rim or inner cuff $\gamma$ is at most $c_j'''\dfrac{R^{14}e^{2h_{T_j}}}{\#\Gamma_{\frac1j,R_j}^{\leq h_{C_j}}}$ (\cite{kahn-wright2020}, Theorem 4.13), where $c_j'''$ is independent of $R_j$. Thus, $$w_H\leq c_j'''\frac{R^{14}e^{2h_{T_j}}}{\#\Gamma_{\frac1j,R_j}^{\leq h_{C_j}}}\frac{\#\Gamma_{\frac1j,R_j}^{\leq h_{C_j}}}{3\#\Pi_{\frac1j,R_j}^{\leq h_{C_j}}}w_P=c_j'''\frac{R^{14}e^{2h_{T_j}}}{3\#\Pi_{\frac1j,R_j}^{\leq h_{C_j}}}w_P.$$ 
For large $j$, we have $\frac1j<\frac12$, then by \eqref{equ_hcht} and \eqref{equ_Hnumber}, 
\begin{equation*}
    \frac{w_H}{w_P} \#\mathcal{H}_j< \frac{2}{3}c_j''c_j'''R_j^{18}e^{-(1-\frac1j)(h_{C_j}-h_{T_j})}<\frac{2}{3}c_j''c_j'''R_j^{-4}.
\end{equation*}
By adjusting $R_j$ if necessary, we can ensure that $c_j''c_j'''R_j^{-4}\to 0$ as $j\to\infty$.

Note that $\Sigma_j$ is totally geodesic except along the pleating locus, and the totally geodesic part occupies full measure in $\Sigma_j$. Therefore, for sufficiently large $j$, we have $\area_{h_0}(P)\approx -2\pi\chi(P)=2\pi$, $\area_{h_0}(H)\approx -2\pi\chi(H)=2\pi R_j$, and $\area_{h_0}(\Sigma_j)\approx -2\pi\chi(\Sigma_j)$.
Therefore, in \eqref{equ_int_pants} assume $g\geq 0$,
\begin{align*}
&\lim_{j\to\infty}\frac{1}{-2\pi\chi(\Sigma_j)}\int_{\Sigma_j} g\,dA_{h_0}\\
=& \lim_{j\rightarrow\infty}\frac{1}{2\pi\#\Pi_{\frac1j,R_j}^{\leq h_{C_j}}+2\pi R_j\frac{w_H}{w_P}\#\mathcal{H}_j}\left(\sum_{P\in \Pi_{\frac1j,R_j}^{\leq h_{C_j}}}\int_P g\,dA_{h_0}+\sum_{H\in\mathcal{H}_j}\frac{w_H}{w_P}\int_H g\,dA_{h_0}\right)\\
=& \lim_{j\rightarrow\infty}\frac{1}{2\pi\#\Pi_{\frac1j,R_j}^{\leq h_{C_j}}}\left(\sum_{P\in \Pi_{\frac1j,R_j}^{\leq h_{C_j}}}\int_P g\,dA_{h_0}+\sum_{H\in\mathcal{H}_j}\frac{w_H}{w_P}\int_H g\,dA_{h_0}\right)\\
\geq&
    \lim_{j\rightarrow\infty}\frac{1}{2\pi\#\Pi_{\frac1j,R_j}^{\leq h_{C_j}}}\sum_{P\in \Pi_{\frac1j,R_j}^{\leq h_{C_j}}}\int_P g\,dA_{h_0}=\int_{\mathcal{F}rM}\mathbf{g}\,d\mu_{Leb}.
\end{align*}
It indicates that, after passing to a subsequence, the Radon measures $\mu_{\Sigma_j}$ on $\mathcal{F}rM$ obtained by averaging integrals over $\Sigma_j$, converge vaguely to $\mu_{Leb}$.

The limit set of $\Pi_j^k:=\pi_1(\Sigma_j^k)$ is a $(1+o_j(1))$-quasicircle. For sufficiently large $j$, as discussed at the beginning of this subsection, there exists a unique minimal surface homotopic to $\Sigma_j^k$, which we denote by $S_j^k$. Let $S_j=S_j^1\cup\cdots\cup S_j^{m_j}$, and let $\mu_{S_i}$ be the Radon measures on $\mathcal{F}rM$ obtained by averaging integrals over $S_j$. Following \cite[Section 2]{AlAssal}, we can show that, after passing to a subsequence, both $\mu_{\Sigma_j}$ and $\mu_{S_j}$ converge to the same vague limit. 

To see this, consider the top boundary component $\partial_+C_j^k$ of the convex hull of $\Pi_j^k$, and let $C_j^k$ be the pleated surface representative of $\Pi_j^k$ such that its lift $\Tilde{C}_j^k\subset\mathbb{H}^3$ lies in $\partial_+C_j^k$. Let $\mu_{C_j}$ be the weighted measure associated with $C_j^1\cup\cdots\cup C_j^{m_j}$. 
On the one hand, if we flow the lift $\Tilde{\Sigma}_j^k$ of $\Sigma_j^k$ normally in $\mathbb{H}^3$ until it reaches $\partial_+C_j^k$, then---using the fact that this map has uniformly small derivatives on most of its domain---\cite[Theorem 2.2]{AlAssal} proves that, as $j\to \infty$ (after taking a subsequence), $\mu_{\Sigma_j}$ and $\mu_{C_j}$ have the same vague limit. On the other hand, the lift $\Tilde{S}_j^k$ of the minimal surface $S_j^k$ lies inside the convex hull of $\Pi_j^k$. By \cite[Lemma 3]{Jiang}, the convex hulls converge in Hausdorff distance in $\overline{\mathbb{H}^3}$ as $j\to\infty$, and the limit is contained in a totally geodesic disk, by taking lifts intersecting a compact fundamental domain of a thick region. In particular, the Hausdorff distance between $\Tilde{S}_j^k$ and $\Tilde{C}_j^k$ tends to zero. Then, by \cite[Theorem 2.3]{AlAssal}, it follows that $\mu_{S_j}$ also has the same vague limit as $\mu_{C_j}$. 

We conclude that a subsequence of $\mu_{S_j}$ converges vaguely to $\mu_{Leb}$. Moreover, recall that each $S_j^k$ is obtained by a map $\phi_j^k\in\mathcal{F}(M,o_j(1))$ and is associated with the laminar measure $\delta_{\phi_j^k}$ as defined in \eqref{def_laminar_measure}. This proves the following lemma.

\begin{Lemma}\label{lemma_convergence_measure_sum}
For any $j\in\mathbb{N}$, there exist a finite sequence $\phi_j^1,\cdots,\phi_j^{m_j}$ in $\mathcal{F}(M,o_j(1))$, and $\theta_j^1,\cdots,\theta_j^{m_j}\in (0,1)$ with $\theta_j^1+\cdots +\theta_j^{m_j}=1$, such that each $\phi_j^k$ is equivariant with respect to a surface subgroup in $\Gamma$. Moreover, the laminar measure \begin{equation*}
    \mu_{S_j}=\sum_{k=1}^{m_j}\theta_j^k\delta_{\phi_j^k}
\end{equation*} 
satisfies that, after passing to a subsequence, $\Omega_*\mu_{S_j}$ converges vaguely to $\mu_{Leb}$ on $\mathcal{F}rM$ as $j\rightarrow\infty$. 
\end{Lemma}

Next, in order to find a connected component $S_j^k$ such that the associated laminar measure $\delta_{\phi_j^k}$ converges vaguely to $\mu_{Leb}$, we need the following lemma. 
\begin{Lemma}\label{lemma_weak_compactness}
Let $\phi_i\in\mathcal{F}(M,\epsilon_i)$, where $\epsilon_i\rightarrow 0$ as $i\rightarrow\infty$. Then after passing to a subsequence, $\Omega_*\delta_{\phi_i}$ converges vaguely to a probability measure $\nu$ on $\mathcal{F}rM$, and $\nu$ is invariant under the homogeneous action of $\PSLR$.
\end{Lemma}

\begin{proof}
   Consider the space of continuous functions on $\mathcal{F}rM$ vanishing at infinity, denoted by $C_0(\mathcal{F}rM)$. The dual space $C_0(\mathcal{F}rM)^*$ is isometrically isomorphic to the space of finite Radon measures $\mathcal{M}(\mathcal{F}rM)$.
    Based on Urysohn's metrization theorem, the one-point compactification of $\mathcal{F}rM$, denoted by $\mathcal{F}rM^*$, is a compact metrizable Hausdorff space, then the space of continuous functions $C(\mathcal{F}rM^*)$ is separable.
    Since $C_0(\mathcal{F}rM)$ is a subspace of the metric space $C(\mathcal{F}rM^*)$, it is also separable. Therefore, due to the sequential Banach-Alaoglu theorem, the closed unit ball in $C_0(\mathcal{F}rM)^*$ or $\mathcal{M}(\mathcal{F}rM)$ is sequentially compact in the weak-$*$ topology. 
As a result, after passing to a subsequence, the probability measure $\Omega_*\delta_{\phi_i}$ converges vaguely to a probability measure $\nu$ on $\mathcal{F}rM$. 

Furthermore, we show that the limit $\nu$ is invariant under the homogeneous action of $\PSLR$. 
Consider the projection of $\mathcal{F}(M,\epsilon)$ onto $\mathcal{F}(M,0)$ given by $P:=\Omega_0^{-1}\circ \Omega$. For any $f\in C_0(\mathcal{F}rM)$, let $\eta:=f\circ \Omega\circ \mathcal{R}_\tau$, where $\mathcal{R}_\tau$ is defined in \eqref{psl2r}. We also have $f\circ R_\tau\circ \Omega=\eta\circ P$, where $R_\tau$ is defined in \eqref{psl2r_2}.
As proved in Lemma 3.2 of \cite{Lowe-Neves}, it suffices to show that
\begin{equation*}
    \lim_{i\rightarrow\infty} |\delta_{\phi_i}(\eta\circ P)-\delta_{\phi_i}(\eta)|=0.
\end{equation*}
Therefore, by the definition of $\delta_{\phi_i}$ in \eqref{def_laminar_measure}, we only need to check that for any $\gamma\in \PSLR$, we have \begin{equation*}
    \lim_{i\rightarrow\infty}|\eta\circ P(\phi_i\circ \gamma)-\eta(\phi_i\circ\gamma)|=0.
\end{equation*}
If the equation does not hold, then there exist $\alpha>0$ and $\gamma\in \PSLR$, such that $|\eta\circ P(\phi_i\circ \gamma)-\eta(\phi_i\circ \gamma)|\geq \alpha$ for an infinite subsequence of $i$. Let $\overline{\phi_i}$ be a lift of $\phi_i\circ \gamma$ in $\mathcal{F}(\mathbb{H}^3,\frac{1}{i})$. After passing to a subsequence, $\im(\overline{\phi_i})$ converges smoothly on compact sets to a totally geodesic disk of $\mathbb{H}^3$ as $i\rightarrow\infty$. Consequently, after rearrangement, all $\overline{\phi_i}(i)$ are contained in a compact subset of $\mathbb{H}^3$. 
Note that the evaluation map, which sends $\overline{\phi_i}$ to $\overline{\phi_i}(i) \in \mathbb{H}^3$, is proper (see \cite{Labourie}, Theorem 5.2). Therefore, after passing to a subsequence, $\phi_i\circ \gamma$ converges to some $\phi_\infty\in \mathcal{F}(M,0)$ with $|\eta\circ P(\phi_\infty)-\eta(\phi_\infty)|\geq \alpha>0$. However, it violates the fact that $P(\phi_\infty)=\phi_\infty$. 
\end{proof}

We now proceed with the proof of Proposition \ref{prop_measure_convergence}. 
Let $\mathcal{T}$ be the set of finite-volume totally geodesic surfaces in $M$, it contains at most countably many candidates.
We can find a decreasing sequence of open subsets $\{B_k\}\subset \mathcal{F}rM$, so that for any $k\in\mathbb{N}$, $B_k$ covers $\cup_{T\in\mathcal{T}}\mathcal{F}rT$, and it satisfies $\mu_{Leb}(B_k)<2^{-2k-1}$ and $\mu_{Leb}(\partial B_k)=0$. 
In consequence of Lemma \ref{lemma_convergence_measure_sum}, we have $\Omega_*\mu_{S_j}(B_k)<2^{-2k}$ for sufficiently large $j$ in a subsequence. Additionally, as argued in Lemma 6.2 of \cite{Lowe-Neves}, we can find a subsequence $\{j_i\}_{i\in\mathbb{N}}$, and component $\phi_i\in\{\phi_{j_i}^1,\cdots,\phi_{j_i}^{m_{j_i}}\}$, 
such that $\Omega_*(\delta_{\phi_i})(B_k)<2^{-k}$.
By Lemma \ref{lemma_weak_compactness}, after passing to a subsequence, $\Omega_*\delta_{\phi_i}$ converges vaguely to a probability measure $\nu$ on $\mathcal{F}rM$. $\nu$ is invariant under the homogeneous action of $\PSLR$, and for any compact set $M(s):=M\setminus(\cup_i T_i\times (s,\infty))$ with $s\geq 0$, it satisfies \begin{equation}\label{nu_ineq}
    \nu(B_k\cap M(s))\leq 2^{-k}.
\end{equation}

It remains to show $\nu=\mu_{Leb}$.
According to the ergodic decomposition theorem (\cite{Hasselblatt-Katok}), $\nu$ can be expressed by a linear combination of ergodic measures for the $\PSLR$-action on $\mathcal{F}rM$. Moreover, Ratner's measure classification theorem (see \cite{Ratner} or \cite{Shah}) says that any ergodic $\PSLR$-invariant measure on $\mathcal{F}rM$ is either an invariant probability measure supported on a union of $\mathcal{F}rT_i$, where $T_i\in \mathcal{T}$, or it is identical to $\mu_{Leb}$. Thus, we can write $\nu$ as \begin{equation*}
     \nu=a\mu_{\mathcal{T}}+(1-a)\mu_{Leb},
\end{equation*}
where $\mu_{\mathcal{T}}$ represents a probability measure supported on $\cup_{T\in\mathcal{T}}\mathcal{F}rT$, and its mass does not accumulate at infinity, since for any totally geodesic surface larger and larger portions of the area are contained in bigger and bigger thick regions. 
By \eqref{nu_ineq}, for all $k\in\mathbb{N}$, \begin{equation*}
    a=a\mu_{\mathcal{T}}(B_k)\leq \lim_{s\rightarrow\infty}\nu(B_k\cap M(s))\leq 2^{-k}.
\end{equation*}
So \begin{equation*}
    1-a\geq 1-2^{-k},\quad\forall k\in\mathbb{N}.
\end{equation*}
We must have $a=0$, and therefore $\nu=\mu_{Leb}$. 
\end{proof}

\begin{Rem}
    Observe that Proposition~\ref{prop_measure_convergence} is mainly applied for the case when $M$ contains at least one totally geodesic surface, but at most finitely many of them. Indeed, if $M$ does not contain any totally geodesic surface, then $\mu_{Leb}$ is the only possible $\PSLR$ invariant limit. If $M$ contains infinitely many totally geodesic surfaces, by \cite{bfms2021arithmeticity} $M$ must be arithmetic of type I, and hence contains infinitely many compact totally geodesic surfaces. A sequence of these surfaces already equidistributes by \cite{mozes_shah_1995}.
\end{Rem}

\subsubsection{The lower bound in Proposition \ref{prop_s}}
Let $\Sigma_\epsilon$ denote a closed, connected, essential surface associated with a surface subgroup in $S_{\epsilon,\mu_{Leb}}(M)$, and let $g_0$ represent the genus of $\Sigma_\epsilon$.
If $\Sigma_k$ is a degree $k$ cover over $\Sigma_\epsilon$, then the genus $g_k$ of $\Sigma_k$ is computed by the following relation of Euler characteristic: \begin{equation}\label{equ_genus of cover}
    2-2g_k=\chi(\Sigma_k)=k\,\chi(\Sigma_\epsilon)=k(2-2g_0)\quad \Longrightarrow \quad g_k=k(g_0-1)+1,
\end{equation}
Additionally, according to the M{\"u}ller-Puchta's formula (see \cite{Muller-Puchta}), the number of index $k$ subgroups of $\pi_1(\Sigma_\epsilon)$ grows like $2k(k!)^{2g_0-2}(1+o(1))$, we denoted it by $\#S_{\epsilon,\mu_{Leb}}^k(M)$. By utilizing Stirling's approximation and \eqref{equ_genus of cover}, we observe that for sufficiently large $k$, \begin{equation*}
    \#S_{\epsilon,\mu_{Leb}}^k(M)\sim 2k(2\pi k)^{g_0-1}\left(\frac{k}{e}\right)^{2k(g_0-1)}(1+o(1))\sim \left(\frac{1}{e(g_0-1)}g_k\right)^{2g_k}.
\end{equation*}
Let $c_1=\frac{1}{e(g_0-1)}$, it depends only on $g_0$, hence only on $M$ and $\epsilon$.
Therefore, we obtain the following lower bound when $g$ is large. \begin{equation*}
    \#S(M,g,\epsilon)\geq \#S_{\mu_{Leb}}(M,g,\epsilon)\geq (c_1g)^{2g}.
\end{equation*}

\subsection{Proof of Theorem \ref{thm_hyp}}

Given $\Pi\in S_\epsilon(M)$, let $S\in\Pi$ be the essential minimal surface with $\area_{h_0}(S)=\area_{h_0}(\Pi)$, and denote its genus by $g$. By Gauss-Bonnet formula and the second fundamental form estimate \begin{equation}\label{equ_seppi}
    |A|^2_{L^\infty(S,h_0)}=O(\epsilon)
\end{equation}
in \cite{Seppi}, when $\epsilon$ is small enough, 
\begin{equation}\label{equ_Gauss_h_0}
    \area_{h_0}(S)=4\pi(g-1)-\frac{1}{2}\int_{S}|A|^2dA_{h_0}=4\pi(g-1)(1+O(\epsilon)).
\end{equation}

As a result, given $0<\eta<1$, for all sufficiently small $\epsilon$, and 
sufficiently large $L$ which only depend on $\eta$, we conclude the following statements. \begin{enumerate}[(i)]
    \item For $\Pi\in S_\epsilon(M)$, if it has $\area_{h_0}(\Pi)\leq 4\pi(L-1)$, then by \eqref{equ_Gauss_h_0} the genus satisfies $g\leq \floor*{(1+\eta)L}$, and thus $\Pi\in S(M,\floor*{(1+\eta)L},\epsilon)$. \label{3.3_item_i}

    \item If $\Pi\in S(M,\floor*{(1-\eta)L},\epsilon)$, then we have $$\area_{h_0}(\Pi)\leq 4\pi (\floor*{(1-\eta)L}-1)\leq 4\pi(L-1).$$ \label{3.3_item_ii}

    \item By Proposition \ref{prop_s}, there are positive constants $c_1^\pm=c_1^\pm(M,\epsilon), c_2^\pm=c_2^\pm(M)$ such that \begin{align*}
        \big(c_1^\pm ((1\pm\eta)L-1)\big)^{2((1\pm\eta)L-1)}\leq &\#S_{\mu_{Leb}}(M,\floor*{(1\pm\eta)L},\epsilon) \leq \#S(M,\floor*{(1\pm\eta)L},\epsilon)\\
        \leq &\big(c_2^\pm (1\pm\eta)L\big)^{2(1\pm\eta)L}.
    \end{align*}
    It follows from the squeeze theorem that, for all sufficiently small $\epsilon>0$, \begin{equation*}
        \underset{L\rightarrow\infty}{\lim}\frac{\ln{\#S_{\mu_{Leb}}(M,\floor*{(1\pm\eta)L},\epsilon)}}{L\ln{L}}=\underset{L\rightarrow\infty}{\lim}\frac{\ln{\#S(M,\floor*{(1\pm\eta)L},\epsilon)}}{L\ln{L}}=2(1\pm\eta).
    \end{equation*}\label{3.3_item_iii}
\end{enumerate}
Consequently, \begin{align*}
    2(1-\eta)
    &=\underset{L\rightarrow\infty}{\lim}\frac{\ln{\#S_{\mu_{Leb}}(M,\floor*{(1-\eta)L},\epsilon)}}{L\ln{L}} \quad \text{by \ref{3.3_item_iii}}\\
    &\leq\underset{L\rightarrow\infty}{\lim}\dfrac{\ln\#\{\area_h(\Pi)\leq 4\pi(L-1):\Pi\in S_{\epsilon,\mu_{Leb}}(M)\}}{L\ln L} \quad \text{by \ref{3.3_item_ii}}\\
    &\leq\underset{L\rightarrow\infty}{\lim}\dfrac{\ln\#\{\area_h(\Pi)\leq 4\pi(L-1):\Pi\in S_\epsilon(M)\}}{L\ln L} \\
    &\leq\underset{L\rightarrow\infty}{\lim}\frac{\ln{\#S(M,\floor*{(1+\eta)L},\epsilon)}}{L\ln{L}} \quad \text{by \ref{3.3_item_i}}\\
    &= 2(1+\eta) \quad \text{by \ref{3.3_item_iii}}.
\end{align*}
As we can choose $\eta$ to be an arbitrarily small positive number, we conclude that $E_{\mu_{Leb}}(h_0)=E(h_0)=2$.

\section{Area minimizers for general metrics}\label{section_existence_minimizer}
In this section, we investigate the conditions on a general metric $h$ on $M$ that guarantee the existence of essential area-minimizing surfaces with respect to $h$. Furthermore, under certain assumptions, we show that most of the area of these surfaces lies within the thick part of $M$.

The minimal surface entropy depends only on the set of surface subgroups $S_\epsilon(M)$ for sufficiently small $\epsilon$. As shown in equation \eqref{equ_seppi}, the closed minimizer $S$ in $(M,h_0)$ corresponding to such a surface subgroup has uniformly small squared norm of the second fundamental form $|A|^2_{L^\infty(S,h_0)}$.
According to \cite[Theorem 4.1]{Rubinstein} (using \cite{Uhlenbeck} and \cite{Epstein86}), any closed immersed surface $S$ with $|A|^2_{L^\infty(S,h_0)}<2$ cannot have accidental parabolics. Therefore, in the discussion that follows, we restrict our attention to closed surface subgroups without accidental parabolics.

\subsection{Existence of closed minimal surfaces}
Consider a weakly cusped metric $h$ on $M$, that is, there exists a compact set $K$ such that $sec(h)\leq 0$ in $M\setminus K$. 
Let $S$ be a closed immersed essential minimal surface of $(M,h_0)$, in this section we find a closed immersed minimal surface $\Sigma$ of $(M,h)$ homotopic to $S$.

\begin{Th}[Controlled existence of area minimizers]\label{lem:surfaceexistencemonotonicity}
    Let $M$ be a finite-volume hyperbolic $3$-manifold, and let $h$ be a weakly cusped metric on $M$. Then for any $A>0$ and any closed surface subgroup $\Pi$ without accidental parabolics that satisfies $\area_h(\Pi)\leq A$, there exist a constant $s=s(M,h,A,\Pi)> 0$ and an area minimizer $\Sigma$ for $\Pi$ with respect to $h$, so that $\Sigma\subseteq M(s)$. Moreover, any area minimizer of $\Pi$ with respect to $h$ is contained in $M(s)$.
\end{Th}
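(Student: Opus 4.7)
The plan is to first prove an a priori depth bound showing that every closed minimal representative of $\Pi$ with area at most $A+1$ is confined to $M(s')$ for some $s' = s'(M,h,A)$, and then to run a standard compactness argument inside the compact region $M(s')$ to produce a minimizer. Applying the depth bound to the resulting minimizer with the sharp area bound $\area_h(\Sigma) \leq A$ will yield the stated $s$.

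For the depth bound, let $r : M \to [0,\infty)$ be the $1$-Lipschitz function $r(x) = \dist_h(x, K)$, and choose $s_0 = s_0(M,h)$ large enough that $\{r > s_0\}$ is a disjoint union of cusp ends, entirely contained in the region $sec(h) \leq 0$, and such that every loop in $\{r > s_0\}$ is homotopic in $M$ to a power of a parabolic element. Let $\Sigma$ be any closed minimal surface in the homotopy class of $\Pi$ with $\area_h(\Sigma) \leq A+1$. Since $\Pi$ has no accidental parabolics and $\pi_1(\Sigma) \hookrightarrow \pi_1(M)$, every loop on $\Sigma$ lying in $\{r > s_0\}$ must be null-homotopic on $\Sigma$; hence each connected component $D$ of $\Sigma \cap \{r > s_0\}$ is a topological disk with $\partial D \subset \{r = s_0\}$. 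Lifting $D$ to a minimal disk $\widetilde{D}$ in the universal cover of the ambient cusp end---which is simply connected with $sec \leq 0$---the monotonicity formula in nonpositive curvature gives $\area(\widetilde{D} \cap B_R(\tilde p)) \geq \pi R^2$ for every $\tilde p \in \widetilde{D}$ and $R < d(\tilde p, \partial \widetilde{D})$. Because the lifted $r$ remains $1$-Lipschitz, $d(\tilde p, \partial \widetilde{D}) \geq r(\tilde p) - s_0$, and evaluating at a deepest point yields $\area(D) \geq \pi (s_{\max}(D) - s_0)^2$. Summing over disk components and invoking $\area_h(\Sigma) \leq A+1$ bounds $\max_\Sigma r$ above by $s_0 + \sqrt{(A+1)/\pi}$, which by the completeness of $h$ (making $r$ proper on each cusp end) confines $\Sigma$ to $M(s')$ for an appropriate $s' = s'(M, h, A)$.

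For existence, take a minimizing sequence $\Sigma_n$ for $\Pi$ with $\area(\Sigma_n) \to \area_h(\Pi) \leq A$; eventually $\Sigma_n \subset M(s')$ by the confinement. In the compact region $M(s')$ the sectional curvature is uniformly bounded and the injectivity radius is bounded below, so standard integral varifold compactness combined with Allard regularity produces a stationary varifold limit $\Sigma_\infty \subset M(s')$ with $\area(\Sigma_\infty) \leq \area_h(\Pi)$, hence equality. Applying the confinement step to $\Sigma_\infty$ with the sharp bound $A$ then places $\Sigma_\infty \subseteq M(s)$ for the claimed $s = s(M, h, A, \Pi)$. The main obstacle is verifying that $\Sigma_\infty$ still genuinely represents the surface subgroup $\Pi$: varifold convergence could a priori permit collapse or bubbling to a limit of different topology. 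This is addressed by combining (i) the disk structure of $\Sigma_n \cap \{r > s_0\}$ obtained in the confinement step, which prevents nontrivial topology from escaping into the cusps, together with (ii) the area-minimizer existence theorems of Hass-Rubinstein-Wang, Ruberman, and Huang-Wang for surface subgroups without accidental parabolics in cusped hyperbolic $3$-manifolds, adapted to our setting via the uniformly bounded geometry on $M(s')$.
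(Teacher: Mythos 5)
Your depth-bound argument (components of $\Sigma\cap\{r>s_0\}$ lift to embedded surfaces in a horoball; monotonicity in nonpositive curvature forces a quadratic lower bound on area in terms of depth) is essentially the same mechanism the paper uses, so that part is sound, modulo the minor point that the components are planar domains rather than disks (but this does not affect the lift-and-embed step, which only needs that $\Pi$ is parabolic-free).

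The genuine gap is circularity in the existence step. Your confinement bound applies only to \emph{minimal} surfaces, since it invokes the monotonicity formula. But the minimizing sequence $\Sigma_n$ in your existence argument need not consist of minimal surfaces, so you have no right to assert ``eventually $\Sigma_n\subset M(s')$ by the confinement.'' Absent that, the sequence can drift off into the cusps and the varifold compactness argument never gets started. Replacing $\Sigma_n$ by actual minimal representatives would require an existence theorem, which is what you are trying to prove. The paper sidesteps this by minimizing in truncated compact pieces first: modify $h$ near $\partial M(s_n)$ to make the boundary totally geodesic, apply the Hass--Scott existence theorem for compact $3$-manifolds with boundary, and obtain genuine minimizers $\Sigma_n$ in $(M(s_n),h_n)$. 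Those \emph{are} minimal, so the monotonicity argument confines them to $M(s)$ where $h_n=h$, and a simple comparison shows each $\Sigma_n$ already minimizes in $(M,h)$. Relatedly, your appeal to Hass--Rubinstein--Wang, Ruberman, and Huang--Wang to certify that the varifold limit still represents $\Pi$ does not close the gap either: those theorems are stated for the hyperbolic metric on a cusped manifold, not for an arbitrary weakly cusped $h$, and ``adapted to our setting'' is not a proof. Finally, you handle only the embedded case; the paper deals with immersed surfaces by lifting to an embedded surface in a finite cover via subgroup separability (Wise, Scott), which introduces the dependence of $s$ on $\Pi$.
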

\begin{proof}
We first consider the embedded surfaces. 
Let $s_0=s_0(M,h)\geq 0$ be the smallest number such that $M\setminus M(s_0)=\cup_iT_i\times (s_0,\infty)$ consists of disjoint cusp neighborhoods.
    Since $h$ is weakly cusped, we can choose a sufficiently large constant $s'=s'(M,h)> s_0$, so that $M\setminus M(s')$ is a union of disjoint cusp neighborhoods where $sec(h)\leq 0$. Let $s=s(M,h,A)>s'$ be a constant so that 
    \begin{equation}\label{equ_3.1_1}
        d_h(s,s')= 2\left(\sqrt{\frac{A}{\pi}}+1\right).
    \end{equation} 
    We claim this choice of $s$ fulfills the conclusion of the theorem. 
    
    We now show the existence of an area minimizer in $M(s)$. Let $\delta>0$ be a fixed number, and let $s_n>s+\delta$ be a sequence of real numbers going to $+\infty$. Assume that $\Sigma_0$ is an embedded representative of $\Pi$ with $\area_h(\Sigma_0)\leq A$. It is contained in the interior of $M(s_n-\delta)$ for sufficiently large $n$. In the $\delta$-tubular neighborhood of $\partial M(s_n)$, modify the metric $h$ to obtain a new metric $h_n$ on $M(s_n)=\cup_iT_i\times[s_n,\infty)$, such that $\partial M(s_n)$ is totally geodesic with respect to $h_n$.
   Using the result of \cite{Hass-Scott}, we can find an embedded area-minimizing surface $\Sigma_n$ in $(M(s_n),h_n)$ homotopic to $\Sigma_0$.
    
    We will start by showing that $\Sigma_n\subset M(s)$. Suppose otherwise $\Sigma_n\cap (M\setminus M(s))\neq\emptyset$. 
    Since $\Sigma_n$ is a $\pi_1$-injective immersion without accidental parabolics,
    it cannot be entirely contained in a union of cusps $\cup_iT_i\times[r,\infty)$ for some $r\geq 0$, we have that there exists a point $p\in\Sigma_n\cap (M(s)\setminus M(s'))$ which is at distance at least $\sqrt{\frac{A}{\pi}}+1$ from both $\partial M(s')$ and $\partial M(s_n-\delta)$, assuming $n$ is sufficiently large. Let $H$ be a horoball in $\mathbb{H}^3$ that is a lift of a component of $M\setminus M(s')$, and let $\Tilde{\Sigma}_n$ be the lift of $\Sigma_n$ to the universal cover such that $\Tilde{\Sigma}_n\cap H\neq\emptyset$. 
    As $\Pi$ is a parabolic-free surface subgroup, for any $n$, we see that in the universal cover the intersection of $\Tilde{\Sigma}_n$ with $H$ embeds in $\Sigma_n$. Moreover, let $\Tilde{p}$ be the lift of $p$ in $\Tilde{\Sigma}_n\cap H$, we have $B_h\left(\Tilde{p},\sqrt{\frac{A}{\pi}}+1\right)\subset H$.
    By assumption, $(H,h)$ has non-positive sectional curvature, this allows us to apply monotonicity formula \cite[Theorem 1]{Anderson82}. 
Hence, we have that $$\area_h\left(\Tilde{\Sigma}_n\cap B_h\left(\Tilde{p},\sqrt{\frac{A}{\pi}}+1\right)\right)\geq \pi\left(\sqrt{\frac{A}{\pi}}+1\right)^2>A.$$
But this is impossible, since for large enough $n$, both $\Sigma_0$ and $B_h\left(p,\sqrt{\frac{A}{\pi}}+1\right)$ are contained in $M(s_n-\delta)$, where the metric $h_n$ is identical to $h$. This would imply that
$$\area_{h_n}(\Sigma_0)=\area_h(\Sigma_0)\leq A< \area_{h_n}\left(\Sigma_n\cap B_{h_n}\left(p,\sqrt{\frac{A}{\pi}}+1\right)\right)\leq \area_{h_n}(\Sigma_n).$$
We would have that $\Sigma_0$ is homotopic to $\Sigma_n$ in $M(s_n)$ while having less area with respect to $h_n$, which contradicts the minimality of $\Sigma_n$. Hence, it follows that $\Sigma_n\subset M(s)$ for all sufficiently large $n$. 

As $s_n>s+\delta$, we get $h_n|_{M(s)} = h$. It follows that $\area_{h}(\Sigma_n) = \area_{h_n}(\Sigma_n)$ and that any surface $\Sigma_n$ is an area minimizer in $(M,h)$ for the homotopy class of $\Sigma_0$. Indeed, for any surface $\Sigma\subset M$ homotopic to $\Sigma_0$, there is $n$ large enough so that $\area_h(\Sigma) = \area_{h_n}(\Sigma) \geq \area_{h_n}(\Sigma_n)$. Hence, for the embedded case, the existence of area minimizers in $M(s)$ follows.

To see that any embedded area minimizer with respect to $h$ has such property, observe that otherwise the monotonicity formula argument of the previous paragraph shows that the minimizer has area at least $A$, which is not possible.

For the immersed case, it follows from the work of Wise \cite{Wise} that surface groups in hyperbolic cusped manifolds are separable. By a result of Scott \cite{Scott}, this implies that each immersed essential surface lifts to an embedded surface in a finite cover $\Tilde{M}$ of $M$. We carry out the argument in this $k$-sheeted cover $\Tilde{M}$, and obtain a constant $s'(M,h)$. For $s>s'$, we need to adjust \eqref{equ_3.1_1} by 
     \begin{equation}\label{equ_3.1_2}
        d_h(s,s')= 2\left(\sqrt{\frac{kA}{\pi}}+1\right),
    \end{equation} 
    as the embedded surface $\Tilde{\Sigma}_0\subset \Tilde{M}$ lifted by $\Sigma_0$ satisfies $\area_h(\Tilde{\Sigma}_0)\leq kA$. Therefore, in this immersed case, the choice of $s=s(M,h,A,\Tilde{M})$ also depends on $\Tilde{M}$, and hence ultimately on the surface subgroup $\Pi$.
\end{proof}

Using a similar approach, we prove the following existence result for metrics that are $C^1$-close to the hyperbolic metric in compact sets. This lemma will be used in Section \ref{sec:proofS>-6}.

\begin{Lemma}[Controlled existence of area minimizers for small perturbations of the hyperbolic metric] \label{lem:minimizer_close_to_hyp_in_compact}
    Let $(M,h_0)$ be a finite-volume hyperbolic $3$-manifold. For any constant $a>0$ and any closed surface subgroup $\Pi$ without accidental parabolics, there exists a compact subset $K=K(M,h_0,a,\Pi)$ of $M$ and a constant $\epsilon=\epsilon(M,h_0,a,\Pi)>0$, such that if $h$ is a metric in $M$ satisfying $\Vert (h-h_0)|_K\Vert_{C^1} <\epsilon$ and $sec(h|_K)\leq-a^2<0$, then there exists an area minimizer of $\Pi$ contained in $K$. Moreover, any area minimizer is contained in $K$.
\end{Lemma}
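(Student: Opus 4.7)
The plan is to adapt the strategy of Theorem~\ref{lem:surfaceexistencemonotonicity}, replacing the non-positively curved cusp regions by the compact region $K$ on which $sec(h)\leq -a^2<0$, and using the $C^1$-proximity of $h$ to $h_0$ on $K$ to transfer area estimates from the hyperbolic setting.

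First I would fix the baseline and the geometry. By the existence results cited before Theorem~\ref{lem:surfaceexistencemonotonicity} applied to the parabolic-free subgroup $\Pi$, there is a closed area minimizer $\Sigma_0$ of $\Pi$ in $(M,h_0)$ with area $A_0:=\area_{h_0}(\Sigma_0)$. Pick a compact neighborhood $K_0$ of $\Sigma_0$. Choose $\rho>0$ with $\pi\rho^2>3A_0$, and let $K\supset K_0$ be a compact set such that $M\setminus K$ is a disjoint union of cusp neighborhoods and $d_{h_0}(K_0,\partial K)>\rho+\sqrt{A_0/\pi}+3$. Then fix $\epsilon>0$ small enough that $\Vert(h-h_0)|_K\Vert_{C^1}<\epsilon$ forces $\area_h(\Sigma_0)\leq\tfrac{3}{2} A_0$ and makes $d_h$ and $d_{h_0}$ bi-Lipschitz close on $K$.

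Next, to produce the minimizer I would mimic Theorem~\ref{lem:surfaceexistencemonotonicity}: take a compact exhaustion $\{L_n\}$ of $M$ with $K\subset L_n$, modify $h$ in a thin collar of $\partial L_n$ to make $\partial L_n$ totally geodesic in the resulting metric $h_n$, and invoke Hass--Scott in the embedded case (or its pass-to-cover refinement based on the separability results of Wise \cite{Wise} and Scott \cite{Scott} in the immersed case) to obtain an area minimizer $\Sigma_n\subset L_n$ of $(L_n,h_n)$ homotopic to $\Sigma_0$. The competitor $\Sigma_0$ gives $\area_{h_n}(\Sigma_n)\leq\tfrac{3}{2} A_0$. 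The confinement step combines two applications of Anderson's monotonicity formula \cite{Anderson82}. On the $K$-side, if some $p\in\Sigma_n\cap K$ had $d_{h_n}(p,\partial K)>\rho$, then $B_{h_n}(p,\rho)\subset K$ would lie in $\{sec(h)\leq -a^2\leq 0\}$ and monotonicity would give $\area_{h_n}(\Sigma_n\cap B_{h_n}(p,\rho))\geq\pi\rho^2>3A_0$, a contradiction. On the cusp side, enlarging $K$ further if necessary so the cusps $M\setminus K$ are non-positively curved, the argument of Theorem~\ref{lem:surfaceexistencemonotonicity} excludes points of $\Sigma_n$ deep in $M\setminus K$. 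Together these trap $\Sigma_n$ inside $K$, and a subsequential smooth limit as $n\to\infty$ yields the desired minimizer $\Sigma\subset K$. Applying the same estimates to any other area minimizer of $\Pi$ in $(M,h)$ proves the ``moreover'' statement.

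The principal obstacle is the tight interaction between the size of $K$ and the monotonicity formula. Enlarging $K$ makes it easier to confine $\Sigma_n$ via the cusp side but simultaneously demands a larger buffer $K\setminus K_0$; conversely, monotonicity applied to the minimizer itself forces every point of $\Sigma$ to lie within roughly $\sqrt{A_0/\pi}$ of $\partial K$, so $K$ cannot be transversally too wide around $\Sigma_0$. Balancing these two constraints---choosing $K$ wide enough to accommodate the monotonicity argument but narrow enough that the minimizer still reaches the interior $K_0$ where $\Sigma_0$ lives---is the delicate part, and the strict curvature bound $sec(h|_K)\leq -a^2$ together with the $C^1$-smallness of $h-h_0$ are used precisely to make this balance robust under perturbation.
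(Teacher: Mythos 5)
Your proposal shares the basic template of Theorem~\ref{lem:surfaceexistencemonotonicity} (exhaustion, totally geodesic truncation, Hass--Scott, monotonicity), but the confinement step contains two genuine gaps.

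First, the cusp-side exclusion is not available under the stated hypotheses. You write ``enlarging $K$ further if necessary so the cusps $M\setminus K$ are non-positively curved,'' but the lemma gives no control whatsoever on $h$ outside $K$: the hypotheses are $\Vert(h-h_0)|_K\Vert_{C^1}<\epsilon$ and $sec(h|_K)\leq-a^2$, both restricted to $K$, and $h$ may be completely wild on $M\setminus K$. You cannot pre-arrange for an unknown metric to be non-positively curved on a region where you have no hypothesis. The paper sidesteps this by taking $K=M(s)$ with $s$ so large (determined by $a$, $s_0$, and $\area_{h_0}(\Sigma_0)$ via the $\cosh$ monotonicity inequality) that whenever $\Sigma_n$ exits $K$ it must cross $\partial M\bigl(\tfrac{s+s_0}{2}\bigr)$, and a monotonicity ball of radius $\tfrac{s-s_0}{4}$ around a lift of such a crossing point lies \emph{entirely inside} $K$ and inside a horoball. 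All the curvature and area control needed for Anderson's formula is thus provided by the hypotheses; nothing outside $K$ is ever touched.

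Second, the ``$K$-side'' application of monotonicity is both misdirected and incorrect. Misdirected, because it would show that $\Sigma_n$ cannot penetrate deeper than $\rho$ from $\partial K$, hence in particular cannot reach $K_0$ --- but $\Sigma_0$ lives in $K_0$, and the whole point is that the minimizer should stay near $\Sigma_0$; your own choices ($d_{h_0}(K_0,\partial K)>\rho+\dots$) would contradict this. Incorrect, because the monotonicity estimate $\area\bigl(\tilde\Sigma_n\cap B(\tilde p,\rho)\bigr)\geq\pi\rho^2$ is an estimate in the universal cover; to convert it into a lower bound for $\area(\Sigma_n)$ one must know that the portion of the lifted surface in the ball embeds into $\Sigma_n$. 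The paper justifies this only for balls inside a horoball, using that $\Pi$ is parabolic-free (so the horoball stabilizer in $\Pi$ is trivial). For a ball based at a point $p$ in the thick part, distinct sheets of $\tilde\Sigma_n$ may coincide downstairs, and no lower bound on $\area(\Sigma_n)$ follows. The ``delicate balance'' you describe in the final paragraph stems from these two misconceptions; in the correct argument there is no such tension, since $K=M(s)$ can be taken as large as the $\cosh$ inequality demands, and the minimizer is free to occupy the interior of $K$.
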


\begin{proof}
    Let $\Sigma_0$ be the area minimizer for $\Pi$ in $(M,h_0)$.
    Fix $s_0=s_0(M,h_0)\geq 0$ be so that the thin region $M\setminus M(s_0)=\cup_iT_i\times (s_0,\infty)$ consists of disjoint cusp neighborhoods.
    Consider $K=M(s)$ for a constant $s$ satisfying \begin{equation}\label{equ_3.2_1}
        \frac{2\pi}{a^2}\left(\cosh\left(\frac{a(s-s_0)}{4}\right)-1\right) \geq 2\area_{h_0}(\Sigma_0).
    \end{equation}
    Then $K$ depends on $M,h_0,a,\Pi$. Take sufficiently small $\epsilon$ (depending only on $M,h_0,a,\Pi$) so that for any metric $h$ in $M$ satisfying $\Vert (h-h_0)|_K\Vert_{C^1} <\epsilon$, we have
    \begin{equation}\label{equ_3.2_2}
        \area_h(\Sigma_0) < 2\area_{h_0}(\Sigma_0),
    \end{equation}
    and 
    \begin{equation}\label{equ_3.2_3}
        \dist_h(\partial M(s_0), \partial M(s))>\frac{s-s_0}{2}.
    \end{equation} 
    We claim that these choices of $K$ and $\epsilon$ fulfill the theorem.
    
    The strategy is as in Theorem~\ref{lem:surfaceexistencemonotonicity}. Let us start with existence. Let $\delta>0$ be a fixed constant, and let $h$ be a metric as in the statement and let $s_n>s+\delta$ be a sequence of real numbers going to $+\infty$. We have $\Sigma_0\subset M(s_n-\delta)$ for sufficiently large $n$. In a small $\delta$-neighborhood of $\partial M(s_n)$, modify the metric $h$ to obtain a metric $h_n$ on $M(s_n)$, so that $\partial M(s_n)$ is totally geodesic. As proved in Theorem \ref{lem:surfaceexistencemonotonicity} there exists an area minimizer $\Sigma_n$ of the homotopy class of $\Sigma_0$ in $M(s_n)$ with respect to the metric $h_n$.

    We will start by showing that $\Sigma_n\subset K$. Assuming otherwise, by \eqref{equ_3.2_3} we have $\Sigma_n \cap M(\frac{s+s_0}{2}) \neq \emptyset$, as $\Sigma_n$ is a $\pi_1$-injective immersion without accidental parabolics. The universal cover $\Tilde{\Sigma}_n$ embeds in $\mathbb{H}^3$. As $\Pi$ is a parabolic-free surface subgroup, then we have that in the universal cover every intersection of $\Tilde{\Sigma}_n$ with a horoball $H_0$ covering $M\setminus M(s_0)$ embeds in $\Sigma_n$. Then consider a geodesic ball $B$ of radius $\frac{s-s_0}{4}$, centered at a point in the lift of $\Sigma_n\cap \partial M(\frac{s+s_0}{2})$. When $n$ is large enough, the metric $h_n$ coincides with $h$ in $B$ and the sectional curvature satisfies $sec(h_n|_B)\leq -a^2$.
    Applying the monotonicity formula \cite[Theorem 1]{Anderson82} and using \eqref{equ_3.2_1} and \eqref{equ_3.2_2}, we obtain 
    \begin{align*}
        \area_{h_n}(\Tilde{\Sigma}_n\cap B)\geq &\frac{2\pi}{a^2}\left(\cosh\left(\frac{a(s-s_0)}{4}\right)-1\right) \geq  2\area_{h_0}(\Sigma_0) \\>& \area_h(\Sigma_0)=\area_{h_n}(\Sigma_0).
    \end{align*}
    This is not possible as $\Tilde{\Sigma}_n\cap B$ embeds in $\Sigma_n$, which is an area minimizer for the homotopy class of $\Sigma_0$ in $(M(s_n), h_n)$. Hence, it follows that $\Sigma_n\subset K$ for all sufficiently large $n$. 
    
    As $s_n>s+\delta$, we get $h_n|_K = h$. It follows that $\area_{h}(\Sigma_n) = \area_{h_n}(\Sigma_n)$ and that any surface $\Sigma_n$ is an area minimizer in $(M,h)$ for the homotopy class of $\Sigma_0$. Indeed, for any surface $\Sigma\subset M$ homotopic to $\Sigma_0$, there is $n$ large enough, so that $\area_h(\Sigma) = \area_{h_n}(\Sigma) \geq \area_{h_n}(\Sigma_n)$. Hence, the existence of area minimizers in $K$ follows.
    
    To see that any area minimizer has such property, observe that otherwise the monotonicity formula argument of the previous paragraph shows that the minimizer has area at least as big as $2\area_{h_0}(\Sigma)>\area_h(\Sigma_0)$, which is not possible.

    The immersed case follows as in the previous Lemma by taking finite covers.
\end{proof}

\subsection{Most area in thick regions}
In this section, we discuss two types of metrics such that most of the area of the minimizers lies within the thick part of $M$. The following two lemmas will be used to prove the main theorems in Sections \ref{section_thm_b}, \ref{sec:proofS>-6} and \ref{section_proof_thmD}.

\begin{Lemma}[Most area in the thick regions, sectional version]\label{lem:areaincusps}
    Let $h$ be a metric on $M$, and assume that there exists $a>0$ so that $sec(h)\leq -a^2<0$. Then for any $0<\kappa<1$, there exists a compact subset $K=K(M,h,a,\kappa)$ of $M$, so that if $\Pi$ is a closed surface subgroup without accidental parabolics, then any area minimizer $\Sigma$ for $\Pi$ in $h$ satisfies $$\area_{h}(\Sigma\cap K)\geq \kappa\left( \area_{h}(\Sigma)\right).$$ 
\end{Lemma}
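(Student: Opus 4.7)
The strategy is to reduce the statement to a linear isoperimetric inequality for minimal surfaces in Cartan--Hadamard manifolds with $sec\leq -a^2$, combined with the coarea formula; the no-accidental-parabolics hypothesis enters through a lifting argument that turns each component of $\Sigma$ in the cusps into a genuine minimal surface in the universal cover.

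Setup: Fix $s_0\geq 0$ so that $M\setminus M(s_0)=\sqcup_i C_i$ is a disjoint union of topological cusp neighborhoods $C_i\cong T_i\times(s_0,\infty)$, and let $d\colon M\to[0,\infty)$ be the $1$-Lipschitz function $d(p):=\dist_h(p,M(s_0))$. For $N>0$ set $K_N:=\{d\leq N\}$, which is compact by completeness of $h$ (implicit in the existence of area minimizers). I claim that $K=K_N$ works whenever $N\geq 1/(a(1-\kappa))$.

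First, pass to the universal cover $(\widetilde{M},\widetilde{h})$, a Cartan--Hadamard space with $sec(\widetilde{h})\leq -a^2$, in which each lift of a cusp $C_i$ is a pairwise disjoint topological horoball $\widetilde{H}$. Since $\Pi$ has no accidental parabolics, $\Pi\cap\Gamma_{\widetilde{H}}=\{e\}$ for every such $\widetilde{H}$, so for any lift $\widetilde{\Sigma}$ of $\Sigma$ the covering map restricts to an injection $\widetilde{\Sigma}\cap\widetilde{H}\hookrightarrow \Sigma\cap C_i$. In particular, every connected component of $\Sigma\cap\{d>r\}$ admits an isometric lift $\widetilde{U}\subset\widetilde{M}$ that is a bounded minimal surface with rectifiable boundary in the Cartan--Hadamard space. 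Gromov's linear isoperimetric inequality then gives $\area(\widetilde{U})\leq \tfrac{1}{a}\,\ell(\partial\widetilde{U})$; summing over components and transporting back to $M$ via the injectivity above yields
\[\area\bigl(\Sigma\cap\{d>r\}\bigr)\leq \tfrac{1}{a}\,\ell\bigl(\Sigma\cap\{d=r\}\bigr)\qquad\text{for a.e.\ }r>0.\]

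Next, the coarea formula applied to $d|_{\Sigma}$ gives $\int_0^N\ell(\Sigma\cap\{d=r\})\,dr\leq \int_\Sigma|\nabla_\Sigma d|\,dA\leq \area(\Sigma)$, so by the mean-value principle there exists $r^\ast\in[0,N]$ with $\ell(\Sigma\cap\{d=r^\ast\})\leq \area(\Sigma)/N$, whence $\area(\Sigma\cap\{d>r^\ast\})\leq \area(\Sigma)/(aN)\leq (1-\kappa)\area(\Sigma)$. Since $\{d\leq r^\ast\}\subseteq K_N$, we conclude $\area(\Sigma\cap K_N)\geq \kappa\,\area(\Sigma)$, as required.

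The main technical obstacle I anticipate is justifying the isoperimetric inequality rigorously on the pieces $\Sigma\cap\{d>r\}$, since $d$ is only Lipschitz: one restricts to generic $r$ for which $\Sigma\cap\{d=r\}$ is a smooth embedded $1$-manifold (via a Sard-type argument), or smooth-approximates $d$ by a proper Lipschitz exhaustion, so that each lifted piece has rectifiable boundary. A minor secondary point is that the pieces need not be simply connected, but the linear isoperimetric inequality is topology-independent for minimal surfaces with boundary in Cartan--Hadamard spaces, so no further argument is needed there.
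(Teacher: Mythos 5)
Your proof is correct and follows essentially the same strategy as the paper's: lift to the universal cover (using the absence of accidental parabolics to make the lifted cusp pieces inject), apply a linear isoperimetric inequality valid for domains in the simply connected lift $\widetilde{\Sigma}$ (whose intrinsic curvature is $\leq -a^2$ by the Gauss equation), and combine with the coarea formula. The only cosmetic difference is in the finish: the paper integrates $\ell(\partial D_t)\geq a\cdot\area(D_{t_0})$ over $t\in[0,t_0]$, whereas you extract a single good level $r^\ast$ by the mean-value principle and apply the isoperimetric bound there; both yield the same threshold $O\!\left(\frac{1}{a(1-\kappa)}\right)$. One small caution on your side remark: the linear isoperimetric inequality is not ``topology-independent'' for arbitrary minimal surfaces with boundary in Cartan--Hadamard spaces (a minimal annulus with short geodesic boundary can violate it); what saves you is that the lifted piece $\widetilde{U}$ is a \emph{domain inside the simply connected surface} $\widetilde{\Sigma}$, and the isoperimetric inequality for domains in complete simply connected surfaces with curvature $\leq -a^2$ needs no hypothesis on the topology of the domain itself. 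You should phrase the justification that way rather than appealing to a general fact about minimal surfaces with boundary.
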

\begin{proof}
    Let $s_0=s_0(M,h)$ be the constant defined in Theorem~\ref{lem:surfaceexistencemonotonicity}, and let $H_0$ be a horoball covering a component of $M\setminus M(s_0)$. Denote by $$N(t) : = \lbrace x\in M\setminus M(s_0)\,|\, \dist_h(x,M(s_0))>t \rbrace$$ with lift $\Tilde{N}(t)$ in $H_0$. Let $t_0\geq \frac{\kappa}{a(1-\kappa)}$. We claim that $K=M\setminus N(t_0)$ fulfills the conclusion.
    
    Let $\Sigma$ be an area minimizer for $\Pi$ with respect to $h$. Since $h$ is weakly cusped and $\Pi$ is a surface subgroup without accidental parabolics, the existence of such surface follows from Theorem~\ref{lem:surfaceexistencemonotonicity}. Moreover, we can see that $\Sigma$ has curvature bounded above by $-a^2$. Let $\Tilde{\Sigma}$ be a lift of $\Sigma$ to $\mathbb{H}^3$ and $D_t : = \Tilde{\Sigma}\cap \Tilde{N}(t)$. By isoperimetric inequality (see for instance \cite[34.2.6]{BuragoZalgaller}), we have $$a\cdot \area_h(D_t)\leq \ell_h (\partial D_t)\quad \forall t>0,$$ 
    where $\ell_h(\cdot)$ represents the length of a curve with respect to the metric induced by $h$. In particular, for any $t\leq t_0$, $$\ell_h(\partial D_t)\geq \ell_h(\partial D_{t_0}) \geq a\cdot \area_h(D_{t_0}).$$
    Applying coarea formula we get
    \[\area_h(D_{0}\setminus D_{t_0}) \geq \int_{0}^{t_0} a\cdot \area_h(D_{t_0}) dt = a\cdot t_0\cdot \area_h(D_{t_0})\geq \frac{\kappa}{1-\kappa}\area_h(D_{t_0}).
    \]
    Therefore, we obtain $\area_h(D_0\setminus D_{t_0})\geq \kappa(\area_h(D_0))$.
    As the lift of $\Sigma\cap M(s_0)$ is given by a disjoint collection of regions as $D_0$, the desired inequality follows by addition.
\end{proof}

Next, we consider an asymptotically cusped metric $h$ of order one. 
Note that $h$ is only assumed to converge to $h_{cusp}$ in $C^1$ near the ends, so it may not satisfy the sectional curvature assumptions in Theorem \ref{lem:surfaceexistencemonotonicity} or Lemma \ref{lem:minimizer_close_to_hyp_in_compact}. Therefore, the existence of area-minimizing surfaces with respect to $h$ is not guaranteed. In the lemma below, we assume that the existence is given and then discuss the area of the minimizers.

\begin{Lemma}[Most area in thick regions, asymptotic version]\label{lem:areaincusp_asymphyp}
    Let $h$ be an asymptotically cusped metric on $M$ of order one. Then for any $0<\kappa<1$, there exists a compact set $K=K(M,h_0,h,\kappa)$ of $M$, so that if $\Pi$ is a closed surface subgroup and $\Sigma$ is an area minimizer for $\Pi$ in $h$, then $$\area_{h}(\Sigma\cap K)\geq \kappa\left( \area_{h}(\Sigma)\right).$$
\end{Lemma}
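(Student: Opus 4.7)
The plan is to adapt the proof of Lemma~\ref{lem:areaincusps}, replacing the universal sectional-curvature-based isoperimetric inequality with an asymptotic one derived from the cusp structure. The key observation is that inside an exact hyperbolic cusp, minimal surfaces already satisfy a linear isoperimetric inequality, and the $C^1$ convergence $\lambda h|_{\mathcal{C}}\to h_{cusp}$ propagates this inequality, with only mildly degraded constant, to $h$-minimal surfaces in a sufficiently deep cusp region. Once such a bound is in hand, the coarea argument of the previous lemma applies essentially verbatim.

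First I would establish the isoperimetric inequality in the model cusp $(T\times[0,\infty),\,h_{cusp}=dr^2+e^{-2r}g_T)$: for every $h_{cusp}$-minimal surface $\Sigma$ contained in the cusp and every compact domain $D\subset\Sigma$ with smooth boundary,
\begin{equation*}
\area_{h_{cusp}}(D)\leq \ell_{h_{cusp}}(\partial D).
\end{equation*}
This follows from the first variation formula applied to the vector field $-\partial_r$ on $\Sigma$. A direct Christoffel computation in cusp coordinates yields $\nabla_Y\partial_r=-Y$ for every $Y$ tangent to a horosphere, hence $\mathrm{div}_\Sigma(-\partial_r)=2-|\nabla^\Sigma r|^2\in[1,2]$. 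Minimality then gives $\int_D\mathrm{div}_\Sigma(-\partial_r)\,dA=\int_{\partial D}\langle-\partial_r,\nu\rangle\,d\ell\leq \ell_{h_{cusp}}(\partial D)$, and bounding the left-hand integrand from below by $1$ produces the claim. Rescaling by $1/\lambda$, the analogous inequality with constant $\sqrt{\lambda}$ holds for $h_{cusp}/\lambda$.

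Next I would transfer this bound to $h$. Given $\eta>0$, the order-one asymptotic cusp condition provides $s_1=s_1(\eta,M,h_0,h)$ so that on the deep cusp region $N(s_1):=\{x\in M\setminus M(s_0):\dist_h(x,M(s_0))>s_1\}$ one has $\|h-h_{cusp}/\lambda\|_{C^1}<\eta$. Since Christoffel symbols are linear combinations of first derivatives of the metric, they are $C^0$-small perturbations of those of $h_{cusp}/\lambda$; the same holds for $h$-orthonormal tangent frames to $\Sigma$. Consequently $\mathrm{div}^h_\Sigma(-\partial_r)\geq 1-C\eta$ pointwise on $\Sigma\cap N(s_1)$, while $h$-areas and $h$-lengths in this region are $(1+O(\eta))$-bilipschitz to their counterparts for $h_{cusp}/\lambda$. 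Applying the first variation formula to the lift $\tilde\Sigma$ of the $h$-minimizer to the universal cover with pulled-back metric (where $\tilde\Sigma$ is embedded in each lifted cusp component by the parabolic-free assumption) yields the approximate isoperimetric inequality
\begin{equation*}
a\cdot\area_h(D)\leq \ell_h(\partial D)
\end{equation*}
valid for every compact $D\subset\tilde\Sigma\cap\tilde N(s_1)$, where $a=a(\lambda,\eta)>0$ tends to $\sqrt{\lambda}$ as $\eta\to 0$.

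The conclusion then follows exactly as in Lemma~\ref{lem:areaincusps}. For $t\geq 0$, set $\tilde N'(t):=\{x\in\tilde N(s_1):\dist_{\tilde h}(x,\partial\tilde N(s_1))>t\}$ and $D_t:=\tilde\Sigma\cap\tilde N'(t)$. Combining the isoperimetric bound, the monotonicity $\area_h(D_t)\geq \area_h(D_{t_0})$ for $t\leq t_0$, and the coarea formula (using $|\nabla^\Sigma d|\leq 1$) yields $\area_h(D_0\setminus D_{t_0})\geq a\,t_0\,\area_h(D_{t_0})$. Choosing $t_0\geq \kappa/(a(1-\kappa))$ then produces $\area_h(D_0\setminus D_{t_0})\geq \kappa\,\area_h(D_0)$, and setting $K:=M\setminus N'(t_0)$ (with $N'(t_0)$ defined analogously in the quotient) delivers the conclusion upon summing over the cusp ends. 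The principal obstacle is the $C^1$-transfer step: one must carefully verify that divergences, tangent frames, and the minimality condition relative to $h$ are all controlled by $C^1$-closeness to $h_{cusp}/\lambda$, with the isoperimetric constant degrading only mildly. Since every quantity involved depends only on the metric and its first derivatives, this is ultimately bookkeeping, but must be executed with enough care to make the argument quantitative.
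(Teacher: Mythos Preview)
Your approach is correct but genuinely different from the paper's. You adapt Lemma~\ref{lem:areaincusps} by producing an isoperimetric inequality via the first variation formula applied to the radial field $-\partial_r$: from $\mathrm{div}_\Sigma(-\partial_r)=2-|\nabla^\Sigma r|^2\geq 1$ in the exact cusp, together with $C^1$-closeness of $h$ to $h_{cusp}/\lambda$ to control the perturbed Christoffel symbols, you obtain $a\cdot\area_h(D)\leq\ell_h(\partial D)$ for pieces of the $h$-minimal lift in a deep cusp, and then run the coarea argument verbatim.

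The paper instead exploits the stronger hypothesis that $\Sigma$ is \emph{area-minimizing}, not merely stationary. For each height $t$ it compares $\tilde\Sigma\cap H(t)$ against a competitor obtained by capping the outermost boundary curve $c(t)$ with the infinite vertical cylinder over it; in the cusp this cylinder has $h$-area at most a constant times $\ell_h(c(t))$, yielding $\area_h(\tilde\Sigma\cap H(s_2))\leq 2\hat\ell$ where $\hat\ell=\inf_{t}\ell(t)$. Coarea then finishes as in your argument. The paper's competitor step uses only a $C^0$ bilipschitz comparison $\tfrac12 h_0\leq h\leq 2h_0$ and $\tfrac12\leq\|\partial_t\|_h\leq 2$, so it actually needs less regularity than your route (order zero would suffice for their argument, though the lemma is stated for order one). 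Conversely, your argument has the advantage that it only uses stationarity of $\Sigma$, so it would apply to any minimal representative, not just area minimizers. Both proofs ultimately rest on the same coarea step; they differ only in how the linear isoperimetric bound $\ell(t)\gtrsim\area(\text{deep part})$ is obtained.
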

\begin{proof}
By scaling the metric we can assume without loss of generality that in Definition \ref{def_asymptotically_cusp} we have $\lambda=1$. This means that in coordinates on $\mathcal{C}=\cup_iT_i\times [0,\infty)$, there is a hyperbolic metric $h_{cusp}$, such that both $| h_{ij} - (h_{cusp})_{ij}|$ and $ | h_{ij;k} - (h_{cusp})_{ij;k}|$ tend to zero as one moves toward infinity along the end.
As a result, there exists a sufficiently large constant $s_1>0$ so that for $M\setminus M(s_1)$, we have
    \begin{enumerate}
    \item The coordinate vector field $\partial_t$ induced by the $t\in [s_1,\infty)$ factor satisfies $$\frac{1}{2}\leq \Vert \partial_t\Vert_{h}\leq 2.$$
    \item  For any vector $v$ in $T(M\setminus M(s_1))$ we have
    \[
    \frac12 h_0(v,v) \leq h(v,v) \leq 2h_0(v,v). 
    \]
    \end{enumerate}
    Let $s_2>s_1+ \frac{4\kappa}{1-\kappa}$. We will show that $K=M(s_2)$ satisfies the conclusion of the theorem.

    Let $\Sigma$ be an area minimizer of a closed surface subgroup $\Pi$. Let $\Tilde{\Sigma}$ be a lift of $\Sigma$ to the universal cover of $M$. As $\Sigma$ is essential, $\Tilde{\Sigma}$ must be a properly embedded disk. Let $H(s_1)$ be a lift of a component of $M\setminus M(s_1)$, and let $H(t)\subseteq H(s_1)$ ($t\geq s_1$) be the lift of the corresponding contained component of $M\setminus M(t)$. As by the same reasoning of the previous lemma, we see that $\Tilde{\Sigma}\cap H(s_1)$ embeds in $\Sigma$, it is sufficient to show
    \begin{equation}\label{equ_3.4_1}
        \area_h\big(\Tilde{\Sigma}\cap (H(s_1)\setminus H(s_2))\big) \geq \frac{\kappa}{1-\kappa} \big(\area_h(\Tilde{\Sigma}\cap H(s_2))\big).
    \end{equation}
    
    For $s_1\leq t\leq s_2$ let $\ell(t) : = \text{length}_h(\Tilde{\Sigma}\cap \partial H(t))$, and let $\hat{\ell} = \inf_{s_1\leq t\leq s_2} \ell(t)$. For $s_1\leq t\leq s_2$ so that $\Tilde{\Sigma}$ is transverse to $\partial H(t)$, each component of $\Tilde{\Sigma} \cap H(t)$ is a planar domain with finitely many Jordan curves in its boundary. Let $D(t)$ be a given component of $\Tilde{\Sigma} \cap H(t)$, and let $c(t)$ the outermost boundary component of $D(t)$. Hence $\area_h(D(t)) \leq 2c(t)$, as the area on the interior of $c(t)$ in $\Tilde{\Sigma}$ (which is larger than $\area_h(D(t))$) cannot be larger of the area of any disk filling $c(t)$, which we can get competitor as close to $2c(t)$ by filling in higher and higher vertical cylinders over $c(t)$. By taking addition over all possible components of $\Tilde{\Sigma} \cap H(t)$, we obtain
    \[
    \area_h(\Tilde{\Sigma}\cap H(s_2)) \leq 2\ell(t)
    \]
    for any $s_1\leq t\leq s_2$ so that $\Tilde{\Sigma}$ is transverse to $\partial H(t)$. In particular
    \[
    \area_h(\Tilde{\Sigma}\cap H(s_2)) \leq 2\hat{\ell},
    \]
    so without loss of generality we will assume $\hat{\ell}>0$.
    
    By coarea formula,
    \[
    \area_h\big(\Tilde{\Sigma}\cap (H(s_1)\setminus H(s_2))\big)\geq \int_{s_1}^{s_2}\ell(t)\frac{dt}{2}\geq  \frac{(s_2-s_1)}{2}\hat{\ell}.
    \]
    As $s_2 > s_1+ \frac{4\kappa}{1-\kappa}$ and $2\hat{\ell} \geq \area_h(\Tilde{\Sigma}\cap H(s_2))$, the desired inequality \eqref{equ_3.4_1} follows.
\end{proof}

\section{Proof of Theorem \ref{thm_finite_vol}}\label{section_thm_b}
In this section, we consider a metric $h$ on $M$ with sectional curvature $sec(h)\leq -1$, and we prove Theorem \ref{thm_finite_vol}. The key lemma used to derive the result is the following, which is analogous to Theorem 5.1 in \cite{Calegari-Marques-Neves} for closed hyperbolic $3$-manifolds.

\begin{Prop}\label{prop_area_comparison}
Suppose that $(M,h_0)$ is a hyperbolic $3$-manifold of finite volume, and let $h$ be a Riemannian metric on $M$ with $sec(h)\leq -1$.
Then given a sequence of surface subgroups $\Pi_i \in S_{\frac1i,\mu_{Leb}}(M)$, we have 
\begin{equation}\label{area}
    \underset{i\rightarrow\infty}{\lim\sup}\dfrac{\area_h(\Pi_i)}{\area_{h_0}(\Pi_i)}\leq 1.
\end{equation}
Furthermore, assume that $h$ is bilipschitz equivalent to $h_0$, and that there is a constant $k>1$ such that $sec(h)\geq -k^2$. Then the equality holds if and only if $h$ is hyperbolic and isometric to $h_0$.
\end{Prop}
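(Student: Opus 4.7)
The plan is to establish both statements through Gauss--Bonnet comparisons for the area minimizers of $\Pi_i$ in $(M,h)$ and $(M,h_0)$.

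\textbf{Inequality \eqref{area}.} Since $sec(h)\leq -1<0$, the metric $h$ is weakly cusped, so Theorem~\ref{lem:surfaceexistencemonotonicity} produces a closed area minimizer $\Sigma_i\subset(M,h)$ of genus $g_i$ representing $\Pi_i$ (we may restrict to $i$ large enough that $\Pi_i$ carries no accidental parabolics). Minimality plus the Gauss equation gives $K_{\Sigma_i}=sec_h(T\Sigma_i)-\tfrac12|A_h|^2\leq -1-\tfrac12|A_h|^2$, and Gauss--Bonnet yields
\[
4\pi(g_i-1)=\area_h(\Sigma_i)+\tfrac12\int_{\Sigma_i}|A_h|^2\,dA_h+\int_{\Sigma_i}\bigl(-sec_h(T\Sigma_i)-1\bigr)\,dA_h,
\]
so in particular $\area_h(\Sigma_i)\leq 4\pi(g_i-1)$. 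On the other side, the $h_0$-minimizer $S_i$ for $\Pi_i$ has $|A_{h_0}|^2_{L^\infty}=O(1/i)$ by Seppi's bound \eqref{equ_seppi}, whence \eqref{equ_Gauss_h_0} yields $\area_{h_0}(S_i)\geq 4\pi(g_i-1)(1-O(1/i))$. Dividing the two estimates produces \eqref{area}.

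\textbf{Equality case.} Assume the $\limsup$ equals $1$ and pass to a realizing subsequence. The two Gauss--Bonnet identities above force the normalized defects
\[
\frac{1}{4\pi(g_i-1)}\int_{\Sigma_i}|A_h|^2\,dA_h\quad\text{and}\quad \frac{1}{4\pi(g_i-1)}\int_{\Sigma_i}\bigl(-sec_h(T\Sigma_i)-1\bigr)\,dA_h
\]
both to tend to zero. Let $\nu_i$ denote the probability measure on $\mathcal{F}rM$ induced by averaging, against $dA_h/\area_h(\Sigma_i)$, over frames whose first two vectors span $T_p\Sigma_i$; regard $-sec_h-1\geq 0$ as the continuous function on $\mathcal{F}rM$ depending only on the span of the first two vectors. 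The display says $\nu_i(|A_h|^2)\to 0$ and $\nu_i(-sec_h-1)\to 0$.

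\textbf{Equidistribution transfer and conclusion.} The decisive step is to show $\nu_i\to\mu_{Leb}$ vaguely on $\mathcal{F}rM$. The plan is to import the equidistribution of the $h_0$-minimizers $S_i$ (granted by the hypothesis $\Pi_i\in S_{1/i,\mu_{Leb}}(M)$ together with Proposition~\ref{prop_measure_convergence}) to $\Sigma_i$. The bilipschitz hypothesis makes $h$- and $h_0$-areas uniformly comparable; the two-sided bound $-k^2\leq sec(h)\leq -1$ combined with minimality provides a uniform Schoen-type bound on $|A_h|$ on $\Sigma_i$; and Theorem~\ref{lem:surfaceexistencemonotonicity} together with Lemma~\ref{lem:areaincusps} confines both families to a common compact thick region up to an arbitrarily small area fraction. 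A Cheeger--Gromov-type compactness argument applied to pointed lifts to $\mathbb{H}^3$, using that $\widetilde\Sigma_i$ and $\widetilde S_i$ span the same $(1+o(1))$-quasicircle limit set on $S^2_\infty$ and are both asymptotic (in Hausdorff distance on any compact fundamental domain of the thick part) to the same totally geodesic disk, in the spirit of \cite{Jiang}, then shows $\nu_i$ and the analogous measure built from $S_i$ share the same vague limit, namely $\mu_{Leb}$. Once $\nu_i\to\mu_{Leb}$ vaguely, testing against compactly supported continuous $\phi$ with $0\leq\phi\leq -sec_h-1$ forces $\mu_{Leb}(\phi)=0$ and hence $sec_h\equiv -1$ on all of $\mathcal{F}rM$. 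Thus $(M,h)$ is a complete finite-volume hyperbolic $3$-manifold in the same homotopy type as $(M,h_0)$, and Mostow--Prasad rigidity identifies $h$ with $h_0$ up to isometry. The main obstacle is precisely the equidistribution transfer from the $h_0$-minimizers to the $h$-minimizers, which is exactly where the bilipschitz and two-sided curvature assumptions become indispensable.
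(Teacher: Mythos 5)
Your inequality argument is correct and is essentially identical to the paper's: the two Gauss--Bonnet identities give $\area_h(\Sigma_i)\leq 4\pi(g_i-1)$ and $\area_{h_0}(S_i)=4\pi(g_i-1)(1+O(1/i))$, and dividing yields \eqref{area}.

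The rigidity half has a genuine gap at the step you flag as decisive, namely the claim that $\nu_i\to\mu_{Leb}$ vaguely on $\mathcal{F}rM$. What you actually extract from equality in \eqref{area} is the averaged statement $\nu_i(|A_h|^2)\to 0$ and $\nu_i(-sec_h-1)\to 0$; this does not give pointwise smallness of $|A_h|$ at a given base point, so a Cheeger--Gromov limit of pointed lifts of $\widetilde\Sigma_i$ need not be totally geodesic in $(M,h)$. The uniform Schoen bound on $|A_h|$ and the bounded Hausdorff distance from $\widetilde\Sigma_i$ to $\widetilde S_i$ (and to the limiting geodesic disk) give compactness and coarse proximity, but they do not transfer the vague limit of one family of area-weighted frame-bundle measures to the other: $\nu_i$ is built from $dA_h$ and $h$-frames while $\mu_{\Pi_i}$ is built from $dA_{h_0}$ and $h_0$-frames, and under a bilipschitz (but not isometric) identification the Radon--Nikodym derivative $dA_h/dA_{h_0}$ survives to the limit, so one expects at best a re-weighted measure, not $\mu_{Leb}$ itself; nor is it clear the limit would be $\PSLR$-invariant for the $h_0$-action so that Ratner's theorem could even be invoked.

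The paper avoids asserting global measure convergence of $\nu_i$. Instead it defines the set $\mathscr{L}$ of round circles arising as limits of $\Lambda(\phi_i\Pi_i\phi_i^{-1})$ with $\phi_i$ running over the filtration $F_i(\epsilon_i,R_i)$ that records \emph{pointwise} smallness of $\int(-(sec(h)+1)+\tfrac12|A|^2)$ over bounded pieces of translated lifts. It shows $\mathscr{L}$ is closed and $\Gamma$-invariant, and then uses the equidistribution hypothesis on the $S_i$ (through Propositions 6.4--6.5 of \cite{Calegari-Marques-Neves} and Proposition~\ref{prop_measure_convergence}) together with Lemma~\ref{lem:areaincusps} to prove the intersection statement \eqref{star}, forcing $\mathscr{L}=\mathscr{C}$ via Ratner/Shah and \cite{mozes_shah_1995}. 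Picking one dense-orbit circle $\gamma\in\mathscr{L}$, the filtration gives a sequence of disks $\Omega_i^0$ converging smoothly to an $h$-totally-geodesic disk $\Omega$ with $\partial_\infty\Omega=\gamma$ (Lemma~\ref{lem_convergence of Omega}), and the Cheeger homeomorphism then spreads $\mathcal{F}r_2^\Omega(M)$ densely through $\mathcal{F}r_2M(h)$, forcing $sec_h\equiv -1$ and Mostow rigidity. If you want to pursue your route, you would need to replace the bare ``$\nu_i\to\mu_{Leb}$'' by this filtration-plus-Ratner mechanism, which is exactly what manufactures a base point where the defect is pointwise small and a dense orbit where totally geodesic behavior can be detected.
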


In the following discussion, we assume that $S_i$ and $\Sigma_i$ are closed essential surfaces immersed in $M$ that minimize the area in the homotopy class corresponding to $\Pi_i$ with respect to the metrics $h_0$ and $h$, respectively. As argued in Section \ref{subsection_equidistribution}, for sufficiently large $i$, $S_i$ is the unique minimizer for $\Pi_i$.

\subsection{Proof of Proposition \ref{prop_area_comparison}}
The inequality follows immediately from the Gauss-Bonnet formula:
\begin{equation}\label{equ_Gauss}
    \area_h(\Sigma_i)=4\pi(g_i-1)+\int_{\Sigma_i} (sec(h)+1)\,dA_h-\frac{1}{2}\int_{\Sigma_i} |A|^2dA_h\leq 4\pi(g_i-1),
\end{equation}
where $g_i$ denotes the genus of the surface subgroup $\Pi_i$. 
On the other hand, by the second fundamental form estimate \eqref{equ_seppi}, we have $|A|^2_{L^\infty(S_i,h_0)}\rightarrow 0$ as $i\rightarrow \infty$, which implies \begin{equation}\label{equ_gauss_ratio}
    \lim_{i\rightarrow\infty} \dfrac{\area_{h_0}(S_i)}{4\pi(g_i-1)}=1.
\end{equation} 
The inequality follows suit.

If the equality of \eqref{area} holds, it yields that \begin{equation*}
    \lim_{i\rightarrow\infty}\frac{1}{\area_h(\Sigma_i)}\int_{\Sigma_i}\left(-(sec(h)+1)+\frac{1}{2}|A|^2\right)dA_h=0.
\end{equation*}
Let $\mathscr{C}$ be the set of all round circles in $S^2_\infty$, and define \begin{align*}
\mathscr{L}=&\{\gamma\in\mathscr{C}:\exists\phi_i\in F_i(\epsilon_i,R_i), \epsilon_i\rightarrow 0, R_i\rightarrow\infty,\text{ such that after passing to}\\
& \text{a subsequence, the limit set }\Lambda(\phi_i\Pi_i\phi_i^{-1})\text{ of $\phi_i\Pi_i\phi_i^{-1}$ converges to }\gamma\},
\end{align*} in which \begin{equation}\label{F_i}
    F_i(\epsilon,R)=\left\{\phi\in\Gamma:\int_{\phi(\Tilde{\Sigma}_i)\cap B_R(0)}\left(-(sec(h)+1)+\frac{1}{2}|A|^2\right)dA_h\leq\epsilon\right\}.
\end{equation}
It is not hard to see that $\mathscr{L}$ is closed and $\Gamma$-invariant. Due to Lemma 5.2 in \cite{Shah2} or Corollary A of \cite{Ratner}, almost every element in $\mathscr{C}$ has a dense $\Gamma$-orbit. 

Assume that $\mathscr{L}$ contains no element whose $\Gamma$-orbit is dense. Then, as shown in Theorem D in \cite{Shah} or Theorem B of \cite{Ratner}, for each $\gamma\in\mathscr{L}$, the unique totally geodesic disk $D(\gamma)$ in $\mathbb{H}^3$ with $\partial_\infty D(\gamma)=\gamma$ projects to an immersed surface in $M$, consisting of a finite union of connected components with finite area. Suppose that there are infinitely many such immersed totally geodesic surfaces corresponding to elements of $\mathscr{L}$. 
According to Corollary 1.5 of \cite{mozes_shah_1995}, any infinite sequence of immersed totally geodesic surfaces becomes dense in $M$. Therefore, we could choose an infinite sequence $\{\gamma_i\}$ in $\mathscr{L}$, and the limit of the orbit closures $\overline{\Gamma\gamma_i}$ would then be dense in $\mathscr{C}$. Since $\mathscr{L}$ is closed and $\Gamma$-invariant, it would follow that $\mathscr{L}=\mathscr{C}$. This means that almost every element in $\mathscr{L}$ has a dense $\Gamma$-orbit in $\mathscr{C}$, contradicting our assumption.
Let $\Delta\subset\mathbb{H}^3$ be a fundamental domain of $M$ whose boundary is transverse to both $\phi(\Tilde{S}_i)$ and $\phi(\Tilde{\Sigma}_i)$ for any $\phi\in \Gamma$.
Hence, only finitely many $\gamma\in\mathscr{L}$ have the property that the associated totally geodesic disk $D(\gamma)$ with $\partial_\infty D(\gamma)=\gamma$ intersects $\Delta$. We denote the union of these intersections by $\Delta_{\mathscr{L}}$.

Furthermore, building on the discussion in Theorem 6.1 of \cite{Calegari-Marques-Neves}, we establish the following result.
 \begin{align}\label{star}\tag{$\star$}
     &\text{For any compact subset $K\subset\mathbb{H}^3$ with non-empty interior, there exists $\gamma\in\mathscr{L}$,}\\
     &\text{such that the unique totally geodesic disk $D(\gamma)$ bounded by $\gamma$ intersects $K$. }\nonumber
 \end{align}

Indeed, let $\Gamma^{S_i}$ and $\Gamma^{S_i}(K)$ be the sets of $\phi\in\Gamma$ such that $\phi(\Tilde{S}_i)$ intersect $\Delta$ and $K$, respectively. Their projections, along with the projection of $\Gamma^{S_i}\cap F_i(\epsilon_i,R_i)$ in the set $\Gamma/\Pi_i:=\{\phi \Pi_i: \phi\in \Gamma\}$, are denoted by $\underline{\Gamma}^{S_i}$, $\underline{\Gamma}^{S_i}(K)$, and $\underline{\Gamma}^{S_i}(\epsilon_i,R_i)$, respectively. Consider the projection of $K$ in $M$, and denote its lift to $\mathcal{F}rM$ by $\mathcal{F}rK$.
Suppose that $f$ is a continuous function on $\mathcal{F}rM$ satisfying $0\leq f\leq 1$, with support in $\mathcal{F}rK$. Proposition 6.4 of \cite{Calegari-Marques-Neves} estimates $\#\underline{\Gamma}^{S_i}$ using the area of $S_i$. It provides a constant $c>0$, such that for sufficiently large $i$, 
$\#\underline{\Gamma}^{S_i}(K)/\#\underline{\Gamma}^{S_i}$
has a lower bound of $c\,\Omega_*\delta_{\phi_i}(f)$, where $\delta_{\phi_i}$ is the laminar measure associated with $S_i$. By Proposition \ref{prop_measure_convergence}, after passing to a subsequence, $\Omega_*\delta_{\phi_i}(f)$ converges to $\mu_{Leb}(f)$. As a result, $\#\underline{\Gamma}^{S_i}(K)/\#\underline{\Gamma}^{S_i}$ is bounded below away from zero. Furthermore, Proposition 6.5 of \cite{Calegari-Marques-Neves} indicates that $\#\underline{\Gamma}^{S_i}(\epsilon_i,R_i)/\#\underline{\Gamma}^{S_i}$ is close to one. Therefore, combining both results, we observe that $\underline{\Gamma}^{S_i}(K)\cap \underline{\Gamma}^{S_i}(\epsilon_i,R_i)$ is non-empty. We obtain $\phi_i\in \Gamma^{S_i}(K)\cap \Gamma^{S_i}(\epsilon_i,R_i)$.

Moreover, by Lemma \ref{lem:areaincusps}, the area of $\Sigma_i$ does not accumulate entirely in the cusp region. This implies that the limit set $\Lambda(\phi_i \Pi_i \phi_i^{-1})$ does not concentrate at a single point. Therefore, after passing to a subsequence, $\Lambda(\phi_i \Pi_i \phi_i^{-1})$ converges to a round circle $\gamma \in \mathscr{C}$. Consequently, we have $\gamma \in \mathscr{L}$. This implies \eqref{star}.

We choose a compact set $K$ within $\Delta\setminus\Delta_{\mathscr{L}}$ that has a non-empty interior. This ensures that $K$ does not intersect any such totally geodesic disks. This contradicts \eqref{star}, and thereby showing that $\mathscr{L}$ must contain at least one element whose $\Gamma$-orbit is dense. We summarize this conclusion in the following lemma.

\begin{Lemma}\label{lem_L=C}
There exists a round circle $\gamma\in\mathscr{L}$ such that $\Gamma\gamma$ is dense in $\mathscr{C}$. Moreover, the fact that $\mathscr{L}$ is closed and $\Gamma$-invariant implies the stronger conclusion that $\mathscr{L}=\mathscr{C}$. Therefore, by applying the results of \cite{Shah2} or \cite{Ratner} again, we conclude that almost every round circle in $\mathscr{L}$ has a dense $\Gamma$-orbit. 
\end{Lemma}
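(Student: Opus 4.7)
The plan is to prove the three claims in sequence, using the preparatory work of the preceding paragraphs (in particular the property \eqref{star} and the rigidity of totally geodesic surfaces in $M$). The first assertion, that some $\gamma\in\mathscr{L}$ has dense $\Gamma$-orbit in $\mathscr{C}$, will be proved by contradiction; once it is established, the other two parts follow quickly from the closedness and $\Gamma$-invariance of $\mathscr{L}$ together with a second invocation of the Ratner--Shah classification.

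Assume, toward a contradiction, that \emph{no} $\gamma\in\mathscr{L}$ has dense $\Gamma$-orbit in $\mathscr{C}$. By Theorem~B of Ratner \cite{Ratner} (or Theorem~D of Shah \cite{Shah}) applied to the $\PSLR$-action on the frame bundle $\mathcal{F}rM$, for each such $\gamma$ the closure $\overline{\Gamma\gamma}$ is the limit-set circle family of a properly immersed, finite-area totally geodesic surface $T(\gamma)\subset M$; concretely, the unique totally geodesic disk $D(\gamma)\subset\mathbb{H}^3$ with $\partial_\infty D(\gamma)=\gamma$ projects to $T(\gamma)$. Now I would split into two cases. If there are infinitely many distinct such totally geodesic surfaces arising from elements of $\mathscr{L}$, then by Corollary~1.5 of \cite{mozes_shah_1995} an infinite sequence of them becomes dense in $M$, so the corresponding orbit closures $\overline{\Gamma\gamma_i}$ become dense in $\mathscr{C}$; since $\mathscr{L}$ is closed and $\Gamma$-invariant, this forces $\mathscr{L}=\mathscr{C}$, and then almost every $\gamma\in\mathscr{C}\subset\mathscr{L}$ has a dense $\Gamma$-orbit by \cite{Shah2} or Corollary~A of \cite{Ratner}, contradicting the standing assumption. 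Hence only finitely many totally geodesic disks associated with elements of $\mathscr{L}$ meet the fundamental domain $\Delta$, so $\Delta_{\mathscr{L}}$ (their intersection with $\Delta$) is a closed set of empty interior in $\Delta$.

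Choose a compact subset $K\subset \Delta\setminus\Delta_{\mathscr{L}}$ with non-empty interior; this is possible because $\Delta_{\mathscr{L}}$ is a finite union of closed totally geodesic disks in $\Delta$. Applying \eqref{star} to this $K$, there exists $\gamma\in\mathscr{L}$ such that $D(\gamma)\cap K\neq\emptyset$, which contradicts the choice of $K$. This completes the contradiction and yields the first conclusion: some $\gamma\in\mathscr{L}$ has $\overline{\Gamma\gamma}=\mathscr{C}$.

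For the remaining two statements, once $\Gamma\gamma$ is dense in $\mathscr{C}$, the closedness and $\Gamma$-invariance of $\mathscr{L}$ give $\mathscr{C}=\overline{\Gamma\gamma}\subseteq\mathscr{L}\subseteq\mathscr{C}$, so $\mathscr{L}=\mathscr{C}$. Then by Lemma~5.2 of \cite{Shah2} or Corollary~A of \cite{Ratner}, almost every element of $\mathscr{C}=\mathscr{L}$ has a dense $\Gamma$-orbit, which is the third claim. The main obstacle I anticipate is the clean handoff to Ratner--Shah: one has to check that the hypotheses (probability measure hypothesis, unipotent-flow invariance, or the equivalent topological rigidity for orbits of $\PSLR$ acting on $\Gamma\backslash\PSLC$) apply uniformly along the sequence used to witness membership in $\mathscr{L}$, so that the closed orbits extracted from non-dense $\Gamma\gamma$ really do descend to finite-area totally geodesic immersed surfaces as required by the application of \cite{mozes_shah_1995}.
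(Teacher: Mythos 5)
Your proposal reproduces the paper's argument in all essentials: the same contradiction hypothesis, the same dichotomy between infinitely many and finitely many totally geodesic surfaces arising from $\mathscr{L}$ (handled via Ratner--Shah plus Mozes--Shah in the first case, and via a choice of compact $K\subset\Delta\setminus\Delta_{\mathscr{L}}$ contradicting \eqref{star} in the second), and the same wrap-up using closedness and $\Gamma$-invariance of $\mathscr{L}$ together with a second application of \cite{Shah2}/\cite{Ratner}. This is essentially the paper's own proof, presented in the same order.
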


After proving Lemma \ref{lem_L=C}, we can choose an arbitrary round circle $\gamma\in\mathscr{L}$ that has a dense $\Gamma$-orbit, and we find $\phi_i\in F_i(\epsilon_i,R_i)$, $\epsilon_i\rightarrow 0$ and $R_i\rightarrow \infty$ as $i\rightarrow \infty$, such that the limit set $\Lambda(\phi_i\Pi_i\phi_i^{-1})$ converges to $\gamma$. Denote by $D_i, \Omega_i$ the lifts of $S_i, \Sigma_i$ to the universal cover $B^3$ of $M$ that are preserved by $\phi_i\Pi_i\phi_i^{-1}$. Due to the estimates of the second fundamental form \eqref{equ_seppi}, after passing to a subsequence, $D_i$ converges on compact sets to a totally geodesic disk $D\subset \mathbb{H}^3$.

We argue that $\partial_\infty D=\gamma$. Let $y$ be an arbitrary point in $\gamma$. 
Take a sequence $x_i\in D_i$ that converges to $x\in D$. Due to the convergence $\Lambda(\phi_i\Pi_i\phi_i^{-1})\to\gamma$, we can take a sequence $y_i\in \Lambda(\phi_i\Pi_i\phi_i^{-1})$ that converges to $y$. Let $\alpha_i$ be the geodesic arc in $\mathbb{H}^3$ connecting $x_i$ to $y_i$, and let $\beta_i$ be the geodesic arc in $D_i$ connecting $x_i$ to $y_i$. 
Because the geodesic curvature of $\beta_i$ in $\mathbb{H}^3$ is uniformly bounded by a small constant for sufficiently large $i$, there exists a uniform constant $r>0$ such that $\beta_i$ is contained in the $r$-tubular neighborhood of $\alpha_i$. 
Furthermore, since $D$ is totally geodesic,  both $\alpha_i$ and $\beta_i$ converge to the same geodesic arc contained in $D$, which connects $x$ to a point in $\partial_\infty D$. 
This shows that the limit $y$ of the sequence $y_i$ is in $\partial_\infty D$.
As a consequence, $\gamma\subset \partial_\infty D$. Therefore, we must have $\partial_\infty D=\gamma$ since $\partial_\infty D$ is a round circle.

We observe from \eqref{F_i} that \begin{equation}\label{integral}
    \lim_{i\rightarrow\infty} \int_{\Omega_i\cap B_{R_i}(0)} \left(-(sec(h)+1)+\frac{1}{2}|A|^2\right)dA_h=0.
\end{equation}

Next, we prove the following result. 

\begin{Lemma}\label{lem_convergence of Omega}
There exists a connected component $\Omega^0_i\subset \Omega_i\cap B_{R_i}(0)$, such that $\Omega^0_i$ is a disk, and after passing to a subsequence, $\Omega^0_i$ converges smoothly to a totally geodesic hyperbolic disk $\Omega$ with asymptotic boundary $\partial_\infty\Omega=\gamma$.
\end{Lemma}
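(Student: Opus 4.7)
The plan is to use the $L^1$ decay from \eqref{integral} to obtain uniform $C^k$ control on $\Omega_i$ over compact subsets of $B^3$, extract a smooth subsequential limit $\Omega$, and then identify $\partial_\infty\Omega=\gamma$ using the bilipschitz comparison $h\asymp h_0$.

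First I would establish a local $h$-area bound: for every compact set $K\subset B^3$, $\area_h(\Omega_i\cap K)$ is uniformly bounded, using the bilipschitz comparison together with the area estimate \eqref{equ_gauss_ratio} for $D_i\to D$ and the fact that $\Omega_i$ is an $h$-area minimizer homotopic to $\phi_i(\Tilde{S}_i)$. Coupling this with the pinching $-k^2\leq sec(h)\leq -1$ and the $L^1$ decay $\int_{\Omega_i\cap K}|A|^2\,dA_h\to 0$ implied by \eqref{integral}, Choi-Schoen $\epsilon$-regularity applied to the $h$-minimal surface $\Omega_i$ yields $\Vert A\Vert_{C^0(\Omega_i\cap K')}\to 0$ on any smaller compact set $K'\subset\mathrm{int}(K)$, and elliptic bootstrapping upgrades this to uniform $C^k$ bounds.

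To select the component, I would use that $\Omega_i$, being an $h$-minimizer with ambient curvature $\leq -1$, lies in the $h$-convex hull of $\Lambda(\phi_i\Pi_i\phi_i^{-1})$; under the bilipschitz equivalence this convex hull is at bounded distance from the $h_0$-convex hull, which converges to $D$ as $\Lambda(\phi_i\Pi_i\phi_i^{-1})\to\gamma$. Fix a point $p\in D$; then $\Omega_i$ meets any fixed neighborhood of $p$ for large $i$, so I can pick $q_i\in\Omega_i$ with $q_i\to p$ and let $\Omega_i^0$ be the component of $\Omega_i\cap B_{R_i}(0)$ containing $q_i$. The curvature control from the previous step shows $\Omega_i^0$ is graphical over $D$ on compact pieces for large $i$, hence a topological disk. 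Standard minimal-surface compactness yields a subsequence converging smoothly on compact sets to a complete surface $\Omega\subset(B^3,h)$ with $|A|_h\equiv 0$, so $h$-totally geodesic; since $sec(h)\equiv -1$ along $\Omega$, the Gauss equation forces intrinsic curvature $-1$ and $\Omega$ is isometric to $\mathbb{H}^2$.

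The hardest step is verifying $\partial_\infty\Omega=\gamma$. Because $h$ is bilipschitz to $h_0$ with pinched negative curvature, the identity $B^3\to B^3$ extends to a $\Gamma$-equivariant homeomorphism $\partial_\infty(B^3,h)\cong S^2_\infty$, and $\partial_\infty\Omega$ embeds into $S^2_\infty$ as a topological circle. For $y\in\gamma$, pick $y_i\in\Lambda(\phi_i\Pi_i\phi_i^{-1})$ with $y_i\to y$ and an $h$-geodesic ray $\beta_i$ in $\Omega_i^0$ from $q_i$ to $y_i$; bilipschitz equivalence makes $\beta_i$ an $h_0$-quasigeodesic that Morse-shadows the hyperbolic segment from $q_i$ to $y_i$, and the smooth convergence $\Omega_i^0\to\Omega$ produces a limiting $h$-geodesic ray of $\Omega$ landing at $y$. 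Hence $\gamma\subseteq\partial_\infty\Omega$, and since any injective continuous map $S^1\to S^1$ is automatically a homeomorphism, this inclusion of topological circles forces $\gamma=\partial_\infty\Omega$.
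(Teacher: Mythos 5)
Your overall strategy mirrors the paper's: use the convex hull containment $\Omega_i\subset C_h(\Lambda(\phi_i\Pi_i\phi_i^{-1}))$ together with Lemma~\ref{lemma_convex_hull} and the Uhlenbeck estimate to keep $\Omega_i$ within a uniformly bounded Hausdorff distance of $D_i$, then drive $|A|\to 0$ on compact sets using the $L^1$ decay \eqref{integral}, extract a smooth subsequential limit $\Omega$, and identify $\partial_\infty\Omega$ with $\gamma$ by comparing surface geodesics with ambient geodesics. The only substantive variation is that you invoke the Morse lemma for quasigeodesics where the paper cites Bowditch's Proposition~2.5.4; both are available under the bilipschitz and pinched negative curvature hypotheses and serve the same purpose.

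However, there is a genuine gap in your argument that $\Omega_i^0$ is a disk. You assert that the curvature control ``shows $\Omega_i^0$ is graphical over $D$ on compact pieces for large $i$, hence a topological disk,'' but the Choi--Schoen decay you obtain is interior: it controls $|A|$ at bounded distance from the origin, not near $\partial B_{R_i}(0)$ (with $R_i\to\infty$), which is precisely where the boundary circles of $\Omega_i^0$ lie. Local graphicality on compact pieces does not preclude $\Omega_i^0$ from being a planar domain with several boundary circles on $\partial B_{R_i}(0)$. The paper closes this step with a purely global barrier argument that uses no curvature control at all: since $\Omega_i$ is an embedded plane, any additional boundary circle $c$ of $\Omega_i^0$ bounds a compact disk $E\subset\Omega_i$ which, near $c$, lies outside $B_{R_i}(0)$; taking $R'>R_i$ to be the maximum of $d(\cdot,0)$ over $E$, the geodesic sphere $\partial B_{R'}(0)$ is strictly convex because $sec(h)\leq -1$ and is internally tangent to the minimal disk $E$ at the maximizing point, contradicting the maximum principle. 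You should replace the graphicality claim with this tangent-sphere argument.
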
  

\begin{proof}
We explore the convex hulls in the same way as in Section 3 of \cite{Calegari-Marques-Neves}. In what follows, the convex hull of a closed curved $\alpha\subset S^2_\infty$ stands for the smallest (geodesically) convex set in $\overline{B^3}$ that contains $\alpha$. Note that by assumption, there exists $k>1$ such that the sectional curvature of $(M,h)$ satisfies $-k^2\leq sec(h)\leq -1$, while $h$ is bilipschitz to $h_0$. This ensures that Proposition 2.5.4 in \cite{Bowditch} and the proof of \cite[Proposition 3.2]{Calegari-Marques-Neves} apply to our setup. We state the version for our case below.

\begin{Lemma}\label{lemma_convex_hull}
    There is a constant $r_0=r_0(h)>0$, such that the Hausdorff distance between $C_h\left(\Lambda(\phi_i \Pi_i\phi_i^{-1})\right)$ and $C_{h_0}\left(\Lambda(\phi_i \Pi_i\phi_i^{-1})\right)$, which are the convex hulls of $\Lambda(\phi_i \Pi_i\phi_i^{-1})$ with respect to metrics $h$ and $h_0$, respectively, satisfies the following inequality. \begin{equation}\label{equ_4.4}
    d_{H,h}\left(C_{h_0}\left(\Lambda(\phi_i \Pi_i\phi_i^{-1})\right),C_{h}\left(\Lambda(\phi_i \Pi_i\phi_i^{-1})\right)\right)\leq r_0.
\end{equation} 
\end{Lemma}

We proceed with the proof of Lemma \ref{lem_convergence of Omega}. 

Consider the covering space $\Tilde{M}_i=\mathbb{H}^3/\phi_i \Pi_i\phi_i^{-1}$ of $M$. With respect to the induced metric of $h_0$, ($\Tilde{M}_i,h_0)$ is a quasi-Fuchsian manifold with $\pi_1(\Tilde{M}_i)\cong\pi_1(\Sigma_i)$. $\Sigma_i$ can be considered as a closed minimal surface in the complete manifold $(\Tilde{M}_i,h)$ with negative curvature. The convex core of $(\Tilde{M}_i,h)$ is defined as $C_h\left(\Lambda(\phi_i \Pi_i\phi_i^{-1})\right)/\phi_i \Pi_i\phi_i^{-1}$. In this setting, \cite[Proposition 3.3]{Calegari-Marques-Neves} shows that each $\Sigma_i$ is contained in the convex core of $(\Tilde{M}_i,h)$. This implies that
\begin{equation}\label{convexhull}
    \Omega_i\subset C_h\left(\Lambda(\phi_i \Pi_i\phi_i^{-1})\right).
\end{equation}
Let $D^r_i$ be the disk in $\mathbb{H}^3$ with the fixed signed distance $r$ to $D_i$.
By the computation in \cite{Uhlenbeck}, when $r>\tanh^{-1}\dfrac{|A|^2_{L^\infty(D_i,h_0)}}{2}$, the closed set enclosed by $D^r_i\cup D^{-r}_i\cup\Lambda(\phi_i \Pi_i\phi_i^{-1})$ in $\overline{\mathbb{H}^3}$ is strictly convex and it bounds inside the convex hull of $\Lambda(\phi_i \Pi_i\phi_i^{-1})$,
Therefore, \begin{equation*}
    d_{H,h_0}\left(C_{h_0}(\Lambda(\phi_i \Pi_i\phi_i^{-1}),D_i\right)\leq \tanh^{-1}\frac{|A|^2_{L^\infty(D_i,h_0)}}{2}.
\end{equation*}  
We can convert the Hausdorff distance with respect to $h_0$ into that of $h$ by adding a constant that depends only on $h$. Thus,
combining these estimates, we conclude that the Hausdorff distance between $D_i$ and $\Omega_i$ is uniformly bounded by some constant $R_0=R_0(h)>0$: \begin{equation*}
    d_{H,h}(\Omega_i,D_i)\leq R_0, \quad \forall i>>1.
\end{equation*}
Then, because of the convergence of $D_i$, there exists $R>0$, such that for sufficiently large $i$ and generic $r\geq R$, $\Omega_i$ intersects $B_{r}(0)$ by a union of circles. We can slightly perturb $R_i$ so that $\Omega_i\cap B_{R_i}(0)$ is a union of circles.

Let $\Omega_i^0$ be a component of $\Omega_i\cap B_{R_i}(0)$ intersecting $B_{R}(0)$.
We claim that it is a disk. Otherwise, if $\Omega_i^0$ were a planar region other than the disk, then we could find a larger ball $B_{R_i'}(0)$ with some $R_i'>R_i$ whose boundary met tangentially with $\Omega_i^0$ at some point. However, the convexity of $\partial B_{R_i'}(0)$ and the minimality of $\Omega_i^0$ contradict the maximum principle. Therefore, $\Omega_i^0$ is a disk provided that $i$ is large enough. 
Furthermore, the total curvature estimates based on \eqref{integral} imply that \begin{equation}\label{equ_omega}
\underset{i\rightarrow\infty}{\lim\sup}\left\{|sec(h(x))+1|+\frac{1}{2}|A(x)|^2:x\in\Omega_i^0\right\}=0.
\end{equation}
From the standard compactness theory for minimal surfaces with a uniform bound on the second fundamental form, after passing to a subsequence, $\Omega_i^0$ converges smoothly on compact sets to a minimal disk $\Omega$ in $(B^3,h)$. Moreover, by \eqref{equ_omega}, $\Omega$ is totally geodesic and has sectional curvature equal to $-1$. 

It remains to show that $\partial_\infty\Omega=\gamma$.  We will use a similar approach to the one previously used in proving that $\partial_\infty D=\gamma$. Let $y$ be an arbitrary point in $\gamma$. 
Take a sequence $x_i\in \Omega_i^0$ that converges to $x\in \Omega$, and a sequence $y_i\in \Lambda(\phi_i\Pi_i\phi_i^{-1})$ that converges to $y$. Let $\alpha_i$ be the geodesic arc in $(B^3,h)$ connecting $x_i$ to $y_i$, and let $\beta_i$ be the geodesic arc in $\Omega_i$ connecting $x_i$ to $y_i$. 
By \eqref{convexhull} and Proposition 2.5.4 in \cite{Bowditch}, there exists a constant $r=r(h)>0$, independent of $i$, such that $\beta_i$ is contained in the $r$-neighborhood of $\alpha_i$. 
Furthermore, since $\Omega$ is totally geodesic,  both $\alpha_i$ and $\beta_i$ converge to the same geodesic arc contained in $\Omega$, which connects $x$ to a point in $\partial_\infty \Omega$. 
This shows that $y$ is contained in $\partial_\infty \Omega$, and thus $\gamma\subset \partial_\infty \Omega$. Since $\partial_\infty\Omega$ is homeomorphic to a circle, it is identical to $\gamma$. 

\end{proof}

To complete the proof of the rigidity in Proposition \ref{prop_area_comparison}, we consider
\begin{align*}
    &\mathcal{F}r_2^D:=\{(x;e_1,e_2): x\in D, (e_1, e_2) \text{ orthonormal base of } \mathcal{F}r_2 D_x\},\\
    &\mathcal{F}r_2^\Omega:=\{(x;e_1,e_2): x\in D, (e_1, e_2) \text{ orthonormal base of } \mathcal{F}r_2 \Omega_x\}.
\end{align*}
Let $\mathcal{F}r_2^D(M)$ and $\mathcal{F}r_2^\Omega(M)$ be the projections of $\mathcal{F}r_2^D$ and $\mathcal{F}r_2^\Omega$ to the $2$-frames bundles of $M$ with respect to $h_0$ and $h$, denoted by $\mathcal{F}r_2M(h_0)$ and $\mathcal{F}r_2M(h)$, respectively. 

We define the Cheeger homeomorphism from $\mathcal{F}r_2M(h_0)$ to $\mathcal{F}r_2M(h)$ as described in \cite{Gromov2000}: we first define equivariant homeomorphisms between $\mathcal{F}r_2\mathbb{H}^3(h_0)$, $\mathcal{F}r_2\mathbb{H}^3(h)$ and $(S^2_\infty)_3$, the set ordered triples of pairwise distinct elements of $S^2_\infty$. Each point $(x;e_1,e_2)$ in $\mathcal{F}r_2\mathbb{H}^3(h_0)$ is uniquely and continuously determined by an ordered triple $(y_1,y_2,y_3)$ on $S^2_\infty$ of distinct elements, where $y_1, y_2$ are the backward and forward asymptotic endpoints of the geodesic (with respect to $h_0$) passing through $x$ with tangent $e_1$, while $y_3$ is the forward asymptotic endpoint of the geodesic (with respect to $h_0$) passing through $x$ with tangent $y_3$. Conversely, given $(y_1, y_2, y_3)$ an ordered triple in $S^2_\infty$, $x$ is (with respect to $h_0$) the orthogonal projection of $y_3$ to the geodesic going from $y_1$ to $y_3$, while $e_1, e_2$ are the unit tangent vectors at $x$ whose corresponding forward points at infinity are $y_2, y_3$ (with respect to $h_0$). As $h$ is complete and strictly negatively curved, we have the analogous correspondence between $\mathcal{F}r_2\mathbb{H}^3(h)$ and $(S^2_\infty)_3$.  Hence the map $\mathcal{F}r_2\mathbb{H}^3(h_0) \rightarrow \mathcal{F}r_2\mathbb{H}^3(h)$ is defined as the composition of homeomorphisms $$\mathcal{F}r_2\mathbb{H}^3(h_0)\rightarrow (S^2_\infty)_3 \rightarrow \mathcal{F}r_2\mathbb{H}^3(h).$$ As the correspondences with $(S^2)^3_\infty$ is equivariant by the geometric action of $\Gamma$ in $\mathbb{H}^3$ with respect to $h_0$ and $h$, it follows then that the homeomorphism is $\Gamma$ equivariant. Then we can pass to the quotient and define a homeomorphism between $\mathcal{F}r_2M(h_0)$ and $\mathcal{F}r_2M(h)$.
Although Gromov initially stated this construction for two closed manifolds $M$ and $N$ with isomorphic fundamental groups in \cite{Gromov2000}, as argued above, the Cheeger homeomorphism also extends naturally to finite volume manifolds $(M,h_0)$ and $(M,h)$.

In particular, since $D$ and $\Omega$ are totally geodesic disks with the same asymptotic boundary with respect to $h_0$ and $h$, respectively, the Cheeger homeomorphism maps $\mathcal{F}r_2^D(M)$ to $\mathcal{F}r_2^\Omega(M)$. 
By the results of Shah \cite{Shah} and Ratner \cite{Ratner}, $\mathcal{F}r_2^D(M)$ is dense in $\mathcal{F}r_2M(h_0)$. 
Therefore, $\mathcal{F}r_2^\Omega(M)$ is dense in $\mathcal{F}r_2M(h)$. 
It follows that for any $(x;e_1,e_2)\in \mathcal{F}r_2M(h)$, there exists a sequence $\{\psi_i\}\subset\Gamma$, such that the images $\psi_i(\Omega)$ converge to a totally geodesic hyperbolic disk in $(B^3,h)$, whose projection to $M$
has orthonormal basis $\{e_1,e_2\}$ at $x$.
Consequently, the metric $h$ on $M$ must have constant sectional curvature equal to $-1$ and thus it is isometric to $h_0$ by Mostow rigidity.

\subsection{Proof of Theorem \ref{thm_finite_vol}}
First, if a metric $h$ on $M$ has sectional curvature less than or equal to $-1$, then $\Pi\in S_{\mu_{Leb}}(M,\floor*{L},\epsilon)$ implies that $\area_h(\Pi)\leq 4\pi(L-1)$ because of the Gauss equation \eqref{equ_Gauss}. Thus, we have $\underline{E}(h)\geq 2= E_{\mu_{Leb}}(h_0)$.

Next, suppose $\underline{E}(h)=2$. Assume that there exists $\eta>0$, such that for any $L>0$ and any increasing sequence $\{k_i\}\subset\mathbb{N}$, the condition $\Pi\in S_{\mu_{Leb}}\big(M,\floor*{(1+\eta)L},\frac{1}{k_i}\big)$ must produce that $\area_h(\Pi)\leq 4\pi(L-1)$.
As a result,\begin{equation*}
    \underline{E}(h)\geq\underset{L\rightarrow\infty}{\lim\inf}\frac{\ln{\#S_{\mu_{Leb}}\big(M,\floor*{(1+\eta)L},\frac{1}{k_i}\big)}}{L\ln{L}}\geq 2(1+\eta),
\end{equation*}
which violates the assumption. Therefore, there exists an increasing sequence $\{k_i\}\subset\mathbb{N}$, a sequence of integers $\{g_i\}$ and $\Pi_i\in S_{\mu_{Leb}}\big(M,g_i,\frac{1}{k_i}\big)$, so that \begin{equation*}
    \area_h(\Pi_i)> 4\pi\Big(\Big(1-\frac{1}{i}\Big)g_i-1\Big).
\end{equation*}
From the above inequality and Proposition \ref{prop_area_comparison},\begin{equation*}
     1\geq\underset{i\rightarrow\infty}{\lim\sup}\dfrac{\area_h(\Pi_i)}{\area_{h_0}(\Pi_i)}\geq \underset{i\rightarrow\infty}{\lim\inf}\dfrac{\area_h(\Pi_i)}{\area_{h_0}(\Pi_i)}\geq \underset{i\rightarrow\infty}{\lim\inf}\frac{4\pi\left(\left(1-\frac{1}{i}\right)g_i-1\right)}{4\pi(g_i-1)}=1.
\end{equation*}
The equality holds if and only if the metric $h$ 
is isometric to $h_0$.

\section{Background of Ricci flow}\label{section_bkg_rf}
In this section, we will briefly review the tools used to prove Theorems \ref{Thm_sar>-6} and \ref{Thm_sar>-6_c0perturbation}.
\subsection{Normalized Ricci flow and Ricci-DeTurck flow}
The \emph{normalized Ricci flow} on $M$ is defined as \begin{equation}\label{RF}
    \frac{\partial h}{\partial t}=-2Ric(h)-4h.
\end{equation}
However, this evolution equation is only weakly parabolic. To achieve strict parabolicity, one considers the following DeTurck-modified version. Let $Sym^2(T^*M)$ be the space of smooth symmetric covariant $(0,2)$-tensors on $M$, and let $Sym^2_+(T^*M)$ be the subset of positive-definite tensors. Moreover, we denote by $\Omega^1(M):=\Gamma(T^*M)$ the space of differential 1-forms. 
Given a Riemannian metric $h$ on $M$, we use $\delta_h: Sym^2(T^*M)\rightarrow \Omega^1(M)$ to denote the map $\delta_hl=-h^{ij}\nabla_il_{jk}dx^k$. The formal adjoint for the $L^2$ product is denoted by $\delta_h^*:\Omega^1(M)\rightarrow Sym^2(T^*M)$.
Define a map $G: Sym^2_+(T^*M)\times Sym^2(T^*M)\rightarrow Sym^2(T^*M)$ by \begin{equation*}
    G(h,u)=\Big(u_{ij}-\frac{1}{2}h^{km}u_{km}h_{ij}\Big)dx^i\otimes dx^j.
\end{equation*}
And $P:Sym^2_+(T^*M)\times Sym^2_+(T^*M)\rightarrow Sym^2(T^*M)$ is defined by \begin{equation*}
    P_u(h)=-2\delta_h^*\left(u^{-1}\delta_h(G(h,u))\right).
\end{equation*}
Finally, the \emph{normalized Ricci-DeTurck flow} for \eqref{RF} is given by \begin{equation}\label{DRF}
    \frac{\partial h}{\partial t}=-2Ric(h)-4h-P_{h_0}(h),
\end{equation}
where we set the background metric $u$ to be the hyperbolic metric $h_0$ so that $h_0$ is a fixed point of \eqref{DRF}.
Notice that the right hand side is a strictly elliptic operator known as the \emph{DeTurck operator}, we denote it by $\mathcal{A}(h)$.

\subsection{Largest spectrum estimate}
In the subsequent section, we consider the linearization of \eqref{DRF} at $h_0$: \begin{equation*}
    \frac{\partial l}{\partial t}=\Delta_Ll-4l,
\end{equation*}
where $\Delta_L$ is the Lichnerowicz Laplacian, and in local coordinates, we have \begin{equation*}
    (\Delta_Ll)_{ij}=(\Delta l)_{ij}+2R_{iklj}l^{kl}-R^k_il_{kj}-R_j^kl_{ki}.
\end{equation*}
Denote by $A_{h_0}:Sym^2(T^*M)\rightarrow Sym^2(T^*M)$ the linear operator 
\begin{equation}\label{equ_def_A}
    A_{h_0}(l):=D\mathcal{A}(h)|_{h=h_0}(l)=\Delta_L l-4l.
\end{equation}
It is a self-adjoint operator, and strictly elliptic when acting on $l\in 
 Sym^2_c(T^*M)$, the space of symmetric covariant 2-tensors with compact support.
 
Next, we will see that the $L^2$-spectra of $A_{h_0}$ are negative and then proceed to estimate the largest spectrum.
Denote by $(\cdot,\cdot)$ the $L^2$-product on $Sym^2_c(T^*M)$.
 Since $R_i^j=-2\delta_i^j$, we have \begin{align}\label{equ_Al,l}
     (A_{h_0}(l),l) &= \int_M \langle \Delta l,l\rangle \dvol +2\int_M R_{iklj}l^{kl}l^{ij}\dvol\\\nonumber
     &=-\int_M\langle \nabla l,\nabla l\rangle \dvol +2\int_M \langle \mathcal{R}(l),l\rangle \dvol,
 \end{align}
using integration by parts, where $\mathcal{R}: Sym^2(T^*M)\rightarrow Sym^2(T^*M)$ is defined by $\langle \mathcal{R}(h),l\rangle =R_{iklj}h^{ij}l^{kl}$.
Moreover, define a covariant 3-tensor by $T_{ijk}:=\nabla_kl_{ij}-\nabla_il_{jk}$, then \begin{align}\label{equ_T}
    \Vert T\Vert ^2 &= \int_M \langle \nabla_k l_{ij}-\nabla_i l_{jk},\nabla^k l^{ij}-\nabla^i l^{jk}\rangle \dvol\\\nonumber
    &= 2\Vert \nabla l\Vert ^2-2\int_M \nabla_kl_{ij}\nabla^il^{jk}\dvol. 
\end{align}
For the second term, we integrate by parts and obtain that \begin{align*}
    -2\int_M \nabla_kl_{ij}\nabla^il^{jk}\dvol &= 2\int_M l_j^i\nabla_k\nabla_il^{jk}\dvol\\
    &= 2\int_M l_j^i(\nabla_i\nabla_kl^{jk}+R^j_{kip}l^{pk}+R^k_{kip}l^{jp})\dvol\\
    &= -2\Vert \delta l\Vert ^2+2\int_M l^i_j R_{ip}l^{jp}\dvol +2 \int_M l_j^i R^j_{kip}l^{pk}\dvol\\
    &= -2\Vert \delta l\Vert ^2-4\Vert l\Vert ^2-2\int_M\langle \mathcal{R}(l),l\rangle \dvol.
\end{align*}
Substituting this into \eqref{equ_T}, we obtain \begin{equation*}
    \Vert T\Vert ^2= 2\Vert \nabla l\Vert ^2-2\Vert \delta l\Vert ^2-4\Vert l\Vert ^2-2\int_M\langle \mathcal{R}(l),l\rangle \dvol.
\end{equation*}
Furthermore, when combined with \eqref{equ_Al,l}, it implies \begin{equation*}
    (A_{h_0}(l),l)=-\frac{1}{2}\Vert T\Vert ^2-\Vert \delta l\Vert ^2-2\Vert l\Vert ^2+\int_M\langle \mathcal{R}(l),l\rangle \dvol,
\end{equation*}
where \begin{equation*}
    \int_M\langle \mathcal{R}(l),l\rangle \dvol=\int_M -\left((h_0)_{ij}(h_0)_{kl}-(h_0)_{ik}(h_0)_{jl}\right)l^{ij}l^{kl}\dvol=-\Vert \tr_{h_0}(l)\Vert ^2+\Vert l\Vert ^2.
\end{equation*}
Thus we have \begin{equation}\label{equ_spectrum}
    (A_{h_0}(l),l)=-\frac{1}{2}\Vert T\Vert ^2-\Vert \delta l\Vert ^2-\Vert l\Vert ^2-\Vert \tr_{h_0}(l)\Vert ^2\leq -\Vert l\Vert ^2.
\end{equation}
Moreover, by \eqref{equ_Al,l} inequality \eqref{equ_spectrum} extends for the closure of $Sym^2_c(T^*(M))$ in the Sobolev space $W^{1,2}(T^*(M))$.

\subsection{Ricci flow with bubbling-off}\label{subsection_bubbling}
In this section, we review the definitions and notations related to Ricci flow with bubbling-off that will be useful in Section \ref{sec:proofS>-6}. For more details, readers are encouraged to consult the book by Bessi{\`e}res, Besson, Boileau, Maillot, and Porti \cite{BBB+10}. However, note that while \cite{BBB+10} and some of other works below discuss Ricci flow, their results can be applied to the normalized flow, as these flows related to one another by a time reparametrization and rescaling.

The construction of Ricci flow with this specific version of surgery on the cusped manifold $M$ was established by Bessi{\`e}res, Besson, and Maillot in \cite{Bessieres-Besson-Maillot}, under the assumption that the initial metric $h$ admits a \emph{cusp-like structure}. This means that the restriction of $h$ on each cusp satisfies the condition that $\lambda h-h_{cusp}$ approaches zero at infinity in the $C^k$-norm for each integer $k$, where $\lambda>0$ and $h_{cusp}=e^{-2s}h_{T_j}+ds^2$ is a hyperbolic metric on $T_j\times [0,\infty)$. Note that the hyperbolic metric $h_{cusp}$ is not unique, it varies based on different choices of flat metrics $h_{T_j}$ on $T_j$. The cusp-like structure ensures that the universal cover $(B^3,h)$ has bounded geometry, allowing the existence theorem of Ricci flow with surgery (Theorem 2.17, \cite{Bessieres-Besson-Maillot}) to apply, and thus making it possible to consider an equivalent version that passes to the quotient (Addendum 2.19, \cite{Bessieres-Besson-Maillot}).

Furthermore, their work examines the long-time behavior of the Ricci flow on $M$ starting from a metric $h(0)$ with a cusp-like structure. After a finite number of surgeries, as $t$ goes to infinity, the solution $h(t)$ converges smoothly to the hyperbolic metric $h_0$ on balls of radius $R$ for all $R>0$ (Theorem 1.2 of \cite{Bessieres-Besson-Maillot}). However, as indicated in the stability theorem (see Theorem 2.22 of \cite{Bessieres-Besson-Maillot}, and also Theorem \ref{thm_stability_cusp} below for a more general version), outside these balls, the cusp-like structure of $h(0)$ is preserved for all time. Therefore, if $h(0)$ is asymptotic to some $h_{cusp}$ different from the restriction of $h_0$ on the cusp, then the convergence cannot be global on $M$. 

It is worth noting that the proof of the stability theorem relies on a different construction of surgery. Since $M$ is both irreducible and lacks finite quotients of $S^3$ or $S^2\times S^1$, any surgery in $M$ splits off a 3-sphere and does not change the topology, the authors focused only on metric surgeries that change the metric on some 3-balls. This version of surgery is called \emph{Ricci flow with bubbling-off} (Definition \ref{Def_bubbling}). The main distinction from the usual Hamilton-Perelman surgery is that, the bubbling-off occurs before a singularity appears. Moreover, in addition to the surgery parameters $r$ and $\delta$, they introduced new \emph{associated cutoff parameters} $H$ and $\Theta$ to determine when the scalar curvature at one end of a neck is large enough to perform a bubbling-off. In particular, this construction of bubbling-off is essential in proving the stability of cusp-like structures at infinity.

The goal of this section is to extend the long-time existence and stability to asymptotically cusped metrics or order $\geq 2$. 
We will provide more details in Section \ref{subsubsection_Persistence and stability}.

\subsubsection{Definitions and notations}
\begin{Def}[Evolving metric (Definition 2.2.2, \cite{BBB+10})]
    Let $M$ be a $3$-manifold and $I\subset \mathbb{R}$ be an interval. An \emph{evolving metric} on $M$ is a map $t\mapsto h(t)$ from $I$ to the space of Riemannian metrics on $M$, then it is left continuous and has a right limit at each $t\in I$. We also define the following terms:
    \begin{itemize}
        \item If the map is $C^1$ in a neighborhood of $t\in I$, then $t$ is called a \emph{regular time}. Otherwise, it is \emph{singular}.

        \item 
        If, on a subset $M_0\times I_0\subset M\times I$, the map $t\mapsto h(t)|_{M_0}$ is $C^1$ at each $t\in I_0$, then $M_0\times I_0$ is \emph{unscathed}. Otherwise, it is \emph{scathed}. 
    \end{itemize}
\end{Def}

\begin{Def}[Ricci flow with bubbling-off (Definition 2.2.1, \cite{BBB+10})]
    Let $I\subset [0,\infty)$ be an interval, and let $h(t)$ be a piecewise $C^1$ evolving metric on $I$ that solves the normalized Ricci flow equation \eqref{RF} at all regular times. We say that $\{h(t)\}_{t\in I}$ is a \emph{Ricci flow with bubbling-off} if, for every singular time $t\in I$, the following conditions hold:
        \begin{equation*}
            \inf_M R(h_+(t))\geq \inf_M R(h(t))\quad \text{and}\quad h_+(t)\leq h(t),
        \end{equation*}
        where $h_+(t)$ denotes the right limit of $h(t)$.
\end{Def}

\begin{Def}[$\epsilon$-closeness, $\epsilon$-homothety (Definition 2.1.1, \cite{BBB+10})]\label{def_closeness}
    Let $U\subset M$ be an open subset, and let $h_0,h$ be Riemannian metrics on $U$. Assume $\epsilon>0$. \begin{itemize}
        \item We say that $h$ is \emph{$\epsilon$-close} to $h_0$ on $U$ if 
    \begin{equation*}
        \left\Vert h-h_0\right\Vert_{\floor*{\frac{1}{\epsilon}},U,h_0}:= \left(\sup_{x\in U} \sum_{k=0}^{\floor*{\frac{1}{\epsilon}}}|\nabla_{h_0}^k(h-h_0)(x)|_{h_0}^2\right)^{\frac{1}{2}}<\epsilon.
    \end{equation*}
    \item If there exists $\lambda>0$ such that $\lambda h$ is $\epsilon$-close to $h_0$ on $U$, we say that $h$ is \emph{$\epsilon$-homothetic} to $h_0$ on $U$. 

    \item Furthermore, a pointed manifold $(U,h,x)$ is said to be \emph{$\epsilon$-close} to $(U_0,h_0,x_0)$, if there exists a $C^{\ceil*{\frac{1}{\epsilon}}}$-diffeomorphism $\psi:(U_0,x_0)\rightarrow (U,x)$, such that the pullback metric $\psi^*(h)$ is $\epsilon$-close to $h_0$ on $U_0$. 
    \item If there exists $\lambda>0$ such that $(U,\lambda h,x)$ is $\epsilon$-close to $(U_0,h_0,x_0)$, we say that $(U,h,x)$ is \emph{$\epsilon$-homothetic} to $(U_0,h_0,x_0)$. 
    \end{itemize}
 \end{Def}

\begin{Def}[$\epsilon$-necks, $\epsilon$-caps (Definitions 3.1.1, 3.1.2, 4.2.6, 4.2.8, \cite{BBB+10})]
Let $\epsilon, C>0$.
\begin{itemize}
    \item An open subset $N\subset M$ is called an \emph{$\epsilon$-neck centered at $x$} if $(N,h,x)$ is $\epsilon$-homothetic to $\left(S^2\times \left(-\frac{1}{\epsilon},\frac{1}{\epsilon}\right),h_{cyl},(\ast,0)\right)$, where $h_{cyl}$ represents the standard metric with on $S^2\times \left(-\frac{1}{\epsilon},\frac{1}{\epsilon}\right)$ with constant scalar curvature 2. 

    \item An open subset $U\subset M$ is called an \emph{$\epsilon$-cap centered at $x$} if, $U$ can be written as $U=V\cup N$, where $V$ is a closed 3-ball, $N$ is an $\epsilon$-neck, and $\overline{N}\cap V=\partial V$, $x\in \text{Int} V$.

    \item An open subset $N\subset M$ is called a \emph{strong $\epsilon$-neck centered at $(x,t)$} if, there exists $Q>0$ such that $\left(N,\{h(t')\}_{t'\in [t-Q^{-1}, t]},x\right)$ is unscathed, and for the parabolic rescaling $\bar{h}(t'):=Qh(t+t'Q^{-1})$, $\left(N,\{\bar{h}(t')\}_{t'\in [-1,0]},x\right)$ is $\epsilon$-close to the cylindrical flow $\left(S^2\times (-\frac{1}{\epsilon},\frac{1}{\epsilon}),\{h_{cyl}(t')\}_{t'\in [-1,0]},(\ast,0)\right)$.

    \item An $\epsilon$-cap $U$ is called an \emph{$(\epsilon,C)$-cap centered at $x$} if $R(x)>0$ and there exists $r\in (C^{-1}R(x)^{-\frac{1}{2}}, CR(x)^{-\frac{1}{2}})$ so that the following properties hold on $U$. \begin{enumerate}[(i)]
        \item $\overline{B(x,r)}\subset U\subset B(x,2r)$.
        
        \item The scalar curvature function restricted on $U$ takes values in a compact subinterval of $(C^{-1}R(x),CR(x))$.
        
        \item \begin{equation*}
            \vol(U)>C^{-1}R(x)^{-\frac{3}{2}}.
        \end{equation*}
        Additionally, if on $B(y,s)\subset U$, one has $|Rm|\leq s^{-2}$, then \begin{equation*}
            \vol(B(y,s))>C^{-1}s^3.
        \end{equation*}

        \item \begin{equation*}
            |\nabla R|<CR^{\frac{3}{2}}.
        \end{equation*}
        \item \begin{equation*}
            |\Delta R+2|Ric|^2|<CR^2.
        \end{equation*}
        \item \begin{equation*}
            |\nabla Rm|<C|Rm|^{\frac{3}{2}}.
        \end{equation*}
    \end{enumerate}

\end{itemize}
\end{Def}
\begin{Rem}
    Given $\epsilon>0$, there exists $C=C(\epsilon)>0$, such that a strong $\epsilon$-neck satisfies properties (i)-(vi) for all time. 

    For (v), if $h(t)$ solves the normalized Ricci flow equation \eqref{RF}, by the evolution equation \begin{equation*}
        \frac{\partial R}{\partial t}=\Delta R+2|Ric|^2+4R,
    \end{equation*} 
    we have \begin{equation}\label{equ_rem_cap}
        \left|\frac{\partial R}{\partial t}\right|< CR^2+4|R|.
    \end{equation}
\end{Rem}

\begin{Def}[Canonical neighborhood (Definitions 4.2.10, 5.1.2, \cite{BBB+10})]\label{Def_CN}
\noindent
    \begin{itemize}
        \item A point $(x,t)$ admits an \emph{$(\epsilon,C)$-canonical neighborhood} if $x$ is the center of a strong $\epsilon$-neck or an $(\epsilon,C)$-cap that satisfies (i)-(vi) for all time.

        \item Let $r>0$, and let $\epsilon_0, C_0$ be the constants in Definition 3.2.1 and Definition 5.1.1 of \cite{BBB+10}.
        The evolving metric $h(t)$ on $M$ satisfies the \emph{Canonical Neighborhood Property $(CN)_r$} if, for each $(x,t)$, when $R(x,t)\geq r^{-2}$, the point $(x,t)$ is the center of an $(\epsilon_0,C_0)$-canonical neighborhood.
    \end{itemize}
\end{Def}

Consider the positive decreasing function $\phi_t(s)$ defined in Remark 4.4.2 of \cite{BBB+10}, such that $\frac{\phi_t(s)}{s}\rightarrow 0$ as $s\rightarrow\infty$.

\begin{Def}[Curvature pinched toward positive (Definition 4.4.3, \cite{BBB+10})]\label{Def_pinching}
    The evolving metric $h(t)$ is said to have \emph{curvature pinched toward positive} if \begin{equation*}
        R(x,t)\geq -6, \quad Rm(x,t)\geq -\phi_t(R(x,t)).
    \end{equation*}
\end{Def}

The definitions above enable us to define the parameters $r,\delta, H$ and $\Theta$ for bubbling-off, thereby introducing the concept of $(r,\delta)$-bubbling-off. 

\begin{Th}[Cutoff parameters (Theorem 5.2.4, Definition 5.2.5, \cite{BBB+10})]
    For any $r,\delta>0$, there exist $H\in (0,\delta r)$ and $D>10$ such that the following holds.
    If $\{h(t)\}_{t\in I}$ is a Ricci flow with bubbling-off on $M$ with curvature pinched toward positive and satisfies the Canonical Neighborhood Property $(CN)_r$, then: 

    Suppose $x,y,z\in M$ and $t\in I$ with 
    \begin{equation*}
        R(x,t)\leq 2r^{-2},\quad R(y,t)=H^{-2},\quad R(z,t)\geq DH^{-2},
    \end{equation*}
    and $y$ lies on the $h(t)$-geodesic segment connecting $x$ to $z$. Then $(y,t)$ is the center of a strong $\delta$-neck. 

    We refer to $r<10^{-3}$ and $\delta<\min (\epsilon_0,\delta_0)$, where $\delta_0$ is determined by Theorem 5.2.2 of \cite{BBB+10}, as the \emph{surgery parameters}. The quantities $H=H(r,\delta)$ and $\Theta=\Theta(r,\delta):=2D(r,\delta) H(r,\delta)^{-2}$ are called the \emph{associated cutoff parameters}. 
\end{Th}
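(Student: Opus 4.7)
The plan is to argue by contradiction via a rescaling and compactness argument, which is the standard route for cutoff parameter statements in Perelman-style Ricci flow with surgery. Fix $r,\delta>0$ and suppose the conclusion fails. Then one can find sequences $H_k\to 0$ and $D_k\to\infty$, Ricci flows with bubbling-off $\{h_k(t)\}_{t\in I_k}$ on $M$ with curvature pinched toward positive and satisfying $(CN)_r$, together with points $x_k,y_k,z_k\in M$ and times $t_k\in I_k$ obeying the three curvature inequalities with $y_k$ on the $h_k(t_k)$-geodesic from $x_k$ to $z_k$, but such that $(y_k,t_k)$ is not the center of a strong $\delta$-neck.

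The next step is to rescale at $y_k$ by $Q_k:=R(y_k,t_k)=H_k^{-2}$, forming $\bar h_k(s):=Q_k h_k(t_k+sQ_k^{-1})$, so that $R(y_k,0)=1$ in the rescaled flow. The scale-invariant gradient bound $|\nabla R|<CR^{3/2}$ and time derivative bound $|\partial_t R|<CR^2+4|R|$ from \eqref{equ_rem_cap}, inherited from the $(\epsilon_0,C_0)$-canonical neighborhoods on the set $\{R\geq r^{-2}\}$, ensure that in the rescaled picture the level sets $\{R=2r^{-2}Q_k^{-1}\}$, $\{R=1\}$ and $\{R=D_k\}$ are separated by definite distances along the rescaled geodesic: in particular this geodesic survives in the rescaled limit to the left (where the curvature drops to zero since $2r^{-2}/Q_k\to0$) and to the right (where it blows up since $D_k\to\infty$). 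The volume lower bound in property (iii) of an $(\epsilon_0,C_0)$-cap, together with $\kappa$-noncollapsing built into the bubbling-off setup of \cite{BBB+10}, supplies a uniform injectivity radius lower bound at $y_k$. Then Hamilton's compactness theorem yields, along a subsequence, a pointed smooth limit $(M_\infty,\bar h_\infty(s),y_\infty)$ of the rescaled flows, at least on the region where curvature is bounded.

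The pinching condition together with $\phi_t(s)/s\to 0$ forces $\mathrm{Rm}\geq 0$ on $M_\infty$. Propagating the canonical neighborhood property backward in time using the strong $\epsilon$-neck structure, the limit extends to a complete smooth ancient $\kappa$-solution with bounded nonnegative curvature in dimension three. The main obstacle, and the crux of the argument, is the classification step: Perelman's structure theorem for three-dimensional ancient $\kappa$-solutions says $M_\infty$ is either compact (a quotient of $S^3$ or $S^2\times\mathbb{R}$), a round shrinking cylinder $S^2\times\mathbb{R}$, or has a single end modeled on a cylinder with a cap. The existence on the limit geodesic of points of arbitrarily small curvature on one side (coming from $x_k$) rules out the compact and one-ended cases, while the arbitrarily large curvature on the other side (coming from $z_k$, since $D_k\to\infty$) is incompatible with asymptotic flatness of any cap end. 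The only possibility left is the round shrinking cylinder, which precisely means $(y_k,t_k)$ is the center of a strong $\delta$-neck for all large $k$, contradicting the setup. This produces $H=H(r,\delta)$ and $D=D(r,\delta)$; shrinking $H$ further if necessary guarantees $H\in(0,\delta r)$, and one then sets $\Theta:=2DH^{-2}$.
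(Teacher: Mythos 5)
The paper does not actually prove this statement: it is quoted verbatim as Theorem 5.2.4 / Definition 5.2.5 of the Bessi\`eres--Besson--Boileau--Maillot--Porti book, cited as \cite{BBB+10}. There is therefore no proof in this paper to compare your sketch against; the authors are importing a foundational piece of the Ricci-flow-with-bubbling-off machinery as a black box.

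Assessing your sketch on its own merits: the overall strategy (argue by contradiction, take $H_k\to 0$ and $D_k\to\infty$, rescale at $y_k$ by $Q_k=H_k^{-2}$, extract a blow-up limit, and identify it with the round shrinking cylinder) is indeed the Perelman-style compactness route, and it is the right family of arguments. However, the step where you assert that the rescaled limit ``extends to a complete smooth ancient $\kappa$-solution'' is a genuine gap. The flows are Ricci flows with bubbling-off on a finite interval $I$, so the rescaled flows are only defined on a backward time interval whose length is controlled by the strong $\epsilon_0$-neck structure (length of order $Q_k^{-1}$ in absolute time, hence of order $1$ after rescaling). Extending the limit backward to all negative time is precisely Perelman's backward-propagation argument inside the proof of the canonical neighborhood theorem, which is an independent and delicate piece of work; it is not available for free here. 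More to the point, the appeal to the classification of ancient $\kappa$-solutions is unnecessary overhead: the hypothesis $(CN)_r$ already tells you that $(y_k,t_k)$, having $R=H_k^{-2}\geq r^{-2}$ for large $k$, admits an $(\epsilon_0,C_0)$-canonical neighborhood, i.e. is the center of a strong $\epsilon_0$-neck or of an $(\epsilon_0,C_0)$-cap. The economical route (and, I believe, the one in \cite{BBB+10}) is to use the scale separation along the $h(t)$-geodesic through $x_k$, $y_k$, $z_k$ together with the gradient/time-derivative bounds (i)--(vi) to rule out the cap case at $y_k$ and to show, via a direct compactness argument on the resulting chain of necks as $D_k\to\infty$, that the rescaled neck at $y_k$ converges to the cylindrical flow, forcing it to be a strong $\delta$-neck for large $k$. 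If you keep the ancient-solution classification in your argument, you must either quote the relevant Perelman/Kleiner--Lott/Morgan--Tian results carefully or replace it by the more direct neck-chain analysis.
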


\begin{Def}[$\delta$-almost standard cap (Definition 5.2.3, \cite{BBB+10})]
    Choose a constant $\delta\in (0, \min (\epsilon_0,\delta_0))$, and let $\delta'=\delta'(\delta)$ be the function determined by Theorem 5.2.2 of \cite{BBB+10}, which tends to zero as $\delta\rightarrow 0$. 
    Let $U$ be an open subset of $M$, $V\subset U$ be a compact subset, $p\in \text{Int} V$, $y\in \partial V$.
    The 4-tuple $(U,V,p,y)$ is called a \emph{$\delta$-almost standard cap} if there is a $\delta'$-isometry $\psi:B(p_0,5+\frac{1}{\delta})\rightarrow (U,R(y)h)$, which maps $p_0$ to $p$ and $B(p_0,5)$ to $\text{Int} V$.
\end{Def}

 Finally, we provide the core definition. 
\begin{Def}[Ricci flow with $(r,\delta)$-bubbling-off (Definition 5.2.8, \cite{BBB+10})]\label{Def_bubbling}
    Fix the surgery parameters $r,\delta$, and let $h,\Theta$ be the associated cutoff parameters. Consider an interval $I \subset [0,\infty)$, and let $\{h(t)\}_{t\in I}$ represent a Ricci flow with bubbling-off on $M$.
    
    We say that $\{h(t)\}_{t\in I}$ is a \emph{Ricci flow with $(r,\delta)$-bubbling-off} if it meets the following conditions.
    \begin{enumerate}
        \item $h(t)$ has curvature pinched toward positive and satisfies $R(x,t)\leq \Theta$ for all $(x,t)\in M\times I$.
        
        \item For every singular time $t\in I$, $h_+(t)$ is \emph{obtained from $h(t)$ by $(r,\delta)$-surgery at time $t$}. This means \begin{enumerate}
            \item for every $x\in M$ where $h_+(x,t)\neq h(x,t)$, there exists a $\delta$-almost standard cap $(U,V,p,y)$ with respect to $h_+(t)$ such that 
            \begin{enumerate}
                \item $x\in \text{Int} V$,
                \item $R(y,t)=H^{-2}$,
                \item $(y,t)$ is the center of a strong $\delta$-neck,
                \item $h_+(t)<h(t)$ on $\text{Int} V$.
            \end{enumerate}

            \item the following (in)equalities hold:
            \begin{equation*}
               \sup_M R(h(t))=\Theta \quad \text{and}\quad \sup_M R(h_+(t))\leq \frac{\Theta}{2}.
            \end{equation*}
        \end{enumerate}

        \item $h(t)$ satisfies the Canonical Neighborhood Property $(CN)_r$.   
    \end{enumerate}
\end{Def}

\begin{Rem}[Short-time existence of Ricci flow with bubbling-off] \label{Rem_parameters}
Let $M$ be a hyperbolic $3$-manifold of finite volume. 
In \cite{Bessieres-Besson-Maillot}, Bessi{\`e}res-Besson-Maillot investigated the existence of Ricci flow with bubbling-off starting from a cusp-like metric on $M$ (there exist a hyperbolic metric $h_{cusp}$ on the cusp and $\lambda>0$, such that $\lambda h-h_{cusp}$ approaches zero at infinity in the $C^k$-norm for each integer $k$). These metrics ensure that the universal cover of $M$ has a bounded geometry, allowing the existence of Ricci flow with bubbling-off on the universal cover to be transferred to the quotient manifold $M$.

We can generalize the setting to asymptotically cusped metrics of order $k\geq 2$, that is, metrics $h$ such that $\lambda h-h_{cusp}$ tends to zero at infinity in $C^2$. Under this assumption, there exists a compact set $K\subset M$ such that the sectional curvature is negative on the thin part $M\setminus K$. By the proof of the Hadamard theorem, the universal cover $\Tilde{M}$ of $M$, equipped with the lifted metric from $h$, has a uniform positive lower bound on the injectivity radius. Therefore, $\Tilde{M}$ has bounded geometry. Applying Addendum 2.19 from \cite{Bessieres-Besson-Maillot}, we obtain the existence of Ricci flow with bubbling-off on $M$, starting from $h$ and defined on a short time interval $[0,T]$.

Moreover, we can choose the parameters to be piecewise constant. In fact, there exist a partition $0=t_0<t_1<\cdots <t_{N-1}=T$ and decreasing sequences of positive numbers $r_j, \delta_j$, such that $r(t)=r_j$ and $\delta(t)=\delta_j$ on $(t_j,t_{j+1}]$. Given that $h(0)$ is an asymptotically cusped metric of order $k\geq 2$ on $M$, there exists a Ricci flow with $(r(t),\delta(t))$-bubbling-off for $t\in [0,T]$.
\end{Rem}

\subsubsection{Stability of asymptotically cusped metrics}\label{subsubsection_Persistence and stability}
Let $(M,h_0)$ be a finite-volume hyperbolic $3$-manifold, and let $\mathcal{C}:=\cup_j T_j\times (0,\infty)$ denote the cusp region. There are hyperbolic metrics on $\mathcal{C}$ that differ from the restriction of $h_0$ on $\mathcal{C}$, given by $h_{cusp}=e^{-s}h_{T_j}+ds^2$, where $h_{T_j}$ stands for a flat metric on the torus.

The following result generalizes Theorem 2.22 of \cite{Bessieres-Besson-Maillot}, which addresses the stability of cusp-like metrics on the cusp, to asymptotically cusped metrics of any order $k\geq 2$. The proof proceeds in a similar manner, and for completeness, we include it below.

\begin{Th}[Stability of asymptotically cusped metrics]\label{thm_stability_cusp}
Let $h(0)$ be an asymptotically cusped metric on $M$ of order $k\geq 2$. Then there exists a normalized Ricci flow with bubbling-off $h(t)$ on $M$, defined for all $t\in [0,\infty)$, starting at $h(0)$.

    Moreover, assume that $\Vert Rm(h(0))\Vert_{C^{k-1}(M)}<\infty$. Then there is a factor $\lambda(t)>0$, such that $\lambda(t)h(t)-h_{cusp}$ goes to zero at infinity in the cuspidal end in $C^k$ uniformly for $t\in [0,\infty)$. This means that $h(t)$ remains asymptotic to the same hyperbolic metric on the cusp for all time. 
\end{Th}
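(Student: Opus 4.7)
The strategy proceeds in two stages: establishing long-time existence of the normalized Ricci flow with bubbling-off, and then showing that the cuspidal asymptotics are preserved uniformly in $t$.

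For existence, Remark \ref{Rem_parameters} already gives short-time existence on some interval $[0,T]$: the asymptotically cusped assumption of order $\geq 2$ ensures that the lift of $h(0)$ to the universal cover has bounded geometry, so Addendum 2.19 of \cite{Bessieres-Besson-Maillot} applies. To extend to $[0,\infty)$ I would iterate the construction with piecewise constant parameters $(r_j, \delta_j)$, chosen inductively so that both the Canonical Neighborhood Property $(CN)_{r_j}$ of Definition \ref{Def_CN} and the pinching of Definition \ref{Def_pinching} persist on each successive interval. Since $M$ is irreducible and atoroidal, $(r,\delta)$-surgeries only remove topologically trivial balls, so the topology (and hence finite volume) is preserved; the standard BBM arguments then show the flow extends to all positive times without extinction.

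For the stability of the cusp asymptotics, the key observation is that on $T_j \times [s,\infty)$ the scalar curvature of $h(0)$ is close to $-6/\lambda$ once $s$ is large, so by continuity it remains much smaller than the surgery threshold $\Theta$. Thus for suitably small $(r_j,\delta_j)$ no surgery ever touches the near-hyperbolic deep cusp, and the flow there is classical. Writing $h(t) = \lambda(t)^{-1}(h_{cusp} + u(t))$ on $\mathcal{C}$, the defect $u(t)$ satisfies a quasilinear parabolic equation whose linearization at $h_{cusp}$ is the operator $A_{h_{cusp}} = \Delta_L - 4$ from \eqref{equ_def_A}, whose $L^2$-spectrum is bounded above by $-1$ by \eqref{equ_spectrum}. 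The scaling $\lambda(t)$ is chosen so that $\lambda(t)^{-1} h_{cusp}$ is stationary for \eqref{RF} and matches the asymptotic scale of $h(t)$, concretely via the conformal class of the flat cross-sectional metric on $T_j$. A maximum-principle argument in a weighted H\"older space, combined with the spectral estimate and the weighted Schauder framework developed in \cite{Jiang-VargasPallete_RF}, yields that $\|u(t)\|_{C^0(T_j \times [s,\infty))} \to 0$ as $s \to \infty$, uniformly in $t \geq 0$. Parabolic interior regularity on thin slices $T_j \times [s, s+1]$ then upgrades this to uniform $C^k$-decay, using $\|Rm(h(0))\|_{C^{k-1}(M)} < \infty$ to start the bootstrap.

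The main obstacle is ruling out that a neck-pinch or cap-type surgery ever forms deep inside the cusp: a priori bubbling-off could invade $T_j \times [s,\infty)$ if curvature unexpectedly spiked there. This is controlled by iterating the pinching inequality of Definition \ref{Def_pinching} together with the near-hyperbolic initial data at infinity, showing that $|Rm(h(t))|$ stays well below $\Theta^{1/2}$ for $s$ large, so canonical neighborhoods never appear in the deep cusp and parabolic analysis takes over there. A secondary challenge is defining $\lambda(t)$ smoothly in $t$; the natural candidate, built from the volume form of a fixed horospherical cross-section, requires verifying that this cross-sectional volume evolves smoothly under the flow, which follows from the smoothness of the flow in the cusp together with the $C^0$-decay of $u(t)$.
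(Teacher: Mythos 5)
Your existence argument matches the paper's (short-time existence via Remark~\ref{Rem_parameters}, extension to all time via Perelman's proof of geometrization, and finitely many surgeries by volume reduction). The stability argument, however, takes a fundamentally different route from the paper's. You propose to linearize the flow around $h_{cusp}$ and control the defect $u(t)$ by the spectral bound \eqref{equ_spectrum} together with a maximum principle in weighted H\"older spaces. The paper does not linearize at all: its persistence lemma (Lemma~\ref{thm_persistence}) is proved by contradiction, extracting a limit flow via Shi's local derivative estimates and Hamilton compactness, identifying it with $h_{cusp}$ by Chen--Zhu uniqueness, and iterating over time intervals of length $\sigma$. The quantity $\sigma$ is derived quantitatively from the canonical-neighborhood estimate \eqref{equ_rem_cap}, which shows that $R$ cannot rise from $\leq 0$ to $2r_j^{-2}$ in time less than $\sigma$ (this is Lemma~\ref{lemma_2.23}); this is precisely what guarantees that no surgery intrudes into the deep cusp, and it is iterated to cover all times.

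Several gaps prevent your linearization scheme from going through as written. First, the flow is a normalized Ricci flow with bubbling-off, which is only weakly parabolic; the operator $\Delta_L - 4$ in \eqref{equ_def_A} is the linearization of the \emph{DeTurck}-modified flow with \emph{global} background $h_0$, not of the Ricci flow restricted to $\mathcal{C}$ around $h_{cusp}$. Making the equation for $u(t)$ strictly parabolic requires a gauge choice, and that choice and its linearization are not the ones you cite. Second, $h_{cusp}$ is only defined on $\mathcal{C}$, so the evolution of $u(t) = \lambda(t)h(t) - h_{cusp}$ sees an effective boundary at $\partial\mathcal{C}$ where the cusp meets the thick part; a maximum-principle argument needs a barrier there and you do not supply one, whereas the paper avoids this issue entirely by localizing to balls $B(x_0, 1/D)$ deep in the cusp. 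Third, \eqref{equ_spectrum} is an $L^2$ estimate for compactly supported tensors and does not directly yield the uniform $C^0$ spatial decay you assert; a De~Giorgi--Nash--Moser step (of the type the paper uses elsewhere, in Lemma~\ref{lem:L2-1eigendecay}) would be required and is not supplied. Fourth, your dismissal of bubbling-off in the deep cusp (``for suitably small $(r_j,\delta_j)$'') skips the quantitative $\sigma$-argument above; without it, the claim that the deep-cusp flow stays classical for all time is not established. Finally, your proposed time-dependent $\lambda(t)$ is ill-defined: since $\lambda^{-1}h_{cusp}$ is stationary for \eqref{RF} precisely when $\lambda = 1$, requiring both stationarity and matching of the asymptotic scale forces $\lambda(t)\equiv 1$, which is exactly the paper's WLOG normalization $\lambda(0)=1$ and eliminates your ``secondary challenge'' from the outset.
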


To prove the theorem, we need the following lemma, which is analogous to Theorem 8.1.3 in \cite{BBB+10}. The key difference is that their result measures the distance between two metrics using the notion of $\epsilon$-closeness defined in Definition \ref{def_closeness}, whereas we use the $C^k$ norm for a fixed integer $k$. Additionally, while their theorem addresses the persistence of a flow $h(t)$ relative to an arbitrary model flow $\bar{h}(t)$ with bounded curvature, we restrict our attention to the case where $\bar{h}(t)=h_{cusp}$. 

\begin{Lemma}\label{thm_persistence}
  Given an integer $k\in\mathbb{N}$ and $D, T,K>0$. There exists a constant $d=d(k, D, T,K)\leq D$ such that the following holds. Let $h(t)$ be a normalized Ricci flow defined on $M\times [0,T]$, which is unscathed on $\mathcal{C}\times [0,T]$. Consider a base point $x_0\in \mathcal{C}$ such that the ball $B(x_0,\frac1d)\subset \mathcal{C}$ is relatively compact.
  Suppose that
    \begin{enumerate}[(P1)]
    \item  \begin{equation*}
      \Vert Rm(h(0))\Vert_{C^{\max(k-1,0)}(\mathcal{C})}\leq K,
    \end{equation*}
    \item  \begin{equation*}
      \Vert Rm(h(t))\Vert_{C^{0}(\mathcal{C})}\leq K\quad \forall t\in [0,T], 
    \end{equation*}
    \item 
    \begin{equation*}
        \Vert h(0)-h_{husp}\Vert_{C^k(B(x_0,\frac1d))}\leq d.
    \end{equation*}
    \end{enumerate}
    Then 
    \begin{equation*}
        \Vert h(t)-h_{cusp}\Vert _{C^k(B(x_0,\frac1D))}< D\quad \forall t\in [0,T].
    \end{equation*}
\end{Lemma}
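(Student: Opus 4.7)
The plan is to prove Lemma \ref{thm_persistence} by combining Shi-type derivative estimates (with initial regularity) with a compactness-and-contradiction argument that exploits the fact that $h_{cusp}$ is a stationary solution of the normalized Ricci flow. First I would record the key structural observation: since $h_{cusp}$ is hyperbolic of sectional curvature $-1$, it satisfies $-2\operatorname{Ric}(h_{cusp})-4h_{cusp}=0$, so $u(t):=h(t)-h_{cusp}$ evolves by
\begin{equation*}
\partial_t u = -2\bigl(\operatorname{Ric}(h)-\operatorname{Ric}(h_{cusp})\bigr)-4u,
\end{equation*}
which, in local harmonic coordinates for $h_{cusp}$, is a quasilinear parabolic equation in $u$ whose coefficients are controlled by $h$ and its first derivatives.

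The analytic input is a uniform higher-derivative curvature bound. Using (P1) together with (P2), I would apply Shi's interior estimates with initial data to propagate the $C^{\max(k-1,0)}$ bound on $Rm(h(0))$ to all times, obtaining a constant $K'=K'(k,K,T)$ and a shrinking of the ball (depending only on $k,K,T,D$) so that
\begin{equation*}
\sup_{t\in[0,T]}\|Rm(h(t))\|_{C^{\max(k-1,0)}(B_t(x_0,2/D))}\leq K'.
\end{equation*}
Here the $C^0$ bound on $Rm$ controls the distortion of the metric along the flow by a factor $e^{\pm 2KT}$, which permits translating between balls measured by $h(0)$ and $h(t)$. In harmonic coordinates for $h(t)$, this $C^{k-1}$ bound on curvature yields uniform $C^{k+1}$ control on the components of $h(t)$.

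Now I would argue by contradiction. Suppose the conclusion fails: there exist sequences $d_n\to 0$, normalized Ricci flows $h_n(t)$ on pointed finite-volume hyperbolic manifolds $(M_n,x_0^n)$ satisfying (P1)--(P3) with $d=d_n$, cusp models $h_{cusp,n}$, and times $t_n\in[0,T]$ with
\begin{equation*}
\|h_n(t_n)-h_{cusp,n}\|_{C^k(B_n(x_0^n,1/D))}\geq D.
\end{equation*}
The uniform $C^{k+1}$ bounds on $h_n(t)$ from the previous step together with the initial $C^k$ smallness in (P3) allow me to apply Cheeger--Gromov compactness, passing to a subsequence converging in $C^k_{loc}$ (from the $C^{k+1}$ bounds on a slightly larger ball via Arzel\`a--Ascoli) to a limit Ricci flow $h_\infty(t)$ on a pointed limit $(M_\infty,x_\infty)$ with limit model $h_{cusp,\infty}$. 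Since $d_n\to 0$, the limiting initial condition is $h_\infty(0)=h_{cusp,\infty}$. By uniqueness of complete Ricci flow with bounded curvature (Chen--Zhu, Kotschwar) and the fact that $h_{cusp,\infty}$ is stationary for \eqref{RF}, one concludes $h_\infty(t)\equiv h_{cusp,\infty}$, contradicting the uniform lower bound $\|h_\infty(t_\infty)-h_{cusp,\infty}\|_{C^k(B_\infty(x_\infty,1/D))}\geq D$ inherited from the sequence.

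The main obstacle will be the bookkeeping between three scales: the initial ball of size $1/d$ where (P3) is assumed, the parabolic ball on which Shi's estimates produce uniform $C^{k-1}$ curvature bounds (after shrinking), and the target ball of size $1/D$ where one wants $C^k$ closeness to hold at time $T$. These must be coordinated with the distance-distortion of the flow (bounded by the $C^0$ curvature hypothesis) so that, along the contradicting sequence, one can extract a well-defined limit flow on a ball of size strictly larger than $1/D$ with $C^k$ convergence of the metrics. A direct alternative, avoiding compactness, would replace Step 3 by a parabolic energy/Schauder estimate for $u$ using cutoff functions on a telescoping sequence of nested balls, leading to a Gronwall inequality $\|u(t)\|_{C^k}\leq e^{C(k,K',T)T}\|u(0)\|_{C^k}$; choosing $d\leq De^{-CT}$ then closes the argument quantitatively. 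I expect the compactness route to be cleaner, as it entirely sidesteps the delicate Schauder estimates for quasilinear systems on shrinking domains.
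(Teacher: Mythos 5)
Your overall strategy---contradiction, Shi's local estimates, Hamilton/Cheeger--Gromov compactness, and Chen--Zhu uniqueness---matches the paper's proof closely, so at the level of tools the proposal is on track. However, the final step of your argument has a real gap. You write that one extracts a $C^k_{loc}$-convergent limit flow $h_\infty(t)$, concludes $h_\infty\equiv h_{cusp}$ by uniqueness, and then obtains a contradiction with the ``inherited'' lower bound $\|h_\infty(t_\infty)-h_{cusp}\|_{C^k}\geq D$. But Hamilton's compactness gives smooth convergence only on compact subsets of $M\times[0,t_\infty)$, with $t_\infty$ excluded, so the values $h_n(t_n)$ at times $t_n\to t_\infty$ need not converge to anything in $C^k$, and the lower bound $\geq D$ is \emph{not} automatically inherited by the limit at $t=t_\infty$. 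In particular, if $t_n\to 0$ the limit flow might only exist at the single time $0$, and no contradiction results.

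The paper closes exactly this gap by two extra devices that you should incorporate. First, one takes $t_n$ to be the \emph{first} time the $C^k$ distance reaches $D$, and uses the time-integrated Shi estimate (your ``Gronwall'' alternative, which in the paper is not an alternative but a necessary component) to prove a uniform lower bound $t_n\geq T_{loc}>0$, so $t_\infty\in(T_{loc},T]$. Second, once Chen--Zhu yields $h_\infty\equiv h_{cusp}$ on $[0,t_\infty)$, the convergence at the earlier time $t_\infty-\frac{T_{loc}}{2}$ shows $\|h_n(t_\infty-\frac{T_{loc}}{2})-h_{cusp}\|_{C^k}$ is small for large $n$; reapplying the local-in-time Gronwall estimate then forces $\|h_n(t)-h_{cusp}\|_{C^k}<D$ up to time $\min(t_\infty+\frac{T_{loc}}{2},T)>t_n$, contradicting the \emph{minimality} of $t_n$ rather than the limit at $t_\infty$ directly. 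Without the minimality of $t_n$ and the restart of the local persistence estimate, the compactness argument alone does not produce a contradiction. Also, a smaller remark: in this lemma all the flows live on the same $M$, so there is no need to invoke pointed limits of varying manifolds $(M_n,x_0^n)$; this simplification is what lets the paper use Hamilton's compactness for flows rather than the more general Cheeger--Gromov machinery you mention.
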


\begin{Rem}
   For a general model flow $\bar{h}(t)$, the persistence of $h(t)$ relative to $\bar{h}(t)$ may hold only on a finite time interval $[0,T]$. For example, an arbitrary large metric ball in the standard cylinder can be approximated by an almost cylindrical ball in the cigar soliton (Remark 8.1.4, \cite{BBB+10}). Consequently, in the proof of the stability theorem, we apply the lemma only over finite time intervals and proceed by induction.
\end{Rem}

\begin{proof}[Proof of Lemma \ref{thm_persistence}]
    Suppose by contradiction that there exist a sequence of normalized Ricci flows $g_n(t)$ defined on $M\times [0,T]$, a sequence $d_n\rightarrow 0$ as $n\rightarrow\infty$, and a sequence of points $x_n\in \mathcal{C}_{\frac{1}{d_n}}:=M\setminus M(\frac{1}{d_n})=\cup_jT_j\times (\frac{1}{d_n},\infty)$ such that $B(x_0,\frac1D)\subset B(x_n,\frac{1}{d_n})$ for each $n\in\mathbb{N}$, and the following conditions hold. 
\begin{enumerate}[(P1)]
         \item  \begin{equation*}
       \Vert Rm(g_n(0))\Vert_{C^{\max(k-1,0)}(\mathcal{C})}\leq K, 
    \end{equation*}
     \item  \begin{equation*}
      \Vert Rm(g_n(t))\Vert_{C^{0}(\mathcal{C})}\leq K \quad \forall t\in [0,T], 
    \end{equation*}
         \item 
         \begin{equation*}
            \Vert g_n(0)-h_{cusp}\Vert _{C^k(B(x_n,\frac{1}{d_n}))}\leq d_n.
        \end{equation*}
\end{enumerate}
    Moreover, there exists $t_n\in [0,T]$ such that 
    \begin{equation*}
        \Vert g_n(t_n)-h_{cusp}\Vert _{C^k(B(x_0,\frac1D))}\geq D.
    \end{equation*}
    We also assume that $t_n$ is the minimum time for this property.

Applying Shi's local derivative estimates \cite{Shi}---specifically, the stronger version stated in \cite[Theorem 3.29]{MorganTian2007}---and using (P1) and (P2) (in fact, only the bounds on $B(x_0,\frac2D)$ are required), 
we obtain a constant $K_m>0$ depending on $k,m,D,T,K$, such that
   \begin{equation}\label{equ_cond_local_persist}
      \Vert \nabla^mRm(g_n(t))\Vert _{C^0(B(x_0,\frac1D))}\leq K_mt^{-\frac{\max(m-k+1,0)}{2}} \quad \forall t\in [0,T].
   \end{equation}
   According to the proof of Lemma 8.2.1 of \cite{BBB+10}, for each integer $m\geq 1$, the pointwise norm $|\partial_t\nabla_{h_{cusp}}^mg_n(t)|_{h_{cusp}}$ is bounded by a constant depending on $\Vert \nabla^m Rm(g_n(t))\Vert_{C^0}$ for $0\leq m\leq k$, where the leading term is linear in $\nabla^k Ric(g_n(t))$.
   It then follows from \eqref{equ_cond_local_persist} that \begin{align}\label{equ_821}
       &\Vert g_n(t)-h_{cusp}\Vert _{C^{k}(B(x_0,\frac1D))}\\\nonumber
       \leq &\Vert g_n(0)-h_{cusp}\Vert _{C^{k}(B(x_0,\frac1D))}+\int_0^t \Vert \partial_s\nabla_{h_{cusp}}^mg_n(s)\Vert_{C^0(B(x_0,\frac1D))}ds\\\nonumber
       \leq & d_n+C_k\int_0^t\left(C(K,K_1,\cdots,K_{k-1})+K_ks^{-\frac12}\right)ds\\\nonumber
       =& d_n+C_k\left(C(K,K_1,\cdots,K_{k-1})t+2K_kt^{\frac12}\right).
   \end{align}
Therefore, there exist constants $d_{loc},T_{loc}>0$ depending on $k,D,K_i$ for $0\leq i\leq k$, and hence depending on $k,D,T,K$, such that if $n$ is sufficiently large so that $d_n\leq d_{loc}$, and if $T_{loc}$ is sufficiently small, we have
\begin{equation*}
        \Vert g_n(t)-h_{cusp}\Vert _{C^k(B(x_0,\frac1D))}<D\quad \forall t\in [0,T_{loc}].
    \end{equation*}
   Hence, we conclude that $t_n>T_{loc}$. This means that the explosion time $t_n$ cannot be too small.

    Define $t_\infty=\lim_{n\rightarrow\infty}t_n$, we have $t_\infty\in (T_{loc},T]$. Since $B(x_n,\frac{1}{d_n})$ shares a common marked point $x_0$, and the initial metrics $g_n(0)$ have a uniform positive lower bound on their injectivity radius at $x_0$, applying Hamilton's compactness theorem (Theorem 1.2, \cite{Hamilton}), we conclude that after passing to a subsequence, the normalized Ricci flows $g_n$ on $B(x_n,\frac{1}{d_n})\times [0,t_n)$ converge uniformly on compact sets in $C^{k}$ to a normalized Ricci flow $g_\infty$ defined on $ \mathcal{C}\times [0,t_\infty)$.
    
    Furthermore, by Chen-Zhu's uniqueness theorem \cite{Chen-Zhu}, the limit $g_\infty(t)$ is exactly $h_{cusp}$ for all $t\in [0,t_\infty)$. Therefore, after passing to a subsequence, $g_n(t_\infty-\frac{T_{loc}}{2})$ converges to $h_{cusp}$ on compact sets of $B(x_n,\frac{1}{d_n})$ in $C^k$. In particular, for sufficiently large $n$, we have \begin{equation*}
        \Big\Vert g_n\Big(t_\infty-\frac{T_{loc}}{2}\Big)-h_{cusp}\Big\Vert _{C^{k}(B(x_0,\frac{1}{D}))}\leq d_{loc}.
    \end{equation*} 
   Moreover, since the derivative estimate \eqref{equ_cond_local_persist} holds for $t\in[t_\infty-\frac{T_{loc}}{2},T]$, we can apply the estimate \eqref{equ_821} on $[t_\infty-\frac{T_{loc}}{2},\min(t_\infty+\frac{T_{loc}}{2},T)]$. Hence, using $t_\infty-\frac{T_{loc}}{2}>\frac{T_{loc}}{2}$,
   \begin{align*}
              & \left\Vert g_n(t)-h_{cusp}\right\Vert _{C^{k}(B(x_0,\frac{1}{D}))}\\
              \leq& \Bigg\{
        \Big\Vert g_n\Big(t_\infty-\frac{T_{loc}}{2}\Big)-h_{cusp}\Big\Vert _{C^{k}(B(x_0,\frac{1}{D}))}\\
        &+C_k\left(C(K,K_1,\cdots,K_{k-1})+K_k\left(t_\infty-\frac{T_{loc}}{2}\right)^{-\frac12}\right)\left(t-\left(t_\infty-\frac{T_{loc}}{2}\right)\right)\Bigg\}\\
              <& C_k\left(d_{loc}+C(K,K_1,\cdots,K_{k-1})T_{loc}+K_k(2T_{loc})^{\frac12}\right)\\
              <&D,\quad t\in \left[t_\infty-\frac{T_{loc}}{2},\min\left(t_\infty+\frac{T_{loc}}{2},T\right)\right].
   \end{align*}
    Therefore, the argument implies that, the first blow-up time $t_n$ can be extended to $\min(t_\infty+\frac{T_{loc}}{2},T)>t_n$, which contradicts the minimality of $t_n$.
\end{proof}

\begin{proof}[Proof of Theorem \ref{thm_stability_cusp}]
By assumption, $h(0)$ is an asymptotically cusped metric on $M$ of order $k$. Recall that this means that there is a constant $\lambda>0$ such that, on each cusp, the restriction of $h(0)$ satisfies the condition that $\lambda h(0)-h_{cusp}$ tends to zero at infinity in the $C^k$-norm. For simplicity, we assume $\lambda=1$ and show the theorem for $\lambda(t)=1$ for $t\in [0,\infty)$. Since $k\geq 2$, there exists $s\geq 0$ such that the scalar curvature satisfies $R(h(0))< 0$ on $\mathcal{C}_s=\cup_jT_j\times(s,\infty)$.

According to Remark \ref{Rem_parameters}, given an initial asymptotically cusped metric $h(0)$ of order $k\geq 2$, there exists a normalized Ricci flow with bubbling-off $h(t)$ on $M$, defined on a short time interval. Then by Perelman's proof of geometrization \cite{Perelman}, the flow $h(t)$ is defined for all time $t\in [0,\infty)$.
By Section 3 of \cite{Bessieres-Besson-Maillot}, each surgery reduces the volume of the manifold by at least a fixed amount, therefore only finitely many surgeries can occur. Let $T$ denote a time after all surgeries have taken place.
Moreover, using the surgery parameter $r(t)=r_j$ on  $(t_j,t_{j+1}]\subset [0,T]$, chosen in Remark \ref{Rem_parameters} and the constant $C_0$ in Definition \ref{Def_CN}, we define \begin{equation*}
    \sigma:=\frac{1}{2C_0r_{N-1}^{-2}+4}\leq \frac{1}{2C_0r_{j}^{-2}+4},\quad \forall j=0,\cdots, N-1.
\end{equation*}
This number is sufficiently small in this context, so that $h(t)$ cannot develop a singularity on a cusp within time $\sigma$. Indeed, if the scalar curvature explodes too fast, there are $t',t''\in (t_j,t_{j+1}]$ and $x\in \mathcal{C}_s$, where $s\geq 0$, such that \begin{equation*}
    0<t''-t'<\sigma,\quad R(x,t')\leq 0,\quad R(x,t'')= 2r_j^{-2}, \quad |R(x,t)|\leq 2r_j^{-2},\,\forall t\in (t',t'').
\end{equation*}
Then there exists $\tau\in (t',t'')$ with \begin{equation*}
   \left|\frac{\partial R(x,t)}{\partial t}\Big|_{t=\tau}\right|> \frac{2r_j^{-2}}{\sigma}\geq 4C_0r_j^{-4}+8r_j^{-2}\geq C_0R(x,\tau)^2+4|R(x,\tau)|.
\end{equation*}
However, it contradicts equation \eqref{equ_rem_cap} in the $(CN)_r$ condition.
Consequently, for any $t\in (t',t'')$ and any $x\in\mathcal{C}_s$, we have \begin{equation*}
    R(x,t)\leq 2r_j^{-2}<<H_j^{-2},
\end{equation*}
where $H_j^{-2}$ is the associated parameter determined by $r_j$ and $\delta_j$ on the interval $(t_j,t_{j+1}]$. 
According to Definition \ref{Def_bubbling}, the bubbling-off only occurs on a $\delta$-almost standard cap whose curvature is comparable to $H_j^{-2}$. Therefore, it is disjoint from the thin part $\mathcal{C}_s$. On $\mathcal{C}_s\times [t_j,t_{j+1}]$, the scalar curvature is uniformly bounded above. Due to the pinching assumption in Definition \ref{Def_pinching}, the curvature tensor $Rm$ is bounded below by a negative number. Moreover, $|Ric|$ cannot be too large. Otherwise, if $K_{12}+K_{13}$ were very large, the upper bound on $R$ would force $K_{23}$ to be very negative, contradicting the lower bound on $Rm$. This shows that $|Rm|$ must be uniformly bounded.

Since the cusp cannot be contained in a 3-ball where the surgery is performed, we conclude the following lemma. 

\begin{Lemma}\label{lemma_2.23}
Given $s\geq 0$. Suppose that $h(t)$ is unscathed on $\mathcal{C}_s\times [0,t]$ and has scalar curvature $R\leq 0$ there, then it is unscathed on $\mathcal{C}_s\times [0,t+\sigma]$ and on which $|Rm|$ is uniformly bounded. 
\end{Lemma}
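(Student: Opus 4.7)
The plan is to formalize the contradiction argument sketched in the paragraphs immediately preceding the lemma. I will work on a single subinterval $(t_j,t_{j+1}]\subset [0,T]$ of the partition on which the surgery parameters $(r_j,\delta_j)$ are constant, and then concatenate; the key is that $\sigma=\tfrac{1}{2C_0 r_{N-1}^{-2}+4}$ was built from the \emph{smallest} $r_j$, so every bound below is uniform across the partition.

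First, I would show that $R\leq 2r_j^{-2}$ on $\mathcal{C}_s\times[t,t+\sigma]$ wherever the flow is defined. Suppose instead that some $x\in\mathcal{C}_s$ satisfies $R(x,t'')\geq 2r_j^{-2}$ at a first time $t''\in(t,t+\sigma]$, and let $t'\in[t,t'')$ be a last time with $R(x,t')\leq 0$, so that $|R(x,\cdot)|\leq 2r_j^{-2}$ on $[t',t'']$. The mean value theorem then produces $\tau\in(t',t'')$ with
\[
\Bigl|\tfrac{\partial R}{\partial t}(x,\tau)\Bigr|>\frac{2r_j^{-2}}{\sigma}\geq 4C_0 r_j^{-4}+8r_j^{-2}\geq C_0 R(x,\tau)^2+4|R(x,\tau)|.
\]
But at such $\tau$ one has $R(x,\tau)\geq r_j^{-2}$, so $(CN)_{r_j}$ (Definition \ref{Def_CN}) forces $(x,\tau)$ to lie in a strong $\epsilon_0$-neck or $(\epsilon_0,C_0)$-cap satisfying the evolution estimate \eqref{equ_rem_cap}, namely $|\partial_t R|\leq C_0 R^2+4|R|$, contradicting the previous display.

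Next, because $2r_j^{-2}\ll H_j^{-2}$, the bound just established puts the entire region $\mathcal{C}_s$ far below the curvature scale $H_j^{-2}$ at which the center $y$ of any $\delta$-almost standard cap produced by a $(r_j,\delta_j)$-surgery sits (Definition \ref{Def_bubbling}). Since on such a cap every point has scalar curvature within a bounded multiplicative factor of $H_j^{-2}$, the cap cannot meet $\mathcal{C}_s$. Hence no bubbling-off touches the cusp region on the interval $(t,t+\sigma]$, and $h(\cdot)$ remains unscathed on $\mathcal{C}_s\times[0,t+\sigma]$.

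For the uniform bound on $|Rm|$: combine $R\leq 2r_j^{-2}$ with the pinching $Rm\geq -\phi_\tau(R)$ from Definition \ref{Def_pinching}. If some sectional curvature were very large and positive, the upper bound on $R$ would force another to fall below $-\phi_\tau(R)$, contradicting the pinching. Together with the lower bound from pinching, this yields a two-sided bound on sectional curvatures, hence the asserted uniform bound on $|Rm|$ throughout $\mathcal{C}_s\times[0,t+\sigma]$.

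The main technical obstacle is bookkeeping: one has to verify that $\sigma$, chosen once and for all from $r_{N-1}$, still works on every subinterval $(t_j,t_{j+1}]$ (which it does since $r_j\geq r_{N-1}$ makes the right-hand side of the key inequality only smaller), and that the contradiction step admits the possibility that $(x,\tau)$ lies on the boundary of an $(\epsilon_0,C_0)$-cap rather than a neck; both scenarios are covered by \eqref{equ_rem_cap}, so the argument goes through uniformly.
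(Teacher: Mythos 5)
Your proof follows the paper's sketch step by step: the MVT contradiction against the canonical-neighborhood estimate \eqref{equ_rem_cap}, the observation that $2r_j^{-2}\ll H_j^{-2}$ keeps the surgery caps out of $\mathcal{C}_s$, and the pinching argument for the $|Rm|$ bound. The one place where you part company with the paper is that you try to make explicit why $(CN)_{r_j}$ applies at the MVT point $\tau$, by asserting that at such a $\tau$ one has $R(x,\tau)\geq r_j^{-2}$. That assertion is not a consequence of the mean value theorem: MVT pins down the derivative $\partial_t R(x,\tau)$ to the secant slope, but it says nothing about the value $R(x,\tau)$ itself. Since $R(x,t')\leq 0$ and $R$ might stay below $r_j^{-2}$ through most of the interval before a rapid rise near $t''$, the $\tau$ produced could very well have $R(x,\tau)<r_j^{-2}$, in which case $(CN)_{r_j}$ does not place $(x,\tau)$ in a canonical neighborhood and \eqref{equ_rem_cap} is unavailable.

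The paper's own write-up leaves this point implicit, so you correctly sensed a gap that needed filling; the issue is that the filler you supplied does not hold. The standard remedy is to take $t_*$ to be the last time in $[t,t'']$ with $R(x,t_*)\leq r_j^{-2}$ (so $R(x,t_*)=r_j^{-2}$ by continuity), which gives $r_j^{-2}<R(x,s)<2r_j^{-2}$ and hence $(CN)_{r_j}$ throughout $(t_*,t'')$, and then to integrate $\partial_s R< C_0R^2+4|R|\leq 4C_0 r_j^{-4}+8r_j^{-2}$ over $(t_*,t'')$ rather than invoke MVT at a single point. This yields $t''-t_*>(4C_0 r_j^{-2}+8)^{-1}$, from which the contradiction follows provided $\sigma\leq (4C_0 r_{N-1}^{-2}+8)^{-1}$; that is half the $\sigma$ defined in the paper, a harmless adjustment for the lemma's conclusion. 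The remainder of your argument (caps stay away from the cusp because their scalar curvature is comparable to $H_j^{-2}$; pinching plus the upper bound on $R$ gives the two-sided bound on $Rm$) matches the paper and is sound.
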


Next, fix any $D>0$ and consider the time interval $[0,\sigma]$. Let $d_1$ be the constant arising from Lemma \ref{thm_persistence}, which depends on $k,D,\sigma$, $\Vert Rm(h(0))\Vert_{C^{k-1}(M)}$ and $\Vert Rm\Vert_{C^0(\mathcal{C}_s\times[0,\sigma])}$. Since $h(0)$ is asymptotically cusped of order $k$, we can find $s_0>0$ large enough so that $$\Vert h(0)-h_{cusp}\Vert_{C^k(B(x_0,\frac1d))}<d_1$$ 
for each $B(x_0,\frac1d)\subset \mathcal{C}_{s_0}$, where $x_0$ is a base point deep in the cusp. 
 Lemma \ref{thm_persistence} applies to the parabolic neighborhood $B(x_0,\frac1D)\times [0,\sigma]$ and implies that $$\Vert h(t)-h_{cusp}\Vert_{C^k\left(B(x_0,\frac1D)\times [0,\sigma]\right)}<D.$$
In particular, $h(t)$ is unscathed on $\mathcal{C}_{s_0+\frac{1}{d_1}-\frac1D}\times [0,\sigma]$ and has scalar curvature $R\leq 0$ (since $k\geq 2$). This allows us to apply Lemma \ref{lemma_2.23} once again with $s=s_0+\frac{1}{d_1}-\frac1D$ and $t=\sigma$, and then apply Lemma \ref{thm_persistence} to the time interval $[0,2\sigma]$.
By iterating the above process for $n:=\ceil*{\frac{T}{\sigma}}$ times, we obtain 
$$\Vert h(t)-h_{cusp}\Vert_{C^k\left(\mathcal{C}_{s_0+\frac{1}{d_1}+\cdots +\frac{1}{d_n}-\frac nD}\times [0,T]\right)}<D.$$

Furthermore, after the post-surgery time $T$, $h(t)$ remains unscathed on $M$ for all $t\geq T$. Then by \cite[Theorem 1.1]{Shi}, there exists a constant $K>0$ depending on $T$ and $\Vert Rm(T)\Vert_{C^0(M)}$, such that \begin{equation*}
       \Vert Rm(h(t))\Vert _{C^0(M)}\leq K \quad \forall t\in[T,2T].
   \end{equation*} 
   Additionally, $\Vert Rm(h(T))\Vert_{C^{k-1}(M)}$ uniformly bounded (Consider a covering of $M$ by a sequence of balls of fixed radius $r$. Then, by applying Shi's local derivative estimates on each ball---repeating the approach used in the proof of Lemma \ref{thm_persistence}---the uniform bound follows). Because $h(T)$ is asymptotically cusped, conditions (P1)-(P3) hold, we can apply the lemma to $[T,2T]$, and then repeatedly to $[nT,(n+1)T]$ for each $n\in\mathbb{N}$.

\end{proof}

\subsection{Stability for normalized Ricci-DeTurck flow}
In this section, we review the stability result associated with the normalized Ricci-DeTurck flow. It is shown in \cite{Bamler} that, under $C^0$ perturbations of the hyperbolic metric $h_0$, the corresponding flow exists for all time and remains close to $h_0$. The following result is deduced from \cite[Theorem 1.1]{Bamler} in \cite[Theorem 2.1]{Jiang-VargasPallete_RF}. 

\begin{Th}[Stability under $C^0$ perturbations, \cite{Bamler}, \cite{Jiang-VargasPallete_RF}]\label{thm_stability_C^0}
Let $(M,h_0)$ be a hyperbolic $3$-manifold of finite volume. There is a constant $d_0$, such that if a metric $h(0)$ satisfies \begin{equation*}
        \Vert h(0)-h_0\Vert _{C^0(M)}\leq d_0,
    \end{equation*}
then the normalized Ricci-DeTurck flow $h(t)$ starting from $h(0)$ exists for all time. 

Furthermore, given $k\in\mathbb{N}$. For any $D>0$, there exists $d=d(M, h_0,D,k)\leq \min\{d_0,D\}$ with the following property.
\begin{equation*}
        \Vert h(0)-h_0\Vert _{C^0(M)}\leq d.
    \end{equation*}
Then \begin{equation*}
        \Vert h(t)-h_0\Vert _{C^k(M)}\leq D\quad \forall t\in [1,\infty).
    \end{equation*}
\end{Th}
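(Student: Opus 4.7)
The plan is to obtain the theorem as a direct consequence of Bamler's $C^0$ stability theorem \cite[Theorem 1.1]{Bamler} combined with parabolic regularity for the normalized Ricci-DeTurck flow. The existence of the flow for all time under a small $C^0$ perturbation, together with preservation of $C^0$ smallness along the flow, is essentially the content of Bamler's result, modulo the harmless passage between the unnormalized and the normalized Ricci-DeTurck flow by time reparametrization and rescaling. This takes care of the first conclusion and yields a uniform $C^0$-smallness bound of the form $\Vert h(t) - h_0\Vert_{C^0(M)} \leq \omega(d)$ for all $t \geq 0$, where $\omega(d) \to 0$ as $d \to 0$.

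For the $C^k$ estimate my plan is to reduce to a purely local parabolic regularity statement. The normalized Ricci-DeTurck evolution \eqref{DRF} is a quasilinear strictly parabolic system for $h$, whose linearization at $h_0$ is the elliptic operator $A_{h_0} = \Delta_L - 4$ of \eqref{equ_def_A}. As long as $h$ stays uniformly $C^0$-close to $h_0$, the equation is uniformly parabolic with coefficients near those of the linearization, so that Shi-type interior derivative estimates and parabolic Schauder estimates produce, for every integer $k$, every radius $R>0$, and every $x \in M$, a bound of the shape
\begin{equation*}
\Vert h(t) - h_0\Vert_{C^k(B(x,R))} \;\leq\; C(k,R)\,\sup_{s \in [t-1,\,t]} \Vert h(s) - h_0\Vert_{C^0(B(x,2R))},
\end{equation*}
valid for $t \geq 1$ provided the right-hand side lies below a fixed threshold. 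Taking the supremum over $x \in M$ and combining with Bamler's uniform $C^0$ bound converts this into a global $C^k$ bound, and choosing $d$ small enough that $C(k,R)\,\omega(d) \leq D$ produces the constant asserted in the theorem.

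The main obstacle is making these interior parabolic estimates genuinely uniform on the non-compact manifold $M$, in particular on the cuspidal ends where the injectivity radius shrinks. I would handle this by lifting to the universal cover $\mathbb{H}^3$, where both $h_0$ and any sufficiently $C^0$-small perturbation have bounded geometry on the scale of unit-size geodesic balls, applying the classical Schauder/Shi machinery on those balls, and then descending to $M$ by $\Gamma$-equivariance. The concrete implementation, including the choice of weighted H\"older spaces adapted to the cusps and the maximal regularity framework (à la Angenent and Da Prato--Grisvard) which in fact upgrades the conclusion to exponential decay of $\Vert h(t) - h_0\Vert_{C^k}$ at the rate dictated by the spectral estimate \eqref{equ_spectrum}, is carried out in \cite{Jiang-VargasPallete_RF}, from which the present theorem is extracted.
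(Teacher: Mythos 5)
The paper does not give a self-contained proof of this theorem; it is stated as a consequence of Bamler's $C^0$-stability theorem, with the deduction carried out in the cited companion paper \cite{Jiang-VargasPallete_RF}. Your reconstruction---Bamler's result for long-time existence and preservation of $C^0$ smallness, then interior parabolic (Shi/Schauder) estimates on unit balls in the universal cover to bootstrap the $C^0$ bound to a $C^k$ bound for $t\geq 1$, with uniformity obtained by $\Gamma$-equivariance---is exactly the route the paper has in mind (indeed, the same interior estimate from \cite[Cor.~2.7]{Bamler} is invoked later in Lemma~\ref{lemma_covergence_to_l}), and you are right that the maximal-regularity machinery is only needed for the sharper exponential-decay statement of Theorem~\ref{thm_ricci_flow}, not for this one.
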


\section{Long time behavior of Ricci-DeTurck flow}\label{section_rf_conv}
In this section, we review the long time behavior of the normalized Ricci-DeTurck flow and  its convergence toward the hyperbolic metric. In particular, we present a quantitative exponential decay estimate, which plays an essential role in the proofs of Theorems \ref{Thm_sar>-6} and \ref{Thm_sar>-6_c0perturbation}.
These results were originally introduced by the authors in \cite{Jiang-VargasPallete_RF}.

\subsection{Weighted little H{\"o}lder spaces}
First, we introduce weighted little H{\"o}lder spaces, and apply the interpolation theory.
For closed hyperbolic $3$-manifolds, Knopf-Young \cite{Knopf-Young} studied the stability of the hyperbolic metric $h_0$ using Simonett's interpolation results \cite{Simonett}. They showed that starting from a metric in a little H{\"o}lder $\Vert \cdot\Vert _{2\alpha+\varrho}$ neighborhood of $h_0$, the normalized Ricci-DeTurck flow converges exponentially fast in the $\Vert \cdot\Vert _{2+\varrho}$ norm to $h_0$, where $\varrho\in (0,1)$ and $\alpha\in (\frac{1}{2},1)$.

However, as explained in Section 5 of \cite{Jiang-VargasPallete_RF}, for the cusped manifolds, it is necessary to introduce an additional exponential weight in the thin part of the cusps.

To start our discussion, let $s>0$. For each $x\in M$, let $\Tilde{B}(x)\subset \mathbb{H}^3$ be the unit ball centered at a lift of $x$. For each tensor $l$ on $M$, the lift of $l$ on $\mathbb{H}^3$ is still denoted by $l$.

\begin{Def}[weighted little H{\"o}lder spaces]\label{def_little_holder}

Given $s\geq 0$. 
    The \emph{weighted H{\"o}lder norm} $\Vert \cdot\Vert _{\mathfrak{h}^{k+\alpha}_{s}}$ is defined as
\begin{align}\label{equ_weighted norm}
    \Vert l\Vert _{\mathfrak{h}^{k+\alpha}_{s}}: &= \sup_{x\in M} \ww(x)\Vert l|_{\Tilde{B}(x)} \Vert_{\mathfrak{h}^{k+\alpha}}\\\nonumber
    &=\sup_{x\in M, 0\leq j\leq k} \left( \ww(x)|\nabla^j\,l(x)| + \sup_{y_1\neq y_2\in \Tilde{B}(x)} \ww(x)\frac{|\nabla^kl(y_1)-\nabla^kl(y_2)|}{d_{\Tilde{B}(x)}(y_1,y_2)^\alpha} \right)
\end{align}
where 
\begin{equation*}
\ww(x) =
    (r(x)+1)e^{-r(x)},
\end{equation*}
and
\begin{align*}
r(x) &=\begin{cases}
    0\quad\text{ if }x\in M(s),\\
    \dist(x, \partial M(s))=\min_k (\dist(x,T_k\times \{s\})\quad\text{otherwise}.
\end{cases}\\
\end{align*}
The $(r+1)$ multiplicative factor for the weight function $\ww(x)$ is so that
\[\Vert l \Vert_{L^2(M)} \leq C_{s} \Vert l\Vert _{\mathfrak{h}^{k+\alpha}_{s}},
\]
holds.

Moreover, $\ww(x)$ satisfies
\[|\nabla^j \ww(x)| \leq C_j \ww(x)
\]
we can easily check that the norm $\Vert l \Vert_{\mathfrak{h}^{k+\alpha}_{s}}$ is equivalent to
\[\sup_{x\in M, 0\leq j\leq k} \left( |\nabla^j(\ww(x)\,l(x))| + \sup_{y_1\neq y_2\in \Tilde{B}(x)} \ww(x)\frac{|\nabla^kl(y_1)-\nabla^kl(y_2)|}{d_{\Tilde{B}(x)}(y_1,y_2)^\alpha} \right)
\]

The \emph{little H{\"o}lder space} $\mathfrak{h}^{k+\alpha}_{s}$ is defined to be the closure of $C_c^\infty$ symmetric covariant 2-tensors compactly supported in $M$ with respect to the weighted H{\"o}lder norm $\Vert \cdot\Vert _{\mathfrak{h}^{k+\alpha}_{s}}$. 
\end{Def}

For fixed $0<\varrho<1$, we define \begin{equation*}
    \mathcal{X}_{0}=\mathcal{X}_{0}(M,\varrho,s)=:\mathfrak{h}^{0+\varrho}_s,\quad \mathcal{X}_{1}=\mathcal{X}_{1}(M, \varrho,s)=:\mathfrak{h}^{2+\varrho}_s.
\end{equation*}

\subsection{Exponential attractivity}

\begin{Th}[Theorem 1.1, \cite{Jiang-VargasPallete_RF}]\label{thm_ricci_flow}
Let $(M,h_0)$ be a hyperbolic $3$-manifold of finite volume, and let $\alpha\in (0,1)\setminus\{\frac{1-\varrho}{2},1-\frac{\varrho}{2}\}$. For every
    $\omega\in (0,1)$,
    there exist $\rho, c>0$, such that if $h$ is a metric on $M$ with 
    \begin{equation*}
  \Vert h-h_0\Vert _{C^0(M)}<\rho,
\end{equation*} then the solution $h(t)$ of the normalized Ricci-DeTurck flow \eqref{DRF} starting at $h(0)=h$ exists for all time. 
Moreover, we have \begin{equation*}
    \Vert h(t)-h_0\Vert _{\mathcal{X}_{1}}\leq \frac{c}{(t-1)^{1-\alpha}} e^{-\omega t}\Vert h-h_0\Vert _{C^0(M)}, \quad \forall t>1.
\end{equation*}
\end{Th}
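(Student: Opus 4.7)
The plan is to combine three ingredients: (i) Bamler's $C^0$-stability (Theorem~\ref{thm_stability_C^0}), which guarantees long-time existence and parabolic smoothing for $t \geq 1$; (ii) analyticity and exponential decay of the semigroup generated by the linearization $A_{h_0} = \Delta_L - 4$ on the weighted little H\"older scale $(\mathcal{X}_0, \mathcal{X}_1)$; and (iii) a Duhamel/contraction argument that transfers the linear decay to the nonlinear Ricci-DeTurck flow. The polynomial singularity $(t-1)^{-(1-\alpha)}$ in the target bound is the signature of a smoothing estimate that measures the initial datum in a weaker norm than the one in which output is controlled, which tells me exactly which pair of spaces the linear semigroup needs to act between.

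First I would set up the reduction. Choosing $\rho \leq d_0$ and applying Theorem~\ref{thm_stability_C^0} with $k$ large, the flow exists globally and $\|h(1)-h_0\|_{C^k}$ is arbitrarily small, in particular as small as needed in $\mathcal{X}_1$. Thus for $t \geq 2$ the statement is a stability/attractivity statement in $\mathcal{X}_1$ with $\mathcal{X}_1$-small datum at $t=1$, and for $t \in (1,2]$ the $(t-1)^{-(1-\alpha)}$ factor is produced by the short-time smoothing of the linear semigroup applied to $h(1)-h_0$ viewed only in $C^0$.

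The heart of the argument is the analysis of $A_{h_0}$ on $\mathcal{X}_0$. The self-adjoint calculation
\[
(A_{h_0} l, l)_{L^2} = -\tfrac{1}{2}\|T\|^2 - \|\delta l\|^2 - \|l\|^2 - \|\tr_{h_0} l\|^2 \leq -\|l\|^2
\]
places the $L^2$ spectrum in $(-\infty, -1]$. To lift this to the weighted H\"older scale I would study the resolvent equation $(\lambda - A_{h_0}) l = f$ on unit balls $\widetilde B(x) \subset \mathbb{H}^3$: interior Schauder estimates give $\mathfrak h^{2+\varrho}$-control by $\mathfrak h^{0+\varrho}$-control, and multiplication by the weight $\ww(x)$ is compatible with these local estimates because $|\nabla^j \ww| \leq C_j \ww$. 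Combined with the $L^2$ gap, this yields resolvent bounds in a sector of half-plane type with vertex strictly inside the left half-plane, so $A_{h_0}$ generates a strongly continuous analytic semigroup on $\mathcal{X}_0$ whose spectrum is bounded above by $-1$. Standard semigroup theory (Da Prato--Grisvard, Angenent, Simonett) then delivers the smoothing estimate
\[
\|e^{tA_{h_0}}\|_{\mathcal{X}_0 \to \mathcal{X}_1} \leq \frac{C}{t^{1-\alpha}}\,e^{-\omega t}, \qquad t>0,
\]
for any $\omega \in (0,1)$ and any $\alpha \in (0,1)$ apart from the two excluded values, where the interpolation spaces $(\mathcal{X}_0, \mathcal{X}_1)_\alpha$ fail to coincide with the expected little-H\"older intermediate space.

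Finally I would write $u := h - h_0$, expand the Ricci-DeTurck operator as $\partial_t u = A_{h_0} u + \mathcal N(u)$ with $\mathcal N(u)$ at least quadratic in $u$ and its first two derivatives, and use the algebra-type estimate $\|\mathcal N(u)\|_{\mathcal{X}_0} \leq C \|u\|_{\mathcal{X}_1}^2$ valid whenever $\|u\|_{\mathcal{X}_1}$ is below a threshold ensuring that the coefficients of $\mathcal N$ remain uniformly elliptic. Plugging this into Duhamel,
\[
u(t) = e^{(t-1)A_{h_0}} u(1) + \int_1^t e^{(t-s) A_{h_0}}\,\mathcal N(u(s))\,ds,
\]
and running a contraction/continuation argument in the Banach space of curves $t \mapsto e^{\omega t}(t-1)^{1-\alpha} u(t) \in \mathcal{X}_1$, I obtain the desired decay in $\mathcal{X}_1$ at rate $\omega$ provided $\rho$ is chosen small enough; the factor $\|h-h_0\|_{C^0}$ on the right is propagated via the $C^0$ bound $\|h(1)-h_0\|_{C^0} \leq C\|h(0)-h_0\|_{C^0}$ given by Theorem~\ref{thm_stability_C^0}.

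The main obstacle is the second step: transferring the $L^2$ spectral gap of $A_{h_0}$ to the weighted little H\"older scale on a non-compact cusped manifold. On a closed manifold this is Simonett's framework, but here $\Delta_L$ has continuous spectrum arising from the cusps, and one must show that the exponential weight $\ww(x) = (r(x)+1)e^{-r(x)}$ is simultaneously compatible with the $L^2$ spectral gap and with uniform interior Schauder estimates on balls that run deep into the thin part. Verifying these uniform resolvent estimates, and in particular the absence of residual spectrum in the strip $\Re \lambda > -1$ after conjugation by the weight, is where the technical core of \cite{Jiang-VargasPallete_RF} lies.
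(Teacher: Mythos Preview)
This theorem is not proved in the present paper: it is quoted as Theorem~1.1 of \cite{Jiang-VargasPallete_RF}, and the paper only provides the background spectral computation \eqref{equ_spectrum} and the definition of the weighted spaces $\mathfrak{h}^{k+\alpha}_s$. Your outline matches the high-level strategy the paper attributes to \cite{Jiang-VargasPallete_RF} in the introduction---Bamler's $C^0$-stability to get long-time existence and smoothing, Angenent/Da~Prato--Grisvard maximal regularity on the pair $(\mathcal{X}_0,\mathcal{X}_1)$, the spectral gap $(A_{h_0}l,l)\leq -\|l\|^2$ to locate the semigroup decay rate, and a nonlinear perturbation argument---and you correctly identify the genuine obstacle, namely pushing the $L^2$ gap to a sectorial resolvent estimate in the weighted little H\"older scale on the cusps, which is where the work of \cite{Jiang-VargasPallete_RF} lies.
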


Furthermore, we seek an estimate for the area of each closed essential minimal surface $\Sigma_i$ with respect to the given metric $h$ in Theorem \ref{Thm_sar>-6}. 
Given that we need a metric sufficiently close to $h_0$ to obtain global convergence of Ricci flow, in the cases that the metric $h$ was not as in Theorem~\ref{thm_ricci_flow} we will replace it with another metric $h_i$ that is hyperbolic outside of a thick part containing $\Sigma_i$, and then restart the flow. 
This guarantees all conditions of the theorem are met while taking case of not changing the respective area minimizer. Therefore, to define the weighted spaces $\mathcal{X}_j, j=0,1$, we must fix $i \in \mathbb{N}$ and derive a result for each $i$. In Section \ref{sec:proofS>-6}, we assume that the minimal surface is $\Sigma_i$ is contained within the thick part $M(s_i)$, where $M(s):=M\setminus (\cup_jT_j\times (s,\infty))$. We denote by $\mathfrak{h}_{s}^{k+\alpha}$ the weighted H{\"o}lder space where the weight is applied starting at a height $s$ we are fixing once and for all.

\section {Proof of Theorem \ref{Thm_sar>-6}}\label{sec:proofS>-6}
To give the proof of Theorem \ref{Thm_sar>-6}, the key observation is the following proposition. 

\begin{Prop}\label{prop_2}
Suppose that $(M,h_0)$ is a hyperbolic $3$-manifold of finite volume, and it is infinitesimally rigid.
Let $h$ be a weakly cusped metric on $M$ with $R(h)\geq -6$.
Then for any sequence $\Pi_i\in S_{\frac{1}{i}}(M)$,
we have
    \begin{equation*}
    \underset{i\rightarrow\infty}{\lim\inf}\dfrac{\area_h(\Pi_i)}{\area_{h_0}(\Pi_i)}\geq 1.
\end{equation*}
Furthermore, suppose that $h$ is asymptotically cusped of order at least two, and it satisfies $\Vert Rm(h)\Vert_{C^{1}(M)}<\infty$. 
Then the equality holds if and only if $h$ is isometric to $h_0$.
\end{Prop}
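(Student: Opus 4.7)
The plan is to compare $\area_h(\Pi_i)$ to $\area_{h_0}(\Pi_i)$ by running the normalized Ricci flow $\{h(t)\}_{t\geq 0}$ starting at $h$ and tracking the area of the minimizer along the flow. By Theorem~\ref{thm_stability_cusp}, this flow exists for all time with only finitely many bubbling-off surgeries, preserves $R(h(t))\geq -6$ (via the strong maximum principle on $\partial_tR=\Delta R+2|Ric|^2+4R$ together with the pinching controls of Definition~\ref{Def_pinching}), and remains asymptotically cusped. If needed, we first replace $h$ with a metric $h_i$ agreeing with $h$ on a thick compact set containing the $h$-minimizer of $\Pi_i$ and equaling $h_0$ outside (Lemma~\ref{lem:minimizer_close_to_hyp_in_compact}), so that Theorem~\ref{thm_stability_C^0} ensures the flow enters the $C^0$-neighborhood of $h_0$ where Theorem~\ref{thm_ricci_flow} gives exponential convergence $\Vert h(t)-h_0\Vert _{\mathcal{X}_1}=O(e^{-\omega t})$; infinitesimal rigidity of $(M,h_0)$ identifies the limit as $h_0$.

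For each large $i$, $\Pi_i$ has no accidental parabolics by Seppi's pointwise estimate, so Theorem~\ref{lem:surfaceexistencemonotonicity} and Lemma~\ref{lem:areaincusps} produce an area minimizer $\Sigma_i^t$ for $\Pi_i$ in $(M,h(t))$, contained in a fixed thick compact set, with $\Sigma_i^t\to\Sigma_i^\infty$ smoothly as $t\to\infty$. Writing $a_i(t):=\area_{h(t)}(\Sigma_i^t)$, the envelope theorem for minimizers combined with the Gauss--Bonnet identity
\begin{equation*}
\int_{\Sigma}Ric(N,N)\,dA=\tfrac12\int_{\Sigma}R\,dA-\tfrac12\int_{\Sigma}|A|^2\,dA+4\pi(g_i-1)
\end{equation*}
(obtained from $R=2K_{amb}(T\Sigma)+2Ric(N,N)$ and $\int_\Sigma K_\Sigma\,dA=2\pi\chi(\Sigma)$) yields the exact evolution
\begin{equation*}
a_i'(t)=4\pi(g_i-1)-a_i(t)-\tfrac12\int_{\Sigma_i^t}|A|^2\,dA_{h(t)}-\tfrac12\int_{\Sigma_i^t}(R+6)\,dA_{h(t)}.
\end{equation*}
Both source terms are nonnegative along the flow, so $a_i(t)$ is driven toward the target $E(t):=4\pi(g_i-1)-\tfrac12\int|A_t|^2-\tfrac12\int(R+6)$, with $E(\infty)=a_i(\infty)=\area_{h_0}(\Pi_i)=4\pi(g_i-1)(1-o_i(1))$.

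Once $h(t)$ is $C^2$-close to $h_0$ on the compact support of $\Sigma_i^t$, Theorem~\ref{thm_ricci_flow} and elliptic regularity give $R+6=O(e^{-\omega t})$ and $|A_t|^2=|A_\infty|^2+O(e^{-\omega t})$ pointwise on $\Sigma_i^t$; combined with Seppi's pointwise estimate $|A_\infty|^2=O(1/i)$, this bounds $\int(R+6)\,dA_t$ and $\tfrac12(\int|A_t|^2-\int|A_\infty|^2)$ by $o_i(1)\cdot\area_{h_0}(\Pi_i)$ uniformly for large $t$. Integrating the linear ODE $a_i'+a_i=E$ with the exponentially convergent source $E(t)\to E(\infty)$, and controlling the early-time interval $[0,T]$ (with $T=T(h,h_0)$) by the replacement trick and the thick containment of $\Sigma_i^t$ so that its contribution is bounded independently of $i$, yields $a_i(\infty)-a_i(0)=o_i(1)\cdot\area_{h_0}(\Pi_i)$; dividing by $\area_{h_0}(\Pi_i)\to\infty$ gives $\liminf_i a_i(0)/a_i(\infty)\geq 1$.

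For the rigidity statement, equality along a subsequence forces $\int_{\Sigma_i^t}(R+6)\,dA\to 0$ for each $t$; by Proposition~\ref{prop_measure_convergence} and Ratner--Shah applied to the associated laminar measures (whose limits cannot be supported only on frames of totally geodesic surfaces, by the weak-cusp Lemma~\ref{lem:areaincusps}), this gives $R+6\equiv 0$ on $M$. The asymptotic cusped hypothesis of order $\geq 2$ with $\Vert Rm(h)\Vert _{C^1(M)}<\infty$ places $h$ in the domain of Ricci flow with bubbling-off whose unique attractor in the $\mathcal{X}_1$-basin of $h_0$---singleton by infinitesimal rigidity---is $h_0$ itself; hence $h$ is isometric to $h_0$. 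The main technical obstacle is the quantitative control of the early-time interval before the flow enters the basin of attraction where Theorem~\ref{thm_ricci_flow} applies: the naive ODE bound is $O(g_i)$ per unit time, and achieving the required $o_i(1)\cdot g_i$ bound relies essentially on the replacement trick combined with the uniform thick-containment of $\Sigma_i^t$ from Lemmas~\ref{lem:minimizer_close_to_hyp_in_compact} and~\ref{lem:areaincusps}.
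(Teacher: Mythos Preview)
Your overall architecture is right, but the argument has a genuine gap at the decisive step: the competition between the $e^{-t}$ decay of the area gap and the $e^{-\omega t}$ convergence rate of the flow, with $\omega<1$ from Theorem~\ref{thm_ricci_flow}. From the ODE inequality $a_i'(t)\leq 4\pi(g_i-1)-a_i(t)$ one gets only
\[
4\pi(g_i-1)-a_i(0)\;\leq\; e^{t}\bigl(4\pi(g_i-1)-a_i(t)\bigr),
\]
so to conclude $a_i(0)\geq(1-o_i(1))4\pi(g_i-1)$ you must show $e^{t}\bigl(4\pi(g_i-1)-a_i(t)\bigr)$ is small. But your bounds $R+6=O(e^{-\omega t})$ and $|A_t|^2-|A_\infty|^2=O(e^{-\omega t})$ feed into $4\pi(g_i-1)-a_i(t)=O(e^{-\omega t})\cdot\area_{h_0}(\Pi_i)+O(1/i)\cdot\area_{h_0}(\Pi_i)$, and multiplying by $e^{t}$ produces a divergent factor $e^{(1-\omega)t}$. ``Integrating the linear ODE with exponentially convergent source'' does not escape this: the source converges at rate $\omega<1$, which is slower than the intrinsic rate $1$ of the ODE. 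The paper resolves this by proving (Lemma~\ref{lemma_covergence_to_l} and Lemma~\ref{lemma_A.2}) that along a suitable sequence $T_i\to\infty$ one has $e^{T_i}l(T_i)\to\ovl{l}$ in $C^2$ on compacta, where $\ovl{l}$ is a $(-1)$-eigentensor of $A_{h_0}$; infinitesimal rigidity then forces $\ovl{l}=0$, upgrading the decay to $\Vert l(T_i)\Vert_{C^2}=o(e^{-T_i})$. This eigentensor analysis is the essential use of infinitesimal rigidity in the inequality, and it is absent from your sketch; you invoke rigidity only to identify the limit metric, which is not where it does the work.

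Your rigidity argument also has a gap: you appeal to Proposition~\ref{prop_measure_convergence} and Ratner--Shah to pass from $\int_{\Sigma_i^t}(R+6)\,dA\to0$ to $R+6\equiv 0$, but the hypothesis here is $\Pi_i\in S_{1/i}(M)$, not $\Pi_i\in S_{1/i,\mu_{Leb}}(M)$, so no equidistribution is available. The paper instead argues directly: if $h$ is not hyperbolic, the strong maximum principle gives $R(h(t))\geq -6+2\delta$ on a fixed thick region $M(s)$ for $t\in[t_0,2t_0]$; Lemma~\ref{lem:areaincusp_asymphyp} (applicable since $h(t)$ remains asymptotically cusped by Theorem~\ref{thm_stability_cusp}) puts a fixed fraction $\kappa$ of $\area_{h(t)}(\Sigma_i(t))$ inside $M(s)$, yielding the strengthened ODE $a_i'\leq 4\pi(g_i-1)-(1+\kappa\delta)a_i$ on $[t_0,2t_0]$, which forces $a(2t_0)<1$ and contradicts $a(t)\equiv 1$.
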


In the following discussion, we assume that $S_i$ and $\Sigma_i$ are closed essential surfaces immersed in $M$ that minimize the area in the homotopy class corresponding to $\Pi_i$ with respect to the metrics $h_0$ and $h$, respectively. As argued in \eqref{equ_gauss_ratio}, we have $\underset{{i\rightarrow\infty} }{\lim} \dfrac{\area_{h_0}(S_i)}{4\pi(g_i-1)}=1$, where $g_i$ represents the genus of $S_i$ and $\Sigma_i$.

Now, assuming for contradiction that there exists $\delta > 0$ and a subsequence of $\mathbb{N}$, each element still labeled by $i$, such that: \begin{equation}\label{contradiction_area2}
    \dfrac{\area_h(\Sigma_i)}{4\pi(g_i-1)}\leq 1-\delta.
\end{equation}
We will reveal the contradiction through subsequent steps.

\subsection{Modify the metric on thin part} We start by considering two special cases:
\begin{enumerate}[(I)]
    \item If $h$ is asymptotically cusped of order $k\geq 2$, then by \cite{Bessieres-Besson-Maillot} (which assumes $C^k$ asymptotics for all $k$, and generalizes to any given $k\geq 2$ as noted in Remark \ref{Rem_parameters} and Theorem \ref{thm_stability_cusp}), there exists a normalized Ricci flow with bubbling-off on $M$ starting from $h$, defined for all time.

    \item In a different setting, if $h$ satisfies $\Vert h-h_0\Vert _{C^0(M)}\leq \rho$, where $\rho$ is as in Theorem~\ref{thm_ricci_flow} (which already considers Theorem~\ref{thm_stability_C^0}), long-time existence of the normalized Ricci-DeTurck flow was established in \cite{Bamler}.
\end{enumerate}
We will examine these two cases in greater detail in the rigidity part of the proposition in Section \ref{subsection_7.1_rigidity} and in Section \ref{section_proof_thmD}.

If $h$ is a general weakly cusped metric, there may be neither long-time nor short-time existence of the flow in a sense where we still have existence and control over area minimizers as the flow evolves. Therefore we approximate $h$ by a sequence of asymptotically cusped metrics $\{h_i\}$, and run the normalized Ricci flow starting from each $h_i$.

Recall Theorem \ref{lem:surfaceexistencemonotonicity} and its proof. Given any weakly cusped metric $h$, there exists a constant $s'=s'(M,h)\geq 0$ such that $sec|_{M\setminus M(s')}(h)\leq 0$. Moreover, there exists a constant $\bar{s}_i=\bar{s}_i>s'$, depending on $M,h,\Pi_i$ and $\area_h(\Pi_i)$, such that any area-minimizing surface in the homotopy class $\Pi_i$ with respect to $h$ is contained in $M(\bar{s}_i)$.

We now choose a sequence $\{s_i\}$ with $s_i\rightarrow\infty$ as $i\rightarrow\infty$, such that for each $i$, the value $s_i$ satisfies the following properties:
\begin{align}\label{equ_s_i}
    \bullet & \,\,s_i\geq \bar{s}_i,\\\nonumber
    \bullet & \,\,s_i\text{ is large enough so that Lemma \ref{lem:minimizer_close_to_hyp_in_compact} applies to compact set $K=M(s_i)$: }\\\nonumber
   &\text{ Given $0<a<1$, there exists a constant $\epsilon_i=\epsilon_i(M,h_0,a,\Pi_i)>0$, such that,}\\\nonumber
   &\text{ if a metric $g$ on $M$ satisfies $\Vert (g-h_0)|_{M(s_i)}\Vert _{C^1}<\epsilon_i$ and $sec(g|_{M(s_i)})\leq -a^2< 0$,}\\\nonumber
    &\text{ then there exists an area minimizer of $\Pi_i$ with respect to $g$ contained in $M(s_i)$.}\\\nonumber
    &\text{ Moreover, all area minimizers of $\Pi_i$ with respect to $g$ are contained in $M(s_i)$.}
\end{align}
Then we define a new metric $h_i$ on $M$ using $s_i$, such that 
\begin{align}\label{equ_h_i}
    \bullet & \,\,h_i=h\text{ on the thick part }M(s_i),\\\nonumber
    \bullet & \,\,h_i=h_0\text{ on the thin part }M\setminus M(2s_i)=\cup_j T_j\times (2s_i,\infty),\\\nonumber
    \bullet &\,\, h_i\text{ is a smooth interpolation between the metrics }h\text{ and }h_0\text{ on }M(2s_i)\setminus M(s_i)\\\nonumber
    &=\cup_j T_j\times (s_i,2s_i], \text{ which satisfies }R(h_i)\geq -6 \text{ and } sec(h_i)\leq 0.
\end{align}

As $s_i\geq \bar{s}_i$, the surface $\Sigma_i$, which minimizes area in the homotopy class $\Pi_i$ with respect to $h$, lies within $M(s_i)$. 
We will show that $\Sigma_i$ is also an area minimizer in $\Pi_i$ with respect to the modified metric $h_i$.

Since $\Sigma_i$ lies in the region where $h_i$ agrees with $h$, we have $\area_{h}(\Sigma_i)=\area_{h_i}(\Sigma_i)\geq \area_{h_i}(\Pi_i)$.
Recall from Theorem \ref{lem:surfaceexistencemonotonicity} (equation \eqref{equ_3.1_2}) that the barrier $\bar{s}_i$ corresponding to the metric $h$ can be chosen by
\begin{align*}
    d_h(\bar{s}_i,s')= &2\left(\sqrt{\frac{k_i\area_h(\Pi_i)}{\pi}}+1\right)=2\left(\sqrt{\frac{k_i\area_h(\Sigma_i)}{\pi}}+1\right)\\
    \geq & 2\left(\sqrt{\frac{k_i\area_{h_i}(\Pi_i)}{\pi}}+1\right),
\end{align*}
where $s'<\bar{s}_i$ is a constant so that $M\setminus M(s')$ is a union of disjoint cusp neighborhoods and $sec(h|_{M\setminus M(s')})\leq 0$, and $k_i\in\mathbb{N}$ is the degree of the covering of $M$ so that the lift of $\Sigma_i$ is embedded. 
By $s'<\bar{s}_i\leq s_i$ and \eqref{equ_h_i}, for the new metric $h_i$, the thin region $M\setminus M(s')$ is still a union of disjoint cusp neighborhoods with $sec({h_i}|_{M\setminus M(s')})\leq 0$, and we have $d_{h_i}(\bar{s}_i,s')=d_h(\bar{s}_i,s')$. 
Hence, we can also use $\bar{s}_i$ as a barrier for the new metric $h_i$. This implies that any area minimizer for $\Pi_i$ with respect to $h_i$ must be contained in $M(\bar{s_i})\subset M(s_i)$, where $h_i=h$. In particular, $\Sigma_i$ also minimizes area among all homotopic surfaces with respect to $h_i$. In this case, changing the metric from $h$ to $h_i$ does not affect which surfaces minimize the area in $\Pi_i$

\subsection{Run Ricci and Ricci-DeTurck flows}
Suppose that $h_i(t)$ solves the normalized Ricci flow \eqref{RF}, starting with $h_i(0)=h_i$. 
Let $\Sigma_{i}(t)\subset M$ represent the surface with the minimum area with respect to $h_i(t)$ and it is homotopic to $\Sigma_i$.

\subsubsection{Metric surgeries}

Recall the notions of Ricci flow with bubbling-off in Section \ref{subsection_bubbling}. Since our initial metric $h_i$ is identical to $h_0$ on the thin part $M \setminus M(2s_i)$ as defined in \eqref{equ_h_i}, 
it possesses a cusp-like structure, which permits us to perform Ricci flow with $(r(t),\delta(t))$-bubbling-off on $M$ starting at $h_i$ using parameters defined in Remark \ref{Rem_parameters}.
According to the proof of stability in Theorem \ref{thm_stability_cusp}, $h_i(t)$ is asymptotic to $h_0$ at infinity in the cuspidal end in $C^k$ for all $k\in\mathbb{N}$, uniformly for all time $t\in [0,\infty)$, and the surgeries stay away from the cusp.

Furthermore, because of the reduction in volume through surgery, there can only be a finite number of surgeries. This finite number is represented as $m_i\in \mathbb{N}$. The only possible surgeries are pinching off inessential $\delta$-necks and attaching $\delta$-almost standard caps.

Let $t^1_i<t^2_i<\cdots <t^{m_i}_i$ be the times within $(0,\infty)$ at which some points of $M$ become singular, and let $I^{j}_i=[t^{j-1}_i,t^{j}_i)$ be a time interval, where $t_i^0=0$ and $1\leq j\leq m_i$. We consider the Ricci flow $(M^1_i\times I_i^1, h^1_i(t)),\cdots, (M^{m_i}_i\times I_i^{m_i}, h^{m_i}_i(t))$ on $3$-manifolds $M^1_i,\cdots, M^{m_i}_i$. 
Since all the surgeries are topologically trivial, we have $M^j_i=M$ for any $1\leq j\leq m_i$.
 Additionally, let $U^j_i\subset M$ be an open subset consisting of points where the curvature remains bounded as $t\rightarrow t^j_i$ on $I^j_i$, and let $\overline{h}_i^j$ be the limit of $h_i^j(t)$ as $t\rightarrow t^j_i$ on $I^j_i$. Then, there exists an isometry between $(U^j_i,\overline{h}_i^j)$ and $(M,h_i^{j+1}(t^j_i))$, representing the region where the surgery does not occur. $M\setminus U^j_i$ is diffeomorphic to a union of closed 3-balls, within which the surgeries occur. We can assume that the boundary of each 3-ball represents the centers of a $\delta$-neck. We then cut it off along its boundary sphere, remove the $\delta$-cap end, which, for instance, contains $\psi(S^2\times (0,\frac{1}{\delta}))$, and glue in an almost standard cap.

To proceed with the Ricci flow and use it to estimate the area of $\Sigma_i(t_i^j)$, we prove the following lemma.

\begin{Lemma}\label{Lemma_7.3}
    For each surgery time $t^j_i$, the area-minimizing surface $\Sigma_i(t^j_i)$ for $\Pi_i$ of the manifold $(M, \overline{h}^{j}_i)$ is contained in $U^j_i$.
\end{Lemma}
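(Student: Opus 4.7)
The plan is to combine the topological fact that $\Sigma_i(t_i^j)$ is $\pi_1$-injective (it represents a class in the surface subgroup $\Pi_i$) with the geometric features of the bubbling-off surgery at $t_i^j$. Each connected component $B$ of $M \setminus U_i^j$ is by construction a simply connected $3$-ball whose boundary sphere $\partial B$ is the center of a strong $\delta$-neck of $\overline{h}_i^j$, and therefore has $\overline{h}_i^j$-area controlled by $O(H^2)$, where $H = H(r_j, \delta_j)$ is the cutoff parameter associated to the partition element $(t_i^{j-1}, t_i^j]$.

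First, I would exploit $\pi_1$-injectivity to control any putative intersection $\Sigma_i(t_i^j) \cap B$. Since $B$ is simply connected, every loop of $\Sigma_i(t_i^j) \cap \partial B$ is null-homotopic in $M$ and hence in $\Sigma_i(t_i^j)$; an innermost-disk argument on the surface then shows that each component of $\Sigma_i(t_i^j) \cap B$ may be taken to be a topological disk bounded by a single such loop. In particular, representatives of $\Pi_i$ disjoint from every bad ball $B \subset M \setminus U_i^j$ always exist.

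Second, if $\Sigma_i(t_i^j) \cap B$ did contain such a disk $D$, I would swap $D$ for a competitor disk pushed just to the $U_i^j$-side of $\partial B$, producing a homotopic surface whose $\overline{h}_i^j$-area is at most $\area_{\overline{h}_i^j}(\Sigma_i(t_i^j)) - \area_{\overline{h}_i^j}(D) + O(H^2)$. On the other hand, $D$ must traverse the core of $B$, where the pre-surgery curvature explodes. A monotonicity-formula argument in the spirit of Theorem~\ref{lem:surfaceexistencemonotonicity}, applied on small balls where the strong $\delta$-neck and the $\delta$-almost standard cap have uniformly controlled geometry, should force $\area_{\overline{h}_i^j}(D)$ to strictly exceed the $O(H^2)$ term, contradicting the minimality of $\Sigma_i(t_i^j)$ and forcing $\Sigma_i(t_i^j) \subset U_i^j$.

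The main obstacle is making this area comparison rigorous, since $\overline{h}_i^j$ is only smoothly defined on $U_i^j$ (with curvature degenerating on $M \setminus U_i^j$). I would handle this by transferring the argument to the isometric post-surgery manifold $(M, h_i^{j+1}(t_i^j))$ where the $\delta$-almost standard caps (Definition~5.2.3 of \cite{BBB+10}) provide explicit geometric models for the replaced $3$-balls, and then exploit the uniform bounds on strong $\delta$-necks from \cite{BBB+10} to quantify the strict decrease in area.
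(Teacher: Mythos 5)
Your proposal follows the same strategy as the paper: use the monotonicity formula in the strong $\delta$-neck to show that any component $D$ of $\Sigma_i(t_i^j)\cap B_i^j$ carries area at least $c(\tfrac{1}{\delta}-1)\lambda^2$, then swap $D$ for a spherical cap in $\partial B_i^j$ of area at most $4\pi\lambda^2$ to contradict minimality once $\delta$ is chosen small. The $\pi_1$-injectivity and innermost-disk preamble is superfluous (the disk replacement is homotopic to the original simply because $B_i^j$ is a $3$-ball), and the transfer to the post-surgery manifold is likewise avoidable, but neither detour changes the core of the argument.
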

In other words, $\Sigma_i(t^j_i)$ does not intersect the surgery region. 
\begin{proof}
To see this, we assume by contradiction that $\Sigma_i(t^j_i)\cap (M\setminus U^j_i)\neq \emptyset$. Utilizing the diffeomorphism between the latter space and a collection of 3-balls, we can suppose that $\Sigma_i(t^j_i)$ intersects with the boundary $\partial B_i^j$ of some 3-ball $B^j_i$ within $M\setminus U^j_i$. This ball $B^j_i$ contains a $\delta$-neck $N$, characterized by a homothety constant $\lambda > 0$, and its boundary is given by $\partial B^j_i=\psi(S^2\times \{\frac{1}{\delta}\})$. By slightly perturbing $B^j_i$, we can assume that $\Sigma_i(t^j_i)$ intersects $\partial B_i^j$ transversely in a union of circles.

Let $D$ be a connected component of $\Sigma_i(t^j_i)\cap B^j_i$, then it intersects $\psi(S^2\times \{s\})$ for all $s\in (0,\frac{1}{\delta})$. Consequently, the monotonicity formula for minimal surfaces yields a constant $c>0$, depending only on $(M, \overline{h}_i^j)$, such that for any $s\in (\frac{1}{2}, \frac{1}{\delta}-\frac{1}{2})$, the following inequality holds:
\begin{equation*}
    \area_{\overline{h}_i^j}\Big(D\cap \psi\Big(S^2\times \Big(s-\frac{1}{2},s+\frac{1}{2}\Big)\Big)\Big)>c\lambda^2.
\end{equation*}
Choose $\delta< (\frac{8\pi}{c}+1)^{-1}$, the above estimate implies \begin{equation*}
\area_{\overline{h}_i^j}(D\cap B^j_i)\geq \area_{\overline{h}_i^j}\Big(D\cap \psi\Big(S^2\times \Big(0,\frac{1}{\delta}\Big)\Big)\Big)>c\Big(\frac{1}{\delta}-1\Big)\lambda^2>8\pi\lambda^2.
\end{equation*}

However, on the contrary, $\partial D$ bounds a disk $D'$ within the sphere $\partial B^j_i$ whose area is not greater than $4\pi\lambda^2$. By cutting off $D$ on $\Sigma_i(t^j_i)$ along $\partial D$ and replacing it with $D'$, we obtain a surface homotopic to $\Sigma_i(t^j_i)$ but with a smaller area with respect to the induced metric of $\overline{h}_i^j$, contradicting its minimality. Therefore, the surgeries in the Ricci flow do not impact $\Sigma_i(t)$ for all $t\in [0,\infty)$.
\end{proof}

\subsubsection{Mixed flows and Theorem \ref{thm_ricci_flow}}
We now verify the condition of Theorem \ref{thm_ricci_flow}.

Recall that $h_i(t)$ represents the normalized Ricci flow with $h_i(0)=h_i$, and $\Sigma_i(t)$ denotes the surface with the minimum area with respect to $h_i(t)$ that is homotopic to $\Sigma_i$, we have $\Sigma_i(0)=\Sigma_i$. Lemma \ref{Lemma_7.3} implies that, for each surgery time $t_i^j$, the surface $\Sigma_i(t_i^j)$ stays away from the surgery region. 

Due to the convergence of $h_i(t)$ toward $h_0$ on the thick part (\cite[Theorem 1.2]{Besson-Courtois-Gallot}), there exists a post-surgery time $t_i>t_i^{m_i}$ 
such that \begin{equation*}
   \Vert l_i(t_i)\Vert _{C^2(M(s_i))}<\rho.
\end{equation*}
If, on the thin part $M\setminus M(s_i)$, $h_i(t_i)$ is not in the $C^2$-neighborhood of $h_0$ of radius $\rho$, we replace $h_i(t_i)$ with $h_{i+}(t_i)$ on $M\setminus M(s_i)$ so that the new metric agrees with $h_0$ on a further thin part, and it satisfies \begin{equation}\label{equ_condition_r_i}
   \Vert h_{i+}(t_i)-h_0\Vert _{C^2(M)}<\rho.
\end{equation}
This verifies the condition of Theorem \ref{thm_ricci_flow}.

By the assumption of $s_i$ in \eqref{equ_s_i} and Lemma \ref{lem:minimizer_close_to_hyp_in_compact}, since the $C^2$-distance between $h_i(t_i)$ and $h_0$ on $M(s_i)$ is less than $\rho$, after replacing $\rho$ with a smaller constant $\rho_i$ if needed, we have $sec({h_i(t_i)}|_{M(s_i)})\leq -a^2<0$. Hence, the surface $\Sigma_i(t_i)$, along with any other area minimizers for $\Pi_i$ with respect to either $h_i(t_i)$ or $h_{i+}(t_i)$ (if they exist), is contained in $M(s_i)$. This implies that $\Sigma_i(t_i)$ is also an area minimizer in its homotopy class with respect to $h_{i+}(t_i)$, this modification does not affect the area-minimizing surfaces.

Now we redefine $h_i(t)$ as a mixed flow:
For $0\leq t< t_i$, $h_i(t)$ is still the normalized Ricci flow. And for $t\geq t_i$, it solves the normalized Ricci-DeTurck flow starting with $h_i(t_i):=h_{i+}(t_i)$.
We still use $\Sigma_i(t)$ to represent the surface with the minimum area with respect to $h_i(t)$ that is homotopic to $\Sigma_i$.

\subsubsection{Area ratio estimates}
Define $\mathcal{A}_i(t):=\area_{h_i(t)}(\Pi_i)$. According to Lemma 9 of \cite{Bray-Brendle-Eichmair-Neves}, $\mathcal{A}_i(t)$ is a Lipschitz function on both intervals $[0,t_i]$ and $[t_i,\infty)$. Therefore, it is differentiable almost everywhere. If $t$ is a point where $\mathcal{A}_i$ is differentiable, then we define $\mathcal{A}^t_i(s):=\area_{h_i(s)}(\Sigma_i(t))$. In this case, $\mathcal{A}_i(s)\leq \mathcal{A}_i^t(s)$ for all $s\in [0,\infty)$. 
Additionally, by applying Stokes theorem, we have \begin{align}\label{ode}
    (\mathcal{A}_i^t)'(t) &=\int_{\Sigma_i(t)}\frac{d}{ds}\Big|_{s=t}\sqrt{\text{det}_{h_i(t)}h_i(s)}\,dA_{h_i(t)}\\\nonumber
    &=\frac{1}{2}\int_{\Sigma_i(t)}\tr_{h_i(t)}\left(\frac{d}{ds}\Big|_{s=t}\,h_i(s)|_{\Sigma_i(t)}\right)\,dA_{h_i(t)}\\\nonumber
    &= -\int_{\Sigma_i(t)} \big(Ric(h_i(t))(e_1,e_1)+Ric(h_i(t))(e_2,e_2)+4\big)\,dA_{h_i(t)}\\\nonumber
    &=4\pi(g_i-1)-\area_{h_i(t)}(\Sigma_i(t))-\int_{\Sigma_i(t)}\left(\frac{R(h_i(t))+6}{2}+\frac{|A|^2}{2}\right)\,dA_{h_i(t)}\\\nonumber
    &\leq 4\pi(g_i-1)-
    \mathcal{A}_i^t(t).
\end{align}
We use $R(h_i(t))\geq -6$ in the last inequality because this lower bound of the scalar curvature is preserved by the normalized Ricci flow and DeTurck flow by maximum principle.
Consequently, we obtain \begin{equation*}
    \mathcal{A}_i'(t)\leq (\mathcal{A}_i^t)'(t)\leq 4\pi(g_i-1)-\mathcal{A}_i^t(t)= 4\pi(g_i-1)-\mathcal{A}_i(t).
\end{equation*}
Solving this ODE and applying the assumption \eqref{contradiction_area2} yield the following. \begin{equation}\label{sol_ode}
    \frac{\area_{h_i(t)}(\Sigma_i(t))}{4\pi(g_i-1)}\leq 1-e^{-t}\left(1-\frac{\area_{h_i}(\Sigma_i)}{4\pi(g_i-1)}\right) = 1-e^{-t}\left(1-\frac{\area_{h}(\Sigma_i)}{4\pi(g_i-1)}\right) \leq 1-\delta e^{-t}.
\end{equation}

\subsection{Apply exponential decay estimate}

Denote by $l_i(t)$ the difference between $h_i(t)$ and $h_0$. The condition of Theorem \ref{thm_ricci_flow} is verified in \eqref{equ_condition_r_i}, and then we will prove the following result.

\begin{Lemma}\label{lemma_A.2}
    Let $\omega\in (0,1)$ be the constant in Theorem \ref{thm_ricci_flow}, then we can find a sequence $\{T_i\}_{i\in \mathbb{N}}$ with $T_i>  t_i+1$ and $T_i\rightarrow \infty$, such that the following statements hold. 
    \begin{enumerate}
        \item For each $k\in\mathbb{N}$, there exists a constant $C_k>0$ independent of $i$ so that  \begin{equation*}
        \Vert l_i(T_i)\Vert _{\mathfrak{h}^{k+\varrho}(M(s_i))}\leq C_ke^{-\omega T_i}.
    \end{equation*}

    \item  
    As $i\to\infty$,
    $$\Vert e^{T_i}l_i(T_i)\Vert_{C^2(M(s_i))}\to 0.$$ 
    \end{enumerate}
\end{Lemma}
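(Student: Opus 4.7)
The plan is to apply Theorem \ref{thm_ricci_flow} to the Ricci-DeTurck phase of the mixed flow, which starts at time $t_i$ with $\Vert l_i(t_i)\Vert_{C^2(M)}<\rho$. After shifting time so that Theorem \ref{thm_ricci_flow} applies from $t_i$ onward, one obtains for $T>t_i+1$:
\[
\Vert l_i(T)\Vert_{\mathfrak{h}^{2+\varrho}_{s_i}} \leq \frac{c}{(T-t_i-1)^{1-\alpha}}\, e^{-\omega(T-t_i)}\,\Vert l_i(t_i)\Vert_{C^0(M)}.
\]
Since the weight $\ww(x)\equiv 1$ on $M(s_i)$, the left-hand side dominates the ordinary $C^{2+\varrho}(M(s_i))$ norm. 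I would then choose $T_i$ of the form $t_i+1+(c\rho/C_2)^{1/(1-\alpha)}e^{\omega t_i/(1-\alpha)}$, so that the polynomial prefactor $(T_i-t_i-1)^{-(1-\alpha)}$ absorbs the $e^{\omega t_i}$ shift and yields $\Vert l_i(T_i)\Vert_{C^{2+\varrho}(M(s_i))}\leq C_2\,e^{-\omega T_i}$ with $C_2$ independent of $i$. Clearly $T_i\to\infty$.

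To promote this $C^{2+\varrho}$ control to a $C^{k+\varrho}$ bound for arbitrary $k$, I would use interior Schauder theory and a bootstrap argument on the strictly parabolic normalized Ricci-DeTurck equation. Starting from the $C^{2+\varrho}$ bound at time $T_i-1/2$ and differentiating the flow equation to increase regularity at the cost of a small portion of the time interval, one obtains the $C^{k+\varrho}(M(s_i))$ bound at time $T_i$ with a constant $C_k$ depending only on $k$, on the coefficients of the linearization at $h_0$, and on fixed background geometry. Combined with the exponential decay this establishes part (1).

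Part (2) is the main obstacle. Since Theorem \ref{thm_ricci_flow} only provides decay at rate $\omega\in(0,1)$, feeding part (1) directly into the desired bound yields $e^{T_i}\Vert l_i(T_i)\Vert_{C^2(M(s_i))}\lesssim e^{(1-\omega)T_i}$, which diverges. To close this gap I would invoke the infinitesimal rigidity hypothesis from Proposition \ref{prop_2}, together with the spectral identity
\[
(A_{h_0}l,l)=-\tfrac12\Vert T\Vert^2-\Vert \delta l\Vert^2-\Vert l\Vert^2-\Vert \tr_{h_0}(l)\Vert^2.
\]
Rigidity excludes the existence of $L^2$ approximate eigenvectors of $A_{h_0}$ at eigenvalue $-1$ (any such sequence would produce a nontrivial transverse-traceless infinitesimal deformation contradicting the vanishing of $H^1(\pi_1(M),\mathrm{Ad})$), yielding a strict spectral gap $A_{h_0}\leq -(1+\lambda)I$ for some $\lambda>0$. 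Running the maximal-regularity argument that underlies Theorem \ref{thm_ricci_flow} with this improved spectral bound then produces decay at the strictly better rate $e^{-(1+\lambda)T}$, and consequently $e^{T_i}\Vert l_i(T_i)\Vert_{C^2(M(s_i))}\lesssim e^{-\lambda T_i}\to 0$. As a fallback, one can instead postpone $t_i$ using the Bessi\`eres-Besson-Maillot convergence of the Ricci flow with bubbling-off on compact subsets of $M(s_i)$ to make $\Vert l_i(t_i)\Vert_{C^0(M)}$ decay arbitrarily fast in $i$, and then let $T_i$ grow slowly enough that this smallness beats the $e^{(1-\omega)T_i}$ factor.
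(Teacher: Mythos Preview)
Your argument for part~(1) is close to the paper's, with one slip: the weighted space $\mathfrak{h}^{2+\varrho}_s$ in Theorem~\ref{thm_ricci_flow} is defined with a parameter $s$ that is fixed once and for all (see the last paragraph of Section~\ref{section_rf_conv}), so the weight $\ww(x)$ equals $1$ only on $M(s)$, not on the growing $M(s_i)$. Converting to the unweighted norm on $M(s_i)$ costs a factor of order $e^{2(s_i-s)}$, which must also be absorbed into the choice of $T_i$; the paper does exactly this. Your Schauder bootstrap from $\mathfrak{h}^{2+\varrho}$ to $\mathfrak{h}^{k+\varrho}$ is fine and matches the paper's appeal to the derivative estimates in \eqref{equ_schauder_weighted}.

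Part~(2) has a genuine gap. Your primary argument asserts a strict spectral gap $A_{h_0}\leq -(1+\lambda)I$ from infinitesimal rigidity, but rigidity only kills \emph{$L^2$ eigentensors} at $-1$ (the traceless Codazzi tensors); on a noncompact manifold this does not by itself exclude $-1$ from the essential spectrum. The identity \eqref{equ_spectrum} shows that a Weyl sequence at $-1$ must be approximately traceless Codazzi, but such a sequence could slide off into the cusps without converging to anything in $L^2$, so rigidity of $M$ says nothing about it. You would need a separate analysis of the cusp contribution to the spectrum to justify the gap, and no such result is available in the paper. Your fallback also fails: the Bessi\`eres--Besson--Maillot convergence on compact sets carries no rate, so choosing $t_i$ with $\Vert l_i(t_i)\Vert_{C^0}<\epsilon_i$ gives no control of $t_i$ in terms of $\epsilon_i$. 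Since $T_i>t_i+1$, the crude bound $e^{T_i}\Vert l_i(T_i)\Vert\lesssim \epsilon_i\,e^{(1-\omega)T_i+\omega t_i}\geq \epsilon_i\,e^{t_i}$ cannot be made small without knowing $\epsilon_i=o(e^{-t_i})$, which is essentially the decay you are trying to prove.

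The paper avoids the spectral-gap issue entirely. It proves a separate Lemma~\ref{lemma_covergence_to_l}: writing $v=e^{t}l$ and using $\omega>\tfrac12$ so that the quadratic remainder $e^tQ_t=O(e^{-(2\omega-1)t})$ vanishes, one gets $\partial_t v = (A_{h_0}+1)v + e^tQ_t$. The $L^2$ identity \eqref{equ_spectrum} then gives $\frac{d}{dt}\Vert v\Vert_{L^2}\leq \Vert e^tQ_t\Vert_{L^2}$, so $\Vert v\Vert_{L^2}$ stays bounded; bootstrapping to $H^k$ and applying Rellich--Kondrachov extracts a subsequence $e^{T_i}l_i(T_i)\to\bar l$ in $C^2$ on compact sets, where $\bar l$ is a genuine $(-1)$-eigentensor of $A_{h_0}$. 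Only now is infinitesimal rigidity invoked, forcing $\bar l=0$. The point is that one never needs to push the decay rate in Theorem~\ref{thm_ricci_flow} past $\omega=1$; one instead shows directly that the rescaled flow subconverges into the $(-1)$-eigenspace and then uses rigidity to identify that eigenspace as $\{0\}$.
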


To derive (2), we need the next lemma.

\begin{Lemma}\label{lemma_covergence_to_l}
   Let $h(t)$ be a normalized Ricci-DeTurck flow satisfying the assumptions of Theorem \ref{thm_ricci_flow}, where the little H{\"o}lder spaces are defined with spatial parameter $s>0$. Define $l(t)=h(t)-h_0$. Then for each integer $k$, there exists a sequence on the $t$ variable going to infinity so that along this sequence, $e^{t} l(t)$ converges in $C^2$ on compact sets to a tensor $\ovl{l}$ as $t\rightarrow\infty$, where $\ovl{l}\in C^2_{loc}(Sym^2(T^*M))\cap H^k(M)$, and it satisfies $A_{h_0}(\ovl{l}) = -\ovl{l}$.
\end{Lemma}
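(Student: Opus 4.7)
My plan is to bootstrap the weighted Hölder decay from Theorem \ref{thm_ricci_flow} first to $L^2$ decay at the sharp rate $e^{-t}$, then to pointwise $C^2_{loc}$ and global $H^k$ bounds on $u(t) := e^t l(t)$, and finally to identify a subsequential limit as an eigentensor via a global-in-time spectral argument.

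To begin, I fix $\omega \in (\tfrac12, 1)$ in Theorem \ref{thm_ricci_flow}, so $\|l(t)\|_{\mathcal{X}_1} \leq Ce^{-\omega t}$. Writing the normalized Ricci-DeTurck equation as $\partial_t l = A_{h_0}(l) + Q(l)$ with $Q$ quadratic in $(l,\nabla l,\nabla^2 l)$, this Hölder control gives $\|Q(l(t))\|_{L^2(M)} \leq Ce^{-2\omega t}$. The spectral estimate \eqref{equ_spectrum} yields $A_{h_0} \leq -I$ on $L^2$, so the semigroup satisfies $\|e^{tA_{h_0}}\|_{L^2\to L^2} \leq e^{-t}$, and Duhamel's formula together with $2\omega > 1$ produces
\[\|l(t)\|_{L^2(M)} \leq e^{-t}\|l(0)\|_{L^2} + C\int_0^t e^{-(t-s)}e^{-2\omega s}\,ds \leq Ce^{-t}.\]
Setting $u(t) := e^t l(t)$, this becomes $\|u(t)\|_{L^2(M)} \leq C$, and $u$ satisfies $\partial_t u = (A_{h_0}+1)u + e^{-t}\widetilde{Q}(u)$, where $\widetilde{Q}$ is a smooth quadratic operator whose coefficients are uniformly controlled. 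Interior parabolic Schauder estimates upgrade the $L^2$ bound to uniform $C^{2+\varrho}_{loc}$ control $\|u(t)\|_{C^{2+\varrho}(K)} \leq C(K)$ on every compact $K$, and iterated energy/Schauder estimates on higher derivatives (analogous to what is needed for part (1) of Lemma \ref{lemma_A.2}) give $\|u(t)\|_{H^k(M)} \leq C_k$ for every $k$. Arzelà–Ascoli extracts $t_n \to \infty$ with $u(t_n) \to \bar{l}$ in $C^2_{loc}$, and Fatou's lemma places $\bar{l} \in C^2_{loc}(Sym^2(T^*M)) \cap H^k(M)$.

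To verify $A_{h_0}(\bar{l}) = -\bar{l}$, for each $T > 0$ I consider the time-shifted family $u_n(t,x) := u(t_n+t,x)$ on $M\times[-T,T]$. The same Schauder bounds yield uniform $C^{2,1}_{loc}$ estimates, so a further subsequence converges in $C^{2,1}_{loc}(M\times[-T,T])$ to a map $v$; since $e^{-(t_n+t)}\widetilde{Q}(u_n) \to 0$ on compact sets, $v$ solves the linear equation $\partial_t v = (A_{h_0}+1)v$ with $v(0,\cdot) = \bar{l}$. A diagonal argument across $T\to\infty$ yields a bounded global solution $v \in L^\infty(\mathbb{R};L^2(M)) \cap C^{2,1}_{loc}(\mathbb{R}\times M)$. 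Set $B := A_{h_0}+1$; this is self-adjoint with $\mathrm{spec}(B) \subset (-\infty,0]$ by \eqref{equ_spectrum}, and the spectral representation gives
\[\|v(t)\|_{L^2(M)}^2 = \int_{\mathrm{spec}(B)} e^{2t\lambda}\,d\mu_{v(0)}(\lambda) \qquad \forall\, t\in\mathbb{R}.\]
On $\{\lambda < 0\}$ the integrand tends to $+\infty$ as $t\to-\infty$, so the uniform $L^2$-boundedness of $v$ forces $\mu_{v(0)}(\{\lambda<0\}) = 0$, i.e.\ $\bar{l} \in \ker B$, which is precisely $A_{h_0}(\bar{l}) = -\bar{l}$.

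The main technical obstacle is this last step: producing a bounded global-in-time solution $v$ and then ruling out strictly negative spectral components \emph{without} presupposing a spectral gap, which need not be available in the noncompact finite-volume setting. The spectral-theoretic argument above circumvents the need for such a gap by using only the inequality $A_{h_0}+1 \leq 0$ from \eqref{equ_spectrum} together with the backward-in-time $L^2$-boundedness of $v$; the bootstrap from $e^{-\omega t}$ Hölder decay to $e^{-t}$ $L^2$ decay in Step 2 is what makes the limit $\bar{l}$ potentially nontrivial and thus detects the $-1$ eigenspace of $A_{h_0}$.
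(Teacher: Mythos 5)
Your proof is correct and the overall structure (Duhamel $\Rightarrow$ $L^2$ bound on $u = e^t l$ $\Rightarrow$ higher-order uniform bounds $\Rightarrow$ subsequential $C^2_{loc}$ limit $\bar l$) matches the paper's. The genuinely different part is the identification of $\bar l$ as a $-1$ eigentensor of $A_{h_0}$. The paper considers the \emph{forward} linear flowline $V_t$ emanating from an accumulation point of $u(t_n)$, uses monotone decay of $\|V_t\|_{L^2}$, and argues by contradiction: if the limiting tensor were not in $\ker(A_{h_0}+1)$, flowing it forward would strictly decrease the $L^2$ norm, contradicting convergence. You instead translate backward in time to produce an \emph{eternal} solution $v$ of $\partial_t v = (A_{h_0}+1)v$ with $v(0)=\bar l$ and $\sup_{t\in\mathbb{R}}\|v(t)\|_{L^2} < \infty$, and then apply the spectral theorem: since $\mathrm{spec}(A_{h_0}+1) \subset (-\infty,0]$ by \eqref{equ_spectrum}, any spectral mass of $\bar l$ on $\{\lambda < 0\}$ would force $\|v(t)\|_{L^2} \to \infty$ as $t\to -\infty$, which is ruled out. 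This backward Liouville-type argument is cleaner and more explicit than the paper's forward contradiction, which is only sketched; in exchange you need the diagonal construction of the eternal solution, whereas the paper works only with forward flowlines.

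One step deserves more care: the passage ``interior parabolic Schauder estimates upgrade the $L^2$ bound to uniform $C^{2+\varrho}_{loc}$ control'' is compressed. Schauder theory needs pointwise (Hölder) control of $u$, not just $L^2$; you first need a De Giorgi--Nash--Moser (or $L^p$) step to go from $L^2$ to $L^\infty_{loc}$, then Schauder. The paper instead differentiates the equation (using that the coefficients of $A_{h_0}$ are parallel for $h_0$) to run the same $L^2$ energy argument for $\nabla^j u$, getting $\|u(t)\|_{H^k(M)} \leq C_k$ directly and then $C^2_{loc}$ via Sobolev embedding; this is the route you reference later for the $H^k$ bounds and is simpler to carry out. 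Also, the formula $\|v(t)\|_{L^2}^2 = \int e^{2t\lambda}\,d\mu_{v(0)}(\lambda)$ for $t<0$ implicitly uses that $v(0)$ lies in $\mathrm{Range}(e^{-tB})$ for all $t<0$, which is exactly what the backward existence of $v$ provides; it is worth stating this explicitly (equivalently, write $\|v(s)\|_{L^2}^2 = \int e^{2s\lambda}\,d\mu_{v(0)}(\lambda)$ via $d\mu_{v(s)} = e^{2s\lambda}\,d\mu_{v(0)}$ for $s<0$ and send $s\to -\infty$).
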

\begin{proof}
Let $\mathcal{A}(h(t))$ be the DeTurck operator of $h(t)$, given by the expression on the right-hand side of \eqref{DRF}.
    By equation (7.1) in \cite{Jiang-VargasPallete_RF}, we have that $l(t)$ is of the form
    \begin{equation*}
    l(t)=e^{tA_{h_0}} l(0)+\int_0^t e^{(t-s)A_{h_0}}\big(\mathcal{A}(h(s))(h(s))-\mathcal{A}(h_0)(h_0)-A_{h_0}(l(s))
    \big)\,ds.
\end{equation*}
   By Theorem \ref{thm_ricci_flow}, it satisfies $\Vert l(t)\Vert _{\mathfrak{h}^{2+\varrho}_s(M)}= O(e^{-\omega t})$ for $t\geq 2$.
   Consider any $k\in\mathbb{N}$.
   The derivative estimates in \cite[Corollary 2.7]{Bamler} applies to any ball $B(\Tilde{x},r)\subset \mathbb{H}^3$ with radius $r$, 
   and it provides a constant $c(k,\varrho,r)>0$, such that for any $x\in M$ with a lift $\Tilde{x}$, we have $\Vert l(t+1)\Vert_{\mathfrak{h}^{k+\varrho}(B(\Tilde{x},r))}\leq c(k,\varrho,r)\Vert l(t)\Vert_{C^0(B(\Tilde{x},2r))}$. This implies that
   \begin{equation}\label{equ_schauder_weighted}
       \Vert l(t)\Vert _{\mathfrak{h}^{k+\varrho}_s(M)}= O(e^{-\omega t})\quad \forall k\in\mathbb{N}, \,t\geq 3.
   \end{equation}
   
   Consider the term  
    \begin{equation*}
        l(t) - e^{tA_{h_0}}l(0) =  \int_0^t e^{(t-s)A_{h_0}}\big(\mathcal{A}(h(s))(h(s))-\mathcal{A}(h_0)(h_0)-A_{h_0}(l(s))
    \big)\,ds.
    \end{equation*}
Since $h(0)$ is sufficiently close to $h_0$ in $C^0$, Theorem \ref{thm_stability_C^0} implies that the metric $h(t)$ stays in a small neighborhood of $h_0$ in $C^2$ for all $t\geq 1$.
Therefore, for any $\tau\in [0,1]$ and $t\geq 1$, $h_\tau(t):=\tau h(t)+(1-\tau)h_0$ remains close to $h_0$ in $C^2$. Denote the linearization of $\mathcal{A}(\cdot)(\cdot)$ at $h_\tau(t)$ by $A_{h_\tau(t)}$, then it has the following form (see, for example, \cite[Proposition 2.3.7]{Topping}).
\begin{equation*}
    A_{h_\tau(t)}(l(t))=\Delta_L^{h_\tau(t)}l(t)+\mathcal{L}_{(\delta G(l(t)))^\#}h_\tau(t)-4l(t),
\end{equation*}
where $G(l(t))=l(t)-\frac12\left(\tr_{h_\tau(t)}(l(t))\right)h_\tau(t)$. We observe that \begin{align*}
    &\left\Vert A_{h_\tau(t)}(l(t))-A_{h_0}(l(t))\right\Vert _{\mathfrak{h}^{0+\varrho}_s(M)}\\\nonumber
    \leq& \left\Vert \left(\Delta_L^{h_\tau(t)}-\Delta_L^{h_0}\right)l(t)\right\Vert_{\mathfrak{h}^{0+\varrho}_s(M)}+\left\Vert\mathcal{L}_{(\delta G(l(t)))^\#}h_\tau(t)\right\Vert _{\mathfrak{h}^{0+\varrho}_s(M)}\\\nonumber
    \lesssim& \Vert h_\tau(t)-h_0\Vert_{\mathfrak{h}^{2+\varrho}_s(M)}\Vert l(t)\Vert_{\mathfrak{h}^{2+\varrho}_s(M)}+\Vert h_\tau(t)-h_0\Vert_{\mathfrak{h}^{1+\varrho}_s(M)}\Vert l(t)\Vert_{\mathfrak{h}^{1+\varrho}_s(M)}\\\nonumber
    \lesssim & \Vert l(t)\Vert_{\mathfrak{h}^{2+\varrho}_s(M)}^2.
\end{align*}
Since the map $\tau\mapsto \mathcal{A}(h_\tau(t))(h_\tau(t))$ defined on $[0,1]$ is $C^1$, by applying the mean value theorem to this map, we have  \begin{align*}
    &\left\Vert \mathcal{A}(h(t))(h(t))-\mathcal{A}(h_0)(h_0)-A_{h_0}(l(t))
    \right\Vert _{\mathfrak{h}^{0+\varrho}_s(M)}\\
    \leq & \max_{0\leq \tau \leq 1}\Vert A_{h_\tau(t)}(l(t))-A_{h_0}(l(t))\Vert _{\mathfrak{h}^{0+\varrho}_s(M)}\\
    \lesssim &\Vert l(t)\Vert_{\mathfrak{h}^{2+\varrho}_s(M)}^2=O(e^{-2\omega t}),
\end{align*}
where $t\geq 2$.
Let \begin{equation*}
    Q_t:=\mathcal{A}(h(t))(h(t))-\mathcal{A}(h_0)(h_0)-A_{h_0}(l(t)).
\end{equation*}
    We follow a similar argument as in \eqref{equ_schauder_weighted}, from which we get
    \begin{equation*}
    \Vert Q_t\Vert _{\mathfrak{h}^{k+\varrho}_s(M)}= O(e^{-2\omega t})\quad \forall k\in\mathbb{N},\,t\geq 3.
    \end{equation*}
    Taking $\omega\in (\frac{1}{2},1)$, since $\mathfrak{h}^{k+\varrho}_s(M)\subset H^k(M)$, we obtain that $e^tQ_t\to 0$ in $H^k$ on $M$ as $t\to \infty$. 

    Defining $v=e^{t}l$ we have that $v$ is the solution of
    \[
    \frac{dv}{dt} = A_{h_0}v + v + e^tQ_t.
    \]
    By the $L^2$ estimate of $A_{h_0}$ in \eqref{equ_spectrum}, we have $\langle A_{h_0}v,v\rangle_{L^2} \leq -\Vert v\Vert^2_{L^2}$. Therefore, $f=\Vert v\Vert_{L^2(M)}$ satisfies the following differential inequality
    \[
    \frac{df}{dt} \leq \Vert e^tQ_t\Vert_{L^2} f \leq e^{-(2\omega-1)t}f, \quad \omega\in \left(\frac12,1\right),\,t\geq 3.
    \]
    Hence it follows that
    \[
    f(t) \leq f(3)e^{\int_3^te^{-(2\omega-1)s}ds},\quad \omega\in \left(\frac12,1\right),\,t\geq 3.
    \]
    Or equivalently, $\Vert v \Vert_{L^2}$ is uniformly bounded along the flow for $t\geq 3$. As the terms in $A_{h_0}$ are parallel with respect to the Levi-Civita connection of $h_0$, we can take derivatives to the equation satisfied by $v$ and proceed to analogy to obtain $\Vert v\Vert_{H^k} \leq c_k$ for some constants $c_k$ and for all $t\geq 3$. This in turn implies uniform bounds for times derivatives on given spatial compact sets. In particular, by applying Sobolev embedding and Rellich-Kondrachov compactness theorem, there is a sequence of times $t_i\rightarrow+\infty$ so that the flowlines starting at $v(t_i)$ converge in $H^k$ for any given time $t\in [0,+\infty[$ and $C^2$ in compact sets of $M\times [0,\infty[$ to a time dependent tensor $V_t$ on $M$ ($t\geq 0$).

    As $e^tQ_t$ goes to zero in $\mathfrak{h}^{k+\varrho}$ norm we have then that $V_t$ is the solution of the differential equation
    \[
    \frac{dV_t}{dt} = A_{h_0}V_t + V_t.
    \]
    Reasoning as before, we not only obtain in this case that $\Vert V_t\Vert_{H^k}$ is uniformly bounded (by a constant depending on $k$) but monotone. Since $V_t$ converges to a tensor $\ovl{l}$ in $H^k$ and $C^2$ in compact sets, we have $\ovl{l}\in C^2_{loc}(Sym^2(T^*M))\cap H^k(M)$.
    We claim that $\ovl{l}$ must be a $-1$ eigentensor.

    Assume the contrary. Then by starting at $\overline{l}$ and flowing by the equation $\ovl{l} = A_{h_0}\ovl{l} + \ovl{l}$ we will strictly decrease the $L^2$ norm in a neighbourhood of $\overline{l}$, making impossible the $L^2$ convergence of $V_t$ to $\overline{l}$.

    The argument is finished by doing a diagonal argument and taking a subsequence of times so that $l(t)$ approaches an accumulation tensor of $V_t$.

\end{proof}

We now provide the exponential decay estimate for $l_i(t)$, and use the above lemma to deduce the convergence of $e^{T_i}l_i(T_i)$.

\begin{proof}[Proof of Lemma \ref{lemma_A.2}]
Using Theorem \ref{thm_ricci_flow}, we get
\begin{equation*}
    \Vert l_i(t)\Vert _{\mathfrak{h}^{2+\varrho}_{s}(M)}\leq \frac{c\rho_i}{(t-t_i-1)^{1-\alpha}}e^{-\omega (t-t_i)},\quad \forall t> t_i+1.
\end{equation*}
    Furthermore, since $h(t)$ stays close to $h_0$ for $t\geq t_i+1$, the spatial parameter $r(x)$ on $\partial M(s')$ in Definition \ref{def_little_holder} is approximately $s'-s$, where $s'\geq s$. Therefore, $r(x)$ is bounded by $2(s_i-s)$ on $M(s_i)$. We have 
    \begin{equation*}
    \Vert l_i(t)\Vert _{\mathfrak{h}^{2+\varrho}(M(s_i))}\leq \frac{c\rho_i}{(t-t_i-1)^{1-\alpha}}e^{-\omega (t-t_i)+2(s_i-s)},\quad \forall t> t_i+1.
\end{equation*}
   Observe that we can take a constant $C_2>0$ independent of $i$, so that for sufficiently large $t$ (where sufficiently large depends on $i$), we have $\frac{c\rho_i}{(t-t_i-1)^{1-\alpha}}e^{\omega t_i+2(s_i-s)}\leq C_2$. It implies $\Vert l_i(t)\Vert _{\mathfrak{h}^{2+\varrho}(M(s_i))}\leq C_2e^{-\omega t}$.
   A similar argument as in \eqref{equ_schauder_weighted} applies to $t\geq t_i'$ for some $t_i'>t_i+1$ and deduces the estimates for the $\mathfrak{h}^{k+\varrho}(M(s_i))$ norms, this proves (1).
   
  Next, we prove (2). Fix an arbitrary $\epsilon>0$. 
By Lemma \ref{lemma_covergence_to_l}, there exists $T_i\geq t_i'$ such that $T_i\to\infty$, and 
\begin{equation*}
    \Vert e^{T_i}l_i(T_i)-\ovl{l}_i\Vert_{C^2(M(s_i))}<\epsilon, 
\end{equation*}
where the tensor $\ovl{l}_i$ satisfies $\ovl{l}_i\in C^2_{loc}(Sym^2(T^*M))\cap H^1(M)$ and $A_{h_0}(\ovl{l}_i)=-\ovl{l}_i$. 

By \eqref{equ_spectrum}, $\ovl{l}_i$ must be a traceless Codazzi tensor, which correspond to infinitesimal conformally flat deformations of the hyperbolic metric. 
Since $(M,h_0)$ is infinitesimally rigid, $\ovl{l}_i$ must be $0$. Hence, Lemma \ref{lemma_A.2} (2) follows.

\end{proof}

\subsection{Proof of inequality in Proposition \ref{prop_2}}\label{subsection_inequality_prop_2}

To obtain the inequality of the proposition, we will use the exponential decay of $l_i(t)$ on $M(s_i)$ to analyze the area ratio inequality \eqref{sol_ode}, associated with the surface $\Sigma_i(t)$. So we first need to argue that \begin{equation}\label{equ_surface_in_M_s}
    \Sigma_i(t)\subset M(s_i)\quad \forall t\geq t_i+1.
\end{equation}
According to Theorem \ref{thm_stability_C^0}, given $a\in (0,1)$ and $\epsilon_i>0$ in \eqref{equ_s_i}, after possibly replacing $\rho$ in Theorem \ref{thm_ricci_flow} with a smaller constant $\rho_i$ as done before, the metric $h_i(t)$ remains in the $\epsilon_i$-neighborhood of $h_0$ in $C^2$ for all $t\geq t_i+1$, and its sectional curvature satisfies $sec(h_i(t))\leq -a^2< 0$. This allows us to apply \eqref{equ_s_i} (which uses Lemma \ref{lem:minimizer_close_to_hyp_in_compact}). Consequently, we deduce that $\Sigma_i(t)$ lies inside $M(s_i)$ for all $t\geq t_i+1$. 

Let $D_i$ ($\Omega_i(t)$) be the lifts of $S_i$ ($\Sigma_i(t)$, respectively) to the universal cover of $M$. These discs $D_i$ and $\Omega_i(t)$ are asymptotic and at a uniformly bounded Hausdorff distance from each other for sufficiently large $t$. Additionally, as $h_i(t)\rightarrow h_0$ on $M(s_i)$, $\Omega_i(t)$ converges uniformly on compact sets to $D_i$ in $\mathfrak{h}^{2+\varrho}$. Hence, there exists a smooth map $f_i(t)$ on $D_i$ with $|f_i(t)|_{\mathfrak{h}^{2+\varrho}}<1$, such that $\Omega_i(t)$ can be expressed as the graph of $f_i(t)$ over $D_i$. More precisely, let $n_i$ be the unit normal vector field of $D_i$, then we have the following diffeomorphism $F_i(t)$ from the Minkowski model of $\mathbb{H}^3$.
\begin{equation*}
    F_i(t):D_i\rightarrow\Omega_i(t),\quad F_i(x,t)=\cosh (f_i(x,t))x+\sinh (f_i(x,t))n_i(x).
\end{equation*}
In particular, we have a diffeomorphism at $t=T_i$.

Recall the laminar measure associated with $\phi_i$ defined in \eqref{def_laminar_measure}.
By equation (13) of \cite{Lowe-Neves}, using the Gauss-Bonnet formula, we get \begin{align*}
     \frac{\area_{h_i(T_i)}(\Sigma_i(T_i))}{4\pi(g_i-1)}=&1-\delta_{\phi_i}\left(\Big(Ric(h_i(T_i))(e_3,e_3)-\frac{R(h_i(T_i))}{3}+|A|^2_{h_i(T_i)}\Big)\Lambda_{h_i(T_i)}\right)\\
     &+\delta_{\phi_i} \Big(\frac{R(h_i(T_i))+6}{6}\Lambda_{h_i(T_i)}\Big),
\end{align*}
where $\Lambda_{h_i(T_i)}(\phi_i)$ is the Jacobian of $F_i(T_i)\circ \phi_i$.
When this is combined with \eqref{sol_ode}, it yields the following inequality.
\begin{equation}\label{16}
    \delta e^{-T_i}\leq \delta_{\phi_i}\left(\Big(Ric(h_i(T_i))(e_3,e_3)-\frac{R(h_i(T_i))}{3}+|A|^2_{h_i(T_i)}\Big)\Lambda_{h_i(T_i)}\right).
\end{equation}

Next, we follow the approach of \cite[Lemma 4.2]{Lowe-Neves} to estimate the right-hand side of \eqref{16}.
Let $\theta(l):\mathcal{F}rM\rightarrow \mathbb{R}$ be the continuous function defined by \begin{equation*}
    \theta(l)(x,\{e_1,e_2,e_3\}):=-\frac{1}{2}A_{h_0}(l)_x(e_3,e_3),
\end{equation*}
where $\{e_1,e_2,e_3\}$ is an orthonormal basis of $M$ at $x$. 
Therefore, \begin{equation*}
    \theta(\ovl{l})(x,\{e_1,e_2,e_3\})=\frac{1}{2}(\ovl{l})_x(e_3,e_3).
\end{equation*}

As \eqref{equ_surface_in_M_s} holds for $t=T_i$, according to Lemma \ref{lemma_A.2} (1), $l_i(T_i)$ in $\Sigma_i(T_i)$ has the $\mathfrak{h}^{4+\varrho}$-norm bounded by $C_4e^{-\omega T_i}$. Consequently, utilizing the estimates in Lemma 4.2 of \cite{Lowe-Neves}, we obtain a constant $C>0$ independent of $i$, so that 
\begin{align*}
    \delta e^{-T_i}\leq &\delta_{\phi_i}\left(\Big(Ric(h_i(T_i))(e_3,e_3)-\frac{R(h_i(T_i))}{3}+|A|^2_{h_i(T_i)}\Big)\Lambda_{h_i(T_i)}\right)\\
    \leq &\Omega_*\delta_{\phi_i}\big(\theta(l_i(T_i))\big)+ Ce^{-2\omega T_i}.
\end{align*}
We multiply both sides by $e^{T_i}$:
\begin{equation}\label{equ_delta_Omega_*}
    \delta\leq \Omega_*\delta_{\phi_i}\big(\theta(e^{T_i}l_i(T_i))\big)+Ce^{-(2\omega-1) T_i}.
\end{equation}
    Since $\theta(\cdot)$ involves derivatives up to the second order, Lemma \ref{lemma_A.2} (2) implies that as $i\rightarrow\infty$, \begin{equation*}
    \Vert \theta(e^{T_i}l_i(T_i)))\Vert_{C^0(M(s_i))}=\Vert e^{T_i}\theta(l_i(T_i)))\Vert_{C^0(M(s_i))}\to 0.
\end{equation*}
Since $\Omega_*\delta_{\phi_i}$ has support in $\mathcal{F}r(M(s_i))$,
\begin{align}\label{equ_theta_conv_0}
    \left|\Omega_*\delta_{\phi_i}\big(e^{T_i}\theta(l_i(T_i))\big)\right|\leq & \Vert e^{T_i}\theta(l_i(T_i)))\Vert_{C^0(M(s_i))}\cdot\Omega_*\delta_{\phi_i}\left(\mathcal{F}r(M(s_i))\right)\\\nonumber
    \leq& \Vert e^{T_i}\theta(l_i(T_i)))\Vert_{C^0(M(s_i))}\to 0, \quad i\to \infty.
\end{align}
Choosing $\omega\in (\frac{1}{2},1)$, then it follows from \eqref{equ_delta_Omega_*} and \eqref{equ_theta_conv_0} that \begin{equation*}
    0<\delta\leq 0,
\end{equation*}
leading to a contradiction.
This means that the assumption \eqref{contradiction_area2} is false, therefore the inequality stated in Proposition \ref{prop_2} must hold.

\subsection{Proof of rigidity in Proposition \ref{prop_2}}\label{subsection_7.1_rigidity}
Suppose that a weakly cusped metric $h$ on $M$ is asymptotically cusped of order $k\geq 2$ with $\Vert Rm(h)\Vert_{C^{1}(M)}<\infty$. 
Moreover, it satisfies $R(h)\geq -6$, and $\underset{i\rightarrow\infty}{\lim\inf}\,\dfrac{\area_{h}(\Sigma_i)}{4\pi(g_i-1)}= 1$.

    By Theorem \ref{thm_stability_cusp}, there exists a normalized Ricci flow $h(t)$ with bubbling-off, starting from $h$ and defined for all time. Therefore, it is not necessary to modify the initial metric as in \eqref{equ_h_i} or to run the Ricci flow starting from different modified initial data. Furthermore, the stability in Theorem \ref{thm_stability_cusp} ensures that $h(t)$ remains asymptotically cusped of order $2$ for all time. This allows us to apply Lemma \ref{lem:areaincusp_asymphyp} to a compact set $t\in [t_0,2t_0]$, which guarantees the existence of a constant $\kappa>0$ and a thick region $K=M(s)$, so that for any sequence $\Pi_i\in S_{\frac{1}{i}}(M)$ and $\Sigma_i(t)$ minimal area representative of $\Pi_i$, we have \begin{equation}\label{equ_rigidity7.1_1}
        \area_{h(t)}(\Sigma_i(t)\cap M(s))\geq \kappa\left( \area_{h(t)}(\Sigma_i(t))\right)\quad \forall t\in [t_0,2t_0].
    \end{equation}

Let $$a(t)=\underset{i\rightarrow\infty}{\lim\inf}\,\dfrac{\area_{h(t)}(\Sigma_i(t))}{4\pi(g_i-1)}.$$ In particular, we have $a(0) = \underset{i\rightarrow\infty}{\lim\inf}\,\dfrac{\area_{h}(\Sigma_i)}{4\pi(g_i-1)}=1$. 
The area ratio estimates in \eqref{ode}-\eqref{sol_ode} implies that \begin{align}\label{ode_rigidity}
    \frac{d}{dt}\area_{h(t)}(\Sigma_i(t))\leq & 4\pi(g_i-1)-\area_{h(t)}(\Sigma_i(t))-\frac{1}{2}\int_{\Sigma_i(t)}\left(R(h(t))+6\right)\,dA_{h(t)}\\\nonumber
    \leq & 4\pi(g_i-1)-\area_{h(t)}(\Sigma_i(t)).
\end{align}
Solving the ODE and letting $i\to\infty$, we obtain
$$a(t)\leq 1-e^{-t}\left(1-a(0)\right)= 1.$$ As $h(t)$ is always weakly cusped with $R(h(t))\geq -6$, the inequality in Proposition \ref{prop_2} applies to it and implies that $a(t)\geq 1$. 
Therefore, we must have $$a(t)\equiv 1\quad\forall t\in [0,2t_0].$$

Assume that $h$ is not hyperbolic. By the maximum principle, we have $R(h(t))\geq -6$ for $t\geq0$. Moreover, by the strong maximum principle, we see that if for $t>0$, $R(h(t))$ is equal to $-6$ at an interior point, then $R(h(t))\equiv-6$ and $\overset{\circ}{Ric}\equiv 0$, which in turn implies that $h(t)$ would be hyperbolic. Since this contradicts $h$ not being hyperbolic, for the compact set $K=M(s)$ above, there exists $\delta>0$ so that \begin{equation}\label{equ_rigidity7.1_3}
    R(h(t)|_{M(s)})\geq -6+2\delta\quad \forall t\in [t_0,2t_0].
\end{equation}

\eqref{equ_rigidity7.1_1} and \eqref{equ_rigidity7.1_3} imply that
\begin{equation*}
    \int_{\Sigma_i(t)} R(h(t))\,dA_{h(t)}\geq (-6+2\kappa\delta)\area_{h(t)}(\Sigma_i(t)),\quad \forall t\in [t_0,2t_0].
\end{equation*}
Hence, the inequality in \eqref{ode_rigidity} is of the following form. \begin{equation*}
    \frac{d}{dt}\area_{h(t)}(\Sigma_i(t))\leq 4\pi(g_i-1)-(1+\kappa\delta)\area_{h(t)}(\Sigma_i(t)).
\end{equation*}
We conclude by solving this ODE that \begin{equation*}
    a(2t_0)\leq a(t_0)e^{-(1+\kappa\delta)t_0}+\frac{1-e^{-(1+\kappa\delta)t_0}}{1+\kappa\delta}=e^{-(1+\kappa\delta)t_0}+\frac{1-e^{-(1+\kappa\delta)t_0}}{1+\kappa\delta}<1,
\end{equation*}
which contradicts $a(2t_0)=1$. Consequently, if the equality of Proposition \ref{prop_2} holds, then the metric $h$ is Einstein, and thus it is hyperbolic and isometric to $h_0$.

\subsection{Proof of Theorem \ref{Thm_sar>-6}}
Let $h$ be a weakly cusped metric on $M$ with $R(h)\geq -6$, we first prove that $\overline{E}(h)\leq 2$.
For any $\eta>0$, Proposition \ref{prop_2} gives rise to a constant $\epsilon_0>0$ such that for any $\Pi\in S_{\epsilon_0}(M)$, \begin{equation*}
    \area_{h_0}(\Pi)\leq (1+\eta)\area_h(\Pi).
\end{equation*}
Thus, for any positive number $\epsilon<\epsilon_0$, \begin{align*}
    &\ln\#\{\area_{h}(\Pi)\leq 4\pi(L-1):\Pi\in S_{\epsilon}(M)\}\\
    \leq &\ln\#\{\area_{h_0}(\Pi)\leq 4\pi(1+\eta)(L-1):\Pi\in S_\epsilon(M)\}.
\end{align*}
By the definition of minimal surface entropy, it implies that \begin{equation*}
   \overline{E}(h)\leq (1+\eta)E(h_0)=2(1+\eta).
\end{equation*}
Therefore, the inequality of Theorem \ref{Thm_sar>-6} follows by taking $\eta\rightarrow 0$. 

Next, we prove the rigidity of Theorem \ref{Thm_sar>-6}. Suppose that $h$ is asymptotically cusped of order $k\geq 2$ with $\Vert Rm(h)\Vert_{C^{1}(M)}<\infty$. 
Additionally, suppose $R(h)\geq -6$ and $\overline{E}(h)=2$. Assume by contradiction that there are $\eta>0$ and $\epsilon_0>0$ such that for all $\Pi\in S_{\epsilon_0}(M)$, we have \begin{equation*}
    \area_{h_0}(\Pi)\leq (1-\eta)\area_h(\Pi).
\end{equation*}
Then, as we discussed before, \begin{equation*}
    \overline{E}(h)\leq (1-\eta)E(h_0)=2(1-\eta),
\end{equation*}
which is a contradiction. 

Therefore, we can find a sequence $\Pi_i\in S_{\frac{1}{i}}(M)$ such that \begin{equation*}
    \area_{h_0}(\Pi_i)> \Big(1-\frac{1}{i}\Big)\area_h(\Pi_i)\quad \Longrightarrow\quad  \underset{i\rightarrow\infty}{\lim\inf}\dfrac{\area_h(\Pi_i)}{\area_{h_0}(\Pi_i)}\leq 1.
\end{equation*}
It then follows from Proposition \ref{prop_2} that $h$ is isometric to $h_0$.

\section{Proof of Theorem \ref{Thm_sar>-6_c0perturbation}}\label{section_proof_thmD}
Similarly to the previous section, it suffices to prove the following proposition. 

\begin{Prop}\label{prop_2.2}
Suppose that $(M,h_0)$ is a hyperbolic $3$-manifold of finite volume. Let $h$ be a weakly cusped metric on $M$ that satisfies the following conditions.
\begin{enumerate}
    \item $\Vert h-h_0\Vert_{C^0(M)}\leq\epsilon$ for a given constant $\epsilon>0$,
    \item $h$ is asymptotically cusped of order at least two with $\Vert Rm(h)\Vert_{C^{1}(M)}<\infty$.
\end{enumerate}
If $R(h)\geq -6$, then for any sequence $\Pi_i\in S_{\frac{1}{i},\mu_{Leb}}(M)$,
we have
    \begin{equation*}
    \underset{i\rightarrow\infty}{\lim\inf}\dfrac{\area_h(\Pi_i)}{\area_{h_0}(\Pi_i)}\geq 1.
\end{equation*}
Furthermore, the equality holds if and only if $h$ is isometric to $h_0$.
\end{Prop}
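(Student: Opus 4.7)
The plan is to mirror the proof of Proposition~\ref{prop_2}, but to replace the use of infinitesimal rigidity by the Lebesgue equidistribution $\Omega_*\delta_{\phi_i}\to \mu_{Leb}$ guaranteed by $\Pi_i\in S_{\frac{1}{i},\mu_{Leb}}(M)$. Arguing by contradiction, assume there is $\delta>0$ and a subsequence with $\area_h(\Sigma_i)\leq (1-\delta)\cdot 4\pi(g_i-1)$, where $\Sigma_i$ is an $h$-area minimizer in $\Pi_i$; existence and compact-support control of $\Sigma_i$ follow from weak cuspness via Theorem~\ref{lem:surfaceexistencemonotonicity}.

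Unlike in Proposition~\ref{prop_2}, the two hypotheses (asymptotic cuspness of order $\geq 2$ with bounded $\Vert Rm(h)\Vert_{C^{1}}$, and $C^0$-closeness to $h_0$) remove the need to pre-modify $h$ on the cusp: taking $\epsilon$ smaller than the threshold in Theorem~\ref{thm_ricci_flow}, that theorem applies directly to $h(0)=h$, producing a normalized Ricci--DeTurck flow $h(t)$ on $[0,\infty)$ with $\Vert h(t)-h_0\Vert_{\mathcal{X}_1}=O(e^{-\omega t})$ for any fixed $\omega\in(\tfrac{1}{2},1)$. The scalar-curvature lower bound $R(h(t))\geq-6$ is preserved, so the ODE computation \eqref{ode}--\eqref{sol_ode} gives $\area_{h(t)}(\Sigma_i(t))/[4\pi(g_i-1)]\leq 1-\delta e^{-t}$. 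The Gauss--Bonnet decomposition of \eqref{16} then yields the key inequality
\begin{equation*}
\delta \;\leq\; \Omega_*\delta_{\phi_i}\bigl(\theta(e^{T_i}\,l(T_i))\bigr) \;+\; C\,e^{-(2\omega-1)T_i},
\end{equation*}
where $l(t)=h(t)-h_0$ and $\theta(l)(x;\{e_1,e_2,e_3\})=-\tfrac{1}{2}A_{h_0}(l)_x(e_3,e_3)$. By Lemma~\ref{lemma_covergence_to_l} there is a sequence $T_i\to\infty$ along which $e^{T_i}l(T_i)\to\overline{l}$ in $C^2$ on compact sets, with $\overline{l}$ a $(-1)$-eigentensor of $A_{h_0}$; the spectral identity \eqref{equ_spectrum} forces $\overline{l}$ to be trace-free.

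The crucial new input is that averaging $e_3\otimes e_3$ uniformly over the $SO(3)$-fiber of $\mathcal{F}rM$ equals $\tfrac{1}{3}\mathrm{Id}$, so
\begin{equation*}
\mu_{Leb}\bigl(\theta(\overline{l})\bigr) \;=\; \tfrac{1}{6}\int_M \tr_{h_0}(\overline{l})\,\dvol_{h_0} \;=\; 0.
\end{equation*}
It therefore suffices to pass from vague convergence $\Omega_*\delta_{\phi_i}\to\mu_{Leb}$ to the genuine limit $\Omega_*\delta_{\phi_i}(\theta(e^{T_i}l(T_i)))\to 0$. The main obstacle, and the place where the two hypotheses play their essential roles, is exactly this step: neither $\theta(\overline{l})$ nor its approximants are compactly supported, so vague convergence does not apply naively. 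I will handle this by combining Lemma~\ref{lem:areaincusp_asymphyp} (most area in thick regions for asymptotically cusped metrics)---which, for any $\kappa<1$, produces a compact $K$ with $\Omega_*\delta_{\phi_i}(\mathcal{F}r K)\geq \kappa-o_i(1)$---with the uniform $L^\infty$-bound on $e^{T_i}l(T_i)$ provided by the uniform $H^k$-estimates of Lemma~\ref{lemma_covergence_to_l} plus a bounded-geometry Sobolev embedding. Splitting the integral into a $\mathcal{F}rK$ piece (on which $\theta(e^{T_i}l(T_i))\to\theta(\overline{l})$ uniformly, so vague convergence against a compactly supported cutoff applies) and a cusp piece (controlled by $(1-\kappa)\cdot\Vert e^{T_i}l(T_i)\Vert_{L^\infty}$), then letting $T_i\to\infty$ followed by $\kappa\to 1$, forces $\delta\leq 0$, a contradiction.

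For the rigidity statement the argument of Section~\ref{subsection_7.1_rigidity} transfers essentially verbatim: the hypotheses guarantee, via Theorem~\ref{thm_stability_cusp}, a Ricci flow with bubbling-off $h(t)$ preserving the asymptotic cusp structure on all of $[0,\infty)$. Were $h$ not hyperbolic, the strong maximum principle applied to $R(h(t))+6$ would give $R(h(t)|_K)\geq -6+2\delta'$ on a fixed thick $K$ for $t\in[t_0,2t_0]$; coupled with Lemma~\ref{lem:areaincusp_asymphyp}, this improves \eqref{ode_rigidity} to $\mathcal{A}_i'\leq 4\pi(g_i-1)-(1+\kappa\delta')\mathcal{A}_i$, forcing the limiting ratio $a(2t_0)<1$ and contradicting the equality $a(t)\equiv 1$ established from the inequality half.
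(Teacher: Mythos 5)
Your global outline is right: you reduce the contradiction to showing $\Omega_*\delta_{\phi_i}\bigl(\theta(e^{T_i}l(T_i))\bigr)\to 0$, you observe that equality in the spectral estimate \eqref{equ_spectrum} forces $\ovl{l}$ to be traceless so $\mu_{Leb}(\theta(\ovl{l}))=\frac16\fint_M\tr_{h_0}(\ovl{l})=0$, and you propose to split the integral into a thick part (where vague convergence applies) and a cusp tail. The rigidity argument is also correctly identified as transferring from Section~\ref{subsection_7.1_rigidity}. These match the paper.

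The gap is in your control of the cusp tail. You invoke a ``uniform $L^\infty$-bound on $e^{T_i}l(T_i)$ provided by the uniform $H^k$-estimates of Lemma~\ref{lemma_covergence_to_l} plus a bounded-geometry Sobolev embedding.'' A finite-volume hyperbolic $3$-manifold is not of bounded geometry: the injectivity radius collapses exponentially in the cusps, and $H^k(M)\hookrightarrow L^\infty(M)$ fails there for every $k$ (e.g.\ $l=r\cdot h_0$ in cusp coordinates $e^{-2r}g_{\mathrm{flat}}+dr^2$ satisfies $\nabla^j l=O(1)$ and hence lies in every $H^k$, yet is unbounded). Nor does lifting help: a unit ball in $\mathbb{H}^3$ centered at height $r$ covers a fundamental domain $\sim e^{r}$ times, so the naive Moser/Sobolev iteration yields only $|\ovl{l}(x)|\lesssim e^{r(x)/2}\Vert\ovl{l}\Vert_{H^2}$, which diverges. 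The paper must therefore prove the missing $C^0$ and $L^1$-tail bounds on $\ovl{l}$ by hand---this is Lemma~\ref{lem:L2-1eigendecay}, whose proof uses the eigentensor equation $A_{h_0}(\ovl{l})=-\ovl{l}$ to set up a system of ODEs \eqref{equ_odes} for the horotorus average $\hat{l}$, and the $L^2$-integrability kills the growing modes, yielding exponential decay $|\hat{l}|=O\bigl(\Vert\ovl{l}\Vert_{L^2}e^{-(\sqrt{3}-1)r}\bigr)$; only then does a De~Giorgi--Nash--Moser estimate applied to $\ovl{l}-\hat{l}$ (not a generic Sobolev embedding) close the $C^0$ bound. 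Your plan would need this lemma, or an equivalent argument exploiting the eigentensor equation, to be a proof.

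A more minor point: when you invoke Lemma~\ref{lem:areaincusp_asymphyp} to get $\Omega_*\delta_{\phi_i}(\mathcal{F}r K)\geq\kappa-o_i(1)$, note that $\Omega_*\delta_{\phi_i}$ is defined via the $h_0$-area minimizers $S_i$, not the $h$-area minimizers, so the relevant ``most area in thick regions'' statement must be applied to $h_0$. The paper uses Lemma~\ref{lem:areaincusps} (sectional version, with $a=1$) applied to $h_0$; Lemma~\ref{lem:areaincusp_asymphyp} applied to $h_0$ would also work, but applied to $h$ it would control the wrong surfaces.
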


In the following discussion, we assume that $\Sigma_i$ is a closed essential surface immersed in $M$ that minimize the area in the homotopy class corresponding to $\Pi_i$ with respect to the metric $h$, and denote the genus of $\Sigma_i$ by $g_i$.
Furthermore, assume for contradiction that there exists $\delta > 0$ and a subsequence of $\mathbb{N}$, each element still labeled by $i$, such that: \begin{equation}\label{contradiction_area2_2}
    \dfrac{\area_h(\Sigma_i)}{4\pi(g_i-1)}\leq 1-\delta.
\end{equation}

Suppose that $\epsilon\leq \rho$, where $\rho$ is the constant in Theorem \ref{thm_ricci_flow}.
Then the normalized Ricci-DeTurck flow $h(t)$ starting from $h$ exists for all time. 
In this case, there is only one flow without any modification on metric. Moreover, by condition (2) and Theorem \ref{thm_stability_cusp}, $h(t)$ remains asymptotically cusped of order two for all time. This implies that $h(t)$ is always weakly cusped, and therefore there exists an area-minimizing surface homotopic to $\Sigma_i$ at time $t$, which we denote by $\Sigma_i(t)$.
Following the same approach, we obtain \begin{equation}\label{sol_ode_2}
    \frac{\area_{h(t)}(\Sigma_i(t))}{4\pi(g_i-1)}\leq 1-e^{-t}\left(1-\frac{\area_{h}(\Sigma_i)}{4\pi(g_i-1)}\right) \leq 1-\delta e^{-t}.
\end{equation}

Let $l(t)=h(t)-h_0$. Using Lemma \ref{lemma_covergence_to_l} again, we obtain the following result.   
\begin{Lemma}\label{lemma_A.2_2}
    Let $\omega\in (0,1)$ be the constant in Theorem \ref{thm_ricci_flow}, 
    then we can find a sequence $\{T_i\}_{i\in \mathbb{N}}$ with $T_i>  t_i+1$ and $T_i\rightarrow \infty$, such that the following statements hold. 
    \begin{enumerate}
        \item For each $k\in\mathbb{N}$, there exists a constant $C_k>0$ independent of $i$ so that  \begin{equation*}
        \Vert l(T_i)\Vert _{\mathfrak{h}^{k+\varrho}(M(s_i))}\leq C_ke^{-\omega T_i}.
    \end{equation*}

    \item  As $i\to\infty$,
    $$\Vert e^{T_i}l(T_i)-\ovl{l}\Vert_{C^2(M(s_i))}\to 0,$$ 
    where the tensor $\ovl{l}\in C^2_{loc}(Sym^2(T^*M))\cap H^1(M)$, and it satisfies $A_{h_0}(\ovl{l})=-\ovl{l}$ in \eqref{equ_def_A}, i.e., it is an eigentensor corresponding to the largest spectrum $-1$ of $A_{h_0}$.
    \end{enumerate}
\end{Lemma}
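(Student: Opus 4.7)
The proof closely parallels that of Lemma~\ref{lemma_A.2}, but is considerably simpler because the hypothesis $\Vert h - h_0\Vert_{C^0(M)} \leq \epsilon \leq \rho$ makes $h$ a valid initial datum for Theorem~\ref{thm_ricci_flow} directly. There is no need to modify the initial metric into a sequence $h_i$, no mixing with a Ricci flow with bubbling-off, and no post-surgery time $t_i$ to contend with. I would begin by applying Theorem~\ref{thm_ricci_flow} at $t=0$, obtaining
\[
\Vert l(t)\Vert_{\mathfrak{h}^{2+\varrho}_s(M)} \leq \frac{c\epsilon}{(t-1)^{1-\alpha}} e^{-\omega t}, \quad t>1.
\]
Interior parabolic derivative estimates (cf.\ \cite[Corollary 2.7]{Bamler}) applied on unit balls in $\mathbb{H}^3$ with a unit-length time shift then upgrade this to every order, giving $\Vert l(t)\Vert_{\mathfrak{h}^{k+\varrho}_s(M)} \lesssim_k e^{-\omega t}$ for all $t$ sufficiently large (the implicit constant being independent of $i$).

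For part~(1), I would convert the weighted norm on $M$ into an ordinary H\"older norm on the compact subset $M(s_i)$. Because the weight satisfies $\ww(x) \geq c\,e^{-(s_i-s)}$ throughout $M(s_i)$, restriction produces a factor of $e^{s_i-s}$ (the same mechanism used in Lemma~\ref{lemma_A.2}, where a factor $e^{2(s_i-s)}$ appeared). Choosing the sequence $T_i \to \infty$ large enough that the polynomial factor $(T_i-1)^{-(1-\alpha)}$ absorbs this exponential---for instance requiring $T_i \geq e^{2(s_i-s)/(1-\alpha)}$---yields the bound $\Vert l(T_i)\Vert_{\mathfrak{h}^{k+\varrho}(M(s_i))} \leq C_k e^{-\omega T_i}$ with $C_k$ independent of $i$.

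For part~(2), I would apply Lemma~\ref{lemma_covergence_to_l} to the flow $h(t)$. This provides a subsequence of times $\tau_n \to \infty$ along which $e^{\tau_n} l(\tau_n)$ converges in $C^2$ on compact subsets of $M$ (and in $H^k$ for every $k$) to a tensor $\overline{l}\in C^2_{loc}(Sym^2(T^*M)) \cap H^1(M)$ satisfying $A_{h_0}(\overline{l}) = -\overline{l}$. A diagonal argument then lets me extract from $\{\tau_n\}$ a further subsequence $T_i$ satisfying simultaneously the growth condition needed for part~(1) and the approximation $\Vert e^{T_i} l(T_i) - \overline{l}\Vert_{C^2(M(s_i))} < 1/i$, using that $\overline{l}$ is $C^2_{loc}$ and that $M(s_i)$ exhausts $M$ as $i\to\infty$.

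The essential contrast with Lemma~\ref{lemma_A.2}---and the main conceptual obstacle downstream---is that without the infinitesimal rigidity hypothesis we cannot conclude $\overline{l}=0$; the limit may be a genuine traceless Codazzi eigentensor corresponding to an infinitesimal conformally flat deformation of $h_0$. The statement therefore stops at the existence of the eigentensor rather than at its vanishing. This is precisely why Theorem~\ref{Thm_sar>-6_c0perturbation} must exploit the additional equidistribution input of Proposition~\ref{prop_measure_convergence} to control the residual term $\Omega_*\delta_{\phi_i}\bigl(\theta(e^{T_i}l(T_i))\bigr)$ in the analogue of inequality~\eqref{equ_delta_Omega_*}: vague convergence of $\Omega_*\delta_{\phi_i}$ to $\mu_{Leb}$ together with the $\PSLR$-invariance of $\mu_{Leb}$ forces $\mu_{Leb}(\theta(\overline{l}))=0$, which suffices to rule out the contradiction hypothesis~\eqref{contradiction_area2_2} even when $\overline{l}\neq 0$.
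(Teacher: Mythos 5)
Your proposal is correct and takes essentially the same approach as the paper, which itself says only ``Using Lemma~\ref{lemma_covergence_to_l} again, we obtain the following result'' and leaves the details to the reader to carry over from the proof of Lemma~\ref{lemma_A.2}. You correctly identify that the $C^0$-closeness hypothesis lets you feed $h$ directly into Theorem~\ref{thm_ricci_flow} (no restarting at a post-surgery time $t_i$ is needed, so the $T_i>t_i+1$ requirement in the lemma statement is effectively a harmless carryover), that the weighted-to-unweighted conversion on $M(s_i)$ costs an $i$-dependent exponential factor that can be absorbed by choosing $T_i$ to grow fast enough in $i$, and that a diagonal argument reconciles the growth condition with the $C^2$-on-compacts convergence furnished by Lemma~\ref{lemma_covergence_to_l}; the only discrepancy is cosmetic, namely that the paper uses the safety margin $e^{2(s_i-s)}$ for the weight factor rather than your $e^{(s_i-s)}$, which does not affect the argument.
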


As argued in Section \ref{subsection_inequality_prop_2}, from Lemma \ref{lemma_A.2_2} (1) we get
\begin{equation}\label{equ_delta_Omega_*_2}
    \delta\leq \Omega_*\delta_{\phi_i}\big(\theta(e^{T_i}l(T_i))\big)+Ce^{-(2\omega-1) T_i}.
\end{equation}
    Since $\theta(\cdot)$ involves derivatives up to the second order, Lemma \ref{lemma_A.2} (2) implies that as $i\rightarrow\infty$, \begin{equation}\label{equ_theta_conv_L2_2}
    \Vert \theta(e^{T_i}l(T_i))-\theta(\ovl{l})\Vert_{C^0(M(s_i))}=\Vert e^{T_i}\theta(l(T_i))-\theta(\ovl{l})\Vert_{C^0(M(s_i))}\to 0.
\end{equation}

To discuss the limit of $\Omega_*\delta_{\phi_i}\big(\theta(e^{T_i}l(T_i))\big)$, we need the following two lemmas. 
First, for those $-1$ eigentensors of $A_{h_0}$, we estimate their $L^1$ decay in the cusps and their $C^0$ norms. 
\begin{Lemma}\label{lem:L2-1eigendecay}
Let $\ovl{l}\in C^2_{loc}(Sym^2(T^*M))\cap H^1(M)$ be a tensor satisfying $A_{h_0}(\ovl{l}) = -\ovl{l}$.
Then \begin{enumerate}
    \item $\Vert\ovl{l}\Vert_{L^1(\cup_iT_i\times[r,\infty))}\lesssim \Vert\ovl{l}\Vert_{L^2(M)}e^{-\alpha r}$, where $\alpha>0$.
    \item $\Vert \ovl{l}\Vert_{C^0(M)}\lesssim \Vert\ovl{l}\Vert_{L^2(M)}<\infty$.
\end{enumerate}
\end{Lemma}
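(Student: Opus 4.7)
The plan is to prove the pointwise bound (2) first, and then obtain the $L^1$ cusp decay (1) as a direct consequence of (2) together with the exponential decay of the cusp volume form.

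For (2), I would start by saturating the spectral inequality \eqref{equ_spectrum}. Since $\ovl{l}\in H^1(M)$ with $(A_{h_0}+I)\ovl{l}=0$, integration by parts (valid by density) combined with \eqref{equ_spectrum} gives
\[
0=-\tfrac12\Vert T\Vert^2-\Vert \delta\ovl{l}\Vert^2-\Vert \tr_{h_0}\ovl{l}\Vert^2,
\]
so $\ovl{l}$ is traceless, divergence-free, and Codazzi ($T_{ijk}=\nabla_k\ovl{l}_{ij}-\nabla_i\ovl{l}_{jk}\equiv 0$). On a hyperbolic $3$-manifold, the Lichnerowicz identity then collapses the eigen-equation to $\Delta\ovl{l}=-3\ovl{l}$, whence
\[
\Delta|\ovl{l}|^2=2|\nabla\ovl{l}|^2-6|\ovl{l}|^2\geq -6|\ovl{l}|^2.
\]
Thus $u=|\ovl{l}|^2$ is a non-negative $H^1$ sub-solution of a uniformly elliptic equation with $h_0$-parallel coefficients. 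In the thick part of $M$, bounded geometry of $h_0$ combined with Moser iteration yields $\sup_{\mathrm{thick}}|\ovl{l}|\lesssim \Vert \ovl{l}\Vert_{L^2(M)}$. To propagate the bound into each cusp, I would use separation of variables in horocyclic coordinates $(x,s)\in T_i\times[0,\infty)$: expanding $\ovl{l}$ in Fourier series along $T_i$ decouples the Codazzi eigen-equation into a system of linear ODEs in $s$, one for each Fourier mode, whose indicial roots at $s=\infty$ are explicit. The $L^2$ integrability condition
\[
\int_0^\infty\!\!\int_{T_i}|\ovl{l}|^2_{h_0}\,e^{-2s}\,d\eta_i\,ds<\infty
\]
forces all non-decaying branches to vanish in each Fourier mode (the higher modes decay faster, as the horospherical Laplacian contributes a $|\xi|^2 e^{2s}$ mass term), producing pointwise exponential decay of $\ovl{l}$ in $s$. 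Stitching this decay to boundary data at $s=0$ controlled by the thick-part Moser bound yields $\Vert \ovl{l}\Vert_{C^0(M)}\lesssim \Vert \ovl{l}\Vert_{L^2(M)}$.

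Once (2) is in hand, (1) follows immediately from the volume computation. Since $h_0|_{T_i\times[r,\infty)}=e^{-2s}\eta_i+ds^2$,
\[
\vol_{h_0}(T_i\times[r,\infty))=\vol_{\eta_i}(T_i)\int_r^\infty e^{-2s}\,ds=\tfrac12\vol_{\eta_i}(T_i)\,e^{-2r},
\]
so
\[
\Vert \ovl{l}\Vert_{L^1(T_i\times[r,\infty))}\leq \Vert \ovl{l}\Vert_{C^0(M)}\,\vol_{h_0}(T_i\times[r,\infty))\lesssim \Vert \ovl{l}\Vert_{L^2(M)}\,e^{-2r},
\]
giving (1) with $\alpha=2$.

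The main obstacle is the uniform $C^0$ estimate in the cuspidal region. A naive application of interior elliptic regularity on unit $\mathbb{H}^3$-balls in the universal cover fails because, at depth $s$ in the cusp, such a ball covers roughly $e^{2s}$ translates of the cusp fundamental domain, so the $L^2$ norm on the ball over-counts $\Vert \ovl{l}\Vert_{L^2(M)}^2$ by the same factor. The separation-of-variables argument bypasses this issue by replacing the purely local elliptic estimate with the combination of the exact spectral ansatz of $A_{h_0}+I$ on the hyperbolic cusp and the $L^2$ integrability condition, which together pin down the decaying asymptotic normal form of $\ovl{l}$ in the depth direction.
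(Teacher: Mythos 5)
Your plan shares its skeleton with the paper's proof — separation of variables in the cusp coordinates plus a De Giorgi--Nash--Moser estimate — but the order and emphasis are genuinely different, and your route for part (1) is cleaner.

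\medskip

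\textbf{Where you diverge and what each approach buys.} You prove (2) first and deduce (1) trivially from the volume decay $\vol_{h_0}(T_i\times[r,\infty))\sim e^{-2r}$, giving $\alpha=2$. The paper goes the other way: it establishes pointwise exponential decay only for the horotorus average $\hat{l}$ via the constant-coefficient ODE system \eqref{equ_odes}, and claims (1) from the decay of $\hat{l}$. As written, that step is loose, since $\Vert\ovl{l}\Vert_{L^1(T\times[r,\infty))}$ dominates, rather than equals, $\int_r^\infty|\hat{l}|(s)\vol(T(s))\,ds$; your order of argument sidesteps this entirely, at the cost of a slightly worse $\alpha$ (the paper claims $\alpha=\sqrt{3}+1$), which is irrelevant here because the lemma only asks for some positive $\alpha$. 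Your opening reduction — saturating \eqref{equ_spectrum} to conclude $T\equiv0$, $\delta\ovl{l}\equiv0$, $\tr_{h_0}\ovl{l}\equiv0$ and hence $\Delta\ovl{l}=-3\ovl{l}$ — is a clean way to obtain the subsolution inequality that the paper instead derives componentwise via the Hamenst\"adt--J\"ackel computation, and it makes the Moser step on the thick part transparent. Also, be aware that before integrating $(A_{h_0}\ovl{l},\ovl{l})$ by parts you need to justify that $\ovl{l}\in H^1(M)$ lies in the $W^{1,2}$-closure of compactly supported tensors, which is exactly what the paper's extension of \eqref{equ_spectrum} asserts.

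\medskip

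\textbf{The real gap.} For (2), you propose a full Fourier decomposition on each horotorus, including all nonzero modes, and appeal to the $|\xi|^2 e^{2s}$ mass term to kill those modes. The paper does not do this: it only writes out the ODE system for the \emph{zero mode} $\hat{l}$, then controls the oscillatory remainder $\ovl{l}-\hat{l}$ via a De Giorgi--Nash--Moser bound combined with a Poincar\'e-type estimate borrowed from the companion paper \cite{Jiang-VargasPallete_RF} (their Proposition B.2 and (6.16)), which is precisely what neutralizes the over-counting of lifts that you correctly identify as the obstacle. Your separation-of-variables route is plausible in principle, but for a tensor equation in cusp coordinates the nonzero-mode ODE system is not constant-coefficient and has Christoffel couplings between components, the indicial-root language does not apply, and you have not argued that the coefficients of the decaying (Bessel-type) branches are bounded uniformly in $\xi$ so that the mode sum stitches to a uniform $C^0$ bound. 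These are fixable, but they are exactly the nontrivial part that the paper outsources to its Appendix B machinery, so if you intend your argument to be self-contained you should carry out this uniformity estimate explicitly rather than wave at the $|\xi|^2e^{2s}$ term.
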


\begin{proof}
    Taking coordinates $e^{-2r}g_{\rm flat} + dr^2$ in each cusp $T\times [0,\infty)$, let $\hat{l}$ be the tensor defined in the cusp as average on each horotorus $T(r):=T\times \{r\}$ of $\ovl{l}$, that is,  \begin{equation*}
    \hat{l}_{ij}(x)=\frac{1}{\vol(T(r))}\int_{T(r)} \ovl{l}_{ij}(y)\dvol(y),
\end{equation*} 
where $x\in T(r)$.
Then we have $A_{h_0}(\hat{l}) = -\hat{l}$. As $\hat{l}$ only depends on $r$, the eigentensor equation can be expressed as follows. 
    \begin{align}\label{equ_odes}
        &(e^{2r}\hat{l}_{ij})''-2(e^{2r}\hat{l}_{ij})'+e^{2r}\hat{l}_{ij} = 2\delta_{ij}\big( \tr_{h_0}(\hat{l}) -\hat{l}_{33}\big),\quad i,j=1,2,\\\nonumber
        &(e^r\hat{l}_{i3})''-2(e^r\hat{l}_{i3})'-2e^r\hat{l}_{i3} = 0,\quad i=1,2,\\\nonumber
        &(\hat{l}_{33})''-2(\hat{l}_{33})'-3\hat{l}_{33} = 0,\\\nonumber
        &(\tr_{h_0}(\hat{l}))''-2(\tr_{h_0}(\hat{l}))'-3\tr_{h_0}(\hat{l}) = 0.
    \end{align}
    The ODEs are derived from equations (6.4)-(6.5) of \cite{Jiang-VargasPallete_RF}, where we take $\omega=0$ and $\hat{f}=\hat{l}$. The roots of the characteristic polynomials of $e^{2r}l_{12}$, $e^r\hat{l}_{i3}$, $\hat{l}_{33}$, and $\tr_{h_0}(\hat{l})$ are $1$, $1\pm\sqrt{3}$, $1\pm2$, and $1\pm 2$, respectively.
    Then the solutions to the system \eqref{equ_odes} are as follows.
\begin{align}\label{eq:ODEsolutions}
    e^{2r}\hat{l}_{12} &= a_1 e^{r}+a_2 re^{r},\\\nonumber
    e^r\hat{l}_{i3} &= b_1^i e^{(1+\sqrt{3})r}+ b_2^i e^{(1-\sqrt{3})r},\quad i=1,2,\\\nonumber
    \hat{l}_{33} &= c_1e^{3r}+c_2e^{-r},\\\nonumber
    \tr_{h_0}(\hat{l}) &= d_1e^{3r}+d_2e^{-r},
\end{align}
where the coefficients are real numbers. 

Observe that $\hat{l}$ is $L^2$-integrable, as by applying Cauchy-Schwartz we have that for $x\in T(r)$ 
\[\left( \hat{l}_{ij}(x) \right)^2 \leq \frac{\int_{T(r)} \left(\bar{l}_{ij}(y)\right)^2 \dvol(y)}{\int_{T(r)}\dvol(y)},
\]
it follows that
\[
\int_0^{\infty} e^{-2r}|\hat{l}|^2 dr \leq \Vert \bar{l}\Vert_{L^2(T\times [0,\infty))}^2.
\]
Therefore, we have
\begin{equation*}
    e^{-r}(e^{2r}\hat{l}_{12}),\, e^{-r}(e^r\hat{l}_{i3}),\, e^{-r}\hat{l}_{33},\,e^{-r}\tr_{h_0}(\hat{l})\in L^2([0,\infty)).
\end{equation*}
Observe that any root with real part greater than or equal to $1$ is not square integrable. Therefore, we must have $a_1=a_2=b_1^i=c_1=d_1=0$.

Consider the remaining coefficients $b_2^i, c_2$ and $d_2$. 
Let $\Tilde{x}$ be a lift of $x$ in $\mathbb{H}^3$, and let $L:=\Tilde{\ovl{l}}$ be the lift of $\ovl{l}$ defined as $\Tilde{\ovl{l}}(\Tilde{x}):=\ovl{l}(x)$.
It follows that 
$$ -L=A_{h_0}(L)=\Delta L-Ric(L)-4L.$$
 Since $\Delta\left(| L|^2\right)=2\langle\Delta L,L\rangle+2|\nabla L|^2$, by Lemma 3.2 of \cite{Hamenstadt-Jackel}, we have \begin{align*}
        \frac12 \Delta\left(|L|^2\right)
        =& \langle\Delta L,L\rangle+|\nabla L|^2\\
        =&-|L|^2+\langle Ric(L),L\rangle+4|L|^2+|\nabla L|^2\\
        \geq & -|L|^2-6|L|^2+2\tr_{h_0}(L)^2+4|L|^2+|\nabla L|^2\\
        \geq& -3|L|^2+|\nabla L|^2.
    \end{align*}
    On the other hand, since $|\nabla(|L|)|\leq |\nabla L|$, \begin{equation*}
        \frac12 \Delta\left(| L|^2\right)= |L|\Delta(|L|)+\left|\nabla(|L|)\right|^2
        \leq |L|\Delta(|L|)+|\nabla L|^2.
    \end{equation*}
    Combining these two inequalities and assuming $L\neq0$, we obtain \begin{equation*}
        \Delta(|L|)\geq -3|L|,
    \end{equation*}
    this verifies the condition for the De Giorgi-Nash-Moser estimate (see Theorem 8.17 in \cite{Gilbarg-Trudinger} or Lemma 2.8 in \cite{Hamenstadt-Jackel}). This implies \begin{equation}\label{De Giorgi-Nash-Moser_0}
        |L|(\Tilde{x})\leq C\Vert L\Vert _{L^2(B(\Tilde{x}))},
    \end{equation}
     where $B(\Tilde{x})$ is the unit ball at $\Tilde{x}$ and $C$ is a constant, and assuming $L\neq 0$. As \eqref{De Giorgi-Nash-Moser_0} is stable under $C^2$ convergence, we can extend the inequality to arbitrary $L$.
    Applying it to the scalar function $|L|=|\Tilde{\ovl{l}}|$, we obtain the following inequality. 
    \begin{equation}\label{eq:GNMapplication_1}
        \vert\ovl{l}\vert(x)= \vert\Tilde{\ovl{l}}\vert(\Tilde{x})\lesssim \Vert \Tilde{\ovl{l}}\Vert_{L^2(B(\Tilde{x}))}.
    \end{equation}

     Let $x$ be a point that does not lie far out into the cusp, and let $y\in B(x)$, one can verify that the number of lifts of $y$ in $B(\Tilde{x})$ is bounded by $ce^{r(y)}$ for some constant $c$ 
     (see for instance \cite[corollary 7.7]{Hamenstadt-Jackel}). This leads to \begin{equation}\label{eq:L2_estimate_l}
        \int_{B(\Tilde{x})} |\Tilde{\ovl{l}}|^2\dvol \lesssim \int_{B(x)}e^{r(y)}|\ovl{l}|^2(y)\dvol\leq e\int_{B(x)}|\ovl{l}|^2(y)\dvol\lesssim \Vert\ovl{l}\Vert^2_{L^2(M)}.
    \end{equation}
    In particular, taking $x\in T(0)$ and combining it with \eqref{eq:GNMapplication_1},
    we obtain \begin{equation*}
         \Vert\hat{l}\Vert_{C^0(T(0))}\lesssim \Vert \ovl{l}\Vert_{C^0(T(0))}=O(\Vert\ovl{l}\Vert_{L^2(M)}).
    \end{equation*}
    Setting $r=0$ in \eqref{eq:ODEsolutions}, this shows that $b_2^i,c_2,d_2=O(\Vert\ovl{l}\Vert_{L^2(M)})$. 

Consequently, $e^{2r}\hat{l}_{12}=0$, $e^r\hat{l}_{i3}=O\left(\Vert\ovl{l}\Vert_{L^2(M)}e^{-(\sqrt{3}-1)r}\right)$, $\hat{l}_{33} = O\left(\Vert\ovl{l}\Vert_{L^2(M)}e^{-r}\right)$, and $\tr_{h_0}(\hat{l}) = O\left(\Vert\ovl{l}\Vert_{L^2(M)}e^{-r}\right)$. As a result, the ODEs corresponding to $e^{2r}\hat{l}_{ii}$, $i=1,2$, have the following form \begin{equation*}            (e^{2r}\hat{l}_{ii})''-2(e^{2r}\hat{l}_{ii})'+e^{2r}\hat{l}_{ii} = O\left(\Vert\ovl{l}\Vert_{L^2(M)}e^{-r}\right),\quad i=1,2.
\end{equation*}
A similar argument shows that $e^{2r}\hat{l}_{ii}=O\left(\Vert\ovl{l}\Vert_{L^2(M)}e^{-r}\right)$.

We conclude that \begin{equation}\label{eq:bound_average}
    \vert\hat{l}\vert=O\left(\Vert\ovl{l}\Vert_{L^2(M)}e^{-(\sqrt{3}-1)r}\right).
\end{equation}
(1) follows by
\begin{align*}
\Vert\ovl{l}\Vert_{L^1(T\times[r,\infty))}=&\int_r^\infty |\hat{l}|(s)\vol(T(s))\,ds\\
    =&\int_r^\infty O\left(\Vert\ovl{l}\Vert_{L^2(M)}e^{-(\sqrt{3}+1)s}\right)ds=O\left(\Vert\ovl{l}\Vert_{L^2(M)}e^{-(\sqrt{3}+1)r}\right).
\end{align*}

Next, we prove (2), it remain to evaluate $\ovl{l}-\hat{l}$. Consider the lifts of $\ovl{l}$ and $\hat{l}$ that are defined as $\Tilde{\ovl{l}}(\Tilde{x}):=\ovl{l}(x)$ and $\Tilde{\hat{l}}(\Tilde{x}):=\hat{l}(x)$, respectively. By applying the De Giorgi-Nash-Moser estimate \eqref{De Giorgi-Nash-Moser_0} again to the scalar function $|L|=|\Tilde{\ovl{l}}-\Tilde{\hat{l}}|$, we have
    \begin{equation}\label{eq:GNMapplication}
        \vert\ovl{l}-\hat{l}\vert(x)=\vert\Tilde{\ovl{l}}-\Tilde{\hat{l}}\vert(\Tilde{x}) \lesssim \Vert \Tilde{\ovl{l}}-\Tilde{\hat{l}}\Vert_{L^2(B(\Tilde{x}))}.
    \end{equation}
    In Proposition B.2 and equation (6.16) of \cite{Jiang-VargasPallete_RF}, setting $f=\ovl{l}$ and $\xi=0$, we obtain \begin{equation}\label{eq:l2_of_difference}
        \Vert \Tilde{\ovl{l}}-\Tilde{\hat{l}}\Vert^2_{L^2(B(\Tilde{x}))}\lesssim \int_{T\times [r(x)-1,r(x)+1]}|\ovl{l}|^2_{C^1}\dvol\lesssim \Vert\ovl{l}\Vert^2_{L^2(M)}.
    \end{equation}
    Combining \eqref{eq:bound_average}, \eqref{eq:GNMapplication}, and \eqref{eq:l2_of_difference}, we obtain \begin{equation*}
        \Vert\ovl{l}\Vert_{C^0(T\times[0,\infty))}\lesssim \Vert\ovl{l}\Vert_{L^2(M)}.
    \end{equation*}

     For any point $x$ in the thick part of $M$, observe that since we have a lower bound on injectivity radius, \eqref{eq:GNMapplication_1} and \eqref{eq:L2_estimate_l} imply $|\ovl{l}(x)|\lesssim \Vert\ovl{l}\Vert_{L^2(M)}$. This completes the proof of (2).
\end{proof}

\begin{Lemma}\label{lem:measureconv-1eigen}
    Let $M$ be a finite-volume hyperbolic $3$-manifold, $\Pi_i \in S_{\frac{1}{i},\mu_{Leb}}(M)$ a sequence so that $\Omega_*\delta_{\phi_i}$, the measures associated to the minimal surfaces representing $\Pi_i$, weakly converge for compactly supported functions to $\mu_{Leb}$ on $\mathcal{F}rM$. 
    Then $$\lim_{i\rightarrow \infty}\Omega_*\delta_{\phi_i}\big(e^{T_i}\theta(l(T_i))\big) = \mu_{Leb}(\theta(\ovl{l})).$$
\end{Lemma}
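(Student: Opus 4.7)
The strategy is a cutoff decomposition that reduces matters to two regimes: (i) on a fixed compact thick region $\mathcal{F}r(M(R))$, uniform $C^2$ convergence of $e^{T_i}l(T_i)\to \ovl{l}$ on $M(s_i)\supset M(R)$ combined with the hypothesized vague convergence of $\Omega_*\delta_{\phi_i}$ to $\mu_{Leb}$; (ii) in the cuspidal tail $\mathcal{F}rM\setminus\mathcal{F}r(M(R))$, uniform decay of both the measures $\Omega_*\delta_{\phi_i}$ and the tensor $\theta(\ovl{l})$. The main obstacle is controlling the tail contribution $\int_{\mathcal{F}r(M\setminus M(R))}\theta(e^{T_i}l(T_i))\,d\Omega_*\delta_{\phi_i}$ uniformly in $i$, since the weighted H\"older estimate on $l(T_i)$ alone cannot absorb the multiplier $e^{T_i}$ outside of $M(s_i)$.

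I would first upgrade the pointwise bound on $e^{T_i}l(T_i)$ to a uniform global one. The proof of Lemma~\ref{lemma_covergence_to_l} establishes that $v(t):=e^{t}l(t)$ satisfies $\Vert v(t)\Vert_{L^2(M)}\leq C$ uniformly in $t\geq 3$, and in fact $\Vert v(t)\Vert_{H^k(M)}\leq c_k$ for every $k$. Lifting to the universal cover $\mathbb{H}^3$---which has bounded geometry---and using that the number of sheets of the covering over a unit ball grows at worst like $e^{r(x)}$ (as in the proof of Lemma~\ref{lem:L2-1eigendecay}), local parabolic regularity applied to the equation $\partial_tv=A_{h_0}v+v+e^{t}Q_t$ yields a uniform estimate $\Vert e^{T_i}l(T_i)\Vert_{C^2(M)}\leq C$, and hence $\Vert\theta(e^{T_i}l(T_i))\Vert_{C^0(\mathcal{F}rM)}\leq C'$ for all $i$. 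Next, since $\Omega_*\delta_{\phi_i}$ is supported on the frame bundle of the $h_0$-area minimizer $S_i$, the isoperimetric plus coarea argument underlying Lemma~\ref{lem:areaincusps} applied with $h=h_0$ (so $a=1$) gives the differential inequality $f'(t)\leq -f(t)$ for $f(t):=\area_{h_0}(S_i\cap\{r>t\})$, whence the uniform tail mass estimate $\Omega_*\delta_{\phi_i}(\mathcal{F}r(M\setminus M(R)))\lesssim e^{-R}$. Finally, Lemma~\ref{lem:L2-1eigendecay}(1) yields $\int_{M\setminus M(R)}|\ovl{l}|\,d\mathrm{vol}\lesssim e^{-\alpha R}$ for some $\alpha>0$, and consequently $\int_{\mathcal{F}r(M\setminus M(R))}|\theta(\ovl{l})|\,d\mu_{Leb}\lesssim e^{-\alpha R}$.

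With these three ingredients in hand, fix a smooth cutoff $\chi_R\colon M\to[0,1]$ equal to $1$ on $M(R)$ and supported in $M(R+1)$, and let $\tilde{\chi}_R$ be its pullback to $\mathcal{F}rM$. Decompose
\begin{align*}
\Omega_*\delta_{\phi_i}(\theta(e^{T_i}l(T_i)))-\mu_{Leb}(\theta(\ovl{l}))
&= \int \tilde{\chi}_R\bigl[\theta(e^{T_i}l(T_i))-\theta(\ovl{l})\bigr]\,d\Omega_*\delta_{\phi_i} \\
&\quad + \bigl[\Omega_*\delta_{\phi_i}(\tilde{\chi}_R\,\theta(\ovl{l}))-\mu_{Leb}(\tilde{\chi}_R\,\theta(\ovl{l}))\bigr] \\
&\quad + \int (1-\tilde{\chi}_R)\,\theta(e^{T_i}l(T_i))\,d\Omega_*\delta_{\phi_i} \\
&\quad - \mu_{Leb}\bigl((1-\tilde{\chi}_R)\,\theta(\ovl{l})\bigr).
\end{align*}
For each fixed $R$, the first term vanishes as $i\to\infty$ because $M(R+1)\subset M(s_i)$ eventually and $e^{T_i}l(T_i)\to\ovl{l}$ uniformly in $C^2$ there by Lemma~\ref{lemma_A.2_2}(2); the second vanishes by the hypothesized vague convergence, tested against the compactly supported continuous function $\tilde{\chi}_R\,\theta(\ovl{l})$; the third is bounded uniformly in $i$ by $C'\cdot\Omega_*\delta_{\phi_i}(\mathcal{F}r(M\setminus M(R)))\lesssim e^{-R}$; and the fourth is $\lesssim e^{-\alpha R}$. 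Sending $i\to\infty$ first and then $R\to\infty$ yields the claim.
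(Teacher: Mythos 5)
Your decomposition and your choice of ingredients---Lemma \ref{lem:areaincusps} for uniform tail estimates of the measures $\Omega_*\delta_{\phi_i}$, Lemma \ref{lem:L2-1eigendecay} for the decay and boundedness of $\ovl{l}$, and the hypothesized vague convergence for the compact part---agree with the paper's argument; the two proofs differ only in surface form (you use a smooth cutoff $\chi_R$ where the paper restricts to a compact set $K$, and you derive the exponential tail estimate from the ODE $f'\leq -f$ where the paper's Lemma \ref{lem:areaincusps} is stated more weakly, though the weaker statement already suffices). However, your preliminary step claiming a \emph{global} uniform $C^2(M)$ bound on $e^{T_i}l(T_i)$ is not justified as written. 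On a cusped manifold an $H^k(M)$ bound does not give a pointwise bound deep in the cusp via the sheet-counting argument you cite: that argument gives $\int_{B(\Tilde{x})}|\Tilde{v}|^2\lesssim e^{r(x)}\Vert v\Vert_{L^2(M)}^2$, which blows up as $r(x)\to\infty$. In Lemma \ref{lem:L2-1eigendecay} (2), the $C^0(M)$ control on $\ovl{l}$ deep in the cusp is obtained only by solving the eigentensor ODEs for the horotorus average $\hat{l}$ and then estimating $|\ovl{l}-\hat{l}|$; this is specific to the elliptic equation $A_{h_0}(\ovl{l})=-\ovl{l}$, and a parabolic analogue of that averaging/ODE analysis would be needed for the time-dependent $v(t)$. ``Local parabolic regularity'' does not substitute for it.

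The gap is easily bridged, and the fix is what the paper does: observe that $\Omega_*\delta_{\phi_i}$ is supported in $\mathcal{F}r(M(s_i))$, where Lemma \ref{lemma_A.2_2} (2) already gives $\Vert e^{T_i}l(T_i)-\ovl{l}\Vert_{C^2(M(s_i))}\to 0$. Combined with the genuinely global bound $\Vert\ovl{l}\Vert_{C^0(M)}<\infty$ from Lemma \ref{lem:L2-1eigendecay} (2), this yields a uniform-in-$i$ bound on $\Vert\theta(e^{T_i}l(T_i))\Vert_{C^0(M(s_i)\setminus M(R))}$, which is exactly what your third term requires. With that replacement your argument closes, and the remaining three terms are handled as you describe.
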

\begin{proof}

     For any given $\epsilon>0$, applying Lemma \ref{lem:areaincusps} to the hyperbolic metric $h_0$, we can find a compact set $K\subset M$ so that 
     \begin{equation}\label{equ_7.7_1}
        \Omega_*\delta_{\phi_i}(\mathcal{F}r(M\setminus K))<\epsilon.
     \end{equation}
     Moreover, it follows from \eqref{equ_theta_conv_L2_2} that, when $i$ is sufficiently large, we have 
     \begin{equation}
         \Vert e^{T_i}\theta(l(T_i))-\theta(\ovl{l})\Vert_{C^0(M(s_i))}<\epsilon.
     \end{equation}
     Since $\Omega_*\delta_{\phi_i}$ converges to $\mu_{Leb}$ on compact sets,
         \begin{equation}\label{equ_7.7_2}
             \Omega_*\delta_{\phi_i}(\mathcal{F}r K)<(1+\epsilon) \mu_{Leb}(\mathcal{F}r K).
         \end{equation}
     Combining \eqref{equ_7.7_1}-\eqref{equ_7.7_2} and using the fact that $\Omega_*\delta_{\phi_i}$ has support in $\mathcal{F}r(M(s_i))$, we obtain \begin{align}\label{equ_7.7_3}
&\left|\Omega_*\delta_{\phi_i}\big(e^{T_i}\theta(l(T_i))\big)-\Omega_*\delta_{\phi_i}(\theta(\ovl{l})|_K)\right|\\\nonumber
    \leq &\left|\Omega_*\delta_{\phi_i}\big(e^{T_i}\theta(l(T_i))\big)-\Omega_*\delta_{\phi_i}\big(e^{T_i}\theta(l(T_i)|_K)\big)\right|\\\nonumber
    &+\left|\Omega_*\delta_{\phi_i}\big(e^{T_i}\theta(l(T_i)|_K)\big)-\Omega_*\delta_{\phi_i}(\theta(\ovl{l}|_{K}))\right|\\\nonumber
   \leq & \left\Vert e^{T_i}\theta(l(T_i))\right\Vert_{C^0(M(s_i)\setminus K)}\cdot\Omega_*\delta_{\phi_i}(\mathcal{F}r(M(s_i)\setminus K))\\\nonumber
    &+ \left\Vert e^{T_i}\theta(l(T_i))-\theta(\ovl{l})\right\Vert_{C^0(K)} \cdot\Omega_*\delta_{\phi_i}(\mathcal{F}r K)\\\nonumber
     < & \left(\left\Vert \theta(\ovl{l})\right\Vert_{C^0(M(s_i))}+\epsilon\right)\cdot\Omega_*\delta_{\phi_i}(\mathcal{F}r(M\setminus K))+ \epsilon (1+\epsilon)\mu_{Leb}(\mathcal{F}r K)\\\nonumber
    <& \left(\frac12\left\Vert \ovl{l}\right\Vert_{C^0(M)}+\epsilon\right)\epsilon+\epsilon (1+\epsilon),
    \end{align}
    which tends to $0$ as $\epsilon\to 0$ due to Lemma \ref{lem:L2-1eigendecay} (2).
    
    Using Lemma \ref{lem:L2-1eigendecay} (1) and choosing a larger compact set $K$ if needed, we get 
    \begin{align}\label{equ_7.7_4}
        |\mu_{Leb}(\theta(\ovl{l}))-\mu_{Leb}(\theta(\ovl{l}|_K))|=&  \mu_{Leb}(\theta(\ovl{l}|_{M\setminus K}))=\frac12 \mu_{Leb}(\ovl{l}|_{M\setminus K})\\\nonumber
        \leq& \frac12\vol_{h_0}(M)^{-1}\Vert\ovl{l}\Vert_{L^1(M\setminus K)}<\epsilon. 
    \end{align}
    As $\Omega_*\delta_{\phi_i}(\theta(\ovl{l}|_{K}))$ converges to $\mu_{Leb}(\theta(\ovl{l}|_{K}))$, the lemma is derived by \eqref{equ_7.7_3} and \eqref{equ_7.7_4}.
\end{proof}

Choosing $\omega\in (\frac{1}{2},1)$, then it follows from \eqref{equ_delta_Omega_*_2} and Lemma \ref{lem:measureconv-1eigen} that \begin{equation*}
    0<\delta\leq \mu_{Leb}(\theta(\ovl{l})).
\end{equation*}
However, the equality of \eqref{equ_spectrum} implies $\tr_{h_0}(\ovl{l})=0$, and hence 
\begin{equation*}
    \mu_{Leb}(\theta(\ovl{l}))=\frac{1}{2}\mu_{Leb}(\ovl{l})=\frac{1}{6}\fint_M \tr_{h_0}(\ovl{l})\dvol_{h_0}=0,
\end{equation*}
leading to a contradiction.
This means that the assumption \eqref{contradiction_area2_2} is false, therefore the inequality stated in Proposition \ref{prop_2.2} must hold.

The rigidity result in Proposition \ref{prop_2.2} and the proof of Theorem \ref{Thm_sar>-6_c0perturbation} follow from arguments similar to those used in the previous section.

\bibliographystyle{plain} 
\bibliography{ref}   
\end{document}